\documentclass{article}

\usepackage{amsmath}
\usepackage{amssymb}
\usepackage{mathtools}
\usepackage{bbm}

\usepackage{a4wide}
\usepackage{setspace}
\usepackage{enumitem}
\usepackage{framed}
\usepackage{verbatim}
\usepackage{csquotes}
\usepackage[T1]{fontenc}
\usepackage[bottom]{footmisc}

\usepackage{amsthm}
\usepackage{thmtools}

\usepackage{floatrow}
\usepackage[hidelinks]{hyperref}
\usepackage[all]{hypcap}

\setlist[itemize]{leftmargin=*} 
\setlist[enumerate]{leftmargin=*}

\usepackage{mathdots}
\usepackage{xcolor}
\usepackage{diagbox}
\usepackage{colortbl}

\usepackage{graphicx}
\usepackage{subcaption}

\theoremstyle{plain}

\newtheorem{theorem}{Theorem}[section]
\newtheorem{claim}[theorem]{Claim}
\newtheorem{proposition}[theorem]{Proposition}
\newtheorem{lemma}[theorem]{Lemma}
\newtheorem{corollary}[theorem]{Corollary}
\newtheorem{conjecture}[theorem]{Conjecture}
\newtheorem{observation}[theorem]{Observation}

\newtheorem{problem}[theorem]{Problem}
\theoremstyle{definition}

\newtheorem{defn}[theorem]{Definition}
\newtheorem*{defn*}{Definition}
\newtheorem*{claim*}{Claim}
\newtheorem*{lem*}{Lemma}

\newtheorem*{rem}{Remark}

\expandafter\def\expandafter\normalsize\expandafter{%
    \normalsize
    \setlength\abovedisplayskip{4pt}
    \setlength\belowdisplayskip{4pt}
    \setlength\abovedisplayshortskip{4pt}
    \setlength\belowdisplayshortskip{4pt}
}

\usepackage[square,sort,comma,numbers]{natbib}
\def\oliver#1{}
\def\domagoj#1{}

\let\oliver=\oliverOpt 
\let\domagoj=\domagojOpt 

\newcommand{\calF}{\mathcal{F}}

\newcommand{\calG}{\mathcal{G}}

\def\eps {\varepsilon}
\DeclareMathOperator{\E}{\mathbb{E}}

\newcommand{\Bin}{\mathrm{Bin}}

\newcommand{\cF}{\mathcal{F}}

\newcommand{\cP}{\mathcal{P}}

\newcommand{\cW}{\mathcal{W}}

\newcommand{\Vint}{V_{\mathrm{int}}}

\newcommand{\norm}[1] {
    \left \| #1 \right \|
}
\newcommand{\TV}{\mathrm{TV}}
\newcommand{\Cay}{\mathrm{Cay}}

\newcommand{\Abs}{\mathrm{Abs}}

\renewcommand{\Pr}{\mathbb{P}}
\renewcommand{\E}{\mathbb{E}}

\newcommand{\ceil}[1]{
    \left \lceil #1 \right \rceil
}
\newcommand{\floor}[1]{
    \left \lfloor #1 \right \rfloor
}

\newcommand{\plabel}[1]{\phantomsection\label{#1}}

\newcommand{\explet}{\gamma} 

\title{Hamiltonicity of regular sublinear expanders}
\author{Domagoj Brada\v{c}\thanks{Institute of Mathematics, EPFL, Lausanne, Switzerland. Email: \textbf{domagoj.bradac@epfl.ch}} \and Oliver Janzer\thanks{Institute of Mathematics, EPFL, Lausanne, Switzerland. Email: \textbf{oliver.janzer@epfl.ch}}}
\date{}
\begin{document}

\maketitle
\begin{abstract}
    We say that a $d$-regular graph is a $\gamma$-expander if for every not too large set of vertices $S$, there are at least $\gamma d |S|$ edges leaving $S$, and we say that a graph $G$ is $\gamma$-far from bipartite if at least $\gamma e(G)$ edges need to be removed to make it bipartite. We prove that there exists an absolute constant $K$ such that any $n$-vertex $d$-regular $\gamma$-expander with $d \ge (\gamma^{-1} \log n)^K$ is Hamiltonian, provided that it is bipartite or $\gamma$-far from bipartite. As applications, we obtain highly robust versions of recent important results on the Hamiltonicity of Cayley graphs and Kneser graphs. As part of our proof, we prove a random connecting lemma for sublinear expanders which might be of independent interest.
\end{abstract}

\section{Introduction}
A Hamilton cycle in a graph is a cycle that visits every vertex exactly once, and a graph is called \emph{Hamiltonian} if it contains a Hamilton cycle. The algorithmic task of determining whether a given graph is Hamiltonian is one of the 21 problems shown to be NP-complete in a seminal paper of Karp~\cite{karp}. Thus, a simple characterization of Hamiltonian graphs is unlikely to exist, motivating the search for sufficient conditions that imply Hamiltonicity.

The most classical such result, predating Karp's work, is the famous theorem of Dirac~\cite{dirac} stating that any $n$-vertex graph with minimum degree at least $n/2$ is Hamiltonian. Dirac's theorem has been generalized in two different ways by Ore~\cite{ore} and Chv\'{a}tal~\cite{chvatal}. Other notable sufficient conditions for Hamiltonicity were proved by Chv\'{a}tal and Erd\H{o}s~\cite{chvatal-erdos}, Jackson~\cite{jackson} and Nash-Williams~\cite{nash-williams}.

The aforementioned results only apply to fairly dense graphs. Indeed, the conditions in the above theorems require the graph to have linear average degree, except the Chv\'{a}tal-Erd\H{o}s theorem, the condition in which implies that the average degree is at least the square root of the number of vertices. Thus, it is natural to seek conditions that apply to sparser graphs as well. Although of a slightly different flavour, the first result in this direction is a famous theorem of P\'{o}sa~\cite{posa} who showed that an $n$-vertex Erd\H{o}s-R\'{e}nyi random graph $\calG(n, p)$ with edge probability $p = C \log n / n$ is Hamiltonian with high probability\footnote{We say that an event holds with high probability, if this probability tends to $1$ as $n$ tends to infinity.}, for a sufficiently large constant $C$. This was further refined by Korshunov~\cite{korshunov}, after which Bollob\'{a}s~\cite{bollobas} and, independently, Koml\'{o}s and Szemer\'{e}di~\cite{komlos-szemeredi-hamiltonicity}, showed that for $p = (\log n + \log \log n + \omega(1)) / n$, the random graph $\calG(n, p)$ is Hamiltonian with high probability. This result is tight, since for smaller values of $p$, with high probability there is a vertex of degree at most one. Due to results of Cooper, Frieze and Reed~\cite{cooper-frieze-reed} and Krivelevich, Sudakov, Vu and Wormald~\cite{krivelevich-sudakov-vu-wormald}, it is also known that for any $d \in [3, n-1]$, a random $d$-regular $n$-vertex graph is Hamiltonian with high probability.

Given the above results, it is natural to seek pseudorandomness conditions that imply Hamiltonicity. The preeminent way to quantify the pseudorandomness of a graph is via its spectrum, that is, the spectrum of its adjacency matrix. For the purposes of this introduction, we shall restrict our attention to $d$-regular graphs. For an $n$-vertex $d$-regular graph $G$, we denote its eigenvalues as $\lambda_1(G), \dots ,\lambda_n(G)$, where $d = \lambda_1(G) \ge \lambda_2(G) \ge \dots \ge \lambda_n(G)$. An $(n, d, \lambda)$-graph is a $d$-regular $n$-vertex graph such that $\max\{ \lambda_2(G), -\lambda_n(G)\} = \lambda$. It is known that the smaller $\lambda$ is compared to $d$, the better pseudorandomness properties an $(n, d, \lambda)$-graph has. We refer to the excellent survey of Krivelevich and Sudakov~\cite{krivelevich-sudakov-survey} for many results on pseudorandom graphs. Coming back to our topic, Krivelevich and Sudakov~\cite{krivelevich-sudakov} conjectured that any $(n, d, \lambda)$-graph with $d / \lambda \ge C$, for some large constant $C$, is Hamiltonian. Weaker versions of the conjecture requiring $d / \lambda \ge (\log n)^{1 + o(1)}$ and $d / \lambda \geq C(\log n)^{1/3},$ were proved by Krivelevich and Sudakov~\cite{krivelevich-sudakov} and Glock, Munh\'{a} Correia and Sudakov~\cite{glock-correia-sudakov}, respectively, before the conjecture was resolved in a recent breakthrough by Dragani\'{c}, Montgomery, Munh\'{a} Correia, Pokrovskiy and Sudakov~\cite{draganic-montgomery}. 

\begin{theorem}[Dragani\'{c}, Montgomery, Munh\'{a} Correia, Pokrovskiy and Sudakov~\cite{draganic-montgomery}] \plabel{thm:hamiltonicity-of-expanders}
    There is an absolute constant $C$ such that any $(n, d, \lambda)$-graph with $d / \lambda \ge C$ is Hamiltonian.    
\end{theorem}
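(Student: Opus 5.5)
The plan is to follow the absorption-based strategy of Dragani\'{c}, Montgomery, Munh\'{a} Correia, Pokrovskiy and Sudakov. The key input from $d/\lambda \ge C$ is the expander mixing lemma. It gives $e(A,B) = \frac{d}{n}|A||B| \pm \lambda\sqrt{|A||B|}$, so any two disjoint sets of size at least roughly $\lambda n/d$ are joined by an edge. It also gives that small sets expand by a factor proportional to $d/\lambda$. Together these make $G$ a good vertex expander in the sense of Hefetz–Krivelevich–Szab\'{o} / Friedman–Pippenger: small sets $X$ satisfy $|N(X)|\ge D|X|$ with $D$ large, and all linear-size sets are pairwise adjacent. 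These are the only properties I would use.

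\textbf{Step 1: robust sparse connectors.} Within $G$, find a sparse subgraph $H$ with bounded maximum degree that is itself an $(m,D)$-expander. Friedman–Pippenger tree embedding and Haxell-type extendability then embed trees and short paths greedily while preserving expansion. From this, prove a linking lemma: for a set $U$ of $\Theta(n)$ vertices and pairs $(x_i,y_i)$ with $x_i,y_i\in U$ at most about $n/(\log n)$, we can find vertex-disjoint $x_i$–$y_i$ paths of length $O(\log n)$ inside $U$. This uses the extendability framework with rollback, that is, deleting paths to maintain the expansion invariant.

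\textbf{Step 2: absorber.} Build a small structure $A$ with a flexible vertex set $F$ such that for every $F'\subseteq F$ of at most half its size, $A$ contains a path with fixed endpoints covering exactly $V(A)\setminus F'$. As in that work, I would use a robustly matchable bipartite template, a sparse bipartite graph with this matching property for all small deletions, together with gadgets realizing each template edge as a switch between two paths. The gadgets are built by the linking lemma and embedded disjointly.

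\textbf{Step 3: cover and absorb.} Cover almost all remaining vertices by a bounded number of long paths, using a DFS argument. DFS in a graph where every two disjoint sets of size $\lambda n/d$ are adjacent leaves fewer than $\varepsilon n$ vertices uncovered. Next, connect the leftover vertices and paths into a single path through $A$ using the linking lemma, routing connections only through $F$. Finally, absorb the unused part of $F$ and close the cycle.

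The main obstacle is Step 3's interface with the absorber. The leftover set is a worst-case set, possibly with all of its neighbours in bad places, and we only have expansion with constant $d/\lambda$, not $\log n$. I would handle this as the cited authors do. First, absorb leftover vertices into the paths by the DFS plus a small random reservoir set $R$ chosen at the start, which by mixing is well connected to every vertex with high probability. Then all connections go through $R\cup F$, so that the sizes to be linked stay well below the expansion threshold $n/D$.
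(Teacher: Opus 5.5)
\textbf{The paper's proof.} The paper gives no proof of this theorem; it is quoted from Dragani\'{c}, Montgomery, Munh\'{a} Correia, Pokrovskiy and Sudakov. The paper's own method cannot reach it either: it needs $d\ge(\log n)^K$, whereas here $d$ may be a large constant, and the paper remarks on exactly this.

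\textbf{The gap in Step~3.} Your sketch therefore has to stand on its own, and it breaks down in Step~3. The DFS argument gives a path missing up to $2\lambda n/d$ vertices. So the uncovered set $L$ has size $\Theta(n/C)$: linear in $n$ and otherwise arbitrary. Rerunning DFS inside $L$ gains nothing, since $|L|$ is already at the threshold $2\lambda n/d$ below which the ``two disjoint sets are adjacent'' property says nothing. Each vertex of $L$ then needs its own connection. Your linking lemma handles only about $n/\log n$ pairs with paths of length $O(\log n)$. This is not a defect of the lemma: for bounded $d$, balls of radius $r$ contain at most $d^{r+1}$ vertices, so short connecting paths are unavailable in general. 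Then $|L|$ internally disjoint links would use $\Omega(|L|\log n/\log d)\gg n$ vertices.

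\textbf{Steps~2 and the reservoir do not help.} The absorber of Step~2 only absorbs subsets of its own flexible set $F$, not the arbitrary set $L$. Keeping all connections inside half of $F$ would force $|F|\gtrsim |L|\log n/\log d>n$. Your final paragraph, absorbing $L$ ``by the DFS plus a random reservoir $R$'', is not an argument. Degrees into $R$ do not shorten connecting paths, and a small $R$ cannot carry $\Theta(n)$ disjoint links of logarithmic length.

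\textbf{What is missing.} The hard part of the theorem is absent. You would need either a way to cover everything outside the absorber by only $o(|F|/\log n)$ paths whose endpoints can still be linked, or an absorbing mechanism for an adversarial leftover set of linear size. DFS supplies neither.

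\textbf{Comparison with the paper.} For its own theorems, the paper does this covering step using near-regularity and $d\ge(\log n)^K$. A random partition plus random matchings gives a linear forest with only $d^{-\Omega(1)}n$ paths and well-spread endpoints, which is exactly why its absorber must be uniform. None of this survives when $d$ is constant.

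\textbf{A smaller issue.} Your linking lemma, stated for an arbitrary $U$ with $|U|=\Theta(n)$, is false. $G[U]$ need not expand; for instance, some $x_i\in U$ may have no neighbours in $U$. The set $U$ has to be chosen randomly or be protected by an extendability invariant.
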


In fact, they proved that certain expansion properties of $(n, d, \lambda)$-graphs are already enough to imply Hamiltonicity, settling a conjecture of Brandt, Broersma, Diestel, and Kriesell~\cite{brandt-broersma}. We also remark that simultaneously, Ferber, Han, Mao and Vershynin~\cite{ferber-han} proved a weaker version of Theorem~\ref{thm:hamiltonicity-of-expanders} with the added assumption that $d \ge \log^6 n$.

The above result concerns a strong version of so-called \emph{linear} expanders -- graphs in which any set $U$ of at most half the vertices has a neighbourhood of size at least $\alpha |U|$, where $\alpha > 0$ is a constant. If one allows $\alpha$ to be smaller than a constant, but rather some function of $|U|$ and the size of the graph, one obtains a broad class of definitions of what are now called \emph{sublinear expanders}. These were first introduced by Koml\'{o}s and Szemer\'{e}di~\cite{komlos-szemeredi-subdivisions-1, komlos-szemeredi-subdivisions-2} in their work on finding clique subdivisions. Expansion is often endowed with a ``robustness'' property, which, informally speaking, means that expansion is preserved even after removing a sparse subset of edges. This may be viewed as a combination of edge and vertex expansion. Robust sublinear expanders were independently introduced by Haslegrave, Kim and Liu~\cite{haslegrave-kim-liu} and Sudakov and Tomon~\cite{sudakov-tomon} and have seen many applications. We shall work with the following simple form of edge expansion.
\begin{defn} \plabel{def:rho-exp}
    An $n$-vertex graph $G$ is a $\explet$-expander if for every $S \subseteq V(G)$ with $1 \le |S| \le \frac{2}{3} n,$ we have $e_G(S, V(G) \setminus S) \ge \explet \bar{d}(G) |S|,$ where $\bar{d}(G)$ is the average degree of $G$.
\end{defn}
In the above, $\explet$ is not necessarily a constant and a prototypical example one should keep in mind is $\explet = 1 / (\log n)^2$. As we shall discuss below, we will work with graphs which are very close to being regular and for such graphs, Definition~\ref{def:rho-exp} is equivalent, up to small changes in the parameters, to several notions of robust sublinear expanders used in the literature.

While linear expanders arise in many natural contexts, they are not ubiquitous. On the other hand, any graph contains a sublinear expander with nearly the same average degree, and this fact has led to many applications as it often allows one to replace an arbitrary graph with a sublinear expander, see e.g.~\cite{liu-montgomery, fernandez-gil, haslegrave-kim-liu, kim-liu-sharifzadeh-staden, liu-montgomery-maders-conjecture}. As a concrete example, it is known (e.g.~\cite{chakraborti2025edge}) that any $n$-vertex graph with average degree $d \ge (\log n)^2$ contains a $\explet$-expander for $\explet = 1 / (\log n)^2$ with average degree close to $d$. More refined and stronger statements are known and needed for certain applications, e.g.~\cite{alon-bucic-rainbow-cycles}. While finding a single sublinear expander in an arbitrary graph is sufficient for many extremal problems, for covering or decomposition problems, one would like to efficiently cover or decompose a graph into sublinear expanders. This approach was first utilised by Buci\'{c} and Montgomery~\cite{bucic2022towards} who, in their work on the Erd\H{o}s-Gallai conjecture, showed that one can efficiently decompose the edges of any graph into sublinear expanders. Given their many uses, results on sublinear expanders and tools to work with them are likely to have further applications. For much more details on different types of sublinear expanders and their applications, we refer to the recent survey of Letzter~\cite{letzter-survey}.

Closer to the topic at hand, Letzter, Methuku and Sudakov~\cite{letzter-methuku-sudakov-nearly-ham-cycles}, proved that nearly regular sublinear expanders of at least polylogarithmic average degree contain nearly Hamilton cycles, i.e. cycles covering all but an $o(1)$ proportion of the vertices, and presented several applications of this result.

In this paper, we prove that any regular sublinear expander with sufficiently large degree is Hamiltonian, unless it is close to being bipartite, but not bipartite. We start with our main theorem for bipartite sublinear expanders.

\begin{theorem} \plabel{thm:main-bipartite}
    Let $G$ be a bipartite $n$-vertex $d$-regular $\explet$-expander and assume that $n$ is sufficiently large. If $d > (\explet^{-1} \log n)^{10^8}$, then $G$ is Hamiltonian.
\end{theorem}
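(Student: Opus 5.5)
We will prove Theorem~\ref{thm:main-bipartite} by the absorption method, combining three ingredients: a \emph{random connecting lemma}, an \emph{absorbing structure} built from randomly sampled vertex sets, and a \emph{path cover} of the remaining vertices by few long paths. Throughout, write $A,B$ for the two parts of $G$. Since $G$ is $d$-regular and bipartite, $|A|=|B|=n/2$. Every path we build must alternate sides, so the accounting of how many vertices of each part are used must be kept exactly balanced at the end.

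\textbf{Step 1: a random connecting lemma.} Let $p=d^{-c}$ for a small absolute constant $c>0$, and choose a random set $R\subseteq V(G)$ by including each vertex independently with probability $p$. We aim to show that with high probability the following holds. For any set $W$ of at most $p n/(\explet^{-1}\log n)^{C}$ vertices and any collection of at most $pn/(\explet^{-1}\log n)^{C}$ disjoint pairs $(x_i,y_i)$ with the correct parities (endpoints in opposite parts when the connecting path has odd length), we can find vertex-disjoint $x_i$--$y_i$ paths. These paths have their interior vertices in $R\setminus W$ and length at most $L=(\explet^{-1}\log n)^{O(1)}$.

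The proof goes through ball growth. The expansion of $G$ implies that $G[R]$ inherits a weaker edge expansion, since degrees concentrate because $dp\gg\log n$. This is where the large polylogarithmic bound on $d$ enters. Then balls around $x_i$ inside $R\setminus W$ grow by a factor $1+\explet/\mathrm{polylog}$ per step until they reach size greater than $|R|/2$ or so, and two large balls must intersect in $G[R]$. To obtain many disjoint paths greedily, we delete the previously used paths. Expansion is robust to deleting a set $U$ that is small compared to $\explet d\cdot(\text{set size})/\Delta$, so this deletion is harmless, and a standard Koml\'os--Szemer\'edi type argument (robust ball growth avoiding a small forbidden set) applies.

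\textbf{Step 2: absorbers.} We reserve further disjoint random sets. For every vertex $v$ we want many gadgets that can optionally include $v$. In the bipartite setting the correct unit is a pair $\{a,b\}$ with $a\in A$ and $b\in B$, because single vertices would break the balance. A gadget for $\{a,b\}$ consists of a path $P$ with fixed endpoints, together with an alternative path on $V(P)\cup\{a,b\}$ with the same endpoints. Such a gadget is built from edges $u a u'$ and $w b w'$ with $u,u',w,w'$ in a random reservoir, joined using the connecting lemma. We would then use a Montgomery-style sparse absorbing template, namely a bipartite robust matching structure of maximum degree $O(1)$. This yields a path $P_{\mathrm{abs}}$ that can absorb any balanced set of pairs drawn from a designated flexible set $F$ of size about $n/\mathrm{polylog}$, where $F$ is random.

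\textbf{Step 3: cover and finish.} On the remaining vertices $V'$ we find a cover of all but a small leftover by at most $n/\mathrm{polylog}$ vertex-disjoint paths. One route is the Letzter--Methuku--Sudakov nearly-Hamilton-cycle approach applied iteratively; another is repeatedly taking long paths via DFS in expanders, each leaving a small remainder. The small leftover vertices are then incorporated into paths through the connecting lemma, using random reservoir vertices. Next we connect all paths and $P_{\mathrm{abs}}$ into one cycle with the connecting lemma. The connecting paths are kept short and use only vertices from $R$. The unused vertices of $R$ together with the unused part of $F$ must then be absorbed. For this we arrange that all unused reservoir vertices belong to the flexible set, for instance by making $R\subseteq F$ and running the connections only through $R$. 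We fix the parity imbalance by choosing connecting path lengths appropriately: in a bipartite graph, the lengths of the connecting paths determine how many $A$ and $B$ vertices are consumed, and since the total is balanced, the leftover set is automatically balanced.

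\textbf{Main obstacle.} The hardest step is the connecting lemma of Step 1 in the sublinear regime, which requires three things simultaneously. First, the paths must use only the random set $R$. Second, they must avoid a forbidden set whose size is comparable to $pn$ divided by a polylogarithmic factor. Third, they must handle up to $pn/\mathrm{polylog}$ pairs. Because expansion is only $\explet$, small sets of size $s$ grow only to $s(1+\explet/\mathrm{polylog})$, so the forbidden set must be chosen small relative to every ball. I would first attempt to prove robust expansion of $G[R]$ via a union bound over connected sets, counting them as trees, which is affordable because $dp\ge(\explet^{-1}\log n)^{\Omega(1)}$. I would then grow balls while excluding $W$ using the standard ``$U$ small relative to $\explet|X|$'' robustness.
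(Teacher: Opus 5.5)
Your proposal has a genuine gap, located in Step~3 together with the way Step~2 places the absorber. The gadget paths come from a connecting lemma that gives no control over which reservoir vertices they use. So the graph $G[V']$ left after removing $V(P_{\mathrm{abs}})$ need not be nearly regular, let alone an expander. Nothing prevents the gadget paths, inside a reservoir of density $q$, from using essentially all of $N(v)\cap(\text{reservoir})$ for some vertices $v$ and nothing for others, so degrees in $G[V']$ may vary by $\Theta(qd)$.

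The tools you name do not cope with this. Letzter--Methuku--Sudakov needs a nearly regular expander as host and leaves $o(n)$ uncovered vertices in uncontrolled positions. DFS leaves a remainder with no expansion, so neither route can be iterated.

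The budget is also very tight. Every path of the cover costs up to $L$ vertices of $R$, so you need far fewer than $|R|/L$ paths. With $p=d^{-c}$ and $d$ possibly polynomial in $n$, your $n/\mathrm{polylog}$ paths exceed $pn$ by a polynomial factor.

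The standard way to get $d^{-\Omega(1)}n$ paths is a random equipartition into $t$ parts with random matchings between consecutive parts, as in Lemma~\ref{lem:linear-forest}. The number of endpoints it leaves is governed by $1-\delta/\Delta$, roughly $(1-\delta/\Delta)n$ up to logarithmic factors. It therefore requires $G[V']$ to be regular up to an additive error much smaller than $d|R|/(nL)$. Equivalently, $V(P_{\mathrm{abs}})$ must meet every neighbourhood in $d|V(P_{\mathrm{abs}})|/n\pm d^{1-\Omega(1)}$ vertices, a precision finer than the density of the reservoir itself. Nothing in your plan produces this.

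A related problem is that your Step~1 lemma is false as stated, since it has no spread hypothesis. Suppose $d/2$ vertices $x_i$ have all their neighbours in a set $Y$ of $2d$ vertices, which is compatible with $G$ being a bipartite $d$-regular $\gamma$-expander. Then the $x_i$--$y_i$ paths through $R$ need distinct second vertices in $Y\cap R$, which has only about $2pd<d/2$ elements. The correct statement (Definition~\ref{def:connecting-set} and Lemma~\ref{lem:randomsetconnecting}, adapted from Buci\'{c}--Montgomery and Chakraborti--Janzer--Methuku--Montgomery) requires every vertex of $R$ to have at most $D$ neighbours in the multiset of endpoints. So Step~3 must also output spread endpoints, which again is available only if $G[V']$ is nearly regular at this fine scale.

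What you call the main obstacle is therefore the known part of the argument. The paper's new ingredient is the random connecting lemma, Lemma~\ref{lem:connections}. Uniformly distributed endpoint pairs are joined by random walks conditioned on their endpoints, of length much larger than the $O(\gamma^{-2}\log n)$ bipartite mixing time. The walks that are non-degenerate and meet no other walk are kept, and the remaining pairs are resampled. A Poisson-paradigm estimate plus Talagrand's inequality shows that, for each $t$, the $t$-th vertices of the kept walks form a random-like set.

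Applying this in three rounds to build the gadgets makes $V(H)$ $d^{-c/20}$-uniform while $|R|\ge d^{-c/10^4}n$ (Lemma~\ref{lem:absorber-bipartite}). Then $G-V(H)$ is regular up to $O(d^{1-c/40})$, and Lemma~\ref{lem:linear-forest} yields at most $d^{-c/225}n$ paths with spread endpoints, which $R$ can join (Theorem~\ref{thm:robust-bipartite}).
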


The condition in Theorem~\ref{thm:main-bipartite} can also be phrased in terms of the gap between the first and second eigenvalue of $G$. By Cheeger's inequality for graphs, due to Alon and Milman~\cite{alon-milman}, a $d$-regular graph $G$ is a $\explet$-expander with $\explet = (d - \lambda_2(G)) / (3d).$ Thus, we have the following corollary.

\begin{corollary} \plabel{cor:bipartite-spectral}
    Let $G$ be a bipartite $n$-vertex $d$-regular graph and assume that $n$ is sufficiently large. If $\lambda_2(G) = (1 - \delta)d$ and $d > (3 \delta^{-1} \log n)^{10^8}$, then $G$ is Hamiltonian.
\end{corollary}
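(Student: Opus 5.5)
This follows directly from Theorem~\ref{thm:main-bipartite}, so the plan is to check its hypotheses. The only thing to establish is that $G$ is a $\explet$-expander with $\explet = \delta/3$. The expansion condition then follows from the Alon--Milman form of Cheeger's inequality, and the degree condition is a direct substitution.

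\textbf{Step 1: expansion from the spectral gap.} Let $S \subseteq V(G)$ with $1 \le |S| \le \frac{2}{3}n$, and write $\mathbbm{1}_S$ for its indicator vector. Take the test vector $x = \mathbbm{1}_S - \frac{|S|}{n}\mathbbm{1}$, which is orthogonal to the all-ones eigenvector. By the Courant--Fischer characterization of $\lambda_2(G)$,
\[
x^{\top}(dI - A)x \ \ge\ (d - \lambda_2(G))\,\|x\|^2 = \delta d\,\|x\|^2 .
\]
The left-hand side equals $\sum_{uv \in E(G)} (x_u - x_v)^2 = e_G(S, V(G)\setminus S)$. The right-hand side involves $\|x\|^2 = |S|(n-|S|)/n \ge |S|/3$, where the last inequality uses $|S| \le \frac{2}{3}n$. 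Combining these gives
\[
e_G(S, V(G)\setminus S) \ \ge\ \tfrac{\delta}{3}\, d\, |S| .
\]
Since $G$ is $d$-regular, $\bar d(G) = d$. By Definition~\ref{def:rho-exp}, $G$ is therefore a $\explet$-expander with $\explet = \delta/3$. This is exactly the statement quoted from Alon and Milman before the corollary, so one may simply cite it; the computation above just makes the factor $3$ transparent.

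\textbf{Step 2: applying Theorem~\ref{thm:main-bipartite}.} With $\explet = \delta/3$ we have $\explet^{-1}\log n = 3\delta^{-1}\log n$. The hypothesis $d > (3\delta^{-1}\log n)^{10^8}$ is therefore precisely $d > (\explet^{-1}\log n)^{10^8}$. The graph $G$ is bipartite, $n$-vertex, $d$-regular and $n$ is sufficiently large, so Theorem~\ref{thm:main-bipartite} applies and $G$ is Hamiltonian.

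No step here is a genuine obstacle. The only points to watch are degenerate cases, and neither causes trouble. If $\delta \le 0$, the assumed degree bound is meaningless, so implicitly $\delta > 0$. If $\delta > 1$, the bound $\explet = \delta/3$ still holds. Moreover, since $G$ is bipartite, $\lambda_n = -d$; this plays no role because only $\lambda_2$ enters the argument.
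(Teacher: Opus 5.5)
Your proof is correct and follows the paper's route: the paper also deduces the corollary from Theorem~\ref{thm:main-bipartite}, citing the Alon--Milman (easy) direction of Cheeger's inequality to get that $G$ is a $\gamma$-expander with $\gamma=(d-\lambda_2(G))/(3d)=\delta/3$. Your Rayleigh-quotient computation with the test vector $\mathbbm{1}_S-\frac{|S|}{n}\mathbbm{1}$ simply makes that cited step explicit, including where the factor $3$ comes from.
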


We move on to our result for far-from-bipartite graphs. For $\eps \ge 0,$ we say that a graph $G$ is $\eps$-far from bipartite if $e_G(S, V(G) \setminus S) \le (1 - \eps) e(G)$ for every $S \subseteq V(G)$. Otherwise, we say that $G$ is $\eps$-close to bipartite.

\begin{theorem} \plabel{thm:main-far-from}
    Let $G$ be an $n$-vertex $d$-regular $\explet$-expander and assume that $n$ is sufficiently large. If $G$ is $\eps$-far from bipartite and $d \ge (\explet^{-1} \eps^{-1} \log n)^{10^8}$, then $G$ is Hamiltonian.
\end{theorem}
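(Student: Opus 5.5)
We plan to follow the absorption strategy that presumably also underlies Theorem~\ref{thm:main-bipartite}, and to reduce to it as far as possible. The non-bipartite case differs in one essential way: parity constraints disappear. Any path system can be closed into a cycle without balancing two sides, but we must be able to connect vertices with paths of \emph{both} parities. This is where the hypothesis of being $\eps$-far from bipartite enters. The first step is to show that it gives \emph{robust odd connectivity}. For any two vertices $u,v$, and after deleting any small set of vertices, there are short $u$--$v$ paths of both parities. To see this, we pass to the bipartite double cover $G\times K_2$, which is $d$-regular on $2n$ vertices. We check that it is a $\explet'$-expander with $\explet' = \mathrm{poly}(\explet\eps)$.

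For a set $S=(A\times\{0\})\cup(B\times\{1\})$, we split into two cases.
\begin{itemize}
\item If $A$ and $B$ differ substantially, the edges of $G$ leaving $A\cup B$, or between $A\setminus B$ and $B\setminus A$, give expansion via the $\explet$-expansion of $G$.
\item If $A\approx B$, the far-from-bipartiteness applied to the cut $(A, V\setminus A)$, restricted appropriately, supplies many edges inside $A$. These correspond to edges leaving $S$ in the cover. Sets $A$ of size close to $n$ are handled by complementation.
\end{itemize}
With this, the expansion of $G\times K_2$ translates directly into connectivity with prescribed parity in $G$.

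Next we run the main machinery, taking the tools developed for Theorem~\ref{thm:main-bipartite} as given; presumably these are the random connecting lemma for sublinear expanders, an absorber construction, and a near-Hamilton-cycle/path-cover step in the spirit of Letzter--Methuku--Sudakov.
\begin{enumerate}
\item Randomly sample a small reservoir $R$ of vertices, with density polylogarithmic in $1/d$ so that $\explet$-expansion is inherited by $G[R]$ and $G-R$ (with high probability, by Chernoff bounds and the degree assumption $d\ge(\explet^{-1}\eps^{-1}\log n)^{10^8}$). Also check that the far-from-bipartite property survives; it is inherited up to constant loss by random subsets by a union bound over cuts weighted appropriately, or more simply by working in the double cover, where only expansion is needed.
\item Build an absorbing structure $A^\ast$ that can absorb any small leftover set $L\subseteq R$. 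In the bipartite case, absorbers must be balanced. Here we build them in the double cover and project. Alternatively, we use odd cycles (short odd closed walks, which exist robustly by the first step) to fix the parity so that each single vertex can be absorbed.
\item Cover $V(G)\setminus (R\cup A^\ast)$ by few vertex-disjoint paths using the near-regular expander path cover.
\item Link these paths and $A^\ast$ into one cycle through $R$ using the random connecting lemma. We apply it in $G\times K_2$ to obtain paths of the required parity whose internal vertices are spread out. Random usage of $R$ keeps the remainder still expanding.
\item Absorb the unused vertices of $R$.
\end{enumerate}

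We expect the main obstacle to be the first step, namely proving that the double cover is a sublinear expander with only polynomial loss in $\eps$ and $\explet$, uniformly over all sets up to $2/3$ of $2n$. The same difficulty reappears in showing that this property is preserved inside random subsets used for connection. The delicate case is $A\approx B$ with $|A|$ near $n/2$. There, the intuition is that the edges crossing $(A,V\setminus A)$ number at most $(1-\eps)e(G)$, and this forces about $\eps e(G)$ non-crossing edges, split between $G[A]$ and $G[V\setminus A]$. We would need to handle the regime where these edges all lie outside $A$ by using the complementary set in the cover together with $\explet$-expansion; this is where I would concentrate effort first. Once this is established, Theorem~\ref{thm:main-far-from} should follow by feeding $G\times K_2$ into the bipartite machinery with $\explet$ replaced by $\mathrm{poly}(\explet\eps)$, which is consistent with the stated exponent.
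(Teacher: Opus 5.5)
\paragraph{Gap 1: the double-cover reduction.} The decisive step, running the bipartite machinery on $G\times K_2$, does not produce anything in $G$. A Hamilton cycle of $G\times K_2$ projects to a closed walk that visits every vertex of $G$ twice. The same defect affects your steps 2 and 4. A path in the cover may use both $(w,0)$ and $(w,1)$, and then it projects to a walk with a repeated vertex. Two vertex-disjoint paths (or absorbers) in the cover can also project onto subgraphs of $G$ that share vertices. So neither disjointness nor the spread-out property you obtain in the cover transfers to $G$, and ``build absorbers in the double cover and project'' is not available.

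What the cover legitimately encodes is spectral. The spectrum of $N(G\times K_2)$ is that of $N(G)$ together with its negative, so expansion of the cover implies, via Lemma~\ref{lem:second-eigenvalue}, that $\lambda_n(N(G))$ is bounded away from $-1$. In other words, the random walk on $G$ \emph{itself} mixes in time $\mathrm{poly}(\gamma^{-1}\eps^{-1})\log n$. Your step~1 is therefore a reasonable substitute for the paper's use of Trevisan's lemma (Lemma~\ref{lem:last-eigenvalue}, then Lemmas~\ref{lem:far-from-bip-mixing} and~\ref{lem:remove-set-far-from-bip-mixing-time}).

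The paper, however, then works entirely inside $G$. Lemma~\ref{lem:connections} with $V_0=V_1=V(G)$ turns endpoint-conditioned random walks in $G$ into internally disjoint paths of exact length $\ell$. These paths build the $(x,a,y)$-absorbers, and the odd cycles built into those absorbers are where non-bipartiteness is really used. The connections through the reservoir need no parity control at all.

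Your case analysis in step~1 is also inverted. If $A=B$, edges inside $A$ stay inside $S$. The configuration that needs far-from-bipartiteness is $B=V\setminus A$, and there the edges of $G\times K_2$ leaving $S$ number $2e(G[A])+2e(G[V\setminus A])\ge 2\eps e(G)$ immediately. So the regime you flag as delicate does not arise, although the expansion claim itself is true.

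\paragraph{Gap 2: uniformity of the absorber.} The plan never addresses the issue at the technical heart of the argument. Your step~3 implicitly assumes that $G-(R\cup A^\ast)$ is nearly regular. Only then can one cover it by few paths (random partition plus matchings, Lemma~\ref{lem:linear-forest}) whose endpoints are spread out enough that every reservoir vertex has at most $D$ neighbours among them, as the $(D,\ell_0)$-connecting property requires. Near-regularity of the remainder forces $V(A^\ast)$ to be $\alpha$-uniform, and an arbitrary absorber gives no such control.

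This is why the paper builds the absorber in three rounds of Lemma~\ref{lem:connections}. The entire point of that lemma is that the set of $t$-th vertices of the connecting paths is uniform, which is proved through the Poisson-paradigm and Talagrand analysis of Lemma~\ref{lem:one-bite-probabilistic}. The paper then obtains the statement from Theorem~\ref{thm:robust-far-from-bipartite} with $m=1$ and $a_1b_1$ an edge.
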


Theorem~\ref{thm:main-far-from} can also be phrased in terms of the eigenvalues of $G$. It is easy to show that every $d$-regular graph $G$ is $\eps$-far from bipartite for $\eps=(d+\lambda_n(G))/(2d)$. We thus have the following corollary.

\begin{corollary} \plabel{cor:far-from-bipartite-spectral}
    Let $G$ be an $(n,d,\lambda)$-graph and assume that $n$ is sufficiently large. If $
    \lambda = (1 - \delta) d$ and $d \ge (\delta^{-1} \log n)^{10^9}$, then $G$ is Hamiltonian.
\end{corollary}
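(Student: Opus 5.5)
The statement is Corollary~\ref{cor:far-from-bipartite-spectral}. I plan to deduce it from Theorem~\ref{thm:main-far-from}, so I need $G$ to be both a $\explet$-expander and $\eps$-far from bipartite, with $\explet$ and $\eps$ polynomial in $\delta$.

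For expansion, I will use the Cheeger bound quoted before Corollary~\ref{cor:bipartite-spectral}. Since $\lambda_2(G)\le \lambda=(1-\delta)d$, $G$ is a $\explet$-expander with
\[
\explet=\frac{d-\lambda_2(G)}{3d}\ge \frac{\delta}{3}.
\]

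For being far from bipartite, I will use the fact stated before the corollary: $G$ is $\eps$-far from bipartite with $\eps=(d+\lambda_n(G))/(2d)$. Since $-\lambda_n(G)\le\lambda=(1-\delta)d$, this gives $\eps\ge \delta/2$. This fact is easy to verify directly. For $S\subseteq V(G)$, let $x=\mathbbm{1}_S-\mathbbm{1}_{V\setminus S}$. Then $x^\top A x = 2e(G)-4e(S,V\setminus S)$, and the Rayleigh quotient gives $x^\top A x\ge \lambda_n n$. Since $e(G)=dn/2$, this yields
\[
e(S,V\setminus S)\le \frac{(d-\lambda_n)n}{4}=\Bigl(1-\frac{d+\lambda_n}{2d}\Bigr)e(G).
\]

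Finally, I check the degree condition. We have $\explet^{-1}\eps^{-1}\log n\le 6\delta^{-2}\log n$. Since $\delta\le 1$ and $n$ is large, $6\delta^{-2}\log n\le(\delta^{-1}\log n)^{3}$, hence
\[
(\explet^{-1}\eps^{-1}\log n)^{10^8}\le(\delta^{-1}\log n)^{3\cdot 10^8}\le(\delta^{-1}\log n)^{10^9}\le d.
\]
Theorem~\ref{thm:main-far-from} then applies with these values of $\explet$ and $\eps$, and $G$ is Hamiltonian.

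There is no real obstacle here. The only points needing care are checking the direction of the inequalities, so that smaller $\explet,\eps$ weaken the hypotheses appropriately, and absorbing the constants $3$ and $2$ into the exponent.
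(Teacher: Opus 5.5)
Your proof is correct and follows the route the paper intends. You bound $\gamma \ge \delta/3$ via the Alon--Milman (Cheeger) estimate and $\varepsilon \ge \delta/2$ via the Rayleigh-quotient argument for $\lambda_n$, which you verify correctly, and then apply Theorem~\ref{thm:main-far-from}, absorbing the constants into the exponent. The paper states this corollary without a written proof, so your check of the far-from-bipartite fact and of the exponent bookkeeping supplies exactly the details it leaves implicit.
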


Comparing with Theorem~\ref{thm:hamiltonicity-of-expanders}, Corollary~\ref{cor:far-from-bipartite-spectral} applies for graphs with $\lambda / d$ arbitrarily close to $1$, provided that $d$ is sufficiently large compared to $1 - \lambda / d$. On the other hand, our result requires $d$ to be at least $(\log n)^K$ for some constant $K$, so if $\lambda / d$ is very small, Theorem~\ref{thm:hamiltonicity-of-expanders} is stronger as it has no additional assumptions on $d$. We also point out that Corollary \ref{cor:far-from-bipartite-spectral} is stronger than the main results of \cite{krivelevich-sudakov,glock-correia-sudakov,ferber-han} as those papers all require $d$ to be at least polylogarithmic (though with a smaller power) and $\lambda/d$ to be small, whereas we allow $\lambda/d$ to be close to $1$.

It is also worth mentioning that the condition in (the strongest version of) Theorem~\ref{thm:hamiltonicity-of-expanders} from~\cite{draganic-montgomery} implies that the given graph is far from bipartite. To our knowledge, Theorem~\ref{thm:main-bipartite} is the first Hamiltonicity result for bipartite expanders (linear or sublinear) with degree $n^{o(1)}$.

Slightly unbalanced complete bipartite graphs show that the regularity condition in Theorem~\ref{thm:main-bipartite} cannot be ommited. Moreover, it is not hard to construct a balanced bipartite or a far-from-bipartite non-Hamiltonian $\explet$-expander with all degrees between $d$ and $d(1 + 2 \explet)$, for most reasonable values of $d$ and $\explet$. Thus, some level of regularity is needed to guarantee Hamiltonicity. Robust versions of our main theorems, Theorem~\ref{thm:robust-bipartite} and Theorem~\ref{thm:robust-far-from-bipartite}, imply Hamiltonicity of nearly regular sublinear expanders, provided that the level of regularity is much stronger than the expansion parameter. In the bipartite case, we additionally require that the two parts have equal size which is clearly necessary and not implied by near-regularity and expansion.

\subsection{Applications}
We present several applications of our main results and their robust versions. For moderately dense regular graphs, we are able to omit the condition of being bipartite or far from bipartite.

\begin{restatable}{theorem}{thmregularrestate} \plabel{thm:dense-regular}
    There is an absolute constant $\eta > 0$ such that every $n$-vertex $d$-regular $\explet$-expander with $d \ge n^{1-\eta}$ and $\explet = n^{-\eta}$ is Hamiltonian.
\end{restatable}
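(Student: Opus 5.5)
We reduce to the two main theorems by a dichotomy on how close $G$ is to bipartite. Fix a small parameter $\eps = n^{-\eta'}$ with $\eta' \ll 1$ and $\eta \ll \eta'$. Then $\explet^{-1}\eps^{-1}\log n \le n^{\eta+\eta'}\log n$, so $(\explet^{-1}\eps^{-1}\log n)^{10^8} \le n^{1-\eta}\le d$ once $\eta,\eta'$ are small enough (say $\eta+\eta' < 10^{-9}$). Hence, if $G$ is $\eps$-far from bipartite, Theorem~\ref{thm:main-far-from} applies directly and $G$ is Hamiltonian.

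Otherwise $G$ is $\eps$-close to bipartite. Then there is a partition $V(G)=A\cup B$ with $e(A,B) > (1-\eps)e(G)$, so $e(G[A])+e(G[B]) < \eps dn/2$. Regularity gives $d|A| = 2e(A)+e(A,B)$ and $d|B| = 2e(B)+e(A,B)$, hence $\bigl||A|-|B|\bigr| \le 2\eps n$.

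Next we use expansion to show that few vertices are atypical. Let $X$ be the set of vertices with more than $\sqrt{\eps}\, d$ neighbours on their own side. Then $|X| \le 2\sqrt{\eps}\, n$, and since $\eps$ is polynomially smaller than $\explet$, the sides are essentially rigid.

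The plan is to absorb the imbalance and the bad vertices, then apply the bipartite result to what remains. Concretely:
\begin{enumerate}
\item Suppose $|A| \ge |B|$ and let $t = |A|-|B| \le 2\eps n$.
\item Find $t$ vertex-disjoint paths of length two inside $A$. Each such path $a_1a_2a_3$ can be contracted to a single ``$A$-vertex''. Such paths exist by a greedy argument, provided $A$ spans enough edges. Indeed, $\eps$-closeness gives only an upper bound on $e(G[A])$, but the identity $e(G[A]) - e(G[B]) = d(|A|-|B|)/2 = dt/2$ guarantees that $A$ contains at least $dt/2$ edges, which is enough.
\item Cover the vertices of $X$ by short paths through typical vertices, alternating sides appropriately.
\item Contract all these paths. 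The result is a balanced bipartite auxiliary graph $H$ (keeping only edges between the sides) in which every vertex has degree $d(1-O(\sqrt\eps))$.
\end{enumerate}

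Expansion of $H$ follows from that of $G$: deleting $O(\eps dn)$ edges and $O(\sqrt\eps\, n)$ vertices costs at most $O(\sqrt\eps\, d|S|)$ for small $S$, which is negligible against $\explet d|S|$ only if $S$ is not tiny. For tiny $S$ we instead use the near-regularity of $H$, since $S$ cannot contain many edges. We then apply the robust bipartite version, Theorem~\ref{thm:robust-bipartite}, to $H$, which allows near-regular (not exactly regular) balanced bipartite expanders whose regularity error is much smaller than $\explet$. Here that error is $\sqrt{\eps} = n^{-\eta'/2} \ll n^{-\eta}$. Finally, a Hamilton cycle of $H$ through the contracted vertices expands to a Hamilton cycle of $G$. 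To make this expansion valid, the contracted vertices must be given only neighbours adjacent to the correct endpoints, i.e.\ a vertex contracted from a path is joined to $N(a_1)\cap B$ on one side and $N(a_3)\cap B$ on the other. This asymmetry is awkward in an undirected graph. To handle it, we instead choose paths whose endpoints have large common neighbourhoods, or use the robust theorem's flexibility to prescribe edges.

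The main obstacle is this last step, together with checking the hypotheses of Theorem~\ref{thm:robust-bipartite}: the contracted graph must be exactly balanced, nearly regular with error $\ll \explet$, and still an expander. In particular, handling the vertices of $X$ and the endpoint-compatibility of contracted paths without destroying near-regularity is the delicate part. I would first try to choose paths $a_1a_2a_3$ with $a_1,a_3$ typical and to identify the contracted vertex's neighbourhood with $N(a_1)\cap N(a_3)\cap B$. Since $d\ge n^{1-\eta}$ is dense, common neighbourhoods of typical vertices are typically large, but showing they have size $d(1-o(\explet))$ may fail. In that case the fallback is to reserve these paths via an absorption-type connecting argument in the robust theorem.
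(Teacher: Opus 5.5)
Your far-from-bipartite case is correct and matches the paper, which applies Theorem~\ref{thm:robust-far-from-bipartite} with $m=1$ to an edge $a_1b_1$ (equivalently, Theorem~\ref{thm:main-far-from}). The gap is in the close-to-bipartite case, at the step you flag yourself: contracting paths into single vertices cannot produce a graph to which Theorem~\ref{thm:robust-bipartite} applies. A Hamilton cycle of $H$ enters and leaves a contracted vertex $w$ (standing for $a_1a_2a_3$) through two neighbours you do not control. For the cycle to expand back to $G$, you therefore essentially need $N_H(w)\subseteq N(a_1)\cap N(a_3)$. But $H$ must be $(d\pm d^{1-c})$-nearly regular, and with $d=n^{1-\eta}$ two vertices on the same side can have only of order $d^2/n\ll d$ common neighbours, as in a random-like bipartite graph. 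Nothing in the hypotheses makes $N(a_1)\cap N(a_3)$ almost all of $N(a_1)$, even for typical $a_1,a_3$. Your fallback (``an absorption-type connecting argument in the robust theorem'') is not specified, so the close-to-bipartite case remains unproved.

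The missing idea is that no contraction is needed, because Theorem~\ref{thm:robust-bipartite} already accepts prescribed endpoint pairs. The paper proceeds as follows (Lemma~\ref{lem:clean-up-forest} does the construction).
\begin{itemize}
\item Take a maximum cut $(A,B)$, so that $\Delta(G[A]),\Delta(G[B])\le d/2$.
\item Lemma~\ref{lem:large linear forest} then gives a linear forest inside the larger side with exactly $\bigl||A|-|B|\bigr|$ edges.
\item Extend each path of this forest by one cross edge, so that \emph{every path has one endpoint in $A$ and one in $B$}.
\item Make each bad vertex $v\in X$ an internal vertex of a short path in $G[A,B]$.
\end{itemize}
Let $U$ be the set of internal vertices of this forest. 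A degree count gives $|A\setminus U|=|B\setminus U|$. Since $X\subseteq U$ and $|U|\ll d^{1-c}$, the graph $G[A,B]\setminus U$ is balanced, $(d\pm d^{1-c})$-nearly regular and a $\gamma/2$-expander. The edge loss is $O((d^{1-c}+|U|)|S|)$ for every $S$, so small sets $S$ need no separate treatment. One then applies Theorem~\ref{thm:robust-bipartite} with the pairs (end of the $i$-th forest path, start of the $(i+1)$-st). Their endpoint sets are trivially bi-uniform, since they have size far below $\alpha d$.

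The opposite-side endpoints are essential: the theorem only connects $V_0$--$V_1$ pairs, so your paths $a_1a_2a_3$ with both ends in $A$ would not fit even this framework. Your balancing count is also off. Contracting $t$ two-edge paths reduces $|A|$ by $2t$, while the imbalance is only $t$. Moreover, $e(G[A])\ge dt/2$ yields a linear forest with $t$ edges only once $\Delta(G[A])\le d/2$, which is why the paper uses a maximum cut.
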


An old conjecture of Rapaport-Strasser~\cite{rapaport-strasser} states that any connected Cayley graph contains a Hamilton cycle. This conjecture is often referred to as Lov\'{a}sz' conjecture who asked whether every vertex transitive graph contains a Hamilton cycle, apart from five known examples~\cite{lovasz-conjecture}. Chen and Quimpo~\cite{chen-quimpo} proved that any connected Cayley graph on at least three vertices over an abelian group is Hamiltonian. The conjecture of Lov\'{a}sz has been confirmed for graphs with linear degree by Christofides, Hladk\'{y} and M\'{a}th\'{e}~\cite{christofides} and very recently, Bedert, Dragani\'{c}, M\"{u}yesser and Pavez-Sign\'{e}~\cite{bedertdraganic} have shown that even polynomially sparse Cayley graphs are Hamiltonian.

\begin{theorem}[Bedert, Dragani\'{c}, M\"{u}yesser and Pavez-Sign\'{e}~\cite{bedertdraganic}] \plabel{thm:lovasz}
    There exists a positive constant $\eta > 0$ such that any connected $n$-vertex Cayley graph with degree $d \ge n^{1-\eta}$ is Hamiltonian.
\end{theorem}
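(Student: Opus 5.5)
We would derive Theorem~\ref{thm:lovasz} from Theorem~\ref{thm:dense-regular} (or rather its robust versions, Theorems~\ref{thm:robust-bipartite} and~\ref{thm:robust-far-from-bipartite}), using a dichotomy between expanding and non-expanding Cayley graphs. Let $\Gamma=\Cay(A,T)$ be connected, with $|A|=n$ and $|T|=d\ge n^{1-\eta}$. Fix a small $\eta'\gg\eta$ with $\eta'$ still below the constant of Theorem~\ref{thm:dense-regular}.

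\emph{Expanding case.} If $\Gamma$ is an $n^{-\eta'}$-expander, then, since it is $d$-regular with $d\ge n^{1-\eta}$, Theorem~\ref{thm:dense-regular} applies directly and we are done.

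\emph{Non-expanding case.} Otherwise there is a set $S$ with $|S|\le \frac23 n$ and $e(S,S^c)< n^{-\eta'}d|S|$. Then for all but a small fraction of generators $t\in T$, the translate $St$ almost coincides with $S$, since $|St\setminus S|$ summed over $t$ equals $e(S,S^c)$. The set $\mathrm{Sym}(S)=\{t: |St\triangle S|\le \delta|S|\}$ of approximate stabilisers of $S$ therefore contains most of $T$. A standard argument shows that $\mathrm{Sym}(S)\cdot\mathrm{Sym}(S)$ is again an approximate stabiliser (with doubled error), and combining this with a Kneser/Freiman-type lemma for the dense set $T$ should produce a proper subgroup $H<A$ with $|T\cap H|\ge(1-n^{-\Omega(1)})d$. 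Because $|H|\ge|T\cap H|\ge n^{1-\eta}$, the index $k=[A:H]$ is at most $n^{\eta}$. This structural step, a quantitative stability result turning a sparse cut in a dense Cayley graph into a nearly-containing subgroup with polynomial losses, is the one I expect to be the main obstacle. I would first try to run it through a Plünnecke/Ruzsa-type argument applied to the symmetric set $T\cap\mathrm{Sym}(S)$.

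\emph{Gluing along cosets.} Each coset $gH$ induces a copy of $\Cay(H,T\cap H)$. This graph is dense relative to $|H|$, and we iterate the dichotomy on it; the iteration terminates after $O(1/\eta')$ rounds because each step shrinks the group polynomially. The outcome is that each coset graph is robustly Hamiltonian: by the robust theorems, it has a Hamilton cycle, even a Hamilton path between prescribed far-apart endpoints, even after deleting a few vertices. Connectivity of $\Gamma$ means that the generators outside $H$ connect the coset graph, which is itself a connected vertex-transitive structure on only $k\le n^\eta$ vertices. We take a closed walk, or a Hamilton cycle when available, of this quotient. We then concatenate Hamilton paths inside consecutive cosets, linked by edges $x\to xt$ with $t\notin H$. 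Since every vertex has a $t$-neighbour, the endpoints can be chosen freely, so the robust connecting versions supply the required paths with prescribed ends.

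Two degenerate situations need separate treatment. When the quotient walk must revisit cosets, we split each coset into several vertex-disjoint paths using the robust partition version. Parity (bipartite) issues are handled by the equal-part-size condition, which holds automatically in a vertex-transitive graph.
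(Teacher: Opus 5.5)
The overall architecture matches the paper's. The statement is the case $p=1$ of Theorem~\ref{thm:lovasz-percolated}, which is proved by finding a subgroup that carries almost all generators and on which the Cayley graph expands. The paper then applies the robust theorems inside cosets and glues cosets along a matching of cross edges that follows an Euler tour of an auxiliary graph on the cosets.

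Your expanding case is fine. The step you flag as the main obstacle is not where the difficulty lies: it is exactly the weak arithmetic regularity lemma, Theorem~\ref{thm:weak-ARL}, which the paper cites and which gives the expanding subgroup in one step, with no iteration. Your stabiliser sketch can also be completed. Indeed $\mathrm{Sym}_\delta(S)^k\subseteq\mathrm{Sym}_{k\delta}(S)$, and a symmetric set of size $\Omega(d)$ containing the identity generates its subgroup within $O(n/d)$ products. So with $\eta'\gg\eta$ the generated subgroup still nearly stabilises $S$, and is therefore proper.

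\textbf{The genuine gap is in your last two paragraphs.}

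\emph{Near-bipartite cosets are not covered.} The robust theorems apply only to coset graphs that are bipartite, or $\varepsilon$-far from bipartite with $d\ge(\gamma\varepsilon)^{-10^6/c}$. An excellent expander $\Cay(H,T\cap H)$ can be $\varepsilon$-close to bipartite, for $\varepsilon$ below that threshold, without being bipartite. In that case nothing in the paper yields Hamilton paths with prescribed ends. Theorem~\ref{thm:dense-regular} only gives a cycle, via a max-cut and a clean-up forest that ignores endpoints. This is not cosmetic: relative to a balanced near-bipartition $(A,B)$, a spanning path with both ends in $A$ must use exactly one more edge inside $B$ than inside $A$.

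\emph{The endpoints cannot be chosen freely.} The half in which a cross edge $x\mapsto xt$ lands is dictated by the group. For example, if $\Gamma$ is itself bipartite with parts $L,\Gamma\setminus L$, every cross edge joins an $L$-half of one coset to a non-$L$-half of another. Equal part sizes, which vertex-transitivity gives you, are necessary but far from sufficient. Theorem~\ref{thm:robust-bipartite} needs every path inside coset $i$ to run from $A_i$ to $B_i$, so the endpoint multisets in each coset must be balanced.

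\textbf{How the paper handles this (Case~2 of the proof of Theorem~\ref{thm:lovasz-percolated}).}

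It calls the subgroup graph close to bipartite at threshold $\varepsilon=(pd_H)^{-\delta}$. It then uses Lemma~\ref{lem:expander-to-iron} and the Christofides--Hladk\'{y}--M\'{a}th\'{e} lemma (Lemma~\ref{lem:transitive-close-to-bip}) to obtain a bipartition $A\cup B$ of the subgroup. This bipartition has $|A|=|B|$, small internal degrees, and $g(A)\in\{A,B\}$ for every automorphism $g$.

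It transports the bipartition to the cosets as $A_i=x_iA$, $B_i=x_iB$, and works only in the nearly regular bipartite expanders $G[A_i,B_i]$. The automorphism property makes the auxiliary graph on the $2k$ halves vertex-transitive, hence regular. So in an Euler tour of its double, every half is visited the same number $r$ of times.

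Each visit enters and leaves through the same half, so it is split into two paths through a fresh vertex of the opposite half. Every coset then receives exactly $3r$ distinct endpoints on each side, and Theorem~\ref{thm:robust-bipartite} applies.

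Your proposal needs some substitute for all of this. As written, the gluing step fails when the expanding subgroup is close to, but not exactly, bipartite. It also leaves the required balance unjustified even in the exactly bipartite case.
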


Using the robust versions of our main results as well as a ``weak arithmetic regularity lemma'' due to Bedert, Buci\'{c}, Kravitz, Montgomery and M\"{u}yesser~\cite{bedert2025graham} and some ideas from~\cite{christofides}, we are able to prove a robust version of this result as follows.

For a graph $G$ and a real number $p \in [0, 1],$ let $G_p$ denote the random graph obtained from $G$ by keeping each edge independently with probability $p$. We prove the following statement.

\begin{restatable}{theorem}{thmlovaszpercolatedrestate}
    \plabel{thm:lovasz-percolated}
    There exists an absolute constant $C$ such that the following holds. Let $G$ be a connected $n$-vertex Cayley graph with degree $d$ and let $p \in [0, 1]$ be such that $pd \ge \left( \frac{n}{d} \log n \right)^C$. Then $G_p$ is Hamiltonian with high probability.
\end{restatable}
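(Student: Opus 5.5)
The plan is to reduce to the robust versions of the main theorems, Theorem~\ref{thm:robust-bipartite} and Theorem~\ref{thm:robust-far-from-bipartite}. These apply to nearly regular sublinear expanders whose regularity is much tighter than the expansion parameter, and in the bipartite case to graphs with two equal parts. So I will show that, with high probability, $G_p$ is either such a bipartite expander or such a far-from-bipartite expander.

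\textbf{Step 1: the structure of $G$.} Write $G=\Cay(\Gamma,S)$ with $|S|=d$. If $d\le n^{c}$ for a small constant $c$, the hypothesis $pd\ge (\frac nd\log n)^C$ cannot hold, so we may assume $d\ge n^{c}$. I would apply the weak arithmetic regularity lemma of Bedert, Buci\'c, Kravitz, Montgomery and M\"uyesser~\cite{bedert2025graham}, together with the ideas of~\cite{christofides}, to find a subgroup $H\le\Gamma$ of index $k\le \mathrm{poly}(n/d)$. It should have two properties: the cosets of $H$ partition $G$ into pieces, each of which is a $\explet$-expander with $\explet\ge (d/n)^{O(1)}$; and the quotient structure on cosets is a connected Cayley graph on $\Gamma/H$, which is small.

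Rather than handle the quotient separately, I would aim for the simpler outcome of Christofides--Hladk\'y--M\'ath\'e style arguments: either $G$ itself is a $\explet$-expander with $\explet\ge (d/n)^{O(1)}$, or it is nearly disconnected along a subgroup structure. A connected Cayley graph has no sparse cut of size comparable to a coset unless $S$ is concentrated on a subgroup. Running the regularity lemma with parameters polynomial in $d/n$, I expect the conclusion that $G$ is a $(d/n)^{O(1)}$-expander. The key fact is that vertex-transitivity forces every small set to expand at a rate controlled by the largest "approximate subgroup" containing most of $S$, and connectivity rules out a genuine subgroup. I expect this to be the main obstacle. The delicate case is when $S$ lies almost entirely, but not completely, inside a proper subgroup $H$; then edge expansion of the cosets of $H$ is only about $|S\setminus H|/d$, which could be tiny. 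To handle it, I would treat $G_p$ as a union of near-expanders on cosets joined by the few cross edges, and use the robust theorems inside cosets. The cross edges would be stitched together by absorbing paths, following~\cite{christofides}.

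\textbf{Step 2: bipartite or far from bipartite.} A Cayley graph is bipartite exactly when there is an index-$2$ subgroup disjoint from $S$. Otherwise, by regularity-lemma arguments, $G$ is $\eps$-far from bipartite with $\eps\ge (d/n)^{O(1)}$. In the bipartite case both parts are cosets and therefore have equal size, as the robust theorem requires.

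\textbf{Step 3: percolation.} Pass to $G_p$. Since $pd\ge(\frac nd\log n)^{C}\gg \log n$, Chernoff bounds show that all degrees of $G_p$ are $pd(1\pm (pd)^{-1/3})$ with high probability. This regularity error is far smaller than $\explet$, which is what the robust theorems need. For expansion, edge counts across a set $U$ concentrate, but a union bound over all sets $U$ fails. Instead I would use the robust formulation: the robust theorems should only need expansion of $G$ itself, plus the property that $G_p$ keeps near-regularity and roughly a $p$-fraction of every cut of size at least $\explet d|U|$. The latter can be verified by a Chernoff bound union-bounded over the at most $\binom{n}{u}$ sets of size $u$. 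This works because the expected cut size $p\explet d u$ is much larger than $u\log n$, a consequence of $pd\explet\ge (\frac nd\log n)^{C-O(1)}\gg \log n$. Similarly, $\eps$-farness from bipartiteness transfers to $G_p$ by the same union bound over all $2^n$ cuts, since $\eps\, e(G_p)\gg n$. Finally, with the degree $pd$ and the parameters $\explet$, $\eps$ polynomial in $d/n$, the condition $pd\ge ((\explet\eps)^{-1}\log n)^{10^8}$ holds once $C$ is large, and the relevant robust theorem gives that $G_p$ is Hamiltonian with high probability.
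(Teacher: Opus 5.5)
Applying the robust theorems once to $G_p$ as a whole cannot work, and it breaks down in two places.

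\textbf{First gap: $G$ need not be an expander.} Your expectation in Step~1 that $G$ is a $(d/n)^{O(1)}$-expander is false. Take $\Gamma=\mathbb{Z}_m\times\mathbb{Z}_2$ and $S=\{(x,0):x\in X\}\cup\{(0,1)\}$, where $X$ is a symmetric generating set of $\mathbb{Z}_m$. The Cayley graph is connected, but the two cosets of $\mathbb{Z}_m\times\{0\}$ are joined by only $n/2$ edges. So $G$, and likewise $G_p$, is only an $O(1/d)$-expander even when $d=\Theta(n)$. The requirement $pd\ge(\gamma^{-1}\log n)^{10^8}$ then fails globally.

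You flag this case, but ``stitching the cross edges by absorbing paths'' is not an argument. Moreover, a bounded number of chosen cross edges would simply be destroyed by percolation. The paper handles it as follows:
\begin{itemize}
\item Take $\Gamma'$ from Theorem~\ref{thm:weak-ARL}. Keep only a set $T\subseteq S\setminus\Gamma'$ of at most $2(k-1)$ cross generators that preserves connectivity.
\item Declare two cosets adjacent if $G$ contains a \emph{matching} of size $\sigma^3d/100$ between them. Then a matching of size $\sigma^3pd/200$ survives in $G_p$.
\item Prove this coset graph is connected. This uses that a connected vertex-transitive $D$-regular graph is $D$-edge-connected (Lemma~\ref{lem:edge-connectivity}), while the cross edges between two cosets have maximum degree at most $|T|$.
\item Double a spanning tree, take an Euler tour, and greedily pick a matching $M$ in $G_p$ realising its steps.
\item Inside each coset, apply Theorem~\ref{thm:robust-far-from-bipartite} to the expander $G_p[V_i]$, with the $O(n/d)$ prescribed (entry, exit) pairs coming from the tour's visits.
\end{itemize}
This is exactly why the robust theorems are stated for many prescribed pairs whose paths cover all vertices.

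\textbf{Second gap: non-bipartite does not mean far from bipartite.} Your claim in Step~2 that a non-bipartite Cayley graph is $(d/n)^{O(1)}$-far from bipartite is false. In $\mathbb{Z}_2\times\mathbb{Z}_m$ with $m$ odd, take $S=\{(1,x):x\in X\}\cup\{(0,\pm1)\}$. The unique index-$2$ subgroup meets $S$, so $G$ is not bipartite. Yet deleting $n$ edges makes it bipartite, so it is $O(1/d)$-close to bipartite, even though for random $X$ it is an excellent expander. Neither main theorem covers this regime, and it is the hardest part of the proof.

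The paper sets the far/close threshold at $\varepsilon=(pd_H)^{-10^{-7}}$, tuned to $pd$ rather than to $d/n$. In the close case it proceeds as follows:
\begin{itemize}
\item It converts expansion into iron-connectivity (Lemma~\ref{lem:expander-to-iron}) and invokes the Christofides--Hladk\'y--M\'ath\'e lemma (Lemma~\ref{lem:transitive-close-to-bip}). This gives a \emph{balanced} bipartition $A\cup B$ of $\Gamma'$, with few edges inside the parts, that every automorphism preserves or swaps.
\item It then works with the nearly regular bipartite expanders $G_p[x_iA,x_iB]$.
\item The automorphism invariance makes the auxiliary graph on the halves $x_iA,x_iB$ vertex-transitive, hence regular. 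After doubling \emph{all} its edges, every half is visited equally often by the Euler tour.
\item Each visit that enters and leaves $x_iA$ is routed through a fresh vertex $z_j\in x_iB$, via the pairs $(v_j,z_j),(w_j,z_j)$. This keeps the endpoint sets balanced, as Theorem~\ref{thm:robust-bipartite} requires.
\end{itemize}

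Your percolation step (Chernoff bounds plus a union bound over sets of each size, as in Lemma~\ref{lem:edge-percolated-expander}) is fine. However, it only becomes usable on the pieces produced by this decomposition, not on $G_p$ as a whole.
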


Note that the statement with $p = 1$ recovers Theorem~\ref{thm:lovasz}, though with a worse constant than in~\cite{bedertdraganic}. On the other end of the spectrum, for dense connected Cayley graphs, i.e. with degree $d \ge \eps n$ for a constant $\eps > 0$, Theorem~\ref{thm:lovasz-percolated} implies they remain Hamiltonian after edge-percolation with probability $\frac{(\log n)^{C_2}}{n}$, for some constant $C_2$. This may be viewed as a robust version of the result of Christofides, Hladk\'{y} and M\'{a}th\'{e}~\cite{christofides} for Cayley graphs.

Corollaries~\ref{cor:bipartite-spectral}~and~\ref{cor:far-from-bipartite-spectral} apply to many natural classes of graphs. For the purpose of illustration, let us focus on a particularly notable instance concerning Kneser graphs. For $n > 2k,$ the Kneser graph, denoted by $K(n, k),$ is the graph whose vertices are all $k$-subsets of an $n$-element ground set, where two vertices are adjacent if the corresponding $k$-sets are disjoint. If $n > 2k,$ the Kneser graph is a connected vertex transitive graph, thus according to the aforementioned conjecture of Lov\'{a}sz, $K(n, k)$ should be Hamiltonian unless $(n, k) = (5, 2),$ in which case it is the Petersen graph, one of the five known vertex-transitive non-Hamiltonian graphs. For large enough values of $n$, until very recently, the Hamiltonicity of $K(n, k)$ was known only for $n \ge (2.618 \dots + o(1)) k$ by a result of Chen~\cite{chen} as well as if $n = 2k + 2^{a}$ for some integer $a \ge 0$. The sparsest case $a = 0$ was proved by M\"{u}tze, Nummenpalo and Walczak~\cite{mutze-nummenpalo-walczak} settling an old conjeture of Meredith and Lloyd~\cite{meredith-lloyd} and Biggs~\cite{biggs} from the 1970s, while the result for $a \ge 1$ follows by an inductive argument of Johnson~\cite{johnson}. Very recently, Hamiltonicity of all Kneser graphs was proved in a breakthrough work by Merino, M\"{u}tze and Namrata~\cite{merino-mutze-namrata}.

\begin{theorem}[Merino, M\"{u}tze and Namrata~\cite{merino-mutze-namrata}]
    For all $k \ge 1$ and $n > 2k$, the Kneser graph $K(n, k)$ is Hamiltonian, unless $(n,k) = (5,2)$.
\end{theorem}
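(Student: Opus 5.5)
We do not expect the expansion machinery of this paper to prove this statement in full. The plan is therefore to split the parameter range. The dense and moderately sparse cases follow from Corollary~\ref{cor:far-from-bipartite-spectral} together with known results. The sparsest regime needs a separate combinatorial argument, which is where essentially all the difficulty lies.

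\textbf{Step 1: spectrum of $K(n,k)$.} The Kneser graph is $d$-regular with $d=\binom{n-k}{k}$ on $N=\binom nk$ vertices. Its eigenvalues are $(-1)^i\binom{n-k-i}{k-i}$ for $0\le i\le k$. Hence $\lambda_2=\binom{n-k-2}{k-2}$ and $\lambda_N=-\binom{n-k-1}{k-1}=-\frac{k}{n-k}d$. It follows that $\lambda=\max\{\lambda_2,-\lambda_N\}=\frac{k}{n-k}d$, so $\lambda=(1-\delta)d$ with $\delta=\frac{n-2k}{n-k}\ge\frac1n$. Moreover $\log N\le n$. Corollary~\ref{cor:far-from-bipartite-spectral} therefore gives Hamiltonicity as soon as
\[
\binom{n-k}{k}\ge n^{2\cdot 10^9}.
\]
Writing $s=n-2k$, we have $\binom{k+s}{k}\ge (\min\{k,s\})^{\min\{k,s\}}$-ish growth. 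So the condition holds whenever $\min\{k,s\}\ge K_0\log n$ for a suitable large constant $K_0$; in practice $\min\{k,s\}$ being a large constant multiple of $10^9$ suffices, since $n\le 2\max\{k,s\}+\min\{k,s\}$ is only polynomial.

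\textbf{Step 2: the remaining ranges.} If $k$ is bounded, or more generally $s\ge 0.619k$ (i.e.\ $n\ge(2.618\ldots+o(1))k$), we invoke Chen's theorem~\cite{chen}; small $n$ are finite cases to be checked. What is left is the regime of small $s=n-2k$, at most about $10^{10}\log n$, with $k$ large. Here $d=\binom{k+s}{s}$ may be only polynomial, or even linear (for $s=1$, $d=k+1$). Hence no polylogarithmic-degree expansion theorem can apply, and this is the main obstacle.

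\textbf{Step 3: the sparse regime.} For this regime we would follow a cycle-gluing strategy in the spirit of M\"utze--Nummenpalo--Walczak and Merino--M\"utze--Namrata. First, encode $k$-sets as bitstrings and use a Dyck-path/parenthesis matching rule to define an explicit cycle factor of $K(2k+s,k)$. Second, identify a family of local ``gluing'' configurations (pairs of cycles sharing a $4$-cycle-type switch, which turn two cycles into one). Third, show that the auxiliary graph on cycles, with gluings as edges, is connected and admits a spanning tree of pairwise compatible gluings. The last point, connectivity together with compatibility of the chosen switches, is where we expect the real work to be. We would first attempt it by induction on $s$, using Johnson's argument~\cite{johnson} to handle the step from $s$ to $2s$, combined with a direct treatment of odd $s$.
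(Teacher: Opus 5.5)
Your proposal does not prove the statement, and the gap is where you yourself place it: Step~3. The paper does not prove this theorem either. It quotes it from Merino, M\"{u}tze and Namrata, and its own contribution for Kneser graphs is only the remark that Corollary~\ref{cor:far-from-bipartite-spectral} (and Theorem~\ref{thm:robust-far-from-bipartite}) applies to $K(2k+t,k)$ once $t$ exceeds a large constant.

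Your Steps~1--2 reproduce that remark and add Chen's theorem. There is one minor slip: for $k\approx s=m$ one has $\binom{k+s}{k}\approx 4^m$, not $m^m$. To justify that a constant threshold on $m=\min\{k,s\}$ suffices, you need a case split. Either $m\ge K_0\log n$, or else $n\ge e^{m/K_0}\ge 4m^2$ for large $m$, whence $\binom{k+s}{m}\ge (n/2m)^m\ge n^{m/2}$.

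What remains is $K(2k+s,k)$ with $s$ bounded and $k$ arbitrary. There the degree $\binom{k+s}{s}$ is only about $(\log N)^s$ while $\delta^{-1}=\Theta(\log N)$. So, as you note, nothing in this paper reaches it.

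For that regime, Step~3 is a research programme rather than an argument.
\begin{itemize}
\item You do not define the cycle factor.
\item You do not specify the gluing configurations or check that each one merges two cycles into one.
\item The decisive claim, that the auxiliary graph on cycles is connected and admits a spanning tree of pairwise compatible gluings, is explicitly left open.
\end{itemize}

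The proposed induction on $s$ does not reduce the difficulty. Johnson's argument is what carries the case $s=1$ (M\"{u}tze--Nummenpalo--Walczak, itself a long and delicate proof) to $n=2k+2^a$, and that is all it gives. Every $s$ with an odd factor $\ge 3$ still needs your ``direct treatment of odd $s$''. Supplying that uniformly in $k$ is precisely the content of the Merino--M\"{u}tze--Namrata theorem.

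The finitely many leftover small cases are also only declared ``to be checked''. These include the Petersen exception and the graphs below the ``$n$ sufficiently large'' threshold of the corollary, which are astronomically large.

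As written, the proposal establishes the statement only in the range already covered by Corollary~\ref{cor:far-from-bipartite-spectral} and Chen's theorem. For the full statement you have to invoke the cited result.
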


The proof of Merino, M\"{u}tze and Namrata heavily relies on the structure of the Kneser graph, while our main results are much more robust, and we can use them to obtain strong versions of the above theorem for nearly all values of $n$ and $k$ as follows. Denoting $K = K(2k+t, k)$, $K$ is an $(n, d, \lambda)$-graph with $n = \binom{2k+t}{k}, d = \binom{k+t}{t}$ and $\lambda = \binom{k+t-1}{t} = \left(1 - \frac{t}{k+t}\right)d$ (see e.g.~\cite[Chapter~6]{godsil-meagher}). 
A simple computation shows that as long as $t$ is at least some large constant, Corollary~\ref{cor:far-from-bipartite-spectral} applies and we may conclude that $K(2k+t, k)$ is Hamiltonian. In fact, a robust version of our theorem, Theorem~\ref{thm:robust-far-from-bipartite} implies that $K$ is Hamilton connected, answering a question of M\"{u}tze~\cite{mutze-survey} for this range of parameters. Moreover, it is easy to see that removing $s \le \gamma d / 4$ edge-disjoint Hamilton cycles from a $d$-regular $\gamma$-far from bipartite $\gamma$-expander yields a $(d-2s)$-regular $\gamma/2$-far from bipartite $\gamma/2$-expander. Thus, iteratively applying Theorem~\ref{thm:main-far-from}, we find $\Omega(\gamma d) \geq \Omega(d^{1 - 1/t})$ edge-disjoint Hamilton cycles in $K$. Finally, provided $pd \ge (\log n)^C$ for some absolute constant $C$, the edge-percolated Kneser graph $K_p$ is Hamiltonian with high probability. Indeed, it is not hard to see that $K_p$ with high probability retains the expansion properties of $K$, is also far from bipartite and is nearly regular (see Lemma~\ref{lem:edge-percolated-expander}), in which case Theorem~\ref{thm:robust-far-from-bipartite} implies that $K_p$ is Hamiltonian.

The robust versions of our results can be used to find spanning subgraphs other than Hamilton cycles. For instance, for any fixed $t$, under the assumptions of Theorem~\ref{thm:main-far-from}, the graph contains a spanning subdivision of $K_t$. Likewise, in the bipartite case, under the assumptions of Theorem~\ref{thm:main-bipartite}, the graph contains a spanning subdivision of $K_{2t}$ (note that for odd $t \ge 5$, even the complete bipartite graph $K_{d,d}$ does not contain a spanning subdivision of $K_t$). This strengthens results of Letzter, Methuku and Sudakov~\cite{letzter-methuku-sudakov-nearly-ham-cycles} who proved the existence of nearly spanning subdivisions in nearly-regular sublinear expanders and complements the results of Lee, Pavez-Sign\'{e} and Petrov~\cite{lee-pavezsigne-petrov} who found spanning clique subdivisions in regular strong linear expanders.

Finally, let us mention that a key part of our proof is a random connecting lemma for sublinear expanders, Lemma~\ref{lem:connections}, which might be of independent interest. Roughly speaking, it states that in a nearly regular sublinear expander, a large number of uniformly distributed pairs of endpoints can be connected with vertex disjoint paths such that the vertices appearing in those paths form a random-like set.

\textbf{Organisation.} The rest of this paper is organised as follows. In Section~\ref{sec:outline} we present a detailed outline of the proof of our main theorems. In Section~\ref{sec:random-walks} we introduce several key definitions used in the proof, we recall known results relating the spectrum, expansion and the mixing time of random walks and we prove several simple lemmas that will be used throughout the proof. Section~\ref{sec:connecting} is the most technical part of the paper and it contains the proof of the random connecting lemma, Lemma~\ref{lem:connections}. This lemma is then used in Section~\ref{sec:absorbers} for the construction of absorbers with uniformly distributed vertices. In Section~\ref{sec:robust} we combine these tools to prove the robust versions of our main theorems, and in Section~\ref{sec:applications} we use these robust theorems to prove Theorems~\ref{thm:dense-regular}~and~\ref{thm:lovasz-percolated}. Finally, we end with some concluding remarks in Section~\ref{sec:concluding}.

\textbf{Notation.} We use standard graph theoretic notation. For a graph $G$, we use $V(G)$ and $E(G)$ for its vertex and edge sets, respectively. We denote $e(G) = |E(G)|$. For $v \in V(G)$, we use $d_G(v)$ for its degree in $G$ and for a set $S 
\subseteq V(G),$ we write $d_G(S) = \sum_{v \in S} d_G(v)$. We shall often omit the subscript when $G$ is clear from the context. We denote by $\delta(G), \bar{d}(G)$ and $\Delta(G)$ the minimum, average and maximum degree of $G$, respectively. We say that a graph is $(d \pm d')$-nearly-regular if all of its vertices have degree between $d - d'$ and $d + d'$. For a set $S \subseteq V(G),$ we use $G[S]$ for the subgraph of $G$ induced by $S$ and we denote $G \setminus S = G[V(G) \setminus S].$ For two disjoint sets $A, B \subseteq V(G),$ $G[A,B]$ is the bipartite graph with vertex set $A \cup B$ and edge set $(A \times B) \cap E(G)$ and we denote $e_G(A, B) = e(G[A,B])$. We write $\mathrm{Aut}(G)$ for the automorphism group of $G$. A $p$-random subset of a set $S$ is a random set formed by keeping each element of $S$ independently with probability $p$.

We use $x = a \pm b$ to mean $x \in [a-b, a+b]$. Arithmetic operations in this notation should be viewed as operations on real closed intervals, e.g. for nonnegative $a, b, c, d,$ the equation $x = (a \pm b) \cdot (c \pm d) = ac \pm (bc + bd + ad)$ should be interpreted as $x \in [a-b, a+b] \cdot [c-d, c+d] \subseteq [ac - (bc + bd + ad), ac + (bc + bd + ad)]$.

Our asymptotic notation is as $n$ tends to infinity, unless specified otherwise. Whenever they are not crucial, we omit the use of floor and ceiling signs for the sake of clarity. We use $[n]$ for the set $\{1, \dots, n\}$. We use $\log x$ to denote the natural logarithm of $x$. 

\section{Proof outline}\label{sec:outline}
In this section, we describe our approach for the proof of Theorem~\ref{thm:main-far-from}. The proof of Theorem~\ref{thm:main-bipartite} is similar, and we comment on the necessary modifications at the end of this section. Let $G$ be an $n$-vertex $d$-regular $\gamma$-expander which is $\eps$-far from being bipartite. Throughout this outline, let us focus on the case where $\explet = 1 / (\log n)^2$, say, and $\eps$ is a positive constant. Assuming that $d \ge (\log n)^C$ for a large enough constant $C$, we wish to prove that $G$ is Hamiltonian.

Our starting point is an adaptation of an argument of Chakraborti, Janzer, Methuku and Montgomery~\cite{chakraborti2025edge} who showed that any $n$-vertex graph with at least $n (\log n)^C$ edges contains two edge-disjoint cycles on the same vertex set, where $C$ is an absolute constant. Let us remark that in~\cite{chakraborti2025edge} the graph need not be a nearly-regular sublinear expander, and a big part of the proof there is concerned with essentially, though not completely, reducing to such a case. Next, we present a natural translation of their approach to our setting and comment on the new obstacles that arise.

The proof is based on the idea of absorption, a powerful method introduced by R\"{o}dl, Ruci\'{n}ski and Szemer\'{e}di~\cite{rodl-rucinski-szemeredi}. Let $p = d^{-c},$ for a constant $c > 0,$ and let the ``reservoir'' set $R$ be a $p$-random subset of $V(G)$. Adapting the argument of Buci\'{c} and Montgomery~\cite{bucic2022towards}, it was shown in~\cite{chakraborti2025edge} that with high probability, the random set $R$ can be used to connect roughly $p^{10} n$ pairs of vertices in $G$ with short pairwise internally vertex disjoint paths through $R$, provided that the multiset of endpoints we wish to connect is reasonably spread out, that is, no vertex in $R$ has too many endpoints in its neighbourhood.

We want to construct an absorber for $R$, that is, a subgraph $H$ with $V(H) \supseteq R$ such that for any subset $R' \subseteq R$, there exists a path with vertex set $V(H) \setminus R'$. The ``template'' for constructing $H$ is fairly simple and we shall comment on it in more detail in Subsection~\ref{subsec:absorber-outline}.

After constructing the absorber $H$, the strategy is to decompose the remainder $G' \coloneqq G \setminus V(H)$ into few paths. The endpoints of these paths are then to be connected into a cycle using paths through $R$. By the properties of $H$, the unused vertices in $R$ can then be absorbed into the cycle (along with the other vertices in the absorber) to produce a Hamilton cycle. For this plan to work, we must be able to cover $G'$ with at most $p^{10} n$ paths whose endpoints are reasonably spread out. A widely used way (e.g.~\cite{kelmans-mubayi-sudakov, chakraborti2025edge, montgomery-approximate}) to construct such a linear forest is as follows. First, we  randomly partition $V(G')$ into $t$ parts $V_1 \cup V_2 \cup \dots \cup V_t$. Then for each $i \in [t-1]$, decompose the edges of $G'[V_i, V_{i+1}]$ into a small number of matchings (using Vizing's theorem, say) and take $M_i$ to be a random one of these matchings. We let $F \coloneqq \bigcup_{i \in [t-1]} M_i$ be the desired linear forest.

This works well if $G'$ is very close to being regular. Indeed, suppose that all degrees in $G'$ are $(1 \pm \beta) d'$, for some $d' = \Theta(d)$, and $\beta = d^{-\Omega(1)}$. Then, with high probability, for any $i \in [t-1]$, all degrees in $G'[V_i, V_{i+1}]$ are $(1 \pm O(\beta)) \frac{d'}{t}$ (assuming an appropriate choice of $t$). This implies that we may decompose $G'[V_i,V_{i+1}]$ into $(1 + O(\beta)) \frac{d'}{t}$ matchings. It is easy to see that any vertex $v \in V_i, i \in [2, t-1]$ is an endpoint of a path in $F$ with probability at most $O(\beta)$. Using standard martingale concentration inequalities, it follows that $F$ consists of at most $O(\beta n)$ paths, whose endpoints are reasonably well spread (again, if $t$ is appropriately chosen). Thus, we would like that $G 
\setminus V(H)$ is nearly regular, or equivalently, that $V(H)$ intersects the neighbourhood of every vertex approximately equally. More precisely, we shall require that $V(H)$ intersects the neighbourhood of every vertex in $d |V(H)| / n \pm \alpha d$ vertices, for $\alpha = d^{-\Omega(1)}$. If this holds, we shall say that $V(H)$ is $\alpha$-uniform (see Definition~\ref{def:uniform}).

To summarize, we will construct the desired absorber $H$ for $R$ such that $V(H)$ is $\alpha$-uniform for $\alpha = d^{-\Omega(1)}$. The formal statement is given in Lemma~\ref{lem:absorber-far-from-bipartite} for the far-from-bipartite case, and in  Lemma~\ref{lem:absorber-bipartite} for the bipartite case. These lemmas and their proofs represent the main technical contribution of our paper, thus we shall now discuss them in detail. We also point out that this is the key novelty compared to the argument of Chakraborti, Janzer, Methuku and Montgomery \cite{chakraborti2025edge}; in their setting uniformity of $V(H)$ is not needed, and their approach for constructing the absorber does not guarantee any pseudorandomness for $V(H)$.

\subsection{Constructing a random-like absorber} \label{subsec:absorber-outline}
Let us first describe the template for constructing the absorber $H$  for $R$. Denote $R = \{a_1, \dots, a_{|R|} \}$ and let $X=\{x_1, \dots, x_{|R|+1} \}$ be a set of vertices disjoint from $R$, both of which are chosen at random. For each $i \in [|R|],$ we construct an $(x_i, a_i, x_{i+1})$-absorber $F_i$ which means that in $G$ there is an $x_i - x_{i+1}$ path with vertex set $V(F_i)$ as well as an $x_i - x_{i+1}$ path with vertex set $V(F_i) \setminus \{a_i\}$. An $(x, a, y)$-absorber is illustrated in Figure~\ref{fig:non-bip-gadget}
and its construction is described after its definition, Definition~\ref{def:non-bip-gadget}. Any particular $F_i$ is constructed in three rounds as follows: in the first round, we find a path $Q_1$ from $a_i$ to $x_{i+1}$ of length $\ell$, in the second round we find a path $Q_2$ from $a_i$ to a given vertex in $Q_1$ and in the third round we find roughly $\ell$ paths with pairs of endpoints prescribed as certain vertices in $Q_1$ and $Q_2$, as well as $x_i$.

To construct the entire absorber $H$, we perform these three rounds in parallel for all $i \in [|R|]$. Our task thus reduces to the following problem, which we describe informally.

\begin{problem}
    Given a nearly-regular sublinear expander and prescribed pairs of uniformly distributed endpoints, find pairwise internally vertex disjoint paths connecting these pairs such that the vertices in these paths are also uniformly distributed.
\end{problem}

Note that the crucial requirement is that the set of vertices appearing in the constructed paths should be uniformly distributed. Indeed, without this requirement, it is known that these connections can be made since $G$ is a $\explet$-expander and, in fact, this task is significantly easier than connecting pairs through a random set.

Our solution to the above problem is given in Lemma~\ref{lem:connections}. Let us mention here that Lemma~\ref{lem:connections} is a combined statement that works both for far-from-bipartite as well as for bipartite sublinear expanders. In the former case, it shows that if the (multi)set of prescribed endpoints is $\alpha$-uniform, then for any $k$, the set of $k$-th vertices in our paths is an $\alpha^{1/2}$-uniform set. In fact, the proof of Lemma~\ref{lem:connections} shows that the set of $k$-th vertices on the found paths can, for many applications, be thought of as a random subset of vertices. It is not hard to see that removing a not too large, $(d^{-c})$-uniform subset of vertices from a $(d \pm d^{1-c})$-nearly-regular $\explet$-expander results in a $(d' \pm d^{1-c/2})$-nearly regular $\explet'$-expander, where $d'\geq d/2$ and $\explet' \ge \explet/2,$ say (see Lemma~\ref{lem:remove-set}). Recall that the sets $R$ and $X$ are random, so they are with high probability $d^{-1/2}$-uniform. Thus, we can indeed iterate Lemma~\ref{lem:connections} three times to find the absorber $H$ such that $V(H)$ is a $(d^{-\Omega(1)})$-uniform set.

\subsection{A random connecting lemma}
The known versions of connecting lemmas for sublinear expanders do not provide sufficient control on the set of vertices used by the connecting paths. Thus, we believe that both the statement and the proof of Lemma~\ref{lem:connections} are likely to have further applications. Let us now sketch the proof.

Since we want that our paths use a random-like set of vertices, a natural approach is to connect pairs using random paths. The key to analyzing a set of random paths is the fact that a random walk on $G$ is rapidly mixing. More precisely, in our setting, a random walk on $G$ mixes in time $(\log n)^{O(1)}$. This is proved using known spectral arguments relating expansion and the spectrum of $G$ (see e.g.~\cite{lovasz-survey}), where we remark that to bound the smallest eigenvalue of far-from-bipartite sublinear expanders, we use a result of Trevisan~\cite{trevisan}. In similar settings, random walks have been used by Jiang, Letzter, Methuku and Yepremyan~\cite{jiang-letzter-methuku-yepremyan} to find rainbow subdivisions of cliques and later by Dragani\'{c}, Methuku, Munh\'{a} Correia and Sudakov~\cite{cycleswithchords} to find cycles with many chords.

Suppose that we wish to connect pairs $(a_i, b_i)_{i \in [m]}$, where the multisets $A = \{a_1, \dots, a_m\}$ and $B = \{b_1, \dots, b_m\}$ are sufficiently uniform, with random looking pairwise internally vertex disjoint paths, where $m = pn$ and $p = (\log n)^{-K}$, for some large constant $K$. Let $\ell = (\log n)^{\Theta(1)}$ be much larger than the mixing time of $G$ and for each $i \in [m],$ let $P_i$ be a random path of length $\ell$ from $a_i$ to $b_i$. Since $\ell$ is much larger than the mixing time, after $\ell / 2$ steps, the path $P_i$ ``forgets'' its starting point $a_i$. Thus, we may think of $P_i$ as two random walks of length $\ell/2$, one from $a_i$ and another from $b_i$, glued at their ends. More formally, up to small errors, $P_i$ can be modeled by coupling a random walk of length $\ell/2$ from $a_i$ with a random walk of length $\ell/2$ from $b_i$ such that their endpoints coincide. This means that to understand the vertices visited by our set of paths $\{ P_i \mid i \in [m]\},$ the precise pairing of the endpoints is irrelevant, rather what matters are the multisets of endpoints $A$ and $B$. 

Let us denote by $W$ the set of internal vertices in the paths $\{ P_i \mid i \in [m]\}$. Using that $A$ and $B$ are uniform, we can show that each vertex appears in $W$ with almost the same probability. Indeed, consider sampling a random index $i \in [m]$ and let $v$ be the second vertex in the path $P_i$. By our discussion above, this is essentially the same as sampling a random index $i$ and taking the second vertex of a random walk from $a_i,$ that is, a random neighbour of $a_i.$ Since $A$ is a uniform multiset, it is not hard to see that $v$ is approximately uniformly distributed among all vertices in $G$. Since the stationary distribution of a random walk in a regular graph is the uniform distribution, the same argument shows that for any $j \in [1, \ell/2]$, the $j$-th vertex of $P_i$, where $i$ is a random index, is approximately uniformly distributed in $V(G)$. By symmetry, the same holds for $j \in [\ell/2, \ell-1]$, as well. This shows that every vertex has nearly equal probability of appearing in $W$.

Of course, the key aspect of a connecting lemma is that the obtained paths should be internally vertex disjoint, which need not hold for the set of paths $\{P_i \mid i \in [m]\}$. In fact, the expected number of intersecting pairs of paths is roughly $m^2 \cdot \ell^2 / n \approx pm$, where since $p 
\ll \ell^{-1}$, for simplicity we ignore the dependence on $\ell$. Since only about a $p$-fraction of the paths $P_i$ intersect other paths, this suggests that we may try to discard and resample them. So, let $\cP = \{ P_i \mid i \in [m]\}$ and let $\cP'$ denote the set of paths in $\cP$ which are internally vertex disjoint from all other paths in $\cP$. If we can show that the set of vertices appearing in the paths in $\cP'$ is sufficiently uniform, we can remove all the vertices in these paths from the graph and iterate to connect the remaining approximately $pm$ pairs.

Thus, the crux of the argument is to show that the internal vertices of the paths in $\cP'$ form a random-like set. Let $W'$ denote the set of vertices appearing in the paths in $\cP'$. We show that each vertex appears in $W'$ with almost the same probability, and moreover there is limited dependence between different vertices in $W'$. Formally, for any set $S \subseteq V(G)$, we prove that the random variable $|W' \cap S|$ has expectation very close to $\frac{|W'||S|}{n}$ and that it has exponentially decaying tails.

Recall that, by our previous discussion, $W$ is a random-like set. Thus, it suffices to show that for any vertex $v \in V(G)$ and any $i \in [m]$, the events $v \in P_i$ and $P_i \in \cP'$ are essentially independent. To this end, we show that, even after conditioning on a path $P \in \cP$ containing $v$, the so-called Poisson paradigm applies to the number of intersections of the other paths in $\cP$ with $P$. Thus, the probability that $P \in \cP'$ is approximately determined by the expected number of these intersections which is very close to $(m-1) \cdot \frac{(\ell-1)^2}{n}$ regardless of $P$, where note that $\ell-1$ is the number of internal vertices of each of the paths $P_i$. We conclude that each vertex appears in $W'$ with almost the same probability. Finally, to show concentration of $|W' \cap S|$ for a given set $S$, we use Talagrand's inequality. Hence, with high probability, $W'$ is sufficiently uniform. A similar argument shows that the set of remaining pairs to connect is also sufficiently uniform, thus we may indeed iterate this argument to find the remaining paths.

\textbf{The bipartite case.} Finally, let us comment on the modifications that need to be made for the proof of Theorem~\ref{thm:main-bipartite}. Firstly, the notion of mixing time is replaced by the notion of bipartite mixing time which was already used, e.g., in~\cite{jiang-letzter-methuku-yepremyan}~and~\cite{cycleswithchords}. Secondly, the construction of the absorber is more complicated than in the far-from-bipartite case. Our construction is almost identical to that in~\cite{chakraborti2025edge}, so we only say a few words about it here. Due to the bipartiteness, one cannot construct gadgets that absorb single vertices, i.e. $(x,a,y)$-absorbers, and instead we use gadgets that absorb a pair of vertices in different parts of the bipartition (see Definition~\ref{def:bip-gadget}). Then, to obtain a bipartite absorber for $R$, we construct gadgets for $\Theta(|R|)$ pairs of vertices in $R$ given by a ``robustly matchable'' auxiliary graph, first introduced by Montgomery~\cite{montgomery}. Finally, throughout the proof, we need to make sure that we use up vertices in different parts equally.

\section{Expansion, spectral properties and random walks} \label{sec:random-walks}
For a walk $W$ of length $\ell$, we denote its vertices in order by $W(0), W(1), \dots, W(\ell).$ For $a \le b,$ we use $W([a, b])$ to denote the subwalk $W(a), W(a+1), \dots, W(b)$. We often slightly abuse notation and use $W$ for the vertex set of $W$. We say that a walk $W$ is \emph{degenerate} if it visits the same vertex more than once, i.e. if $W(i) = W(j)$ for some $0 \le i < j \le \ell$. Otherwise, it is \emph{non-degenerate}, in which case it forms a path from $W(0)$ to $W(\ell)$.

Given a graph $G$, a vertex $v \in V(G)$ and an integer $\ell \ge 0$, let $\cW_G^{\ell}(v)$ denote the random walk of length $\ell$ starting from vertex $v$, i.e. a random walk $W(0), \dots, W(\ell)$, where $W(0) = v$ and for $i \in [\ell],$ $W(i)$ is a uniformly random neighbour of $W(i-1)$ in $G$. For two vertices, $a, b$ and an integer $\ell \ge 0$, we denote by $\cP_G^{\ell}(a, b)$ the conditional distribution of $\cW_G^{\ell}(a)$ conditioned on the event that the walk ends at vertex $b$. For two sets $A, B \subseteq V(G),$ we denote by $\cP_G^{\ell}(A, B)$ a random walk obtained by sampling independently and uniformly at random a vertex $a \in A$ and a vertex $b \in B$ and then sampling a random walk from $\cP_G^{\ell}(a,b)$. In our proofs, $A, B$ and $\ell$ will always be such that for any $a \in A, b \in B$ at least one such walk exists. We omit the subscript whenever $G$ is clear from the context.

The following two definitions might appear strange at first. One should think of them as definitions unifying the case of bipartite graphs and that of far from bipartite graphs.
\begin{defn}
    Let $G$ be a graph and let $V_0, V_1 \subseteq V(G)$. We say that $G$ is $(V_0, V_1)$-\emph{alternating} if either $V_0 = V_1 = V(G)$ or $G$ is bipartite with parts $V_0$ and $V_1$, where $|V_0| = |V_1|$. We call a set $S \subseteq V(G)$ \emph{balanced} if $|S \cap V_0| = |S \cap V_1|$.
\end{defn}
When working with a $(V_0, V_1)$-alternating graph, we shall treat indices modulo $2$ so that $V_t$ stands for $V_0$ if $t$ is even, and for $V_1$ if $t$ is odd.

\begin{defn}
    Let $G$ be an $n$-vertex graph and let $V_0, V_1 \subseteq V(G)$. We say that $G$ is $(V_0, V_1)$-mixing in time $t_0$ if $G$ is $(V_0, V_1)$-alternating and the following holds. Let $i \in \{0, 1\}, t \ge t_0$ be arbitrary and let $j = (i + t) \pmod{2}$. Then, for any $a \in V_i$ and $b \in V_j$,
    \[ \left| \Pr_{W \sim \cW^t(a)}[W(t) = b] - \frac{d(b)}{d(V_j)} \right| \le n^{-10}. \]
\end{defn}

Given a graph $G$, let $A(G)$ denote the adjacency matrix of $G$ and let $D(G)$ be the diagonal matrix with $D(G)_{i,i} = 1 / d_G(i)$ for $i \in V(G)$ and $D(G)_{i,j} = 0$ for $i \neq j$. Let $N(G) = D(G)^{1/2} A(G) D(G)^{1/2}$. For an $n$-vertex graph $G$, since $N(G)$ is a symmetric matrix, it has $n$ real eigenvalues, which we denote by $\lambda_1(N(G)), \dots, \lambda_n(N(G))$, where $1 = \lambda_1(N(G)) \ge \lambda_2(N(G)) \ge \dots \ge \lambda_n(N(G))$. The following lemma, which can be obtained by combining Theorem 2.8 and Lemma 2.9 from~\cite{cycleswithchords}, bounds the second eigenvalue of nearly regular expanders.

\begin{lemma}[\cite{cycleswithchords}] \plabel{lem:second-eigenvalue}
    If $G$ is a $(d \pm d')$-nearly regular $\explet$-expander with $d' \le d/4,$ then $\lambda_2(N(G)) \le 1 - \explet^2 / 32.$
\end{lemma}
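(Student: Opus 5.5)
We prove the bound through the variational characterization of $\lambda_2(N(G))$, followed by a Cheeger-type argument. Write $D_0 = \mathrm{diag}(d(v))$, so that $N(G) = D_0^{-1/2} A D_0^{-1/2}$. The top eigenvector of $N(G)$ is $D_0^{1/2}\mathbf{1}$ with eigenvalue $1$. Substituting $x = D_0^{1/2} f$ gives
\[ 1-\lambda_2(N(G)) = \min_{f \perp_{D_0} \mathbf{1},\, f \neq 0} \frac{\sum_{uv \in E(G)} (f(u)-f(v))^2}{\sum_v d(v) f(v)^2}, \]
where $f \perp_{D_0} \mathbf{1}$ means $\sum_v d(v) f(v) = 0$. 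It therefore suffices to show that this Rayleigh quotient is at least $\explet^2/32$ for every such $f$. This is the discrete Cheeger inequality $1-\lambda_2 \ge h^2/2$, where $h$ is the conductance. The work lies in translating the expansion condition of Definition~\ref{def:rho-exp} (which applies only to sets of size at most $\frac{2}{3}n$ and is normalized by $\bar d$) into a conductance bound, and in tracking the constants.

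\textbf{Step 1: conductance.} Consider any $S$ with $d(S) \le d(V)/2$. Since all degrees lie in $[3d/4, 5d/4]$, we get $|S| \le \frac{5/4}{3/4}\cdot\frac{n}{2} = \frac{5}{6}n$. This exceeds $\frac{2}{3}n$, so the definition cannot be applied directly. Instead, we split into two cases. If $|S| \le \frac{2}{3}n$, then $e(S,\bar S) \ge \explet \bar d |S| \ge \explet \frac{3d}{4}\cdot\frac{4}{5d}\, d(S) \cdot$ (const), so the conductance of $S$ is at least $c\,\explet$ with, say, $c = 1/2$. If $|S| > \frac{2}{3}n$, then $|\bar S| < n/3 \le \frac{2}{3}n$, and we apply expansion to $\bar S$ instead. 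Since $d(\bar S) \ge d(S)$, this again gives $e(S,\bar S) \ge \explet\bar d|\bar S| \ge c\,\explet\, d(S)$. In both cases $\Phi(S) := e(S,\bar S)/d(S) \ge \explet/2$.

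\textbf{Step 2: Cheeger.} Take an $f$ achieving the minimum and shift it by its weighted median, so that both $\{f>0\}$ and $\{f<0\}$ have degree-volume at most $d(V)/2$. Shifting only increases the denominator, since $f \perp_{D_0} \mathbf{1}$. Work with the positive part $g = f_+$; without loss of generality it carries at least half of $\sum_v d(v) f(v)^2$, and its Rayleigh quotient is no larger. Apply the standard co-area argument. Cauchy--Schwarz gives
\[ \sum_{uv}|g(u)^2-g(v)^2| \le \Big(\sum_{uv}(g(u)-g(v))^2\Big)^{1/2}\Big(2\sum_v d(v)g(v)^2\Big)^{1/2}. \]
The level sets $\{g^2>t\}$ all have volume at most $d(V)/2$, so by Step 1 the left-hand side is at least $\frac{\explet}{2}\sum_v d(v) g(v)^2$. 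Combining these yields a Rayleigh quotient of at least $(\explet/2)^2/2 = \explet^2/8$, and hence $\lambda_2(N(G)) \le 1 - \explet^2/32$ with room to spare in the constants.

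The main obstacle is Step 1's mismatch between the $\frac{2}{3}n$ size threshold and the volume threshold $d(V)/2$, together with the near-regularity losses. The complement trick resolves this because near-regularity with $d' \le d/4$ keeps $|S|$ and $d(S)/d$ within constant factors of each other. The rest is the textbook Cheeger inequality, and the constant $32$ is loose enough to absorb these factor losses.
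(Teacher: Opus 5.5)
Your proof is correct and matches the route the paper takes by citing \cite{cycleswithchords}: a conductance lower bound for nearly regular $\gamma$-expanders, followed by the Cheeger inequality $1-\lambda_2\ge\Phi^2/2$. Your complement trick correctly handles the mismatch between the $\frac{2}{3}n$ size threshold and the volume threshold, and in fact gives $\Phi\ge 3\gamma/5$. One small slip: you cannot assume both that $g=f_+$ carries half of $\sum_v d(v)f(v)^2$ and that it has the smaller Rayleigh quotient. Either property alone suffices, though: the first gives a quotient for $f$ of at least half that of $g$, hence $\ge\gamma^2/16$, and the second gives $\ge\gamma^2/8$, so $\lambda_2(N(G))\le 1-\gamma^2/32$ follows either way.
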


We shall use the following lemma from~\cite{cycleswithchords} bounding the mixing time of nearly regular bipartite expanders, which follows from Lemma~\ref{lem:second-eigenvalue} and a lemma in~\cite{jiang-letzter-methuku-yepremyan} relating the eigenvalues of $N(G)$ and the mixing time of random walks for bipartite graphs. The definition of mixing time in~\cite{cycleswithchords} has error $n^{-2}$ as opposed to $n^{-10}$, but this changes nothing apart from the constant factor in the following lemma, so we shall not reprove this trivial modification.

\begin{lemma}[{\cite[Corollary 2.12]{cycleswithchords}}] \plabel{lem:mixing-time-bipartite}
    Let $G$ be a $(d \pm d')$-nearly regular bipartite graph with parts $V_0, V_1$ of size $n/2$. If $n$ is large enough, $d' \le d/4$ and $G$ is a $\explet$-expander, then $G$ is $(V_0,V_1)$-mixing in time $t_0 = 1000 \explet^{-2} \log n$.
\end{lemma}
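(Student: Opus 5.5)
The plan is a spectral argument for the normalized walk matrix, using the bipartite symmetry of the spectrum to handle the parity constraint. Let $P = D A$ be the transition matrix, where $D = D(G)$ has entries $1/d(v)$. Then $P = D^{1/2} N D^{1/2\,\cdot(-1)}$; precisely, $P = D^{1/2} N(G) D^{-1/2}$, so
\[ P^t(a,b) = \sqrt{d(b)/d(a)}\; (N^t)_{ab} = \sqrt{d(b)/d(a)} \sum_{k} \lambda_k^t\, u_k(a) u_k(b), \]
where $u_1,\dots,u_n$ is an orthonormal eigenbasis of $N = N(G)$.

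\textbf{Extreme eigenvectors.} Since $G$ is bipartite, the spectrum of $N$ is symmetric. The top eigenvector is $u_1(v) = \sqrt{d(v)/d(V)}$ with eigenvalue $1$. Its signed version $u_n(v) = \pm\sqrt{d(v)/d(V)}$, positive on $V_0$ and negative on $V_1$, has eigenvalue $-1$. Because the graph is bipartite and regular-ish, $d(V_0) = d(V_1) = d(V)/2$ (this equals the number of edges).

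For $a \in V_i$ and $b \in V_j$ with $j \equiv i+t \pmod 2$, the two extreme terms combine to
\[ \sqrt{d(b)/d(a)} \cdot \frac{\sqrt{d(a)d(b)}}{d(V)} \bigl(1 + (-1)^t(-1)^{i+j}\bigr) = \frac{2 d(b)}{d(V)} = \frac{d(b)}{d(V_j)}, \]
which is exactly the target value.

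\textbf{Remaining eigenvalues.} It remains to bound the other eigenvalues. By Lemma~\ref{lem:second-eigenvalue}, $\lambda_2 \le 1 - \gamma^2/32$, and by symmetry of the spectrum $\lambda_{n-1} \ge -(1-\gamma^2/32)$. The expansion also forces $G$ to be connected, so the eigenvalue $-1$ is simple. Hence every $k \notin \{1,n\}$ has $|\lambda_k| \le 1 - \gamma^2/32$.

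\textbf{Error bound.} By Cauchy--Schwarz, $\sum_k |u_k(a)u_k(b)| \le 1$, so the error is at most
\[ \sqrt{d(b)/d(a)}\,(1-\gamma^2/32)^t \le \sqrt{5/3}\; e^{-\gamma^2 t/32}, \]
using $d(b)/d(a) \le (5/4)/(3/4)$.

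With $t \ge 1000\gamma^{-2}\log n$ this is at most $\sqrt{5/3}\, n^{-31} \le n^{-10}$. The main point requiring care is verifying symmetry of the spectrum and simplicity of $\pm1$, both of which are standard.
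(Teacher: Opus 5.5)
Your spectral argument is correct and is essentially the paper's route: the paper cites \cite[Corollary~2.12]{cycleswithchords}, which combines the bound $\lambda_2(N(G))\le 1-\explet^2/32$ of Lemma~\ref{lem:second-eigenvalue} with the standard bipartite mixing lemma of \cite{jiang-letzter-methuku-yepremyan}. That lemma is exactly your decomposition $P^t=D^{1/2}N^tD^{-1/2}$, where the eigenvalues $\pm1$ give the main term $d(b)/d(V_j)$ and the rest contribute at most $\sqrt{d(b)/d(a)}\,(1-\explet^2/32)^t$. As a minor remark, the appeal to connectivity is unnecessary, since $\lambda_2<1$ already makes $1$ (and hence, by the spectral symmetry, $-1$) simple, and $d(V_0)=d(V_1)$ uses only bipartiteness, not near-regularity.
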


To bound the mixing time of random walks in far from bipartite graphs, we need to bound their smallest eigenvalue. We shall do so using the following result of Trevisan~\cite{trevisan}.

\begin{lemma}[{\cite[Lemma 3]{trevisan}}] \plabel{lem:trevisan}
    Suppose there is a vector $x \in \mathbb{R}^{V(G)}$ such that $x^T A x \le (c-1) \cdot x^T D(G)^{-1} x.$ Then, there exists a vector $y \in \{-1, 0, 1\}^{V(G)}$ such that
    \[ \frac{\sum_{u \sim v} |y_u + y_v|}{\sum_{v \in V(G)} d_G(v) |y_v| } \le \sqrt{8 c}. \]
\end{lemma}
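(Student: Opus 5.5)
The plan is a Cheeger-type threshold rounding argument, essentially Trevisan's original proof. We assume $x\neq 0$, which is the only meaningful case and the one in which the lemma is applied; for $x=0$ the conclusion is not meaningful because the denominator vanishes.

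\textbf{Rewriting the hypothesis.} Since $D(G)$ has diagonal entries $1/d_G(v)$, the matrix $D(G)^{-1}$ is the diagonal degree matrix. Hence $x^T D(G)^{-1} x = \sum_v d_G(v) x_v^2$. Moreover
\[ x^T A x + x^T D(G)^{-1}x = \sum_{u\sim v} (x_u+x_v)^2, \]
where the sum runs over edges. So the hypothesis says exactly
\[ \sum_{u\sim v}(x_u+x_v)^2 \le c\sum_v d_G(v)x_v^2. \]
After rescaling we may assume $\max_v |x_v| = 1$.

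\textbf{Randomized rounding.} Pick $t\in[0,1]$ at random so that $t^2$ is uniform on $[0,1]$. Define $y_v=\mathrm{sign}(x_v)$ if $|x_v|\ge t$, and $y_v=0$ otherwise. Then $\E|y_v| = \Pr[t\le |x_v|] = x_v^2$, so the expected denominator is exactly $\sum_v d_G(v)x_v^2$.

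For the numerator, we bound edge by edge:
\[ \E|y_u+y_v| \le |x_u+x_v|\,(|x_u|+|x_v|). \]
There are two cases.
\begin{itemize}
\item If $x_u,x_v$ have the same sign, then $|y_u+y_v|$ is $2,1,0$ according to how $t$ compares with $|x_u|$ and $|x_v|$. Its expectation is $x_u^2+x_v^2\le(|x_u|+|x_v|)^2$, which equals the right-hand side in this case.
\item If $x_u,x_v$ have opposite signs, then $|y_u+y_v|=1$ exactly when $t$ lies between $|x_u|$ and $|x_v|$. This has probability
\[ \bigl|x_u^2-x_v^2\bigr| = \bigl||x_u|-|x_v|\bigr|\,(|x_u|+|x_v|) = |x_u+x_v|\,(|x_u|+|x_v|). \]
\end{itemize}
Zero entries fit into either case.

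Summing over edges and applying Cauchy--Schwarz, together with $(|x_u|+|x_v|)^2\le 2(x_u^2+x_v^2)$, gives
\[ \E\sum_{u\sim v}|y_u+y_v| \le \Bigl(\sum_{u\sim v}(x_u+x_v)^2\Bigr)^{1/2}\Bigl(2\sum_v d_G(v)x_v^2\Bigr)^{1/2} \le \sqrt{2c}\sum_v d_G(v)x_v^2. \]

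\textbf{Choosing the threshold.} Set $Z = \sum_{u\sim v}|y_u+y_v| - \sqrt{2c}\sum_v d_G(v)|y_v|$. The previous step gives $\E Z\le 0$. Outcomes with $y=0$ have $Z=0$, so they contribute nothing. The event $y\neq 0$ has positive probability, since the vertex attaining $|x_v|=1$ is always selected. Hence some outcome with $y\neq0$ has $Z\le 0$. For that $y\in\{-1,0,1\}^{V(G)}$ the ratio in the statement is at most $\sqrt{2c}\le\sqrt{8c}$.

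The only delicate point is the per-edge inequality, in particular the opposite-sign case, which is what makes the square-root loss appear. The rest is bookkeeping.
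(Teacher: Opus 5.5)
Your proof is correct, and it is essentially Trevisan's own proof of his Lemma 3; the paper gives no proof and only cites that lemma. The steps match: threshold rounding with $t^2$ uniform, giving $\E|y_v|=x_v^2$; the per-edge bound $\E|y_u+y_v|\le|x_u+x_v|(|x_u|+|x_v|)$; and Cauchy--Schwarz. One convention point: the paper's $\sum_{u\sim v}$ runs over ordered pairs, which is how it becomes $4e_G(L)+4e_G(R)+2e_G(S,V(G)\setminus S)$ in the proof of Lemma~\ref{lem:last-eigenvalue}. The ratio in the statement is therefore twice yours, so your edge-wise bound gives exactly $2\sqrt{2c}=\sqrt{8c}$, not just $\sqrt{2c}\le\sqrt{8c}$. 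This factor of $2$ is where the constant $8$ comes from, and the conclusion holds under either reading.
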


\begin{lemma} \plabel{lem:last-eigenvalue}
    Let $G$ be a $(d \pm d')$-nearly regular $\explet$-expander and assume that $G$ is $\eps$-far from bipartite. If $d'/d \le \eps^2 \explet^2 / 4000,$ then $\lambda_n(N(G)) \ge \beta - 1$ for $\beta = \eps^2 \explet^2 / 800.$
\end{lemma}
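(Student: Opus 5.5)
We argue by contradiction, using Trevisan's lemma (Lemma~\ref{lem:trevisan}). Suppose $\lambda_n(N(G)) < \beta - 1$. Let $z$ be an eigenvector of $N(G)$ for $\lambda_n$ and set $x = D(G)^{1/2} z$. Since $N = D^{1/2} A D^{1/2}$ and $x^T D(G)^{-1} x = z^T z$, we get
\[ x^T A x = z^T N z = \lambda_n(N(G))\, z^T z < (\beta - 1)\, x^T D(G)^{-1} x. \]
So Lemma~\ref{lem:trevisan} applies with $c = \beta$. It gives $y \in \{-1,0,1\}^{V(G)}$ with $\sum_{u\sim v}|y_u+y_v| \le \sqrt{8\beta}\sum_v d(v)|y_v|$, where $\sqrt{8\beta} = \eps\explet/10$. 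Write $P = \{y=1\}$, $N' = \{y=-1\}$, $Z = \{y=0\}$ and $S = P\cup N'$. The numerator counts, with weight $2$, each edge inside $P$ or inside $N'$, and, with weight $1$, each edge from $S$ to $Z$. Hence
\[ 2e(P) + 2e(N') + e(S,Z) \le \frac{\eps\explet}{10}\, d(S) \le \frac{\eps\explet}{10}(d+d')|S|. \]

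Next we split into two cases according to the size of $S$. Throughout, $\bar d(G) \ge d - d'$.

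\emph{Case $|S| \le 2n/3$.} Expansion gives $e(S,Z) \ge \explet(d-d')|S|$. This contradicts the bound above, since $\eps\explet(d+d')/10 < \explet(d-d')$ because $\eps \le 1$ and $d'$ is tiny.

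\emph{Case $|S| > 2n/3$, so $|Z| < n/3$.} If $Z \neq \emptyset$, apply expansion to $Z$ itself: $e(S,Z) \ge \explet(d-d')|Z|$. Together with the bound above this gives $|Z| \lesssim \frac{\eps}{10}|S| \le \frac{\eps}{10} n$. Now consider the cut $(P, V\setminus P)$. The only edges not crossing it are those inside $P$, inside $N'$, inside $Z$, or between $N'$ and $Z$. Their number is at most
\[ e(P)+e(N') + e(S,Z) + e(Z) \le \frac{\eps\explet}{10}(d+d')n + \frac{(d+d')|Z|}{2}, \]
and the $\frac{(d+d')|Z|}{2}$ term accounts for all edges touching $Z$. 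Using $|Z| \le \eps n/10$ (up to the factor $(1+d'/d)$), this is at most roughly $\frac{\eps}{10}dn + \frac{\eps}{20}dn < \eps\, e(G)$, since $e(G) \ge (d-d')n/2$.

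Therefore $e(P, V\setminus P) > (1-\eps)e(G)$, contradicting that $G$ is $\eps$-far from bipartite.

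The main obstacle is the careful accounting in the second case, in particular that all edges touching $Z$ must be charged to the smallness of $Z$. The crude bound $|Z| \le \eps n/10$ obtained from expanding $Z$ suffices, and the near-regularity hypothesis $d'/d \le \eps^2\explet^2/4000$ is much more than needed to absorb the $(1 \pm d'/d)$ factors. One should also double-check the form of Trevisan's lemma, namely that the hypothesis with $c = \beta$ yields exactly the stated ratio bound $\sqrt{8\beta}$; this is what fixes the constant $800$ in the definition of $\beta$.
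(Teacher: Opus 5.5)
Your proof is correct and follows the paper's route: apply Trevisan's lemma, use expansion to show the zero set $Z$ of $y$ is small, then merge $Z$ into one side to get a cut that violates $\varepsilon$-farness from bipartite. The one difference is how you verify Trevisan's hypothesis. Your exact substitution $x = D(G)^{1/2}z$ gives it with $c=\beta$ directly, whereas the paper approximates $x^TNx$ by $x^TAx/d$ and gets $c=\beta+5d'/d$; that approximation is the only place the strong assumption $d'/d\le\varepsilon^2\gamma^2/4000$ is needed, so your version only requires $d'\ll d$ (and reading the sum in Trevisan's lemma over unordered edges, as you do, is the conservative choice).
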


\begin{proof}
    Let $A = A(G), D = D(G), N = N(G)$ and for the sake of contradiction, suppose that $\lambda_n(N) < \beta-1$, where $\beta$ is defined in the statement. In other words, there is a unit vector $x \in \mathbb{R}^{V(G)}$ such that $x^T N x < \beta-1$.

    Since $G$ is $(d \pm d')$-nearly regular, 
    \[ x^T N x = \sum_{u \sim v} \frac{x_u x_v}{ \sqrt{d(u) d(v)}} = \sum_{u \sim v} \frac{x_u x_v}{d \pm d'} = \frac{1}{d} \sum_{u \sim v} x_u x_v (1 \pm 2d' / d) = \frac{1}{d} (x^T A x \pm 2d' / d \cdot (d + d')) = \frac{1}{d} x^T A x \pm 4d' / d, \]
    where to justify the second to last inequality, note that for an arbitrary matrix $M$ obtained by replacing each $1$-entry of $A$ by a real value in $[-2d'/d, 2d'/d]$, we have $|x^T M x| \le \frac{2d'}{d} \cdot (d + d'),$ which holds true because $\norm{M}_2 \le \sqrt{ \norm{M}_1 \norm{M}_\infty} \le \frac{2d'}{d} \cdot (d + d')$, where we used that each absolute row and column sum of $M$ is at most $\frac{2d'}{d} \cdot (d + d')$.
    This implies that $x^T A x \le (\beta-1)d + 4d'$.
    Since $x^T D^{-1} x \le d + d',$ for $c = \beta + 5d'  / d \le \eps^2 \explet^2 / 400,$ we have $x^T A x \le (c - 1) x^T D^{-1} x.$

    By Lemma~\ref{lem:trevisan}, there is a vector $y \in \{-1, 0, 1\}^{V(G)}$ such that
    \begin{equation} \plabel{eq:trevisan-y}
        \frac{\sum_{u \sim v} |y_u + y_v|}{\sum_{v \in V(G)} d_G(v) |y_v| } \le \sqrt{8 c}.        
    \end{equation}
    Let $L = \{ v \in V(G) \mid y_v = -1 \}, R = \{ v \in V(G) \mid y_v = 1 \}$ and $S = L \cup R$. Note that \eqref{eq:trevisan-y} implies
    \begin{equation*} \plabel{eq:trevisan-partition}
        4 e_G(L) + 4e_G(R) + 2 e_G(S, V(G) \setminus S) \le \sqrt{8c} (d + d') |S| \le 4 c^{1/2} d|S|.
    \end{equation*}
    In particular, $e_G(S, V(G) \setminus S) \le 2 c^{1/2} d |S|.$ Since $G$ is a $\explet$-expander, we have $e_G(S, V(G) \setminus S) \ge \min \{ |S|, n - |S|\} \cdot \explet d / 2.$ Suppose $|S| \le n/2,$ then $2 c^{1/2} d |S| \ge |S| \explet d / 2,$ implying $c \ge \explet^2 / 16$, which contradicts $c \le \eps^2 \explet^2 / 400$, since $\eps \le 1$. Thus, $|S| \geq n/2,$ and so
    \[ (n - |S|) \explet d / 2 \le 2 c^{1/2} d |S| \le 2 c^{1/2} d n, \]
    implying $n - |S| \le \frac{4 c^{1/2}}{\explet} \cdot n$. Let $L' = L \cup (V(G) \setminus S)$. We claim that the cut $(L', R)$ has too many edges. Indeed,
    \begin{align*}
        e_G(L') + e_G(R) &\le e_G(L) + e_G(R) + (d+d') |L' \setminus L| \le  c^{1/2} d n + \frac{5}{4}d(n-|S|)\\
        &\le (c^{1/2} + 5c^{1/2} / \explet) \cdot dn \le (6c^{1/2} / \explet) \cdot dn \le (3/10) \eps dn < \eps e(G),
    \end{align*}
    contradicting our assumption that $G$ is $\eps$-far from bipartite. We conclude that $\lambda_n(N) \ge  \beta - 1$, as claimed.
\end{proof}

Finally, the following result relating the mixing time of random walks with the second and smallest eigenvalues of $N$ is well-known and, for example, it follows directly from~\cite[Theorem~5.1]{lovasz-survey}.

\begin{lemma}[\cite{lovasz-survey}] \plabel{lem:lovasz-mixing}
    Let $G$ be a $(d \pm d')$-nearly regular graph on $n$ vertices with $d' \le d/4$ and assume that $\lambda_2(N(G)), -\lambda_n(N(G)) \le 1 - c$ for some $c > 0$. Then, $G$ is $(V(G), V(G))$-mixing in time ${100 c^{-1} \log n}$.
\end{lemma}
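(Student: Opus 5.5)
We prove Lemma~\ref{lem:lovasz-mixing} by the standard spectral decomposition of the random walk transition matrix. Write $P = D A$ for the transition matrix of the walk (row $a$ gives the distribution of one step from $a$) and $D = D(G)$. Then $P = D^{1/2} N D^{-1/2}$, where $N = N(G)$. So $P^t = D^{1/2} N^t D^{-1/2}$, and hence
\[ \Pr_{W \sim \cW^t(a)}[W(t) = b] = (P^t)_{a,b} = \sqrt{d(b)/d(a)}\,(N^t)_{a,b}. \]

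Let $u_1, \dots, u_n$ be an orthonormal eigenbasis of $N$. The top eigenvector is $u_1 = \big(\sqrt{d(v)/d(V(G))}\big)_v$, with eigenvalue $1$. Expanding,
\[ (N^t)_{a,b} = \sum_k \lambda_k^t\, u_k(a) u_k(b). \]
The $k=1$ term contributes exactly
\[ \sqrt{d(b)/d(a)} \cdot \frac{\sqrt{d(a)d(b)}}{d(V(G))} = \frac{d(b)}{d(V(G))}, \]
which is the target stationary value.

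It remains to bound the error from the other eigenvalues:
\[ \Big|\sqrt{d(b)/d(a)} \sum_{k \ge 2} \lambda_k^t u_k(a) u_k(b)\Big| \le \sqrt{d(b)/d(a)}\,(1-c)^t \sum_k |u_k(a)||u_k(b)|. \]
By Cauchy--Schwarz and orthonormality of the basis (the rows of the orthogonal matrix have unit norm), the sum is at most $1$. By near-regularity with $d' \le d/4$, the prefactor is at most $\sqrt{5/3} < 2$. So the error is at most $2(1-c)^t \le 2e^{-ct}$. For $t \ge 100 c^{-1}\log n$ this is at most $2n^{-100} \le n^{-10}$.

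This holds for every $t \ge t_0$ and every $a, b$. In the alternating setting with $V_0 = V_1 = V(G)$, we have $d(V_j) = d(V(G))$, so this is exactly the $(V(G),V(G))$-mixing condition. There is no real obstacle here. The only points needing care are the identity $P^t = D^{1/2}N^tD^{-1/2}$, which depends on the convention that $D$ holds inverse degrees, and using $\max(\lambda_2, -\lambda_n) \le 1-c$ to bound every $|\lambda_k|$ with $k \ge 2$. Alternatively, one can cite~\cite[Theorem~5.1]{lovasz-survey} directly, which gives the bound $\sqrt{d(b)/d(a)}\,(1-c)^t$.
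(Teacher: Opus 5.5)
Your argument is correct and is essentially the route the paper takes: the paper gives no proof beyond citing \cite[Theorem~5.1]{lovasz-survey}, and your decomposition $P^t = D^{1/2}N^tD^{-1/2}$, together with the Cauchy--Schwarz bound on the non-principal eigencomponents, is exactly the standard proof of that theorem, giving error at most $\sqrt{d(b)/d(a)}\,(1-c)^t \le 2e^{-ct} \le n^{-10}$. The one step you leave implicit is immediate: $\lambda_2 \le 1-c < 1$ makes the eigenvalue $1$ simple, so $u_1 = \big(\sqrt{d(v)/d(V(G))}\big)_v$ is indeed the top eigenvector.
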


Combining Lemmas~\ref{lem:second-eigenvalue},~\ref{lem:last-eigenvalue}~and~\ref{lem:lovasz-mixing}, we obtain the following.
\begin{lemma} \plabel{lem:far-from-bip-mixing}
    Let $G$ be a $(d \pm d')$-nearly regular $\explet$-expander and assume $G$ is $\eps$-far from bipartite. If $d'/d \le \eps^2 \explet^2 / 4000,$ then $G$ is $(V(G), V(G))$-mixing in time $10^5 \log n \cdot \eps^{-2} \explet^{-2}.$
\end{lemma}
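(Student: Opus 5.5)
The plan is to verify the hypotheses of Lemma~\ref{lem:lovasz-mixing} and read off the mixing time. We need three ingredients: near-regularity with $d' \le d/4$, the bound $\lambda_2(N(G)) \le 1-c$, and the bound $-\lambda_n(N(G)) \le 1-c$, for a single value $c = \eps^2\explet^2/800$.

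First, the hypothesis $d'/d \le \eps^2\explet^2/4000$ together with $\eps,\explet \le 1$ gives $d' \le d/4000 \le d/4$. So Lemma~\ref{lem:second-eigenvalue} applies and yields $\lambda_2(N(G)) \le 1 - \explet^2/32$. Since $\eps \le 1$, we have $\explet^2/32 \ge \eps^2\explet^2/800$, hence $\lambda_2(N(G)) \le 1 - \eps^2\explet^2/800$.

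Second, the hypotheses of Lemma~\ref{lem:last-eigenvalue} are exactly the present ones. It gives $\lambda_n(N(G)) \ge \beta - 1$ with $\beta = \eps^2\explet^2/800$, that is, $-\lambda_n(N(G)) \le 1 - \beta$.

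Finally, we apply Lemma~\ref{lem:lovasz-mixing} with $c = \beta = \eps^2\explet^2/800$. It shows that $G$ is $(V(G),V(G))$-mixing in time
\[ 100c^{-1}\log n = 8\cdot 10^4\,\eps^{-2}\explet^{-2}\log n \le 10^5 \log n\cdot \eps^{-2}\explet^{-2}. \]
Mixing in time $t_0$ implies mixing in any larger time, since the definition quantifies over all $t \ge t_0$, so the claimed bound follows. There is no real obstacle here. The only points to check are that $\eps,\explet \le 1$ (which holds by definition, as an edge count and a cut size are at most $e(G)$ and $\bar d\,|S|$ respectively) and the constant bookkeeping above.
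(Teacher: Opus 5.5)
Your proposal is correct and is the paper's proof: Lemma~\ref{lem:second-eigenvalue} and Lemma~\ref{lem:last-eigenvalue} together give $\lambda_2(N(G)), -\lambda_n(N(G)) \le 1-\eps^2\explet^2/800$, and Lemma~\ref{lem:lovasz-mixing} then gives the mixing time. The paper leaves the bookkeeping implicit, namely $d'\le d/4$, $8\cdot 10^4\le 10^5$, and that mixing in time $t_0$ implies mixing in any larger time; you spell these out.
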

\begin{proof}
    By Lemma~\ref{lem:second-eigenvalue}, $\lambda_2(N(G)) \le 1 - \explet^2 / 32$ and by Lemma~\ref{lem:last-eigenvalue}, $\lambda_n(N(G)) \ge \eps^2 \explet^2 / 800 - 1.$ Hence, by Lemma~\ref{lem:lovasz-mixing}, $G$ is $(V(G), V(G))$-mixing in time $10^5 \log n \cdot \eps^{-2} \explet^{-2},$ as needed.
\end{proof}

Throughout the proof, we shall often consider the subgraph of a nearly-regular graph obtained by removing a small set of vertices which intersects the neighbourhood of every vertex approximately in the same number of vertices. In the bipartite case, we might consider a vertex subset of one part intersecting the neighbourhoods of vertices in the other part approximately equally. Hence, the following definition will be crucial for our proof.

\begin{defn} \plabel{def:uniform}
    Given a graph $G$, we say a multiset $S$ is $(V_0, V_1, \alpha)$-bi-uniform with respect to $G$ if $S \subseteq V_0$ and
    \[ \left||N(v) \cap S| - \frac{|S| \cdot \bar{d}(G)}{|V_0|} \right| \le \alpha \bar{d}(G), \forall v \in V_1. \]
    We say a multiset $S$ is $\alpha$-uniform with respect to $G$ if it is $(V(G), V(G), \alpha)$-bi-uniform.    
\end{defn}

When $G$ is clear from the context, we shall often simply say a multiset is $(V_0, V_1, \alpha)$-bi-uniform or $\alpha$-uniform. Let us make several simple observations which follow directly from the definition. These will be implicitly used throughout our proofs.

\begin{observation}
    Let $G$ be a $(V_0, V_1)$-alternating graph.
    \begin{itemize}
        \item If $S \subseteq V_0$ is $(V_0, V_1, \alpha)$-bi-uniform, then it is also $(V_0, V_1, \beta)$-bi-uniform for any $\beta \ge \alpha$.
        \item If $S_1 \subseteq V_0$ is $(V_0, V_1, \alpha_1)$-bi-uniform and $S_2 \subseteq V_0$ is $(V_0, V_1, \alpha_2)$-bi-uniform, then the multiset $S_1 \cup S_2$ is $(V_0, V_1, \alpha_1+\alpha_2)$-bi-uniform.
        \item If $G$ is $(d \pm d')$-nearly regular and $S \subseteq V_0$ is such that $|S| \le \alpha(d - d')$, then $S$ is $(V_0, V_1, \alpha)$-bi-uniform.
    \end{itemize}
\end{observation}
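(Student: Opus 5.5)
The observation has three parts, and each follows directly from Definition~\ref{def:uniform} by a one-line estimate; I will verify them in turn. Throughout I write $\bar d = \bar d(G)$ and use the fact that the normalisation $|S|\bar d/|V_0|$ is linear in $|S|$.

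For the first item there is nothing to do. The defining inequality $\bigl||N(v)\cap S| - |S|\bar d/|V_0|\bigr| \le \alpha \bar d$ only weakens when $\alpha$ is replaced by any $\beta\ge\alpha$.

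For the second item, fix $v\in V_1$. Multiset union is additive, so $|N(v)\cap (S_1\cup S_2)| = |N(v)\cap S_1| + |N(v)\cap S_2|$, and likewise $|S_1\cup S_2| = |S_1|+|S_2|$. Hence the deviation for $S_1\cup S_2$ is the sum of the two individual deviations. By the triangle inequality it is at most $(\alpha_1+\alpha_2)\bar d$. The union is clearly still contained in $V_0$.

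The third item is the only one needing a small estimate.
\begin{itemize}
\item Both quantities compared in the definition are nonnegative, and $|N(v)\cap S|\le |S|$.
\item Since $G$ is $(V_0,V_1)$-alternating, $|V_0|\ge n/2$; also $\bar d\le d+d'$. Hence $|S|\bar d/|V_0| \le 2|S|(d+d')/n$, which is at most $|S|$ once $n \ge 2(d+d')$. In the bipartite case this holds since $d+d'\le |V_0| = n/2$. In the non-bipartite case $|V_0|=n$, so we only need $\bar d\le n$, which is trivial.
\item The difference of two numbers in $[0,|S|]$ has absolute value at most $|S|\le \alpha(d-d')\le \alpha\,\delta(G)\le\alpha\bar d$.
\end{itemize}
So $S$ is $(V_0,V_1,\alpha)$-bi-uniform.

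None of these steps is a real obstacle. The only point requiring care is the third item: one must check that the expected term $|S|\bar d/|V_0|$ does not exceed $|S|$, which is where the alternating structure is used. This does not depend on the multiset subtlety, since the bound $|N(v)\cap S|\le|S|$ holds for multisets as well.
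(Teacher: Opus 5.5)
Your verification is correct and is the same direct check from Definition~\ref{def:uniform} that the paper intends; the paper gives no proof beyond saying the observations follow from the definition.

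One justification in the third item needs replacing. Near-regularity does not imply $d+d'\le n/2$ in the bipartite case: for instance, $K_{n/2,n/2}$ is $(n/2\pm 1)$-nearly regular. What you actually need is $\bar{d}(G)\le \Delta(G)\le |V_0|$. This holds in both the bipartite case (where $|V_0|=|V_1|$) and the non-bipartite case, and it gives $|S|\bar{d}(G)/|V_0|\le |S|$ directly, without passing through $d+d'$.
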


The next lemma shows that the removal of a small uniform set preserves near-regularity and expansion.

\begin{lemma} \plabel{lem:remove-set}
    Let $G$ be an $n$-vertex $(d \pm d')$-nearly regular graph with $d' \le d/8$, let $S \subseteq V(G)$ be an $\alpha$-uniform set of distinct vertices and let $G_2 = G \setminus S.$ Then, for any $v \in V(G)$, it holds that $|N_G(v) \setminus S| = d_2 \pm d_2',$ where $d_2 = d (1 - |S|/n)$ and $d_2' = d'(1 + |S|/n) + \alpha(d + d')$. In particular, $G_2$ is $(d_2 \pm d_2')$-nearly regular. Moreover, if $G$ is a $\explet$-expander, then $G_2$ is a $\explet_2$-expander, where $\explet_2 = \explet - 4(|S|/n+\alpha+d'/d).$
\end{lemma}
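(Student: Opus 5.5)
The proof splits into two parts: a degree computation from the definition of $\alpha$-uniformity, and then a cut-counting argument for the expansion.

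\textbf{Degrees.} Fix $v \in V(G)$. By $\alpha$-uniformity (with $V_0=V_1=V(G)$), we have $|N_G(v)\cap S| = |S|\bar d(G)/n \pm \alpha \bar d(G)$. Since $\bar d(G) = d \pm d'$ and $d_G(v) = d\pm d'$, this gives
\[ |N_G(v)\setminus S| = (d\pm d') - \frac{|S|}{n}(d\pm d') \pm \alpha(d+d') = d\Bigl(1-\frac{|S|}{n}\Bigr) \pm \Bigl(d'\Bigl(1+\frac{|S|}{n}\Bigr) + \alpha(d+d')\Bigr), \]
which is exactly $d_2 \pm d_2'$. Applying this to $v \in V(G_2)$ gives the near-regularity of $G_2$.

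\textbf{Expansion.} Let $n_2 = n-|S|$ and take $T \subseteq V(G_2)$ with $1\le |T| \le \frac23 n_2 \le \frac23 n$. Since $G$ is a $\explet$-expander, $e_G(T, V(G)\setminus T) \ge \explet \bar d(G)|T| \ge \explet(d-d')|T|$. The edges from $T$ into $S$ number at most $\sum_{v\in T}|N_G(v)\cap S| \le |T|\bigl(|S|/n+\alpha\bigr)(d+d')$. Subtracting, we get
\[ e_{G_2}(T, V(G_2)\setminus T) \ge |T|\Bigl(\explet(d-d') - \Bigl(\frac{|S|}{n}+\alpha\Bigr)(d+d')\Bigr). \]
It remains to compare with $\explet_2\bar d(G_2)|T|$, where $\bar d(G_2) \le d_2+d_2' \le d + d_2'$. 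Dividing through by $d$, and using $d'\le d/8$ and $\explet\le 1$, the bound above is at least $\bigl(\explet - d'/d - \tfrac98(|S|/n+\alpha)\bigr)d|T|$.

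The target is at most $(\explet - 4x)(d+d_2')|T|$, where $x = |S|/n+\alpha+d'/d$. We may assume $\explet_2>0$, since otherwise the claim is vacuous. Then $\explet - 4x > 0$ and $\bar d(G_2)/d \le 1 + d_2'/d$, and the inequality to check is
\[ \explet - \frac{d'}{d} - \frac98\Bigl(\frac{|S|}{n}+\alpha\Bigr) \ge (\explet - 4x)\Bigl(1+\frac{d_2'}{d}\Bigr). \]

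This reduces to the elementary claim
\[ (\explet-4x)\frac{d_2'}{d} \le 4x - \frac{d'}{d} - \frac98\Bigl(\frac{|S|}{n}+\alpha\Bigr). \]
The left side is at most $d_2'/d \le d'/d\cdot 2 + \frac98\alpha$, using $\explet-4x \le 1$ and $|S|\le n$ (since $S$ is a set of distinct vertices). The right side is at least $3d'/d + \frac{23}{8}(|S|/n + \alpha)$, which dominates the left side.

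\textbf{Main obstacle.} The only real care needed is this constant bookkeeping: making sure that the normalization by $\bar d(G_2)$ rather than by $d$, which costs a factor up to $1+d_2'/d$, is absorbed by the generous constant $4$. No other difficulty arises.
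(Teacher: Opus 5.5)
Your proof is correct and follows essentially the same route as the paper. Both arguments compute degrees from $\alpha$-uniformity, subtract $e_G(T,S)\le |T|(|S|/n+\alpha)\bar d(G)$ from the expansion of $G$, and renormalize using $\bar d(G_2)\le d_2+d_2'$. The only difference is cosmetic: you finish with a linear inequality after assuming $\gamma_2>0$, while the paper first reduces to $|S|\le n/4$ and then bounds the ratio using $1/(1+y)\ge 1-y$.
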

\begin{proof}
    Let $G_2 = G \setminus S$. Consider an arbitrary vertex $v \in V(G)$. Since $S$ is $\alpha$-uniform, we have
    \begin{align*}
        |N_G(v) \setminus S|  &= d_G(v)-|N_G(v)\cap S|= d_G(v) - |S|\bar{d}(G) / n \pm \alpha \bar{d}(G) = d\pm d' - (d \pm d') |S| / n \pm \alpha (d \pm d')\\
        &= d(1 - |S|/n) \pm d'(1 + |S|/n) \pm \alpha(d + d') =
        d_2 \pm d_2',
    \end{align*}
    as needed. Since $d_{G_2}(v) = |N_G(v) \setminus S|,$ this implies that $G_2$ is $(d_2 \pm d_2')$-nearly regular.

    Now, suppose $G$ is a $\explet$-expander and we show that $G_2$ is a $\explet_2$-expander. We may assume that $|S| \le n/4$, as otherwise $\explet_2 \le 0$ since $\explet \le 1$, so there would be nothing to prove.
    
    Let $X \subseteq V(G_2)$ be an arbitrary set with $1 \le |X| \le 2|V(G_2)| / 3 \le 2 n/ 3.$ Using that $G$ is a $\explet$-expander and $S$ is $\alpha$-uniform with respect to $G$, we have
    \begin{align*}
        e_{G_2}(X, V(G_2) \setminus X) &= e_G(X, V(G) \setminus X) - e_G(X, S) \ge \explet \bar{d}(G) |X| - |X| \cdot (|S| / n + \alpha) \bar{d}(G)\\
        &= (\explet - |S|/n - \alpha) \bar{d}(G) |X|
    \end{align*}
    Since $\bar{d}(G_2) \le d_2+d_2' \le d(1 - |S|/n + \alpha + 2d'/d)$, we have
    \begin{align*} 
    \frac{e_{G_2}(X, V(G_2) \setminus X)}{\bar{d}(G_2) |X|} &\ge \frac{(d-d') (\explet - |S|/n-\alpha)}{d(1 - |S|/n + \alpha + 2d'/d)} \ge \frac{d \explet}{d(1 - |S|/n + \alpha + 2d'/d)}
    - \frac{d' \explet + d(|S|/n + \alpha)}{d(1 - |S|/n + \alpha + 2d'/d)}\\
    &\ge \explet (1 + |S|/n - \alpha - 2d'/d) - \frac{d' \explet + d(|S| / n + \alpha)}{\frac{3}{4} d}\\
    &\ge \explet - (\alpha + 2d' / d) - \frac{4}{3} (d'/d + |S|/n + \alpha) \ge \explet_2,
    \end{align*}
    where we used that $\frac{1}{1+x} \ge 1-x$ for $x > -1$ and that $\explet \le 1$. This proves that $G_2$ is a $\explet_2$-expander and finishes the proof of the lemma.
\end{proof}

Combining Lemma~\ref{lem:mixing-time-bipartite} and Lemma~\ref{lem:remove-set}, we obtain the following convenient lemma.
\begin{lemma} \plabel{lem:remove-set-bip-mixing-time}
    Let $n$ be sufficiently large, let $G$ be an $n$-vertex $(d \pm d')$-nearly regular bipartite $\explet$-expander with parts $V_0$ and $V_1$, each of size $n/2$ and let $S$ be a balanced $\alpha$-uniform set with respect to $G$. If $|S| \le \explet n / 100, \alpha \le \explet/100$ and $d' \le \explet d / 100$, then $G \setminus S$ is $(V_0 \setminus S, V_1 \setminus S)$-mixing in time $t_0 = 4000 \explet^{-2} \log n$.
\end{lemma}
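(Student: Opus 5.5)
The plan is to apply Lemma~\ref{lem:remove-set} to $G$ and $S$, and then feed the resulting graph $G_2 = G\setminus S$ into Lemma~\ref{lem:mixing-time-bipartite}. Three things have to be checked: that $G_2$ is bipartite with equal parts, that it is sufficiently nearly regular, and that it is still a good expander.

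\emph{Bipartiteness and balance.} $G_2$ is bipartite with parts $V_0\setminus S$ and $V_1\setminus S$. Since $S$ is balanced, these parts have equal size $(n-|S|)/2$. Hence $G_2$ is $(V_0\setminus S, V_1\setminus S)$-alternating.

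\emph{Near-regularity and expansion.} Since $d'\le \explet d/100\le d/8$, Lemma~\ref{lem:remove-set} applies. It shows that $G_2$ is $(d_2\pm d_2')$-nearly regular with
\[ d_2 = d(1-|S|/n)\ge \tfrac{99}{100}d \quad\text{and}\quad d_2' = d'(1+|S|/n)+\alpha(d+d')\le \tfrac{2\explet d}{100}+\tfrac{2\explet d}{100}. \]
In particular $d_2'\le d_2/4$. The same lemma shows that $G_2$ is a $\explet_2$-expander with
\[ \explet_2 = \explet - 4\bigl(|S|/n+\alpha+d'/d\bigr) \ge \explet - \tfrac{12}{100}\explet \ge \explet/2. \]

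\emph{Applying the mixing lemma.} Let $n_2 = n-|S|\ge n/2$, which is still large. Lemma~\ref{lem:mixing-time-bipartite} applied to $G_2$ gives that $G_2$ is $(V_0\setminus S,V_1\setminus S)$-mixing in time
\[ 1000\,\explet_2^{-2}\log n_2 \le 4000\,\explet^{-2}\log n. \]
The only point needing care is the error term. The definition of mixing for $G_2$ is stated with error $n_2^{-10}$, and $n_2^{-10}\ge n^{-10}$, so this is weaker than the error $n^{-10}$ required in the conclusion.

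I expect this error-term mismatch to be the main obstacle. The fix is to rerun the spectral argument behind Lemma~\ref{lem:mixing-time-bipartite} with target error $n^{-10}$ rather than $n_2^{-10}$. Since $\log(n^{10})\le 11\log n_2$ when $n_2\ge n/2$ and $n$ is large, this changes the mixing time only by a constant factor, which is absorbed either by the slack from $\explet_2\ge\explet/2$ (in fact $\explet_2\ge 0.88\explet$, giving more room) or by adjusting the constant $1000$ as in the remark preceding Lemma~\ref{lem:mixing-time-bipartite}. Alternatively, one could apply the lemma as a black box with the parameter $n$ in place of $n_2$, since its proof only uses $\log$ of the desired error.
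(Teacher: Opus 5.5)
Your argument is correct and is the paper's proof: Lemma~\ref{lem:remove-set} gives that $G\setminus S$ is a $(d_2\pm d_2')$-nearly regular $\explet_2$-expander with $d_2'\le d_2/4$ and $\explet_2\ge\explet/2$. Balance of $S$ keeps the two parts equal, and Lemma~\ref{lem:mixing-time-bipartite} then gives mixing time $1000\,\explet_2^{-2}\log|V(G\setminus S)|\le 4000\,\explet^{-2}\log n$.

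The error-term concern is unnecessary. In the definition of mixing, the error $n^{-10}$ refers to the number of vertices of the graph under consideration, so for $G\setminus S$ the required error is exactly $|V(G\setminus S)|^{-10}$, which is what the lemma delivers. This is also how the notion is used later, e.g.\ when Lemma~\ref{lem:one-bite-probabilistic} is applied to $G\setminus U$. So no rerunning of the spectral argument is needed.
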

\begin{proof}   
    By Lemma~\ref{lem:remove-set}, the graph $G_2 = G \setminus S$ is a $(d_2 \pm d_2')$-nearly regular $\explet_2$-expander, where $d_2 = d (1 - |S|/n) \ge d/2$, $d_2' = d'(1 + |S|/n) + \alpha(d + d') \le d/8$ and $\explet_2 = \explet - 4(|S|/n+\alpha+d'/d) \ge \explet / 2.$ By Lemma~\ref{lem:mixing-time-bipartite}, $G_2$ is $(V_0 \setminus S, V_1 \setminus S)$-mixing in time $1000 \explet_2^{-2} \log |V(G_2)| \le 4000 \explet^{-2} \log n$, as needed.
\end{proof}

Similarly, in the far-from-bipartite case, we have the following.
\begin{lemma} \plabel{lem:remove-set-far-from-bip-mixing-time}
    Let $G$ be an $n$-vertex $(d \pm d')$-nearly regular $\explet$-expander, with $0 < \explet \le 1$. Assume that $G$ is $\eps$-far from bipartite for some $\eps > 0$ and $d'/ d \le \eps^2 \explet^2 / 10^5$. Let $S$ be an $\alpha$-uniform subset of $V(G)$ of size at most $\eps \explet n/ 16$, where $\alpha \le \eps^2 \explet^2 / 10^5.$ Then, the graph $G_2 = G \setminus S$ is $(V(G_2), V(G_2))$-mixing in time $t_0 = 10^7 \log n \cdot \eps^{-2} \explet^{-2}.$
\end{lemma}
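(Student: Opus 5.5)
The plan mirrors the proof of Lemma~\ref{lem:remove-set-bip-mixing-time}: show that $G_2$ keeps near-regularity, expansion and far-from-bipartiteness with slightly worse parameters, then apply Lemma~\ref{lem:far-from-bip-mixing}.

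\textbf{Step 1: regularity and expansion.} We may assume $\eps,\explet\le 1$ (being $\eps$-far from bipartite with $\eps>1$ is impossible for a nonempty graph). The hypotheses give $d'\le d/8$, so Lemma~\ref{lem:remove-set} applies. Hence $G_2$ is $(d_2\pm d_2')$-nearly regular with $d_2=d(1-|S|/n)\ge d/2$ and $d_2'=d'(1+|S|/n)+\alpha(d+d')\le 2d'+2\alpha d$. Consequently
\[ d_2'/d_2\le 4(d'/d+\alpha)\le 8\eps^2\explet^2/10^5 . \]
Lemma~\ref{lem:remove-set} also says $G_2$ is a $\explet_2$-expander with $\explet_2=\explet-4(|S|/n+\alpha+d'/d)$. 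Using $|S|/n\le \eps\explet/16$ and the bounds on $\alpha$ and $d'/d$, we get $\explet_2\ge \explet-\explet/4-8\eps^2\explet^2/10^5\ge \explet/2$.

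\textbf{Step 2: far from bipartite.} Take any $T\subseteq V(G_2)$. Every edge of $G$ not in $G_2$ is incident to $S$, so $e(G)-e(G_2)\le |S|(d+d')$. Therefore
\[ e_{G_2}(T,V(G_2)\setminus T)\le e_G(T,V(G)\setminus T)\le (1-\eps)e(G)\le (1-\eps)\bigl(e(G_2)+|S|(d+d')\bigr). \]
Since $e(G_2)\ge n d/4$, we have $|S|(d+d')\le \tfrac{9}{8}\cdot\tfrac{\eps\explet}{16} nd\le \tfrac{\eps}{2} e(G_2)$, giving
\[ e_{G_2}(T,V(G_2)\setminus T)\le (1-\eps/2)e(G_2). \]
So $G_2$ is $\eps_2$-far from bipartite with $\eps_2=\eps/2$. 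This is exactly where the bound $|S|\le \eps\explet n/16$ is used, and it leaves room to spare.

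\textbf{Step 3: apply the mixing lemma.} Lemma~\ref{lem:far-from-bip-mixing} requires $d_2'/d_2\le \eps_2^2\explet_2^2/4000=\eps^2\explet^2/64000$. From Step~1 we have $d_2'/d_2\le 8\eps^2\explet^2/10^5=\eps^2\explet^2/12500$, which is too large. So a naive chaining of the constants fails, and this bookkeeping is the main obstacle.

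The fix is to tighten Step~1. Dividing by $d_2\ge d(1-\eps\explet/16)\ge 15d/16$ instead of $d/2$ gives
\[ d_2'/d_2\le \tfrac{16}{15}\bigl(d'/d\,(1+1/16)+\alpha(1+1/8)\bigr)\le 2.5\,\eps^2\explet^2/10^5 . \]
Similarly, $\explet_2\ge \explet-\explet/4-O(\eps^2\explet^2/10^5)\ge 0.74\,\explet$, and $\eps_2$ can be taken close to $\eps$ rather than $\eps/2$, since $|S|(d+d')\le 0.15\,\eps\, e(G_2)$. With these, the required bound is $\eps_2^2\explet_2^2/4000\ge (0.85)^2(0.74)^2\eps^2\explet^2/4000\approx 10^{-4}\eps^2\explet^2$, and $2.5\cdot 10^{-5}\eps^2\explet^2$ is below it. If a little more slack is still needed, the hypotheses $d'/d,\alpha\le \eps^2\explet^2/10^5$ can be used at full strength at each step.

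Lemma~\ref{lem:far-from-bip-mixing} then shows that $G_2$ is $(V(G_2),V(G_2))$-mixing in time
\[ 10^5\log|V(G_2)|\,\eps_2^{-2}\explet_2^{-2}\le 10^5\cdot 2.6\,\eps^{-2}\explet^{-2}\log n\le 10^7\log n\,\eps^{-2}\explet^{-2}, \]
as claimed.
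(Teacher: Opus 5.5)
Your proof is correct and follows the same route as the paper: Lemma~\ref{lem:remove-set} gives near-regularity and expansion, a count of the at most $(d+d')|S|$ edges touching $S$ keeps $G_2$ far from bipartite (the paper takes $\eps_2=\frac34\eps$ and $\explet_2=\frac34\explet$), and Lemma~\ref{lem:far-from-bip-mixing} then finishes. Your tighter bookkeeping via $d_2\ge\frac{15}{16}d$ is worthwhile: the paper's stated intermediate bounds $d_2'\le \eps^2\explet^2 d/10^4$ and $d_2\ge d/2$ do not by themselves give the required $d_2'/d_2\le(\frac34)^4\eps^2\explet^2/4000$, although the actual quantities, as you compute them, do.
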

\begin{proof}
    By Lemma~\ref{lem:remove-set}, the graph $G_2 = G \setminus S$ is a $(d_2 \pm d_2')$-nearly regular $\explet_2$-expander, where $d_2 = d(1 - |S|/n) \ge d/2, d_2' = d'(1 + |S|/n) + \alpha(d + d') \le 2d' + 2\alpha d \le (\eps^2 \explet^2 / 10^4) d$ and $\explet_2 = \explet - 4(|S| /n + \alpha + d'/d) \ge (3/4) \explet$. The number of edges touching $S$ in $G$ is at most $(d + d') |S| \le 1.1d |S| \le (\eps/4) e(G)$, hence $G_2$ is $(3/4) \eps$-far from bipartite. By Lemma~\ref{lem:far-from-bip-mixing}, we get that $G_2$ is $(V(G_2), V(G_2))$-mixing in time $10^5 \log |V(G_2)| \cdot ((3/4) \eps)^{-2} \cdot ((3/4) \explet)^{-2} \le 10^7 \log n \cdot \eps^{-2} \explet^{-2}$.
\end{proof}

Given two random variables $X$ and $Y$ supported on a finite set $S$, we denote their total variation distance as $\TV(X, Y) \coloneqq \sum_{s \in S} |\Pr[X = s] - \Pr[Y = s]|.$
 
The following lemma is the key place where we use the mixing time of random walks.

\begin{lemma} \plabel{lem:combine-two-walks}
    Let $G$ be an $n$-vertex graph and let $V_0, V_1 \subseteq V(G)$ be such that $G$ is $(V_0, V_1)$-mixing in time $t_0$. Let $i \in \{0, 1\}, \ell \ge 2t_0$ and let $j = (i+\ell)\pmod 2$. Let $a \in V_i, b \in V_j$ be fixed. Let $0 \le t \le \ell/2$ be arbitrary and let $W$ be a random walk of length $\ell$ from $a$ to $b$ defined as follows. Sample $W_1 \sim \cW^t(a)$ and let $w = W_1(t)$ denote its endpoint. Sample $W_2 \sim \cP^{\ell-t}(w,b)$ and let $W$ be the concatenation of $W_1$ and $W_2$. Let $W' \sim \cP^\ell(a,b)$. Then,
    \[ \TV(W, W') \le n^{-6}. \]    
\end{lemma}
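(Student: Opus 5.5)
Both $W$ and $W'$ are distributions on walks of length $\ell$ from $a$ to $b$, so we compare them pointwise. Fix such a walk $P = (P(0), \dots, P(\ell))$. Write $p(P)$ for the probability that the unconditioned walk $\cW^\ell(a)$ equals $P$, which is $\prod_{s<\ell} 1/d(P(s))$. For vertices $u,v$ and $k \ge 0$, set $q_k(u,v) = \Pr_{X \sim \cW^k(u)}[X(k) = v]$.

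By definition of $\cP^\ell(a,b)$,
\[ \Pr[W' = P] = \frac{p(P)}{q_\ell(a,b)}. \]
For the concatenated walk, put $w = P(t)$. Since the first $t$ steps of $P$ are taken by $W_1$ and the rest by $W_2 \sim \cP^{\ell-t}(w,b)$,
\[ \Pr[W = P] = \frac{p(P)}{q_{\ell-t}(w,b)}. \]
Hence
\[ \TV(W,W') = \sum_P p(P)\left| \frac{1}{q_{\ell-t}(P(t),b)} - \frac{1}{q_\ell(a,b)} \right|. \]
So it suffices to show $q_{\ell-t}(w,b) = q_\ell(a,b)(1 \pm n^{-7})$ for every $w$ reachable from $a$ in exactly $t$ steps.

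\textbf{Estimating the $q$'s.} Parities line up: $w \in V_{i+t}$, and both $\ell - t \ge \ell/2 \ge t_0$ and $\ell \ge t_0$. The mixing assumption therefore gives
\[ q_{\ell-t}(w,b) = \frac{d(b)}{d(V_j)} \pm n^{-10}, \qquad q_\ell(a,b) = \frac{d(b)}{d(V_j)} \pm n^{-10}. \]
We now need a lower bound on $d(b)/d(V_j)$ of order $1/n$. The statement does not assume near-regularity, so I would use only $d(V_j) \le n\,d(b) \cdot (\Delta/\delta)$. A cruder but hypothesis-free bound is $d(b)/d(V_j) \ge 1/(n \cdot n) = n^{-2}$, since $d(V_j) \le n^2$ and $d(b) \ge 1$. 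We may assume $d(b) \ge 1$, because otherwise no walk from $a$ to $b$ exists.

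With $\pi := d(b)/d(V_j) \ge n^{-2}$, both $q$'s are $\pi(1 \pm n^{-8})$. Therefore
\[ \left| \frac{1}{q_{\ell-t}} - \frac{1}{q_\ell} \right| \le \frac{3 n^{-8}}{\pi}. \]

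\textbf{Bounding the sum.} Summing over $P$,
\[ \sum_P p(P) = \Pr\big[\cW^\ell(a) \text{ ends at } b\big] = q_\ell(a,b) \le 2\pi. \]
Hence $\TV(W,W') \le 2\pi \cdot 3n^{-8}/\pi = 6n^{-8} \le n^{-6}$ for $n \ge 3$.

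The main obstacle is this lower bound on the stationary mass of $b$. It is needed so that the additive error $n^{-10}$ becomes a multiplicative one. The trivial bound $d(V_j) \le n^2$ suffices here because the mixing error $n^{-10}$ is much smaller than $n^{-2}$. The remainder is the exact conditional-probability identity above, plus checking that the parity indices in the definition of mixing are consistent for both the walk from $w$ and the walk from $a$.
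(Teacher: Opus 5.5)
Your proof is correct, and it takes a more direct route than the paper.

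The paper first uses the Markov property to reduce to comparing the laws of $W(t)$ and $W'(t)$. It then introduces an auxiliary unconditioned walk $W_3\sim\cW^{\lceil \ell/2\rceil}(a)$, whose first $t$ steps have the same law as $W_1$. Finally, it bounds $\TV\big(W_3(\lceil\ell/2\rceil),W'(\lceil\ell/2\rceil)\big)$ via Bayes' rule, applying the mixing property to the two halves of lengths $\lceil \ell/2\rceil$ and $\lfloor \ell/2\rfloor$.

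You instead compute the exact likelihood ratio $\Pr[W=P]/\Pr[W'=P]=q_\ell(a,b)/q_{\ell-t}(P(t),b)$, and you apply mixing only at lengths $\ell$ and $\ell-t$. Your route has two advantages:
\begin{itemize}
\item It yields a stronger, pointwise multiplicative statement: $\Pr[W=P]=(1\pm O(n^{-8}))\Pr[W'=P]$ for every walk $P$, not just a total variation bound.
\item It only needs $\ell-t\ge t_0$, rather than both $t\le \ell/2$ and $\ell\ge 2t_0$.
\end{itemize}

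You also make explicit the lower bound $d(b)/d(V_j)\ge n^{-2}$, which is what converts the additive $n^{-10}$ mixing error into a multiplicative one. The paper's Bayes step relies on the same fact silently when it divides by $d(b)/d(V_j)\pm n^{-10}$.

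The paper's midpoint formulation mirrors the ``two half-walks glued at their ends'' heuristic it uses later. Your argument is shorter and self-contained.
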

\begin{proof}
    By definition, conditioning on $W'(t) = w$, the walks $W'([0, t])$ and $W'([t, \ell])$ are distributed as $\cP^t(a,w)$ and $\cP^{\ell-t}(w,b)$, respectively. Therefore, it is enough to show that $\TV(W(t), W'(t)) \le n^{-6}.$
    
    Denote $\ell' = \ceil{\ell/2}$. Let $W_3 \sim \cW^{\ell'
    }(a)$. By the triangle inequality, we have
    \[ \TV(W(t), W'(t)) \le \TV(W(t), W_3(t)) + \TV(W_3(t), W'(t)). \]

    Observe that both $W([0, t])$ and $W_3([0, t])$ are distributed as $\cW^{t}(a)$, so $\TV(W(t), W_3(t)) = 0.$
    
    We claim that $\TV(W_3, W'([0, \ell'])) \le n^{-6},$ which implies the statement. Conditioning on $W_3(\ell') = w'$, $W_3$ is distributed as $\cP^{\ell'}(a, w')$ and conditioning on $W'(\ell')=w'$, $W'[0,\ell']$ is distributed as $\cP^{\ell'}(a,w')$, so it is enough to show that $\TV(W_3(\ell'), W'(\ell')) \le n^{-6}.$
    Since $G$ is $(V_0,V_1)$-mixing in time $t_0 \le \ell',$ setting $j' = (i + \ell') \pmod 2,$ we have
    \begin{equation} \plabel{eq:w_3}
        \Pr[W_3(\ell') = w] = d(w) / d(V_{j'}) \pm n^{-10}, \forall w \in V_{j'}.
    \end{equation}

    Let $W_4 \sim \cW^{\ell}(a)$. We have
    \begin{align*}
        \Pr[W'(\ell') = w] &= \frac{\Pr[W_4(\ell') = w \land W_4(\ell) = b]}{\Pr[W_4(\ell) = b]} = \frac{\Pr[W_4(\ell') = w] \cdot \Pr[W_4(\ell) = b \, \vert \, W_4(\ell') = w]}{\Pr[W_4(\ell) = b]}\\
        &= \frac{\big(d(w) / d(V_{j'})  \pm n^{-10}\big) \cdot \big(d(b) / d(V_j) \pm n^{-10}\big)}{d(b) / d(V_j) \pm n^{-10}} = d(w) / d(V_{j'}) \pm n^{-8}.
    \end{align*}

    Combining with ~\eqref{eq:w_3} and summing over all $w \in V_{j'},$ we have $\TV(W'(\ell'), W_3(\ell')) \le n (n^{-8} + n^{-10}) < n^{-6},$ finishing the proof.
\end{proof}

For a walk $W,$ let $\overline{W}$ denote its reversal. Next, we prove that in a nearly regular graph, the distribution of a random walk from $a$ to $b$ is very close to the distribution of the reversal of a random walk from $b$ to $a$.

\begin{lemma} \plabel{lem:reverse-walk}
    Let $G$ be a $(d \pm d')$-nearly regular $n$-vertex graph with $d' \le d/4$ and let $V_0, V_1 \subseteq V(G)$ be such that $G$ is $(V_0, V_1)$-mixing in time $t_0$. Let $i \in \{0, 1\}$, let $\ell \ge t_0$ and let $j = (i+\ell)\pmod 2$. Let $a \in V_i, b \in V_{j}$. Let $P_1 \sim \cP_{G}^{\ell}(a, b)$ and $P_2 \sim \cP_{G}^{\ell}(b, a)$. Then, for any $a-b$ walk $P$ of length $\ell$, it holds that
    \[ \frac{\Pr[P_1 = P]}{\Pr[\overline{P_2} = P]} = 1 \pm 100d' / d \pm n^{-8}. \]
\end{lemma}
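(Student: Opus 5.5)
Write $P = (v_0, v_1, \dots, v_\ell)$ with $v_0 = a$ and $v_\ell = b$. The plan is to compute both probabilities explicitly as a product of inverse degrees divided by a normalising endpoint probability, and then compare the two expressions factor by factor.

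\textbf{Step 1: write both probabilities explicitly.} By definition of $\cP^\ell(a,b)$,
\[ \Pr[P_1 = P] = \frac{\prod_{k=0}^{\ell-1} d(v_k)^{-1}}{\Pr_{W \sim \cW^\ell(a)}[W(\ell) = b]}. \]
The event $\overline{P_2} = P$ is the event $P_2 = \overline{P}$, so
\[ \Pr[\overline{P_2} = P] = \frac{\prod_{k=1}^{\ell} d(v_k)^{-1}}{\Pr_{W \sim \cW^\ell(b)}[W(\ell) = a]}. \]

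\textbf{Step 2: take the ratio.} All interior factors cancel, and the ratio becomes
\[ \frac{d(b)}{d(a)} \cdot \frac{\Pr_{\cW^\ell(b)}[W(\ell)=a]}{\Pr_{\cW^\ell(a)}[W(\ell)=b]}. \]
Since $\ell \ge t_0$ and $a \in V_i$, $b \in V_j$, the mixing assumption gives
\[ \Pr_{\cW^\ell(a)}[W(\ell)=b] = \frac{d(b)}{d(V_j)} \pm n^{-10} \quad\text{and}\quad \Pr_{\cW^\ell(b)}[W(\ell)=a] = \frac{d(a)}{d(V_i)} \pm n^{-10}. \]
For the second estimate, note that $i = (j+\ell) \pmod 2$, so the roles of the parts are correctly aligned. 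Because $d(a), d(b) \ge 3d/4$ and $d(V_j) \le n(d+d')$, both main terms are at least roughly $1/(2n)$. The additive errors of $n^{-10}$ therefore become multiplicative factors $1 \pm O(n^{-9})$, which is well within $n^{-8}$.

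Substituting these estimates, the ratio becomes
\[ \frac{d(b)}{d(a)} \cdot \frac{d(a)/d(V_i)}{d(b)/d(V_j)} \cdot (1 \pm O(n^{-9})) = \frac{d(V_j)}{d(V_i)} \cdot (1 \pm O(n^{-9})). \]
The degree factors $d(a)$ and $d(b)$ cancel exactly. This is the expected reversibility of the walk up to the normalisation of the part-weights.

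\textbf{Step 3: bound $d(V_j)/d(V_i)$.} If $V_0 = V_1$, the ratio is exactly $1$. In the bipartite case, $|V_0| = |V_1|$ and every degree lies in $d \pm d'$, so
\[ \frac{d(V_j)}{d(V_i)} \in \left[\frac{d-d'}{d+d'}, \frac{d+d'}{d-d'}\right] \subseteq 1 \pm 4d'/d, \]
using $d' \le d/4$. One could alternatively use that a bipartite graph has $d(V_0) = d(V_1) = e(G)$, which gives exactly $1$; either way the bound holds.

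\textbf{Step 4: combine.} Multiplying the two estimates yields $1 \pm 100d'/d \pm n^{-8}$. The only step needing any care is checking that the additive mixing error converts to a multiplicative one, which follows from the lower bound on the stationary probabilities in Step 2. I expect no real obstacle beyond this bookkeeping.
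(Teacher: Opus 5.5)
Your proposal is correct and follows essentially the same route as the paper: write each probability as a product of inverse degrees divided by the endpoint probability, estimate that normaliser via the mixing assumption, and compare. The only difference is that you cancel $d(a),d(b)$ exactly and use $d(V_0)=d(V_1)$, whereas the paper absorbs these into $1\pm O(d'/d)$ factors. In fact the reversibility identity $d(a)\Pr_{W\sim\cW^\ell(a)}[W(\ell)=b]=d(b)\Pr_{W\sim\cW^\ell(b)}[W(\ell)=a]$ (obtained by summing your Step 1 weights over all $a$--$b$ walks of length $\ell$) shows the ratio is exactly $1$, so neither mixing nor near-regularity is actually needed.
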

\begin{proof}
    Let $W_1 \sim \cW^\ell(a)$ and $W_2 \sim \cW^\ell(b)$. Then, 
    \begin{align*}
        \Pr[P_1 = P] &= \Pr[W_1 = P] / \Pr[W_1(\ell) = b] = \left(\prod_{i=0}^{\ell-1} d(P(i))^{-1}\right) \cdot (d(b) / d(V_{j}) \pm n^{-10})^{-1}\\
        &= \left(\prod_{i=0}^{\ell-1} d(P(i))^{-1}\right) \cdot (1 \pm 4d' / d \pm n^{-9}) |V_j|,
    \end{align*}
    and analogously
    \begin{align*}
        \Pr[\overline{P_2} = P] &= \Pr[\overline{W_2} = P] / \Pr[W_2(\ell) = a] = \left(\prod_{i=1}^{\ell} d(P(i))^{-1}\right) \cdot (d(a) / d(V_i) \pm n^{-10})^{-1}\\
        &= \left(\prod_{i=1}^{\ell} d(P(i))^{-1}\right) \cdot (1 \pm 4d' / d \pm n^{-9}) |V_i|,
    \end{align*}

    Since $G$ is $(d \pm d')$-nearly regular, it follows that
    \[ \frac{\Pr[P_1 = P]}{\Pr[\overline{P_2} = P]} = 1 \pm 100d' / d \pm n^{-8}. \]

    
    

\end{proof}

Next, we show that a sufficiently long random walk with prescribed endpoints is unlikely to hit any particular vertex.
\begin{lemma} \plabel{lem:path-hits-vtx}
    Let $G$ be an $n$-vertex $(d \pm d')$-nearly regular graph with $d' \le d/100$ that is $(V_0,V_1)$-mixing in time $t_0$. Let $i \in \{0, 1\}$ and $\ell$ be given integers such that $2t_0 \le \ell$ and let $a, b$ be given vertices in $V_i$ and $V_{i+\ell}$, respectively. Then, for any $t \in [1, \ell-1]$ and any $v \in V_t$,
    \[ \Pr_{P \sim \cP^\ell(a, b)}[P(t) = v] \le 3/d. \]
\end{lemma}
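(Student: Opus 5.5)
By symmetry under reversal (Lemma~\ref{lem:reverse-walk}), it suffices to handle $t \le \ell/2$; for $t > \ell/2$ apply the argument to $\cP^\ell(b,a)$ at time $\ell - t$. The reversal lemma only changes probabilities by a factor $1 \pm 100d'/d \pm n^{-8}$, which is at most $1.01$ here since $d' \le d/100$.

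So fix $t \le \ell/2$, so that $\ell - t \ge \ell/2 \ge t_0$. Let $W \sim \cW^\ell(a)$. By Bayes,
\[ \Pr_{P\sim\cP^\ell(a,b)}[P(t)=v] = \frac{\Pr[W(t)=v]\cdot \Pr_{W'\sim\cW^{\ell-t}(v)}[W'(\ell-t)=b]}{\Pr[W(\ell)=b]}. \]
Since $\ell - t \ge t_0$ and $\ell \ge t_0$, the mixing property gives numerator second factor $d(b)/d(V_j) \pm n^{-10}$ and denominator $d(b)/d(V_j) \pm n^{-10}$, where $j = i+\ell$ and the parities match. Because $d(b)/d(V_j) \ge (d-d')/((d+d')n) \gg n^{-10}$, the ratio of these two is $1 + O(n^{-8})$. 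It therefore remains to show $\Pr[W(t)=v] \le (2.9)/d$, say.

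To bound $\Pr[W(t)=v]$ for $t \ge 1$, condition on $W(t-1)=u$. Then
\[ \Pr[W(t)=v] = \sum_{u \sim v}\Pr[W(t-1)=u]/d(u) \le \frac{1}{d-d'}\sum_{u\sim v}\Pr[W(t-1)=u] \le \frac{1}{d-d'}. \]
Since $d' \le d/100$, this is at most $1.02/d$. Combining with the factor $1+O(n^{-8})$ gives at most $1.03/d$ for $t\le \ell/2$. For $t > \ell/2$, the reversal costs another factor $1.01$, giving well under $3/d$.

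The only real subtlety is ensuring the mixing estimate applies in the conditional factor, i.e.\ that $\ell - t \ge t_0$, which is exactly why we split at $\ell/2$ and use reversal. One can avoid Lemma~\ref{lem:reverse-walk} entirely by writing the symmetric formula with the walk from $b$ when $t>\ell/2$. Alternatively, one can note directly that $\Pr[W(t)=v]$ is bounded by $1/(d-d')$ while the mixing bound handles the tail, using the time-reversal identity $\Pr[\cW^{s}(v)\text{ ends at } b] \approx (d(b)/d(v))\Pr[\cW^{s}(b)\text{ ends at } v]$ to swap roles. Neither route has a genuine obstacle; the constant $3$ leaves ample slack.
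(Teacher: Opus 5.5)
Your proof is correct and follows the paper's route. For $t \le \ell/2$ the paper obtains the same $1/(d-d')$ bound by invoking Lemma~\ref{lem:combine-two-walks} (additive error $n^{-6}$), while you redo that Bayes-plus-mixing computation inline; for $t > \ell/2$ both arguments use Lemma~\ref{lem:reverse-walk}. One slip: with $d' \le d/100$ the factor $1 \pm 100d'/d \pm n^{-8}$ is only bounded by about $2$, not $1.01$, so for $t > \ell/2$ your bound is roughly $2/(d-d')$ rather than about $1.04/d$. This is still below $3/d$, just as in the paper's estimate $2.5/(d-d')$.
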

\begin{proof}
    Assume first that $t \le \ell / 2$. By Lemma~\ref{lem:combine-two-walks}, we have 
    \[ \Pr_{P \sim \cP^\ell(a,b)}[P(t) = v] \le n^{-6} + \Pr_{W \sim \cW^t(a)}[W(t) = v] \le n^{-6} + 1 / (d-d') \le 2 / (d-d') \le 3/d, \]
    where we used that $\delta(G) \ge d - d'$.

    If $t > \ell / 2$, applying the case $t \le \ell / 2$ to the random walk $\cP^{\ell}(b,a)$ and Lemma~\ref{lem:reverse-walk}, we get 
    \[ \Pr_{P \sim \cP^\ell(a, b)}[P(t) = v] \le (n^{-6} + 1 / (d-d'))  \cdot (1 \pm 100d' / d \pm n^{-8}) \le 2.5 / (d-d') \le 3/d. \]
\end{proof}

Using the previous lemma, we show that a random walk $\cP^{\ell}(a,b)$ is very likely non-degenerate if $\ell$ is larger than the mixing time but small compared to the degree.

\begin{lemma} \plabel{lem:path-degenerate}
    Let $G$ be $(d \pm d')$-nearly regular, where $d' < d / 100$, and suppose that $G$ is $(V_0, V_1)$-mixing in time $t_0$. Given $\ell \ge 4t_0, i \in \{0, 1\}$ and vertices $a \in V_i, b \in V_{i+\ell},$ 
    \[ \Pr_{P \sim \cP^{\ell}(a,b)}[P \text{ is degenerate}] \le 3\ell^2 / d. \]
\end{lemma}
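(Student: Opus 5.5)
A union bound over pairs of positions should suffice. A walk $P$ is degenerate exactly when $P(s)=P(t)$ for some $0 \le s < t \le \ell$. There are at most $\binom{\ell+1}{2}$ such pairs, so it is enough to show that for each fixed pair, $\Pr[P(s)=P(t)] \le 3/d$. That gives the total bound $\binom{\ell+1}{2}\cdot 3/d \le 3\ell^2/d$ for $\ell \ge 1$. Parity only helps: if $t-s$ is odd in the bipartite case the event is impossible, and this does not affect the argument.

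Fix $s<t$. The plan is to condition on the prefix $P([0,s])$, or, if $s=0$, to use that $P(0)=a$ is deterministic; the endpoints are handled separately below. By the Markov property of the bridge $\cP^\ell(a,b)$, once we condition on $P(s)=w$, the remainder $P([s,\ell])$ is distributed as $\cP^{\ell-s}(w,b)$. So $\Pr[P(t)=w \mid P(s)=w]$ is the probability that a bridge of length $\ell-s$ from $w$ to $b$ sits at $w$ at time $t-s$. If $\ell-s \ge 2t_0$ and $1\le t-s\le \ell-s-1$, Lemma~\ref{lem:path-hits-vtx} bounds this by $3/d$, and averaging over $w$ gives the claim. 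For these lemma applications I also need the bridges to exist; this follows from mixing, since every $a'\in V_{i'}$, $b'\in V_{i'+\ell'}$ with $\ell'\ge t_0$ have positive transition probability.

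The case analysis is where $\ell\ge 4t_0$ enters, and I expect it to be the main bookkeeping obstacle. If $s \le \ell/2$, then $\ell - s \ge \ell/2 \ge 2t_0$, so the argument above applies whenever $t<\ell$. If $t=\ell$, the event is $P(s)=b$; for $1\le s\le \ell-1$ this is covered by Lemma~\ref{lem:path-hits-vtx} applied directly to the original bridge (length $\ell\ge 2t_0$), and for $s=0$ it is the event $a=b$, which has probability $0$ or $1$. If $a=b$ the statement is vacuous, or one must argue separately, so I would just note that we may assume $a\ne b$. If $s>\ell/2$, I reverse instead. Lemma~\ref{lem:reverse-walk} says the law of $P$ is within a factor $1\pm 100d'/d\pm n^{-8}$ of the reversal of a $\cP^\ell(b,a)$ bridge. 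In the reversed walk the pair $(s,t)$ becomes $(\ell-t,\ell-s)$ with $\ell-t<\ell/2$, so conditioning on the prefix up to $\ell-t$ leaves a bridge of length $t\ge \ell/2\ge 2t_0$. This gives the bound $3/d\cdot(1+100d'/d+n^{-8})$.

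This last factor slightly exceeds $3/d$. To absorb it I would redo the final inequality of Lemma~\ref{lem:path-hits-vtx} with its slack: the true bound there is $(n^{-6}+1/(d-d'))(1+\ldots) \le 2.5/(d-d')<2.6/d$. Multiplying by $1+100d'/d+n^{-8}\le 2.01$ would be too lossy, so instead I would apply the $t\le\ell/2$ branch of Lemma~\ref{lem:path-hits-vtx} directly to the reversed bridge. Its bound $2/(d-d')\le 2.05/d$, multiplied by $1+100d'/d+n^{-8}\le 2.01$, is at most about $4.2/d$. That is still more than $3/d$, so a cleaner route is to count more carefully: use the $\binom{\ell+1}{2}\le \ell^2/2+\ell/2$ pairs, which gives roughly $4.2/d\cdot\ell^2(1/2+1/(2\ell)) \le 3\ell^2/d$ for $\ell\ge 4t_0$ large. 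In this way the constant slack from the union bound absorbs the reversal loss.
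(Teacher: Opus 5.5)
Your proposal is correct, and for $s\le \ell/2$ it is exactly the paper's argument: condition on $P([0,s])$, apply Lemma~\ref{lem:path-hits-vtx} to $\cP^{\ell-s}(P(s),b)$, then sum over at most $\ell^2$ pairs. The difference is the case $s>\ell/2$. Instead of reversing the whole bridge, the paper conditions on the suffix $P([t,\ell])=\tilde P$. Under this conditioning $P([0,t])$ is distributed as $\cP^{t}(a,\tilde P(0))$, a bridge of length $t>\ell/2\ge 2t_0$, so Lemma~\ref{lem:path-hits-vtx} as stated gives $3/d$ at position $s$ with no further work; any reversal loss is already absorbed into that lemma's constant.

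Your route through Lemma~\ref{lem:reverse-walk} is also valid. However, the factor $1+100d'/d+n^{-8}$ can approach $2$, so it forces you to reopen the proof of Lemma~\ref{lem:path-hits-vtx} and use its first-branch bound. For that you need the relative position $t-s$ to lie in the first half of the sub-bridge of length $t$, which holds because $t\le \ell<2s$. Moreover, the sharp first-branch bound is $n^{-6}+1/(d-d')$, not $2/(d-d')$. This gives $(n^{-6}+1/(d-d'))(2+n^{-8})\le 2.05/d<3/d$ per pair, so your final union-bound-slack step is unnecessary (though it is also correct for $\ell\ge 3$). In short, suffix conditioning buys a one-line citation, while your mirror-image version costs some constant bookkeeping.

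In the other direction, you explicitly treat $t=\ell$ and the pair $(0,\ell)$, and you observe that $a\neq b$ must be assumed, since otherwise every such walk is degenerate. This is more careful than the paper, which only treats pairs with $j<\ell$.
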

\begin{proof}
    Let $P \sim \cP^{\ell}(a,b)$. We will show that for any $0 \le i < j \le \ell$, $\Pr[P(i) = P(j)] \le 3 / d$, implying that the expected number of self-collisions is less than $3\ell^2 / d$ after which the statement follows by Markov's inequality.

    So, fix $i, j$ with $0 \le i < j < \ell$. If $i \le \ell/2,$ then given $P([0, i]) = \tilde{P},$ denote $c = \tilde{P}(i)$. The remainder of the path, $P([i, \ell])$ is distributed as $\cP^{\ell-i}(c, b)$ so by Lemma~\ref{lem:path-hits-vtx}, we have
    \[ \Pr[P(j) = c \mid P([0, i]) = \tilde{P}] \le \frac{3}{d}. \]
    Since $\tilde{P}$ was arbitrary, we have $\Pr[P(j) = P(i)] \le 3/d,$ as needed.

    If $i > \ell/2,$ then $j > \ell/2$ and the argument is analogous, except we condition on $P([j, \ell]) = \tilde{P}$ and apply Lemma~\ref{lem:path-hits-vtx} to $P([0,j] \mid P[j, \ell] = \tilde{P})$ which is distributed as $\cP^j(a, \tilde{P}(0))$.
\end{proof}

\section{The random connecting lemma} \label{sec:connecting}
In this section we prove our main technical result, Lemma~\ref{lem:connections}, which allows us to connect pairs of vertices with short vertex disjoint paths such that the set of vertices appearing in those paths is a random-like set.

\begin{lemma} \plabel{lem:connections}
    Let $G$ be a $(d \pm d')$-nearly regular $n$-vertex graph, where $n$ is sufficiently large, and let $V_0, V_1 \subseteq V(G)$ be such that $G$ is $(V_0, V_1)$-alternating. Let $m = np$ be a given integer and let $(a_i, b_i)_{i \in [m]}$ be given pairs of vertices with $a_i \in V_0, b_i \in V_1, \forall i \in [m]$ and $a_i \neq b_i$ such that the multiset $\{ a_i \mid i \in [m]\}$ is $(V_0, V_1, \alpha)$-bi-uniform and the multiset $\{ b_i \mid i \in [m]\}$ is $(V_1, V_0, \alpha)$-bi-uniform. Let $U \subseteq V(G)$ be a given $\alpha$-uniform set with respect to $G$ containing the set $\bigcup_{i \in [m]} \{a_i, b_i\}$ such that $|U \cap V_0| = |U \cap V_1|$ and $|U| \le n/100$. Assume that for any $\alpha^{7/8}$-uniform set $T \subseteq V(G)$ with $|T \cap V_0| = |T \cap V_1|$ of size $|T| \le |U| + m\ell$, the graph $G \setminus T$ is $(V_0 \setminus T, V_1 \setminus T)$-mixing in time $t_0$. Let $\ell$ be an odd integer such that $\ell \ge 10t_0$. Assume that the following inequalities hold: $d' \le \alpha d, p \le \ell^{-8}, \alpha \le \ell^{-400}, \alpha \ge d^{-1/4}$ and $\ell \ge \log n$. Then, there is a collection of paths $(P_i)_{i \in [m]},$ where for $i \in [m],$ $P_i$ is an $a_i-b_i$ path of length $\ell$, satisfying the following:    
    \begin{itemize}
        \item $\Vint(P_i) \cap \Vint(P_j) = \emptyset$ for all $1 \le i < j \le m$;
        \item $\Vint(P_i) \cap U = \emptyset$ for all $i \in [m]$;
        \item For all $t \in [\ell-1]$, the set $W_t = \{ P_i(t) \mid i \in [m]\}$ is $(V_t, V_{t+1}, \alpha^{1/2})$-bi-uniform.
    \end{itemize}    
\end{lemma}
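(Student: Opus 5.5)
We will prove the lemma by an iterative ``sample, keep the disjoint ones, delete, repeat'' procedure, following the outline in Section~\ref{sec:outline}. The process runs in rounds $r = 0, 1, 2, \dots$. Let $G_0 = G \setminus U'$, where $U'$ consists of $U$ with the endpoints removed, so that the endpoints remain usable as path ends. Let $I_0 = [m]$ be the set of pairs still to be connected.

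In round $r$, for every $i \in I_r$ we independently sample $P_i \sim \cP^{\ell}_{G_r}(a_i,b_i)$. Here $G_r$ is obtained from $G$ by deleting $U$ and the internal vertices of all paths accepted so far. We \emph{accept} $P_i$ if it is non-degenerate and its internal vertices avoid the internal vertices of every other sampled path in this round. We then put $I_{r+1} = I_r \setminus \{\text{accepted}\}$ and let $T_{r+1}$ be $U$ together with all accepted internal vertices.

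The hypothesis that $G\setminus T$ is $(V_0\setminus T,V_1\setminus T)$-mixing in time $t_0$ for $\alpha^{7/8}$-uniform balanced $T$ is exactly what makes every round legal. It requires that each $T_r$ is balanced, which holds because $\ell$ is odd and paths alternate, and that $T_r$ is $\alpha^{7/8}$-uniform. We maintain three invariants throughout:
\begin{itemize}
\item $|I_r| \le p_r n$ with $p_r \le p(\ell^2 p)^{r}$, roughly;
\item the endpoint multisets $\{a_i : i\in I_r\}$ and $\{b_i : i \in I_r\}$ are $\alpha_r$-bi-uniform;
\item for each $t$, the accepted $t$-th vertices so far form a bi-uniform set with error bounded by $\sum_{s<r}\alpha_s$.
\end{itemize}
Here $\alpha_r$ grows slowly (for instance $\alpha_{r+1}\le \alpha_r^{0.99}$, capped so that everything stays below $\alpha^{1/2}$ and $\alpha^{7/8}$). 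Since $p\le\ell^{-8}$, the number of pairs shrinks geometrically, so after $O(\log n/\log \ell)$ rounds fewer than $\alpha d$ pairs remain. A set of fewer than $\alpha d$ vertices is automatically bi-uniform, so these leftover pairs can be connected greedily. We do this with one more round of random paths, using a union bound: each pair fails with probability $O(\ell^2 \cdot \alpha d \ell/n + \ell^2/d)$, or we resample individually. This contributes negligibly to uniformity.

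The core is the single-round analysis. First we show that the $t$-th vertices are near-uniform. Fix $v$ and $t\le \ell/2$. Lemma~\ref{lem:combine-two-walks} shows that $P_i(t)$ is distributed, up to $n^{-6}$, as the $t$-th step of an unconditioned walk from $a_i$. Summing over $i$ and using bi-uniformity of the $a_i$ together with near-regularity (stationarity propagates uniformity step by step), we get that $\E|N(v)\cap\{P_i(t)\}|$ equals $|I_r|\bar d/|V_t|$ up to error $\alpha_r \bar d$ plus tiny terms. For $t>\ell/2$ we argue symmetrically, using Lemma~\ref{lem:reverse-walk} and the $b_i$.

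Next we show that acceptance is nearly independent of the path. Condition on $P_i=P$ for a fixed non-degenerate $P$. The number $X$ of other paths hitting $\Vint(P)$ is a sum of independent indicators, each with probability $O(\ell^2/d)$ by Lemma~\ref{lem:path-hits-vtx}. Its mean is $\mu = |I_r|(\ell-1)^2/|V(G_r)|\cdot(1\pm \alpha_r^{\Omega(1)})$, essentially independent of $P$. Since the events are independent across $j$, $\Pr[X=0]=\prod_j(1-q_j)=e^{-\mu}(1\pm O(\ell^4/d))$ with no need for Janson. Together with Lemma~\ref{lem:path-degenerate}, this gives $\Pr[i\text{ accepted}\mid P_i=P]\approx e^{-\mu}$ for all typical $P$. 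Hence the expected number of accepted $t$-th vertices in $N(v)$ is $e^{-\mu}$ times the previous expectation, so it is uniform. Likewise, the rejected endpoints have expected count $(1-e^{-\mu})$ times that of the current endpoints in $N(v)$, so they stay uniform.

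Finally, concentration. Each count $|N(v)\cap W|$ is a function of the independent paths $(P_j)_{j\in I_r}$. Changing one path changes the count by at most $O(\ell)$, and the count is certifiable by few paths. So Talagrand's inequality, or McDiarmid with a typical-bounded-differences variant, gives deviations of order $\sqrt{\ell\,\E}\,\log n \ll \alpha_r\bar d$ with probability $1-n^{-3}$, since $\alpha\ge d^{-1/4}$. A union bound over $v$, $t$, and rounds then gives the invariants.

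The main obstacle is this concentration step. A naive Lipschitz bound is too weak: a single path can interact with many others, and expectations are only of order $p\bar d$, which can be small relative to $\alpha_r\bar d$ early on only by polynomial factors. We would first try Talagrand's inequality applied to the count of accepted vertices in $N(v)$, splitting it as (hit count) minus (hit-and-rejected), and bounding each part separately. We would also need to check carefully that the accumulated errors $\sum_r\alpha_r$ stay below $\alpha^{1/2}$, given the $\ell^{-400}$ slack.
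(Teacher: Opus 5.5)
Your overall strategy matches the paper's. You sample bridges, keep the non-degenerate ones that avoid all the others, delete them, and iterate, applying Talagrand separately to the ``hit'' and ``hit-and-rejected'' counts. The gap is in the endgame.

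\textbf{The single round has content only while the relative error is small.} Bi-uniformity of the remaining endpoints is an \emph{additive} statement, with error $\alpha_r\bar d(G)$. So the estimates are only meaningful while $\alpha_r n/|I_r|$ is small. This concerns three estimates: $\sum_j\Pr[P_j(s)=v]\approx|I_r|/|V_s|$; ``$\mu$ is essentially independent of $P$''; and the two-sided uniformity of the accepted layers and of the rejected endpoints. Once $|I_r|\lesssim\alpha_r n$, all three become vacuous. This is exactly the condition $\eta<1/C$ in Lemma~\ref{lem:one-bite-probabilistic}: the number of walks must be of order at least $\alpha_r n$. Hence you cannot keep running the same round ``until fewer than $\alpha d$ pairs remain''. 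Between roughly $\alpha^{3/4}n$ and $\alpha d$ remaining pairs, a factor of order $n/d$ apart, your invariants say nothing, and the plan has no argument for this regime.

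\textbf{The round count and error budget are inconsistent.} In the analysis you describe, each round inflates the error by a constant factor. The accumulated forbidden set $T_r$, being a union of $\ell-1$ layers, is inflated by a factor of order $\ell$. Since $T_r$ must stay $\alpha^{7/8}$-uniform to invoke the mixing hypothesis, only $O(\log(\alpha^{-1})/\log\ell)$ rounds are affordable. With your schedule $\alpha_{r+1}\le\alpha_r^{0.99}$ that is about a dozen rounds, yet your geometric estimate uses $O(\log n/\log\ell)$ rounds.

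\textbf{How the paper closes this.} It works in two stages.
\begin{itemize}
\item The two-sided rounds (Lemma~\ref{lem:one-bite}) run only while more than $\alpha^{45/64}n$ pairs remain. The decay is doubly exponential, $|I_{k+1}|\le 4\ell^2|I_k|^2/n+C\alpha_k n$, so only $K=O(\log(\log\alpha^{-1}/\log p^{-1}))$ rounds are needed. With $\alpha_k=(C\ell)^{k-1}\alpha$ one checks that $(C\ell)^K\le\alpha^{-1/16}$.
\item A single final step (Lemma~\ref{lem:last-step}) samples $8C\ell^3$ independent attempts per remaining pair. It pads with dummy pairs so the total number of walks is of order $\ell^3\alpha^{3/4}n$, which restores $\eta<1/C$. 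It then uses only one-sided information: every pair keeps a surviving attempt, and every neighbourhood receives $O(\ell^3\alpha^{3/4}d)$ vertices from each layer. That upper bound suffices because the leftover set is small.
\end{itemize}
Some switch of this kind is missing from your plan: from two-sided to one-sided control, with enough samples to keep the probabilistic lemma valid.

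\textbf{Two smaller points.}
\begin{itemize}
\item Keeping the endpoints in the walk graph breaks the hypotheses. The graph $G\setminus(U\setminus\text{endpoints})$ need not be balanced or uniform, so mixing is not available. Also, your acceptance rule lets internal vertices pass through other pairs' endpoints, which lie in $U$. The paper instead walks in $G\setminus U$ between uniformly random neighbours in $N(a_i)\setminus U$ and $N(b_i)\setminus U$.
\item Lemma~\ref{lem:path-degenerate} gives only an unconditional bound, and $|I_r|\cdot 3\ell^2/d$ can greatly exceed $\alpha d$. You need degeneracy to be unlikely \emph{conditionally} on $P_i(t)=v$. The paper obtains this via the reversed-walk coupling in Claim~\ref{cl:expectations}.
\end{itemize}
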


\begin{rem}
    Though the statement of Lemma~\ref{lem:connections} is deterministic, it can be modified to obtain a distribution on collections of internally vertex-disjoint paths $P_i$ as in the statement, such that for any $t \in [\ell-1]$ and any set of vertices $S \subseteq V_t$, the random variable $|S \cap \{ P_i(t) \mid i \in [m]\}|$ has expectation very close to $|S| m / |V_t|$ and exponentially decaying tails. Additionally, let us note that $\alpha^{1/2}$ in the conclusion of the lemma can be replaced by $\alpha^{1-c}$ for any positive constant $c$ at the cost of requiring stronger inequalities in the assumption.
\end{rem}
As described in the proof outline, to prove Lemma~\ref{lem:connections} we sample the desired paths at random and keep the set of paths which are disjoint from all the other ones. We perform this iteratively to create the remaining paths. Thus, the main driving force behind the proof of Lemma~\ref{lem:connections} is the following crucial lemma which proves that one step of this iteration behaves sufficiently randomly. In the following, Part~\ref{item:pathdistr} shows that the set of vertices appearing in the kept paths is a random-like set, while Part~\ref{item:indexdistr} shows that the set of pairs connected in one step is random-like so that we may indeed iterate this argument.

\begin{lemma} \plabel{lem:one-bite-probabilistic}
    Let $G$ be a $(d \pm d')$-nearly regular $n$-vertex graph and let $V_0, V_1 \subseteq V(G)$ be such that $G$ is $(V_0, V_1)$-mixing in time $t_0$. Let $m \le n / (8 (\ell+1)^2)$ and suppose we are given sets $A_i, B_i, i \in [m]$ such that $A_i \subseteq V_0, B_i \subseteq V_1$ with $|A_i| = d \pm d'$ and $|B_i| = d \pm d'$ for all $i \in [m]$. Furthermore, assume that for all $v \in V_0, |\{ i \in [m] \mid v \in A_i\}| = d m / |V_0| \pm d'$ and similarly, for all $v \in V_1, |\{ i \in [m] \mid v \in B_i\}| = d m / |V_1| \pm d'$.

    Let $\ell \ge 8t_0$ be an odd integer. For each $i \in [m],$ let $a_i$ and $b_i$ be independently uniformly random vertices from $A_i$ and $B_i$, respectively. Let $P_i \sim \cP^{\ell}_G(a_i, b_i)$. Let
    \[ J_0 = \{ i \in [m] \mid P_i \text{ is non-degenerate}\} \text{ and } J = \{ i \in J_0 \mid P_i \cap P_j = \emptyset, \forall j \in [m], j \neq i \}. \]

    Denote $q \coloneqq e^{-(\ell+1)^2 m / n}$ and $\eta = \max\{  d' |V_0| / (md), \ell^2 / d\}$. There is an absolute constant $C > 0$ such that if $\ell \ge C$ and $\eta < 1/C$, then the following hold:
    \begin{enumerate}[label=\alph*)]
        \item \plabel{item:pathdistr} For any $t \in [0, \ell]$, any $S \subseteq V_t$ and any $\tau > 0,$ writing $\mu \coloneqq q m |S| / |V_t|$, we have
        \[ \Pr\left[ \big| |\{ P_i(t) \mid i \in J\} \cap S | - \mu \big|  > \tau + C(\eta \mu + \ell \sqrt{\mu} + \ell^2)\right] \le 8 \exp\left(\frac{-\tau^2}{C \ell^2 (\mu + \tau)}\right). \]
        \item \plabel{item:indexdistr} Additionally, for any set $I \subseteq [m]$ and any $\tau > 0,$
        \[ \Pr[| |J \cap I| - q|I|| > \tau + C (\eta|I| + \ell \sqrt{|I|} + \ell^2)] \le 8\exp\left( \frac{-\tau^2}{C \ell^2 (|I| + \tau)} \right). \]
    \end{enumerate}
\end{lemma}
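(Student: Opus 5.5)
The strategy is to prove both parts via a first-moment computation, with independence across indices supplying the concentration. For every index $i$ and every path $P$ that is possible for $P_i$, we show that
\[ \Pr[i \in J \mid P_i = P] = q(1 \pm O(\eta)) \]
for every non-degenerate $P$. Concentration then comes from Talagrand's inequality (or McDiarmid with a certificate argument), using that the $P_i$ are independent.

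\textbf{Step 1 (marginal of $P_i(t)$).} First we show that for each $t\in[1,\ell-1]$ and $v\in V_t$, averaging over $i$ gives $\sum_i \Pr[P_i(t)=v] = \frac{m}{|V_t|}(1\pm O(\eta))$. For $t\le \ell/2$ we use Lemma~\ref{lem:combine-two-walks}: $P_i(t)$ is then distributed, up to $n^{-6}$, as the $t$-th vertex of a random walk started at a uniform vertex of $A_i$. The distribution of $\sum_i \mathbf 1[a_i=u]$ is $\frac{1}{d\pm d'}\cdot(\frac{dm}{|V_0|}\pm d')=\frac{m}{|V_0|}(1\pm O(\eta))$. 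A nearly-stationary initial distribution stays nearly stationary under the walk, because in a nearly regular graph the stationary measure is uniform up to $1\pm O(d'/d)$. This is fine provided $\ell d'/d \lesssim \eta$; note that $d'/d\le \eta m/|V_0|$, and if that is not small enough we instead absorb the drift into $\ell^2/d\le\eta$. For $t>\ell/2$ the same argument applies to $B_i$ via Lemma~\ref{lem:reverse-walk}. The cases $t=0,\ell$ are immediate.

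\textbf{Step 2 (Poisson paradigm, the main obstacle).} Fix $i$ and condition on $P_i=P$, where $P$ is non-degenerate. The other paths $P_j$ remain independent. Let $X_j$ be the indicator that $P_j$ meets $P$ or that $P_j$ is degenerate. Then $i\in J$ if and only if $X_j=0$ for all $j\ne i$, so $\Pr[i\in J\mid P_i=P]=\prod_{j\ne i}(1-\Pr[X_j=1])$ exactly, by independence. This is a product rather than a genuine Poisson approximation, and I would use it directly.

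For each $j$, Lemma~\ref{lem:path-degenerate} bounds $\Pr[P_j\text{ degenerate}]\le 3\ell^2/d$. However, summing this over $j$ gives $m\ell^2/d$, which is too large. So we need the definition of $J$ to be read carefully: $J$ requires only $i\in J_0$ and disjointness from the other paths. Consequently $X_j$ should be just the indicator that $P_j\cap P\ne\emptyset$, and degeneracy of $P_j$ matters only for $j$ itself.

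Next, $\Pr[P_j\cap P\neq\emptyset]=\sum_{s,u}\Pr[P_j(s)=P(u)]\pm(\text{pairs of hits})$. Summing Step~1 over $j$, this gives a total of $\sum_j \Pr[X_j=1]=(\ell+1)^2 m/n\,(1\pm O(\eta))$, up to parity bookkeeping: only positions with matching sides contribute, and $|V_t|=n/2$ in the bipartite case balances this. Double hits are bounded via Lemma~\ref{lem:path-hits-vtx} iterated, contributing $O(\ell^4/d)$ per $j$. Individual probabilities are at most $3(\ell+1)^2/d$, so $\prod(1-x_j)=\exp(-\sum x_j\pm\sum x_j^2)$. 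This yields $q(1\pm O(\eta))$ after multiplying by $\Pr[P\text{ non-degenerate}]\ge 1-3\ell^2/d$.

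The delicate point is that Step~1 gives a bound averaged over $j$, uniformly in the vertex, and that is exactly what is needed here, since $P$ is fixed. Summing over $i$ with $i\in I$, or with $P_i(t)\in S$, then gives expectations $q|I|(1\pm O(\eta))$ and $\mu(1\pm O(\eta))+O(\ell^2)$, where the additive terms absorb $n^{-6}$ errors and the self-index term.

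\textbf{Step 3 (concentration).} The quantity $Z=|\{P_i(t):i\in J\}\cap S|$ is a function of the independent $P_1,\dots,P_m$. Changing a single $P_k$ can alter membership in $J$ only for $k$ itself and for indices whose paths meet the old or new $P_k$. For a typical configuration this is $O(\ell)$ indices, since collisions are few. We use the certificate version of Talagrand's inequality: the event $Z\ge r$ is certified by $r$ paths together with non-intersection, which is not a small certificate. So instead, as is standard, we write $Z=Z_1-Z_2$, where $Z_1$ counts $i$ with $P_i(t)\in S$ and $Z_2$ counts those that additionally collide. $Z_1$ is a sum of independent indicators, handled by Chernoff. $Z_2$ is certified by $r$ colliding pairs, i.e.\ $2r$ paths, and is $O(\ell)$-Lipschitz once we truncate on the high-probability event that no vertex is hit by more than $O(\ell)$ paths. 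Talagrand then gives deviation $\tau$ with probability $\exp(-\tau^2/(C\ell^2(\mu+\tau)))$, with the $\ell\sqrt\mu$ term arising from the median-versus-mean gap. Part b) follows identically with $S$ replaced by the index set $I$.
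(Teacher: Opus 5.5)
Your architecture is the same as the paper's. Both compute the marginals of $P_i(t)$ from mixing plus near-stationarity of the averaged start. Both use the exact product $\prod_{j\neq i}(1-\Pr[P_j\cap P\neq\emptyset])$ over the independent other paths, with first- and second-moment estimates. Both apply Talagrand to a difference $X_1-X_2$.

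There is, however, a genuine gap in the expectation for part a). Conditioning on $P_i=P$ gives $\E[Z]=q(1\pm O(\eta))\sum_i\Pr[P_i(t)\in S\wedge P_i\text{ non-degenerate}]$. So you need, for each $v\in V_t$, that $\sum_i\Pr[P_i(t)=v\wedge P_i\text{ degenerate}]=O(\eta)\,m/|V_t|$. In other words, degeneracy must be rare \emph{conditionally on} $P_i(t)=v$.

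The unconditional bound $3\ell^2/d$ from Lemma~\ref{lem:path-degenerate} only gives an additive error of order $m\ell^2/d$. For $|S|\approx d$, the case used in Lemma~\ref{lem:one-bite} where $\mu\approx md/n$, this exceeds the allowed $C(\eta\mu+\ell\sqrt{\mu}+\ell^2)$ once $m\gg d$.

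Nor can you fix $i$, condition on $P_i(t)=v$, and apply Lemma~\ref{lem:path-degenerate} to the two halves. For small $t$ the first half is a short, non-mixing bridge $\cP^t(a_i,v)$, and for an individual $i$ the conditional degeneracy probability can be of order $1$. For example, take $t=2$, $v\in A_i$, and no other vertex of $A_i$ within distance $2$ of $v$.

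The paper closes this by averaging over the index and the start simultaneously. It shows that $(j,P)$ conditioned on $P(t)=v$ is, within a factor $1\pm O(\eta)$, distributed as a walk $Q$ built as follows: reverse a plain random walk of length $t$ from $v$; take a uniform index $j$ with $Q(0)\in A_j$; take a uniform $b\in B_j$; then append a bridge $\cP^{\ell-t}(v,b)$. Degeneracy of $Q$ is then bounded directly. This construction uses the condition on $|\{i: v\in A_i\}|$ a second time, and it is the missing ingredient.

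Two smaller points.

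\begin{itemize}
\item Your double-hit bound ``$O(\ell^4/d)$ per $j$'' is too weak as stated. Summed over $m$ indices it is not $O(\eta)$, yet you need $\sum_{j\neq i}\Pr[P_j\cap P\neq\emptyset]$ correct up to an additive $O(\eta)$. The bound must be relative. Writing $N_j$ for the number of visits of $P_j$ to $V(P)$, you need $\E\binom{N_j}{2}\le \E[N_j]\cdot O(\ell^2/d)$. Get this by conditioning on the first hit and applying Lemma~\ref{lem:path-hits-vtx}, reversing via Lemma~\ref{lem:reverse-walk} when the first hit lies past the midpoint. Summing with Step~1 then gives $O(\ell^2/d)\cdot(\ell+1)^2m/n=O(\eta)$.
\item The truncation in Step~3 is unnecessary, and it does not fit Theorem~\ref{thm:talagrand} as stated. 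The collision count is deterministically $(\ell+2)$-Lipschitz. Each vertex of the old, respectively new, version of the changed path can switch the collision status of at most one other path, since two other paths through that vertex collide with each other anyway.
\end{itemize}
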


In the above lemma, one should think of $q$ as the proportion of paths that we keep -- indeed, property \ref{item:indexdistr} applied with $I=[m]$ shows that we keep roughly $qm$ paths. Then part \ref{item:pathdistr} shows that each set $S$ is hit roughly as many times by the surviving paths as one would expect if the paths consisted of random vertices.

To show concentration, we shall use the following version of Talagrand's inequality.

\begin{theorem}[Talagrand's inequality~{\cite[Lemma~17]{harutyunyan2025delta}}] \plabel{thm:talagrand}
    Let $X$ be a random variable supported on the nonnegative integers, determined by $n$ independent trials and satisfying the following for some integers $c, r \ge 1$:
    \begin{enumerate}[label=(\roman*)]
        \item changing the outcome of any one trial can affect $X$ by at most $c$, and
        \item for every integer $s \ge 0$, if $X \ge s$, then there is a set of at most $rs$ trials whose outcomes certify that $X \ge s$.
    \end{enumerate}
    Then, for any real $\tau > 126 c \sqrt{r \E[X]} + 344c^2 r,$ we have
    \[ \Pr\left[ |X - \E[X]| > \tau\right] \le 4\exp\left( \frac{-\tau^2}{32c^2 r(\E[X] + \tau)} \right). \]
\end{theorem}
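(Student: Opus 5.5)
The plan is to derive the statement from Talagrand's convex distance inequality on the product space of the $n$ independent trials, following the approach of Molloy and Reed. For $x$ in the product space $\Omega=\prod_i\Omega_i$ and $A\subseteq\Omega$, write $d_T(x,A)=\sup_{\alpha}\inf_{y\in A}\sum_{i:x_i\neq y_i}\alpha_i$, where the supremum is over unit vectors $\alpha\ge 0$. Talagrand's inequality states
\[ \Pr[A]\cdot\Pr[d_T(x,A)\ge t]\le e^{-t^2/4}. \]
The argument first proves concentration of $X$ about a median $M$, and then transfers it to concentration about $\E[X]$.

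\textbf{Step 1: a two-point tail inequality.} Fix reals $b<s$ with $s$ a nonnegative integer, and let $A=\{X\le b\}$. Take $y$ with $X(y)\ge s$, and let $I$ be a certificate for $X(y)\ge s$ with $|I|\le rs$. Put $\alpha_i=|I|^{-1/2}$ for $i\in I$ and $\alpha_i=0$ otherwise. For any $x\in A$, define $z$ to agree with $y$ on $I$ and with $x$ off $I$.
\begin{itemize}
\item Since $z$ agrees with $y$ on the certificate $I$, we have $X(z)\ge s$.
\item The points $z$ and $x$ differ only on $\{i\in I: x_i\neq y_i\}$. By (i), each of these coordinates changes $X$ by at most $c$, so $X(z)\le X(x)+c\,|\{i\in I:x_i\ne y_i\}|$.
\end{itemize}
Combining the two gives $|\{i\in I:x_i\neq y_i\}|\ge (s-b)/c$. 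Hence
\[ d_T(y,A)\ge \frac{s-b}{c\sqrt{rs}}, \]
and Talagrand's inequality yields
\[ \Pr[X\le b]\,\Pr[X\ge s]\le \exp\!\left(-\frac{(s-b)^2}{4c^2 rs}\right). \]

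\textbf{Step 2: tails about the median.}
\begin{itemize}
\item Upper tail: take $b=M$ and $s=M+\tau$. Since $\Pr[X\le M]\ge 1/2$, this gives $\Pr[X\ge M+\tau]\le 2\exp\!\big(-\tau^2/(4c^2r(M+\tau))\big)$.
\item Lower tail: take $s=M$ and $b=M-\tau$. Since $\Pr[X\ge M]\ge1/2$, this gives $\Pr[X\le M-\tau]\le 2\exp\!\big(-\tau^2/(4c^2rM)\big)$.
\end{itemize}

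\textbf{Step 3: comparing $M$ with $\E[X]$.} Because $X\ge0$, Markov's inequality gives $M\le 2\E[X]$. Integrating the two tail bounds of Step 2 gives $|\E[X]-M|\le \int_0^\infty \Pr[|X-M|>u]\,du$. Splitting this integral at $u\approx c\sqrt{rM}$ bounds it by $K_1c\sqrt{r\E[X]}+K_2c^2r$ for explicit constants $K_1,K_2$.

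\textbf{Step 4: conclusion.} Write $\Delta=|\E[X]-M|$. The hypothesis $\tau>126c\sqrt{r\E[X]}+344c^2r$ is meant to guarantee $\Delta\le\tau/2$. Then $|X-\E[X]|>\tau$ forces $|X-M|>\tau/2$. Applying Step 2 with $\tau/2$, and bounding $M+\tau/2\le 2(\E[X]+\tau)$ using $M\le\E[X]+\Delta$, gives
\[ \Pr[|X-\E[X]|>\tau]\le 4\exp\!\left(-\frac{(\tau/2)^2}{4c^2r\cdot 2(\E[X]+\tau)}\right)=4\exp\!\left(-\frac{\tau^2}{32c^2r(\E[X]+\tau)}\right), \]
which is the claim.

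The main obstacle is the constant bookkeeping in Step 3. I have not verified that the integration actually produces constants small enough that $126$ and $344$ ensure $\Delta\le\tau/2$. If they do not, I would first sharpen Step 3 by integrating the two tails separately (the lower-tail denominator is $M$ rather than $M+\tau$), before changing anything else. Since the paper cites this exact form from \cite{harutyunyan2025delta}, deferring to that reference for the precise constants is also acceptable.
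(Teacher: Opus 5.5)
Your argument is correct. It is the standard Molloy--Reed derivation: the convex distance inequality gives concentration about a median, and a mean--median comparison transfers it to $\E[X]$. The paper gives no proof of its own and simply imports the statement from \cite{harutyunyan2025delta}, so you are supplying the argument behind the citation.

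Two points need tightening. First, hypothesis (ii) only applies to integer $s$, so in the upper tail of Step~2 you should take $s=\lceil M+\tau\rceil$. This is harmless: $\Pr[X\ge M+\tau]=\Pr[X\ge \lceil M+\tau\rceil]$, and $s\mapsto (s-M)^2/s$ is increasing for $s\ge M\ge 0$.

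Second, the constant bookkeeping you were worried about has a large margin. Write $\E[X]-M=\E[(X-M)^+]-\E[(M-X)^+]$, so $\Delta$ is at most the larger of the two terms. Split the upper-tail integral at $u=M$, using $M+u\le 2M$ for $u\le M$ and $M+u\le 2u$ for $u\ge M$. This gives
\[ \E[(X-M)^+]\le \sqrt{8\pi}\,c\sqrt{rM}+16c^2r,\qquad \E[(M-X)^+]\le 2\sqrt{\pi}\,c\sqrt{rM}. \]
With $M\le 2\E[X]$ you get $\Delta\le 4\sqrt{\pi}\,c\sqrt{r\E[X]}+16c^2r$, which is far below $\tau/2$ under the stated hypothesis on $\tau$.

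In Step~4 you can also use $M+\tau/2\le \E[X]+\Delta+\tau/2\le \E[X]+\tau$. This gives the bound with $16$ in place of $32$, so the stated inequality follows with room to spare.
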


\begin{proof}[Proof of Lemma~\ref{lem:one-bite-probabilistic}]
    In the proof of this lemma, asymptotic notation is as $\eta$ tends to $0$.
    \begin{claim} \plabel{cl:expectations}
        For any $v \in V_t$, the following hold.
        \begin{equation} \plabel{eq:v-is-hit}
            \sum_{i=1}^m \Pr[P_i(t) = v] = \frac{m}{|V_t|} \cdot (1 \pm O(\eta)).
        \end{equation}

        \begin{equation} \plabel{eq:v-is-hit-by-non-deg}
            \sum_{i=1}^m \Pr[P_i(t) = v \land P_i \text{ is non-degenerate}] = \frac{m}{|V_t|} \cdot (1 \pm O(\eta)).
        \end{equation}
        \end{claim}
    \begin{proof}
        We will prove the statement in the case $t \le \ell / 2.$ The case $t > \ell / 2$ then follows by Lemma~\ref{lem:reverse-walk} since the conditions are symmetric with respect to $A_i, B_i$ and $V_0, V_1$ and since $(1 \pm 100d' / d \pm n^{-8})(1 \pm O(\eta)) = 1 \pm O(\eta). $ (Note that Lemma \ref{lem:reverse-walk} applies since $d'\leq \eta d\leq d/C$ and $C$ is large.)
    
        Let $v \in V_t$ be arbitrary. Let $P$ be a random walk defined as follows. Sample $j \in [m]$ uniformly at random, let $a$ and $b$ be uniformly random vertices in $A_j$ and $B_j$, respectively and let $P \sim \cP^{\ell}(a, b).$ Thus, showing (\ref{eq:v-is-hit}) is equivalent to showing that
        \[ \Pr[P(t) = v] = |V_t|^{-1} (1 \pm O(\eta)). \]
    
        Let $P'$ be a random walk generated as follows. Let $a'$ be a random vertex in $V_0$, where $\Pr[a' = u] = d(u) / d(V_0)$, for all $u \in V_0$. Then, let $j'$ be a uniformly random index such that $a' \in A_{j'}$ and let $b'$ be a uniformly random vertex in $B_{j'}$. Finally, let $P' \sim \cP^\ell(a', b')$.

        Having chosen $a'$, let $W \sim \cW^t(a')$. By Lemma~\ref{lem:combine-two-walks}, conditioning on $a'$ and $b'$, $\TV(P'(t), W(t)) \le n^{-6}.$ Since $a'$ is distributed proportional to $d(a'),$ we have that $\Pr[W(t) = v] = d(v) / d(V_t)$ for any $v \in V_t$. Therefore, for any $v \in V_t$,
        \begin{equation} \plabel{eq:prob-P'}
            \Pr[P'(t) = v] = d(v) / d(V_t) \pm n^{-6}.
        \end{equation}

        Next, we show that $P'$ and $P$ have a similar distribution. Consider arbitrary $i \in [m]$, $x \in A_i, y \in B_i.$ We have
        \begin{align*}
            \Pr[j' = i \land a' = x \land b' = y] &= \frac{d(x)}{d(V_0)} \cdot |\{i' \mid x \in A_{i'}\}|^{-1} \cdot |B_i|^{-1} = (1 \pm 4d'  / d) \frac{1}{|V_0|} (dm / |V_0| \pm d')^{-1} (d \pm d')^{-1}\\
            &= \frac{1}{d^2 m} \cdot (1 \pm O(\eta)).
        \end{align*}

        On the other hand,
        \begin{align*}
            \Pr[j = i \land a = x \land b = y] = \frac{1}{m} |A_i|^{-1} |B_i|^{-1} = \frac{1}{d^2 m} \cdot (1 \pm O(\eta)).
        \end{align*}

        Note that given $a = a', b = b',$ the walks $P'$ and $P$ are both distributed as $\cP^\ell(a,b)$. Hence, 
        \begin{align*}
            \Pr[P(t) = v] &= \sum_{i=1}^m \sum_{x \in A_i, b \in B_i} \Pr[j = i \land a = x \land b = y] \cdot \Pr[P(t) = v \mid a = x \land b = y]\\
            &= \sum_{i=1}^m \sum_{x \in A_i, b \in B_i} (1 \pm O(\eta)) \Pr[j' = i \land a' = x \land b' = y] \cdot \Pr[P'(t) = v \mid a' = x \land b' = y]\\
            &= (1 \pm O(\eta)) \Pr[P'(t) = v] = (1 \pm O(\eta)) \frac{d(v)}{d(V_t)}=(1 \pm O(\eta)) |V_t|^{-1}.
        \end{align*}
        
        Next, we show that for any $v \in V_t,$ it holds that $\Pr[P \text{ is degenerate} \mid P(t) = v] = O(\eta)$, which will imply (\ref{eq:v-is-hit-by-non-deg}). Let us fix $v \in V_t$.
        
        We produce a random walk $Q$ such that $Q(t) = v$ as follows. Sample $Q_0 \sim \cW^t(v)$ and let $Q_1 = \overline{Q_0}$. Denote $a_Q = Q_1(0)$ and let $j_Q$ be a uniformly random index $i$ such that $a_Q \in A_i$. Let $b_Q$ be a uniformly random vertex in $B_i$. Finally, sample $Q_2 \sim \cP^{\ell-t}(v, b_Q)$ and let $Q$ be the concatenation of $Q_1$ and $Q_2$.

        First, let us show that the distribution of $Q$ is close to the conditional distribution $\{ P \mid P(t) = v \}.$ To this end, consider an arbitrary index $i \in [m]$ and an arbitrary $\ell$-path $\tilde{P}$ such that $\tilde{P}(0) \in A_i, \tilde{P}(\ell) \in B_i$ and $\tilde{P}(t) = v$. We have
        \begin{align*}
            \Pr[j = i \land P = \tilde{P}] &= \frac{1}{m} |A_i|^{-1} |B_i|^{-1} \cdot \Pr[P = \tilde{P} \mid P(0) = \tilde{P}(0) \land P(\ell) = \tilde{P}(\ell)]\\
            &= \frac{1}{m}(d \pm d')^{-2} \cdot \frac{\Pr_{W \sim \cW^\ell_{\tilde{P}(0)}}[W = \tilde{P}]}{\Pr_{W \sim \cW^\ell_{\tilde{P}(0)}}[W(\ell) = \tilde{P}(\ell)]} = \frac{1}{m}(d \pm d')^{-2}\frac{(d \pm d')^{-\ell}}{(1 \pm O(\eta)) / |V_1|}\\
            &= \frac{|V_1|}{m (d \pm d')^{\ell+2}}\cdot (1 \pm O(\eta)) = \frac{|V_1|}{m d^{\ell+2}} \cdot (1 \pm O(\eta)),
        \end{align*}
        where we used that $1/C>\eta \geq \frac{d' |V_0|}{md} \ge \frac{d' \ell^2}{d}$. Recalling that $\Pr[P(t) = v] = (1 \pm O(\eta)) |V_t|^{-1},$ we have
        \[ \Pr[j = i \land P = \tilde{P} \mid P(t) = v] = \frac{|V_1||V_t|}{m d^{\ell+2}} \cdot (1 \pm O(\eta)). \]
        On the other hand,
        \begin{align*}
            \Pr[j_Q = i \land Q = \tilde{P}] &= \Pr[\overline{Q_0} = \tilde{P}([0,t])] \cdot \Pr[j_Q = i \mid Q(0) = \tilde{P}(0)] \cdot \Pr[b_Q = \tilde{P}(\ell) \mid j_Q = i] \cdot \Pr[Q_2 = \tilde{P}([t, \ell]) \mid b_Q = \tilde{P}(\ell)]\\
            &= (d \pm d')^{-t} \cdot (d m / |V_0| \pm d')^{-1} \cdot (d \pm d')^{-1} \cdot \frac{\Pr_{W \sim \cW^{\ell-t}(v)}[W = \tilde{P}([t,\ell])]}{\Pr_{W \sim \cW^{\ell-t}(v)}[W(\ell-t) = \tilde{P}(\ell)]}\\
            &= (d \pm d')^{-t} \cdot (d m / |V_0| \pm d')^{-1} \cdot (d \pm d')^{-1} \cdot \frac{(d \pm d')^{-(\ell-t)}}{(1+O(\eta)) / |V_\ell|}\\
            &= \frac{|V_0||V_\ell|}{md^{\ell+2}} (1 \pm O(\eta)).
        \end{align*}
        Since $|V_0| = |V_1|,$ we have
        \[ \frac{\Pr[j = i \land P = \tilde{P} \mid P(t) = v]} {\Pr[j_Q = i \land Q = \tilde{P}]} = 1 \pm O(\eta). \]

        Hence, it is enough to show that $\Pr[Q \text{ is degenerate}] = O(\eta).$
        Indeed, in that case, we have
        \begin{align*}
            \Pr[P \text{ is degenerate} \mid P(t) = v]
            &=\sum_{i=1}^m \sum_{\tilde{P} \text{ is degenerate}} \Pr[j = i \land P = \tilde{P} \mid P(t) = v] \\
            &=\sum_{i=1}^m \sum_{\tilde{P} \text{ is degenerate}} (1 \pm O(\eta))\Pr[j_Q = i \land Q = \tilde{P}] \\
            &=(1 \pm O(\eta))\Pr[Q \text{ is degenerate}] = O(\eta).
        \end{align*}
        
        Recall that $Q([0, t]) = Q_1 = \overline{Q_0}$ and $Q([t, \ell]) = Q_2$. Since $Q_0$ is a random walk from $v$, we have
        \[ \Pr[Q_1 \text{ is degenerate}] = \Pr[Q_0 \text{ is degenerate}] \le \binom{t+1}{2} \frac{1}{d-d'} \le \frac{(t+1)^2}{d}. \]
        Additionally, for any outcome $\tilde{Q}$ of $Q_1$, we have 
        \[ \Pr[b_Q \in V(Q_1) \mid Q_1 = \tilde{Q}] \le (t+1) / |B_{j_Q}| \le (t+1) / (d - d') \le 2(t+1) / d. \] Using Lemma~\ref{lem:path-hits-vtx} for $Q_2$ and Markov's inequality we get,
        \[ \Pr[V(Q_1) \cap V(Q_2[1,\ell-t]) \neq \emptyset] \le \Pr[b_Q \in V(Q_1)] + \sum_{i=0}^t \sum_{j=1}^{\ell-t-1} \Pr[Q_1(i) = Q_2(j)] \le 2(t+1) / d + (t+1) (\ell-t-1) \cdot 3 / d. \]
        Finally, by Lemma~\ref{lem:path-degenerate},
        \[ \Pr[Q_2 \text{ is degenerate}] \le 3(\ell-t)^2 / d. \]
        Combining, we have
        \begin{align*} 
            \Pr[Q \text{ is degenerate}] &\le \Pr[Q_1 \text{ is degenerate}] + \Pr[Q_2 \text{ is degenerate}] + \Pr[V(Q_1) \cap V(Q_2([1, \ell-t])) \neq \emptyset]\\ &\le (t+1)^2 / d + 3(\ell-t)^2 / d + 2(t+1)/d + (t+1)(\ell-t-1) \cdot 3 / d \le 7\ell^2 / d + 2(t+1) / d = O(\eta).
        \end{align*}
    \end{proof}

    \begin{claim} \plabel{cl:prob-collision}
        Let $I \subseteq [m]$ be an arbitrary subset of size $m-1$. Let $U \subseteq V(G)$ be a given set of vertices of size at most $\ell + 1$. Then, 
        \[ \Pr[P_j \cap U = \emptyset, \forall j \in I] = e^{-(\ell+1) |U| m / n} \cdot (1\pm O(\eta)). \]
    \end{claim}
    \begin{proof}
        For $j \in [m],$ let $X_j = |\{ t \in [0, \ell] \mid P_j(t) \in U\}|$ and let $p_j = \Pr[X_j > 0].$ Then, $ p_j \ge \E[X_j] - \E\left[ \binom{X_j}{2} \right].$
        We claim that $\E\left[ \binom{X_j}{2} \right] \le \E[X_j] \cdot 4|U| \ell / d$. Indeed, we can write $\E\left[ \binom{X_j}{2} \right] = \sum_{0 \le t < t' \le \ell} \Pr[P_j(t) \in U \land P_j(t') \in U].$ To bound this expectation, we consider two cases. 
        Assume first that $t \le \ell - t'$ and note that, in particular, this implies that $t \le \ell/2$. If $t' = \ell$, then $t = 0$, so $\Pr[P_j(t) \in U \land P_j(t') \in U] \le \Pr[P_j(0) \in U] \cdot \frac{|U|}{d - d'} \le \Pr[P_j(0) \in U] \cdot \frac{3|U|}{d}$. If $t' \neq \ell,$ then conditioning on $P_j[0, t]$ and $b_j$, the rest of the walk $P_j[t, \ell]$ is distributed as $\cP^{\ell-t}(P_j(t), b_j)$. Since $\ell - t \ge 2t_0,$ by Lemma~\ref{lem:path-hits-vtx}, for any $b \in B_j$ and any valid walk $\tilde{P}$, we have 
        \[ \Pr[P_j(t') \in U \mid b_j = b \land P_j([0,t]) = \tilde{P}] \le 3|U| / d. \]
        Since $\tilde{P}$ and $b$ were arbitrary, we have 
        \[ \Pr\left[ P_j(t) \in U \land P_j(t') \in U  \right] \le \Pr[P_j(t) \in U] \cdot 3|U| / d. \]

        Now assume that $t > \ell - t'$. Applying Lemma~\ref{lem:reverse-walk} and the case $t \le \ell - t',$ we obtain
        \[  \Pr\left[  P_j(t) \in U \land P_j(t') \in U  \right] \le (1 \pm 100d' / d \pm n^{-8}) \cdot \Pr[P_j(t') \in U] \cdot 3|U| / d \le \Pr[P_j(t') \in U] \cdot 4 |U| / d. \]

        Summing over all $0 \le t < t' \le \ell,$ we have
        \[ \E\left[\binom{X_j}{2}\right] \le \sum_{t=0}^{\ell} \Pr[P_j(t) \in U] \cdot \ell \cdot 4|U| / d = \E[X_j] \cdot 4|U|\ell / d. \]

        By Markov's inequality, $p_j \le \E[X_j]$, so $p_j = \E[X_j] (1 \pm O(\eta)).$


        If $V_0 = V_1 = V(G),$ then by~\eqref{eq:v-is-hit}, for any $v \in U$ and any $t \in [0, \ell],$ it holds that $\sum_{j \in [m]} \Pr[P_j(t) = v] = \frac{m}{n}(1 \pm O(\eta)).$ On the other hand, if $V_0 \cap V_1 = \emptyset$ and $|V_0| = |V_1| = n/2,$ then for any $v \in U \cap V_k$, where $k \in \{0,1\},$ and $t \in [0,\ell],$ the sum $\sum_{j \in [m]} \Pr[P_j(t) = v]$ equals $0$ if $k$ and $t$ are of different parity and equals $\frac{m}{|V_t|}(1 \pm O(\eta)) = \frac{2m}{n} (1 \pm O(\eta))$ otherwise, using~\eqref{eq:v-is-hit}. Since $\ell$ is odd, in either case we obtain
        \begin{equation} \plabel{eq:sum_EXj}
            \sum_{j \in [m]} \E[X_j] = \sum_{v \in U} \sum_{t=0}^{\ell} \sum_{j \in [m]} \Pr[P_j(t) = v] = \frac{|U| (\ell+1) m}{n} \cdot (1 \pm O(\eta)).
        \end{equation}
        
        Note that for $t=0$ or $t=\ell$, since $|A_j|, |B_j| \ge d/2,$ for any $j \in [m],$ we have that $\Pr[P_j(t) \in U] \le 2|U| / d.$ Using Lemma~\ref{lem:path-hits-vtx}, it follows that 
        \[ \E[X_j] = \Pr[P_j(0) \in U] + \Pr[P_j(\ell) \in U] + \sum_{t=1}^{\ell-1} \Pr[P_j(t) \in U] \le 4|U| / d + 3 |U| \ell / d = O(\eta), \]
        for any $j \in [m]$.
        Recalling that $p_j \le \E[X_j]$ and combining with~\eqref{eq:sum_EXj}, we have
        \begin{align*}
            \sum_{j \in I} p_j &= \sum_{j \in [m]} p_j \pm O(\eta) = \frac{|U| (\ell+1) m}{n} \cdot (1 \pm O(\eta)) \pm O(\eta).
        \end{align*}        
        Finally, we can estimate the desired probability as follows. Using that $e^{-x} \ge 1 - x \ge e^{-x-x^2}$ for $0 \le x \le 1/2,$ we obtain
        \begin{align*}
             \Pr[P_j \cap U = \emptyset, \forall j \in I] &= \prod_{j \in I} (1 - p_j) = \prod_{j \in I} e^{-p_j (1 \pm p_j)}\\
            &= \exp\left(\left(-\frac{|U| (\ell+1) m}{n} \cdot (1 \pm O(\eta)) \pm O(\eta) \right) \cdot (1 \pm O(\eta)) \right)\\
            &= \exp\big(-(\ell+1) |U| m / n  \pm O(\eta) \big) =\exp\big(-(\ell+1) |U| m / n \big)\cdot (1 \pm O(\eta)),            
        \end{align*}
        where in the second last equality we used that $|U| \le \ell+1$ and $(\ell+1)^2 m / n \le 1/8$.
    \end{proof}
    With these two claims, we are ready to prove the lemma. We start with part~\ref{item:pathdistr}. We define several random variables. Let 
    \begin{align*}
        X &= |\{ v \in S \mid \exists i \in J, P_i(t) = v\}|, \\
        X_1 &= |\{ v \in S \mid \exists i \in [m], P_i(t) = v \land P_i \text{ is non-degenerate} \}|, \\
        X_2 &= |\{ v \in S \mid \exists i, i' \in [m], i \neq i', P_i(t) = v \land P_i \text{ is non-degenerate} \land P_i \cap P_{i'} \neq \emptyset \}|.        
    \end{align*}

    Note that $X$ is the relevant variable in the statement of part~\ref{item:pathdistr} and observe that $X = X_1 - X_2.$ First, we calculate $\E[X]$. Using Claims~\ref{cl:prob-collision}~and~\ref{cl:expectations}, we have    
    \begin{align*}
        \E[X] &= \sum_{i \in [m]} \Pr\big[P_i(t) \in S \land P_i \text{ is non-degenerate} \land P_i \cap P_j = \emptyset, \forall j \in [m] \setminus \{i\} \big]\\
        &= \sum_{i \in [m]} \Pr\big[P_i(t) \in S \land P_i \text{ is non-degenerate}] \cdot e^{-(\ell+1)^2 m / n} \cdot (1 \pm O(\eta)) \\
        &= \frac{|S| m}{|V_t|}(1 \pm O(\eta)) \cdot q \cdot (1 \pm O(\eta)) = \mu(1 \pm O(\eta)).
    \end{align*}
       
    Using Talagrand's inequality, we show that both $X_1$ and $X_2$ are concentrated around their respective expectations. To this end, first we loosely upper bound the expectations of $X_1$ and $X_2$ as follows. Using Claim~\ref{cl:expectations},
    \[ \E[X_2] \le  \E[X_1] \le \frac{|S|m}{|V_t|} (1 \pm O(\eta)) \le 4 \mu, \]
    where we used that $q \ge 1/2.$
    
    Next, we claim that changing the outcome of one path $P_j$ changes $X_1$ by at most $1$ and $X_2$ by at most $\ell+2$. The first claim is immediate and the second follows from the first and the fact that $P_j$ intersects at most $\ell+1$ paths which are not intersected by any other path $P_{j'}, j \neq j'$. Clearly, for any $s \ge 1,$ there is a set of at most $s$ trials certifying that $X_1 \ge s$ and a set of at most $2s$ trials certifying that $X_2 \ge s$. Let $\tau > 0$ be arbitrary and let $\tau_2 = \frac{1}{2}\tau + \frac{1}{2} C(\ell \sqrt{\mu} + \ell^2)$, where $C$ is a large absolute constant to be chosen implicitly later. Applying Theorem~\ref{thm:talagrand} with $c=\ell+2$ and $r=2$ and recalling that $\E[X_1], \E[X_2] \le 4\mu$, for any $i \in \{1, 2\}$, we have
    \[ \Pr\left[ |X_i - \E[X_i]| > \tau_2\right] \le 4\exp\left( \frac{-\tau_2^2}{256 (\ell+2)^2 (\mu + \tau_2)} \right). \]
    Since $X = X_1 - X_2$, we have
    \[ \Pr[|X - \E[X]| > 2\tau_2] \le \Pr[|X_1 - \E[X_1]| > \tau_2] + \Pr[|X_2 - \E[X_2]| > \tau_2] \le 8 \exp\left( \frac{-\tau_2^2}{256(\ell+2)^2 (\mu + \tau_2)}\right). \]
    Finally, using that $\E[X] = \mu (1 \pm O(\eta))$ and recalling that $2\tau_2 = \tau + C(\ell \sqrt{\mu} + \ell^2)$, we have
    \[ \Pr\left[\left| X - \mu \right|  > \tau + C(\eta \mu + \ell \sqrt{\mu} + \ell^2)\right] \le 8 \exp\left(\frac{-\tau_2^2}{\frac{1}{4}C \ell^2 (\mu + \tau_2)}\right) \le 8\exp\left(\frac{-\tau^2}{C \ell^2 (\mu + \tau)}\right)\]
    as claimed.

    Part~\ref{item:indexdistr} is proved similarly. Let 
    \begin{align*} 
        Y &= |J \cap I|,\\
        Y_1 &= |J_0 \cap I | = |\{ i \in I \mid P_i \text{ is non-degenerate} \}|,\\
        Y_2 &= |(J_0 \setminus J) \cap I| = |\{i \in I \mid P_i \text{ is non-degenerate } \land \exists j \in [m] \setminus \{i\}, P_j \cap P_i \neq \emptyset\}|.
    \end{align*}
    By Lemma~\ref{lem:path-degenerate} and Claim~\ref{cl:prob-collision}, for any $i \in I,$ we have
    \begin{align*}
        \Pr[i \in J] &= \Pr[P_i \text{ is non-degenerate}] \cdot \Pr[P_j \cap P_i = \emptyset, \forall j \in [m] \setminus \{i\} \mid P_i \text{ is non-degenerate}]\\
        &= (1 \pm 3\ell^2 / d) \cdot q (1 \pm O(\eta)) = q ( 1 \pm O(\eta)).
    \end{align*} 
    This implies $\E[Y] = q|I| (1 \pm O(\eta)).$ Observe that $Y = Y_1 - Y_2$. Note that changing the outcome of a single path $P_j$ can change $Y_1$ by at most $1$ and $Y_2$ by at most $\ell+2$. Furthermore, for any $s \ge 1$, there is a set of at most $s$ trials certifying $Y_1 \ge s$ and a set of at most $2s$ trials certifying $Y_2 \ge s$. We loosely bound $\E[Y_2] \le \E[Y_1] \le |I|$. Let $\tau > 0$ be arbitrary and let $\tau_2 = \frac{1}{2}\tau + \frac{1}{2}C(\ell \sqrt{|I|} + \ell^2)$. Applying Theorem~\ref{thm:talagrand} with $c=\ell+2$ and $r=2$, for any $i \in \{1, 2\}$, it holds that
    \[ \Pr[ |Y_i - \E[Y_i]| > \tau_2] \le 4 \exp\left( \frac{-\tau_2^2}{64 (\ell+2)^2 (|I| + \tau_2)} \right). \]
    Hence,
    \[ \Pr[|Y - \E[Y]| > 2\tau_2] \le \Pr[|Y_1 - \E[Y_1]| > \tau_2] + \Pr[|Y_2 - \E[Y_2]| > \tau_2] \le 8 \exp\left( \frac{-\tau_2^2}{64 (\ell+2)^2 (|I| + \tau_2)} \right). \]
    Finally, recalling that $\E[Y] = q|I| (1 \pm O(\eta))$ and $2\tau_2 = \tau + C(\ell \sqrt{|I|} + \ell^2)$, we get
    \[ \Pr[| Y - q|I|| > \tau + C (\eta|I| + \ell \sqrt{|I|} + \ell^2)] < 8\exp\left( \frac{-\tau_2^2}{\frac{1}{4}C \ell^2 (|I| + \tau_2)} \right) \le 8\exp\left( \frac{-\tau^2}{C \ell^2 (|I| + \tau)} \right), \] 
    finishing the proof of the lemma.
\end{proof}

Note that in Lemma~\ref{lem:one-bite-probabilistic}, we require $\eta < 1/C$, for some constant $C$, which implies $m \ge C |V_0| d'/d$. As long as we still have many pairs to connect, we may apply Lemma~\ref{lem:one-bite-probabilistic} directly. The following lemma provides a deterministic statement to be used for one step of our iteration provided we still have many pairs remaining.

\begin{lemma} \plabel{lem:one-bite}
    Let $G$ be a $(d \pm d')$-nearly regular $n$-vertex graph and let $V_0, V_1 \subseteq V(G)$ be such that $G$ is $(V_0, V_1)$-alternating.  Let $m = np$ be a given integer and let $(a_i, b_i)_{i \in [m]}$ be given pairs of vertices with $a_i \in V_0, b_i \in V_1$ and $a_i \neq b_i$, for all $i \in [m]$, such that the multiset $\{ a_i \mid i \in [m]\}$ is $(V_0, V_1, \alpha)$-bi-uniform and the multiset $\{ b_i \mid i \in [m]\}$ is $(V_1, V_0, \alpha)$-bi-uniform. Furthermore, let $U \subseteq V(G)$ be an $\alpha$-uniform set with respect to $G$ such that $|U \cap V_0| = |U \cap V_1| \le n/20,$ $U$ contains the set $\bigcup_{i \in [m]} \{a_i, b_i\}$ and $G \setminus U$ is $(V_0 \setminus U, V_1 \setminus U)$-mixing in time $t_0$. Let $\ell \ge 10t_0$ be a given odd integer. Assume the following inequalities hold: $p \ge \alpha^{3/4}, d' \le \alpha d, p \le \ell^{-8}, \alpha > d^{-1/4}, d \ge \ell^8$ and $\ell \ge \log n$. There is an absolute constant $C$ such that there is a set $J \subseteq [m]$ and a collection of paths $(P_i)_{i \in J},$ where for $i \in J,$ $P_i$ is an $a_i-b_i$ path of length $\ell$, satisfying the following with $\alpha' = C\alpha$:
    \begin{itemize}
        \item $\Vint(P_i) \cap \Vint(P_j) = \emptyset, \forall i, j \in J, i \neq j.$
        \item $\Vint(P_i) \cap U = \emptyset, \forall i \in J.$
        \item $|[m] \setminus J| \le 4 \ell^2 m^2/n + \alpha' n.$ 
        \item For all $t \in [\ell-1]$, the set $W_t = \{ P_i(t) \mid i \in J\}$ is $(V_t, V_{t+1}, \alpha')$-bi-uniform.
        \item The multiset $\{a_i \mid i \in [m] \setminus J\}$ is $(V_0, V_1, \alpha')$-bi-uniform and the multiset $\{b_i \mid i \in [m] \setminus J\}$ is $(V_1, V_0, \alpha')$-bi-uniform.
    \end{itemize}
\end{lemma}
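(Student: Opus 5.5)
The plan is to reduce to Lemma~\ref{lem:one-bite-probabilistic}, applied to the graph $G' \coloneqq G \setminus U$ with walk length $\ell-2$. The fixed endpoints are turned into random ones by forcing the first and last step of each path. For $i \in [m]$ set $A_i \coloneqq N_G(a_i) \setminus U \subseteq V_1 \setminus U$ and $B_i \coloneqq N_G(b_i) \setminus U \subseteq V_0 \setminus U$. We sample $a_i' \in A_i$ and $b_i' \in B_i$ uniformly and $P_i' \sim \cP^{\ell-2}_{G'}(a_i', b_i')$, and let $P_i$ be the path $a_i, P_i', b_i$.

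In the bipartite case the roles of $V_0, V_1$ are swapped, since $A_i \subseteq V_1$ and $B_i \subseteq V_0$. This is consistent with the lemma, because $\ell-2$ is odd. By hypothesis $G'$ is $(V_0\setminus U, V_1 \setminus U)$-mixing in time $t_0$, and $\ell - 2 \ge 8t_0$. Moreover $m = pn \le \ell^{-8} n \le n/(8(\ell-1)^2)$. Since all walks live in $G'$, the internal vertices of each $P_i$ avoid $U$, which contains every $a_j, b_j$. Hence every $P_i$ with $i \in J$ (the set from Lemma~\ref{lem:one-bite-probabilistic}) is an $a_i$--$b_i$ path of length $\ell$, and these paths are pairwise internally disjoint. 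It remains to verify the hypotheses and to show that with positive probability all the quantitative bullets hold.

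\textbf{Hypotheses.} By Lemma~\ref{lem:remove-set}, $G'$ is $(d_2 \pm d_2')$-nearly regular with $d_2 = d(1-|U|/n)$ and $d_2' = O(\alpha d)$. The same computation gives $|A_i|, |B_i| = d_2 \pm O(\alpha d)$, using that $U$ is $\alpha$-uniform. The multiplicity $|\{i : v \in A_i\}|$ equals $|N(v) \cap \{a_i\}|$, and by bi-uniformity of $\{a_i\}$ this is $dm/|V_0| \pm \alpha d$. After normalising to $G'$ (where $|V_0\setminus U| = |V_0|(1-|U|/n)$) this is $d_2 m/|V_0 \setminus U| \pm O(\alpha d)$. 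So Lemma~\ref{lem:one-bite-probabilistic} applies with $d' = O(\alpha d)$ and
\[
\eta = \max\{O(\alpha d)\,n/(md),\ \ell^2/d\} = O(\alpha/p) + \ell^2/d \le O(\alpha^{1/4}),
\]
using $p \ge \alpha^{3/4}$ and $\ell^8 \le d$. In particular $\eta < 1/C$ for $n$ large.

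\textbf{Uniformity of $W_t$ and size of $[m]\setminus J$.} For fixed $t \in [\ell-1]$ and $v \in V_{t+1}$, apply part~\ref{item:pathdistr} with $S = N(v) \cap V_t \setminus U$. Here $\mu = qm|S|/|V_t\setminus U| = \Theta(pd)$. The error terms are $\eta\mu = O(\alpha d)$, and $\ell\sqrt{\mu} + \ell^2 \le \alpha d$ because $\alpha \ge d^{-1/4}$ and $d \ge \ell^8$. With $\tau = \alpha d$ the failure probability is $\exp(-\Omega(\alpha^2 d/(\ell^2 p))) \le \exp(-d^{1/4}) \ll 1/(n\ell)$. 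For the last inequality we use $\alpha^2 d \ge d^{1/2}$, $p \le 1$, $\ell^2 \le d^{1/4}$, together with $d \ge \ell^8 \ge (\log n)^8$. A union bound over $v$ and $t$ then shows that $|N(v) \cap W_t| = qmd/|V_t| \pm O(\alpha d)$ for all $v, t$.

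Part~\ref{item:indexdistr} with $I = [m]$ gives $|J| = qm \pm O(\eta m + \ell\sqrt m + \ell^2 + \tau)$. Here $\eta m = O(\alpha n) + \ell^2 m/d$ and $\tau$ can be taken $O(\alpha n)$. Hence $|W_t|\,d/|V_t|$ agrees with $qmd/|V_t|$ up to $O(\alpha d)$, which is the bi-uniformity of $W_t$ with $\alpha' = C\alpha$. Also $m - |J| \le (1 - q)m + O(\alpha n) \le (\ell-1)^2 m^2/n + O(\alpha n)$, which gives the third bullet.

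\textbf{Leftover endpoints.} For each $v \in V_1$ apply part~\ref{item:indexdistr} to $I_v = \{ i : a_i \in N(v)\}$, counted with multiplicity, so $|I_v| = dm/|V_0| \pm \alpha d$. This yields $|I_v \setminus J| = (1-q)|I_v| \pm O(\alpha d)$, again with failure probability $\exp(-d^{\Omega(1)})$, and similarly for the $b_i$. Comparing with $|[m]\setminus J|\,d/|V_0| = (1-q)dm/|V_0| \pm O(\alpha d)$ gives the final bullet.

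A union bound over all $O(n\ell)$ events leaves positive probability, so a suitable outcome exists. The main obstacle I expect is the bookkeeping in these estimates. The degree and uniformity data of $G$ must be transferred to $G'$ and to the normalisations $|V_t\setminus U|$ used in Lemma~\ref{lem:one-bite-probabilistic}, and one must check that every error term, notably $\eta\mu$, is $O(\alpha d)$ rather than the cruder $O(\alpha^{1/4} d)$. This is precisely where $p \ge \alpha^{3/4}$ is used, since it gives $\eta \mu \approx (\alpha/p)\cdot pd = \alpha d$.
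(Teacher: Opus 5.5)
Your proposal is correct and follows essentially the same route as the paper. You pass to $G\setminus U$ with $A_i = N_G(a_i)\setminus U$ and $B_i = N_G(b_i)\setminus U$, apply Lemma~\ref{lem:one-bite-probabilistic} with walk length $\ell-2$ and $\eta = \Theta(\alpha n/m) = O(\alpha^{1/4})$, use part~\ref{item:indexdistr} with $I=[m]$ and $I=I_v$ and part~\ref{item:pathdistr} with $S=N_G(v)\setminus U$ at deviations of order $\alpha n$ or $\alpha d$, and finish with a union bound; this is exactly the paper's argument.

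The remaining points are cosmetic. First, $q$ and the condition $m \le n'/(8(\ell-1)^2)$ should use $n' = |V(G\setminus U)| \ge 0.9n$ rather than $n$, and the factor $4$ in $4\ell^2m^2/n$ absorbs the difference. Second, the lemma does not assume $n$ is large, so you should add the paper's one-line remark that for bounded $n$ one may take $J=\emptyset$. This works because $\alpha > d^{-1/4} \ge n^{-1/4}$, so a large $C$ makes $\alpha' \ge 1 \ge p$.
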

\begin{proof}
    When $n$ is bounded, the statement is trivial with $C$ sufficiently large and $J=\emptyset$, so let us assume that $n$ is sufficiently large.

    Denote $G' = G \setminus U, V_0' = V_0 \setminus U, V_1' = V_1 \setminus U$ and $n' = |V(G')| = n - |U| \ge n/2$. Note that $|V_0'| = |V_1'| = |V_0| - |U| \frac{|V_0|}{n}$. By Lemma~\ref{lem:remove-set}, $G'$ is $(d_1 \pm d_1')$-nearly regular with $d_1 = d(1 - |U| / n) \ge 0.9 d$ and $d_1' = 4\alpha d,$ since $4 \alpha d \ge d'(1 + |U|/n) + \alpha(d + d')$. Furthermore, by assumption, $G'$ is $(V_0', V_1')$-mixing in time $t_0$.

    For $i \in [m],$ let $A_i = N_G(a_i) \setminus U \subseteq V_1'$ and $B_i = N_G(b_i) \setminus U \subseteq V_0'$. By Lemma~\ref{lem:remove-set}, we have $|A_i| = |N_G(a_i) \setminus U| = d_1 \pm d_1'$ and, analogously, $|B_i| = d_1 \pm d_1'.$ Since the multiset $\{a_i \mid i \in [m]\}$ is $(V_0, V_1, \alpha)$-bi-uniform, each vertex $u \in V_1'$ appears in $m \bar{d}(G) / |V_0| \pm \alpha \bar{d}(G) = m d / |V_0| \pm d_1'=md_1/|V_0'|\pm d_1'=md_1/|V_1'|\pm d'_1$ sets $A_i, i \in [m]$. Analogously, each vertex $u \in V_0'$ appears in $md_1 / |V_0'| \pm d_1'$ sets $B_i, i \in [m].$ With the aim of applying Lemma~\ref{lem:one-bite-probabilistic}, for each $i \in [m],$ let $P_i' \sim \cP^{\ell-2}_{G'}(A_i, B_i)$ be independently sampled. Let $J = \{ i \in [m] \mid P_i' \text{ is non-degenerate} \land P_i' \cap P_j' = \emptyset, \forall j \in [m], j \neq i \}$. For each $i \in J,$ let $P_i$ be the path obtained from $P_i'$ by appending $a_i$ to the beginning and $b_i$ to the end. Observe that for each $i \in J,$ the path $P_i$ is an $a_i-b_i$ path of length $\ell$ and the paths $(P_i)_{i \in J}$ are internally vertex disjoint and their internal vertex sets do not intersect $U$, which shows that the first two points are satisfied. It remains to show that with positive probability, the last three points  hold.

    Let $C_{\ref{lem:one-bite-probabilistic}}$ be the constant given by Lemma~\ref{lem:one-bite-probabilistic}. Let $q = e^{-(\ell-1)^2 m / n'}$ and observe that $q \ge 1/2$ since $m \le \ell^{-8} n, n' \ge n/2$ and $\ell$ is sufficiently large.  Denote
    \[ \eta \coloneqq \max \{ d_1' |V_0'| / (m d_1), (\ell-2)^2 / d_1 \} = \frac{d_1' |V_0'|}{m d_1} \in [2 \alpha n / m, 8 \alpha n / m], \]
    where we used that $d_1'=4\alpha d$, $\alpha > d^{-1/4}$ and $m \le n / \ell^8$, as well as that $\frac{|V_0'|}{d_1}=\frac{|V_0|}{d}$. Furthermore, observe that $\eta \le 8 \alpha^{1/4} < 1/C_{\ref{lem:one-bite-probabilistic}}$ since $m \ge \alpha^{3/4} n$ and $\alpha = o(1)$.

    Note that $\eta m \le 8 \alpha n$ and $\max\{\ell \sqrt{m}, \ell^2\} \le \sqrt{n} \le \alpha n,$ where we used that $\alpha \ge d^{-1/4} \ge n^{-1/4}.$ Hence, applying Lemma~\ref{lem:one-bite-probabilistic}~\ref{item:indexdistr} with $I = [m]$ and $\tau = C/8 \cdot \alpha n$ for a sufficiently large absolute constant $C$, we have
    \[ \Pr[ ||J| - q m| > C/4 \cdot \alpha n] \le 8\exp\left( \frac{-  C^2 \alpha^2 n^2}{64 C_{\ref{lem:one-bite-probabilistic}} \ell^2(m + C \alpha n)}\right) < n^{-10}, \]
    where we used that $m \le  n / \ell^8$ and $\alpha > d^{-1/4} \ge n^{-1/4}$.
    Let us assume that
    \begin{equation} \label{eq:J concentrates}
        ||J| - qm| \le C/4 \cdot \alpha n.
    \end{equation}
    Then, in particular, using that $q = e^{-(\ell-1)^2 m / n'} \ge 1 - 4\ell^2 m / n,$ we have
    \[ |[m] \setminus J| \le (1-q)m + C/4 \cdot \alpha n \le 4 \ell^2 m^2/n + C \alpha n, \]
    proving the third point.

    Let $\tau' = (C/4) \alpha d$. Let $t \in [1, \ell-1]$ be arbitrary and note that for any $i \in J,$ $P_i(t) = P_i'(t-1),$ so $P_i(t) \in V_t$. Let $v \in V_{t+1}$ be arbitrary. Let $S = N_G(v) \setminus U \subseteq V_t'$ and recall that $|S| = d_1 \pm d_1',$ so in particular $|S| \ge d/2$. Let $\mu = q m |S| / |V_t'| \in [md / (4n), 4md / n]$. Note that $C_{\ref{lem:one-bite-probabilistic}}(\eta \mu + \ell \sqrt{\mu} + \ell^2) \le 3C_{\ref{lem:one-bite-probabilistic}} \eta \mu \le 96 C_{\ref{lem:one-bite-probabilistic}} \alpha d$, since $\alpha > d^{-1/4}, m \le n $ and $d \ge \ell^8$. Denote $W_t = \{ P_i(t) \mid i \in J\}$. Recalling that $|S| = d_1 \pm d_1'$ and $d_1' = 4 \alpha d,$ using Lemma~\ref{lem:one-bite-probabilistic}~\ref{item:pathdistr} with $\tau=\tau'/2$, we get
    \[ \Pr\left[ \big| | W_t \cap S | - q d_1  \cdot \frac{m}{|V_t'|} \big|  > \tau'\right] \le 8 \exp\left(\frac{-(\tau')^2}{4 C_{\ref{lem:one-bite-probabilistic}} \ell^2 (\mu + \tau')}\right) < n^{-10}, \]
    where we used that $m \le n,  d \ge \ell^8 \ge \log^8 n$ and $\alpha > d^{-1/4}$. Observe that $q d_1 \cdot m / |V_t'| = q d m / |V_t|$.

    Union bounding over all $t \in [1, \ell-1]$ and all $v\in V_{t+1}$, it follows that with probability $1 - o(1)$, for all such $t$ and~$v$ we have $|W_t\cap N_G(v)|=qdm/|V_t|\pm C\alpha d/4$. Since by (\ref{eq:J concentrates}), $|W_t|=|J|=qm \pm C\alpha n/4$, we have $qdm/|V_t| \pm C\alpha d/4=|W_t|d/|V_t| \pm 3C\alpha d /4=|W_t|\bar{d}(G)/|V_t| \pm d' \pm 3C\alpha \bar{d}(G)/4 \pm 3C\alpha d'/4=|W_t|\bar{d}(G)/|V_t| \pm C\alpha \bar{d}(G)$.
    Hence, the set $W_t$ is $(V_t, V_{t+1}, C \alpha)$-bi-uniform with respect to $G$. 

    Similarly, we argue that the last point holds with high probability. To this end, consider an arbitrary vertex $u \in V_1$ and let $I_u = \{ i \in [m] \mid u \in N_G(a_i)\}.$ Since the multiset $\{a_i \mid i \in [m]\}$ is $(V_0, V_1, \alpha)$-bi-uniform, we have $|I_u| = m \bar{d}(G) / |V_0| \pm \alpha \bar{d}(G) = md / |V_0| \pm 4\alpha d$. Note that $\eta |I_u| \le 8\alpha n /m \cdot 4 m d / n = 32 \alpha d$, while $\ell \sqrt{|I_u|} < \ell \sqrt{2d} \le \alpha d$ and $\ell^2 \le \alpha d.$ Hence, letting $\tau'=(C/4)\alpha d$ and using  Lemma~\ref{lem:one-bite-probabilistic}~\ref{item:indexdistr} with $\tau=\tau'/2$, we have
    \[ \Pr[||J \cap I_u| - q \cdot m d / |V_0|| > \tau'] \le 8\exp\left( \frac{-(\tau')^2}{4 C_{\ref{lem:one-bite-probabilistic}} \ell^2 (m d / |V_0| + \tau')}\right) < n^{-10}, \]
    where we used that $\alpha > d^{-1/4}, d \ge \ell^8 \ge \log^8 n$ and $m \le n$. We obtain the analogous statement for all $u \in V_0$.
    
    Note that if $|J \cap I_u| = q md / |V_0| \pm \tau',$ then $|I_u \setminus J| = md / |V_0| \pm 4 \alpha d - (qmd / |V_0| \pm \tau') = (1 - q)md / |V_0| \pm (C/3) \alpha \bar{d}(G) .$ On the other hand, by (\ref{eq:J concentrates}),
    \[ \frac{|[m] \setminus J| \cdot \bar{d}(G)}{|V_0|} = \frac{((1-q)m \pm C/4 \cdot \alpha n) \cdot \bar{d}(G)}{|V_0|} = \frac{(1-q)md}{|V_0|} \pm (2C/3) \alpha \bar{d}(G),\]
    where we used that $|V_0| \ge n/2, m \le n$ and $d' \le \alpha d$. It follows that as long as $|J\cap I_u|=qmd/|V_0|\pm \tau'$, we have $\left||I_u\setminus J|-\frac{|[m] \setminus J| \cdot \bar{d}(G)}{|V_0|}\right|\leq C\alpha \bar{d}(G)=\alpha' \bar{d}(G)$. Note that if $S=\{a_i \mid i\in [m]\setminus J\}$, then $|I_u\setminus J|=|N(u)\cap S|$ and $|[m]\setminus J|=|S|$.
    
    Hence, union bounding over all vertices $u$, it follows that with probability $1 - o(1)$, $S$ is $(V_0,V_1,\alpha')$-bi-uniform. A symmetric argument shows that almost surely the multiset $\{b_i \mid i\in [m]\setminus J\}$ is $(V_1,V_0,\alpha')$-bi-uniform.
    
    We conclude that with high probability, all the requirements are satisfied, implying the existence of the desired paths and thus finishing the proof of the lemma.    
\end{proof}

Once we have few paths left to connect, we cannot apply Lemma~\ref{lem:one-bite-probabilistic} directly. Instead, we attempt to create many paths connecting the same pair so that one of them is disjoint from the others with high probability and we keep an arbitrary such path. Since the number of pairs to connect is small, to obtain that the set of vertices appearing in the paths is sufficiently uniform, we only require an upper bound on the number of times these paths hit the neighbourhood of any particular vertex. The last step of the iteration is given by the following lemma.

\begin{lemma} \plabel{lem:last-step}
    Let $G$ be a $(d \pm d')$-nearly regular $n$-vertex graph and let $V_0, V_1 \subseteq V(G)$ such that $G$ is $(V_0, V_1)$-alternating and assume that $n$ is sufficiently large. Let $m = np$ be a given integer and let $(a_i, b_i)_{i \in [m]}$ be given pairs of vertices with $a_i \in V_0, b_i \in V_1$ and $a_i \neq b_i$, for all $i \in [m]$, such that the multiset $\{ a_i \mid i \in [m]\}$ is $(V_0, V_1, \alpha)$-bi-uniform and the multiset $\{ b_i \mid i \in [m]\}$ is $(V_1, V_0, \alpha)$-bi-uniform. Furthermore, let $U \subseteq V(G)$ be an $\alpha$-uniform set with respect to $G$ such that $|U \cap V_0| = |U \cap V_1| \le n/20$, $U$ contains the set $\bigcup_{i \in [m]} \{a_i, b_i\}$ and $G \setminus U$ is $(V_0 \setminus U, V_1 \setminus U)$-mixing in time $t_0$. Let $\ell \ge 10t_0$ be a given odd integer.
    
    Assume the following inequalities hold: $p \le \alpha^{3/4}, d' \le \alpha d, \alpha \le \ell^{-40}, \alpha \ge d^{-1/4}$ and $\ell \ge \log n$. Then, there is a collection of paths $(P_i)_{i \in [m]},$ where for $i \in [m],$ $P_i$ is an $a_i-b_i$ path of length $\ell$, satisfying the following:    
    \begin{itemize}
        \item $\Vint(P_i) \cap \Vint(P_j) = \emptyset, \forall 1 \le i < j \le m$,
        \item $\Vint(P_i) \cap U = \emptyset, \forall i \in [m]$,
        \item For all $t \in [\ell-1]$, the set $W_t = \{ P_i(t) \mid i \in [m]\}$ is $(V_t, V_{t+1}, \alpha^{2/3})$-bi-uniform.
    \end{itemize}    
\end{lemma}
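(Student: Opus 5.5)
We connect the $m$ pairs one at a time, greedily, exploiting that $m \le \alpha^{3/4} n$ is small. Let $G' = G \setminus U$, $V_0' = V_0\setminus U$, $V_1'=V_1\setminus U$. By Lemma~\ref{lem:remove-set}, $G'$ is $(d_1 \pm 4\alpha d)$-nearly regular with $d_1 \ge 0.9d$, and by assumption it is $(V_0',V_1')$-mixing in time $t_0$. For each $i\in[m]$ we will sample $k = \lceil \ell^{3}\rceil$ (any polylog count suffices) independent candidate walks $Q_{i,1},\dots,Q_{i,k} \sim \cP^{\ell-2}_{G'}(N(a_i)\setminus U, N(b_i)\setminus U)$, independently over all $i$. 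We then choose, for each $i$, one index $s_i$ such that $Q_{i,s_i}$ is non-degenerate and vertex-disjoint from every other candidate $Q_{j,s}$ with $j\ne i$ (all $s$). Prepending $a_i$ and appending $b_i$ gives $P_i$. Disjointness from $U$ and mutual internal disjointness are then automatic, since a chosen walk avoids all candidates of other indices, in particular the chosen ones.

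\textbf{Existence of a good index.} The total number of candidate vertices is $mk(\ell-1)\le \alpha^{3/4}k\ell n$. For fixed $i,s$, Lemma~\ref{lem:path-degenerate} gives degeneracy probability at most $3\ell^2/d$. By Lemma~\ref{lem:path-hits-vtx}, each fixed position of $Q_{i,s}$ hits any fixed vertex with probability at most $3/d$. Since the neighbourhood sets of $\{a_j\}$ are bi-uniform and each neighbourhood has size about $d$, Claim~\ref{cl:expectations}-style reasoning (or simply averaging) shows that the expected number of collisions of $Q_{i,s}$ with the other candidates is $O(mk\ell^2/n)=O(\alpha^{3/4}k\ell^2)$. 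This bound needs the crude estimate that each fixed vertex is hit at a given time by all other candidates in expectation at most $O(mk/n)$. That estimate follows from the argument of \eqref{eq:v-is-hit} with $A_j,B_j$ being the neighbourhoods, which did not require $m$ large for the upper bound. Hence $Q_{i,s}$ is bad with probability $\le \alpha^{1/2}$, using $\alpha\le \ell^{-40}$ with $k=\ell^3$. These events are not independent across $s$, since other indices' candidates are shared. To handle this, we condition on the candidates of all other indices $j\ne i$: the $k$ candidates for $i$ are then i.i.d., and each is bad with conditional probability at most $\beta_i$, where $\beta_i$ is a random variable with $\E\beta_i\le\alpha^{1/2}$. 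Thus $\Pr[\text{all }k\text{ bad}]\le \E[\beta_i^k]$. Bounding $\beta_i\le 3\ell^2/d + (3/d)\cdot\#\{\text{other candidate vertices in positions reachable}\}$ by an occupancy count that is concentrated (Talagrand, Theorem~\ref{thm:talagrand}, or Chernoff on the numbers of hits per neighbourhood) gives $\beta_i\le 1/2$ with probability $1-n^{-10}$. Hence a union bound over $i$ shows that every $i$ has a good index with high probability. This conditioning step, which controls the dependence between different indices' candidates, is the main obstacle; if a cleaner route is needed, I would instead choose $s_i$ to be disjoint only from chosen walks of earlier indices, processing greedily and revealing walks sequentially, at the cost of a worse but still sufficient bound.

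\textbf{Uniformity.} Fix $t\in[\ell-1]$ and $v\in V_{t+1}$. It suffices to show $|W_t\cap N(v)|\le \alpha^{2/3}\bar d(G)/2$, because $|W_t|\bar d/|V_t|\le 2md/n\le 2\alpha^{3/4}d$ is already below $\alpha^{2/3}\bar d/2$. So only an upper bound is needed, as remarked before the lemma. We bound $|W_t\cap N(v)|$ by $Z$, the number of all candidates $Q_{j,s}$ with $Q_{j,s}(t-1)\in N(v)$. By Lemma~\ref{lem:path-hits-vtx}, each candidate does this with probability at most $3|N(v)|/d\le 4$ divided by the size of its endpoint neighbourhood. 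More carefully, summing as in \eqref{eq:v-is-hit}, $\E Z\le O(mk d/n)\le O(\alpha^{3/4}kd)$. Since $Z$ is a sum of independent indicators, Chernoff gives $Z\le O(\alpha^{3/4}\ell^3 d)+d^{1/2}\ll \alpha^{2/3}d$ with probability $1-n^{-10}$, using $\alpha\le \ell^{-40}$ and $\alpha\ge d^{-1/4}$. A union bound over $t,v$ then finishes: the set $W_t$ is $(V_t,V_{t+1},\alpha^{2/3})$-bi-uniform.
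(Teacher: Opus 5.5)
Your proposal is correct in outline and takes a genuinely different, more elementary route than the paper.

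\textbf{The paper's route.} It also samples $r=8C\ell^3$ walks per pair. It then pads the instance with dummy pairs $(X_i,Y_i)$ so that there are $m'=r\alpha^{3/4}n$ walks in total. The only purpose of the padding is to make Lemma~\ref{lem:one-bite-probabilistic} applicable: its error parameter $\eta=\max\{d'|V_0|/(md),\ell^2/d\}$ must be small, which needs many walks. The paper keeps the walks disjoint from \emph{all} other sampled walks. It uses part (b), with $I$ the $r$ walks of one pair, to get a survivor, and part (a), with $S=N(v)\setminus U$, for the uniformity upper bound.

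\textbf{Your route.} You bypass the padding, Talagrand, and the Poisson-paradigm estimate of Claim~\ref{cl:prob-collision}. You need only upper bounds on hitting expectations, which hold for any $m$. You condition on the other indices so that the $k$ candidates of index $i$ are i.i.d., and you use Chernoff for sums of independent indicators. Your weaker survival notion (disjoint only from other indices' candidates) suffices. Counting hits over all candidates rather than survivors makes the uniformity step easy. The cost is that the conditional estimate $\beta_i\le 1/2$ must be proved by hand.

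As written, three points need adjusting.

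(i) The per-vertex bound is $\sum_{j,s}\Pr[Q_{j,s}(t)=w]=O(k(m/n+\alpha))$, not $O(mk/n)$. For $t=0$ and small $m$, a vertex of $N(a_j)$ is hit with probability about $k/d$. This is controlled only by the error term $\alpha\bar d(G)$ in the bi-uniformity of $\{a_j\}$. Since $m/n,\alpha\le\alpha^{3/4}$, your final bounds are unaffected.

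(ii) The bound $\beta_i\le 3\ell^2/d+(3/d)\cdot\#\{\cdots\}$ is useless if the count runs over all other candidate vertices, of which there can be $\Theta(mk\ell)\gg d$. Argue one step at a time instead. Let $O_i$ be the vertex set of the candidates of indices $j\ne i$.
\begin{itemize}
\item For $t=0$: $\Pr[Q(0)\in O_i]\le|N(a_i)\cap O_i|/|N(a_i)\setminus U|$.
\item For $1\le t\le(\ell-2)/2$: Lemma~\ref{lem:combine-two-walks} lets you replace $Q(t)$ by a free walk, giving $\Pr[Q(t)\in O_i]\le n^{-6}+\max_y|N(y)\cap O_i|/(0.8d)$.
\item For $t>(\ell-2)/2$: use Lemma~\ref{lem:reverse-walk}.
\end{itemize}
Now apply your per-neighbourhood Chernoff bound to the candidates of indices $j\ne i$ only, so the good event is measurable with respect to what you condition on. Summed over the $\ell-1$ positions, it gives $\max_y|N(y)\cap O_i|=O(\ell k\alpha^{3/4}d)$ with probability $1-n^{-10}$. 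On that event $\beta_i=O(\ell^2/d+\ell^2k\alpha^{3/4})\le 1/2$. Hence $\Pr[\text{all $k$ candidates bad}]\le 2^{-k}+n^{-10}$, which is enough for the union bound over $i$.

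(iii) ``Any polylog count suffices'' is false. The uniformity step needs $k\alpha^{3/4}\ll\alpha^{2/3}$, i.e.\ $k\ll\alpha^{-1/12}$. Under only $\alpha\le\ell^{-40}$ this means $k\ll\ell^{10/3}$. The survival step needs $k\gg\log n$. Your choice $k=\ell^3$, like the paper's $r=8C\ell^3$, sits in this window.
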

\begin{proof}
    As in the proof of Lemma~\ref{lem:one-bite}, denote $G' = G \setminus U, V_0' = V_0 \setminus U, V_1' = V_1 \setminus U$ and $n' = |V(G')| = n - |U| \ge n/2$. Note that $|V_0'| = |V_1'| = |V_0| - |U| \frac{|V_0|}{n}$. By Lemma~\ref{lem:remove-set}, $G'$ is $(d_1 \pm d_1')$-nearly regular with $d_1 = d(1 - |U| / n) \ge 0.9 d$ and $d_1' = 4\alpha d,$ since $4 \alpha d \ge d'(1 + |U|/n) + \alpha(d + d').$ Furthermore, by assumption, $G'$ is $(V_0', V_1')$-mixing in time $t_0$.

    For $i \in [m],$ let $A_i = N(a_i) \setminus U, B_i = N(b_i) \setminus U$. By Lemma~\ref{lem:remove-set}, we have $|A_i| = |N_G(a_i) \setminus U| = d_1 \pm d_1'$ and, analogously, $|B_i| = d_1 \pm d_1'.$ Because the multisets $\{a_i \mid i \in [m]\}$ and $\{b_i \mid i \in [m]\}$ are $(V_0, V_1, \alpha)$-bi-uniform and $(V_1, V_0, \alpha)$-bi-uniform, respectively, we have that each vertex $u \in V_1'$ appears in $md / |V_0| \pm d_1'=md_1/|V_0'|\pm d_1'=md_1/|V_1'|\pm d'_1$ sets $A_i, i \in [m]$ and each vertex $u \in V_0'$ appears in $md_1 / |V_0'| \pm d_1'$ sets $B_i, i \in [m]$. Let $r \coloneqq 8C \ell^3$, where $C$ is a large constant to be chosen implicitly later.
    
    Recall that in order to apply Lemma~\ref{lem:one-bite-probabilistic}, we require a lower bound on the number of random walks to be sampled (as we require $\frac{md}{d'|V_0|}$ in that lemma to be large). To this end, denote $m' = r \cdot \alpha^{3/4} n$ and note that $rm \le m'$. We add $z \coloneqq m' - r m$ dummy pairs $X_i \subseteq V_0', Y_i \subseteq V_1'$ of size $|X_i| = |Y_i| = d_1$ such that for every vertex $v \in V_0'$,
    \[ r |\{ i\in [m] \mid v \in B_i\}| + |\{ i \in [z] \mid v \in X_i \}| = m' d_1 / |V_0'| \pm 2r d_1', \]
    and analogously, for every $v \in V_1',$ 
    \[ r |\{ i\in [m] \mid v \in A_i\}| + |\{ i \in [z] \mid v \in Y_i \}| = m' d_1 / |V_1'| \pm 2r d_1'. \]
    It is not hard to see this is always possible by iteratively defining the sets $X_i, Y_i$ to consist of least popular vertices.

    With the aim of applying Lemma~\ref{lem:one-bite-probabilistic}, for each $i \in [m]$ and $j \in [r]$, we independently sample a walk $P_{i,j}' \sim \cP_{G'}^{\ell-2}(A_i, B_i)$. Furthermore, for each $i \in [z],$ independently sample a walk $Q_i \sim \cP_{G'}^{\ell-2}(X_i, Y_i)$. Let $\calF$ be the set of these walks which are non-degenerate and vertex disjoint from all the other sampled walks. Let
    \[ \eta = \max\{ 2rd_1' |V_0'| / (m'd_1), (\ell-2)^2 / d_1 \} = \max\{ 8 \alpha^{1/4} |V_0| / n, (\ell-2)^2 / d_1 \} = 8\alpha^{1/4} |V_0| / n  \in [4\alpha^{1/4}, 8\alpha^{1/4}], \]
    where we used that $d_1 \ge 0.9d, \frac{|V_0'|}{d_1} = \frac{|V_0|}{d},$ $|V_0| \in [n/2, n], \alpha > d^{-1/4}$ and $d \ge \alpha^{-4} \ge \ell^{160}$. Note that $\eta < 1 / (8C)$ since $\alpha = o(1)$.

    Consider an arbitrary vertex $v \in V(G)$ and index $t \in [0, \ell-2]$. Denote $S_v = N_G(v) \setminus U, q = e^{-(\ell-1)^2 m' / n'}$ and $\mu = q m' |S_v| / |V_0'|$. Observe that $q \ge 1/2$ since $m'= 8C\ell^3\alpha^{3/4} n, \alpha \le \ell^{-40}$ and $n' \ge n/2$. Furthermore, note that $\mu \in [(1/4) r \alpha^{3/4} d, 4 r\alpha^{3/4} d]$. Using $\alpha \leq \ell^{-40},$ we have $\eta \mu \le 32 \alpha r d \leq \alpha^{3/4} d$. Furthermore, $\ell \sqrt{\mu} \le \ell \cdot 2 \sqrt{r} \alpha^{3/8} \sqrt{d} \le \alpha^{3/4} d$ and $\ell^2 \le \alpha^{3/4} d.$ If $G$ is bipartite with parts $V_0, V_1$ and $v \in V_{t+1},$ then for any $P \in \cF,$ $P(t) \not\in S_v,$ since $P(t) \in V_{t+1}$ and $S_v \subseteq V_t$. Otherwise, setting $\tau = 4r \alpha^{3/4} d$ and using Lemma~\ref{lem:one-bite-probabilistic}~\ref{item:pathdistr}, we get
    \begin{equation*} \plabel{eq:prob-S-last-step}
        \Pr[|\{ P \in \calF \mid P(t) \in S_v \}| \ge 16r \alpha^{3/4}d ] \le 8\exp\left( \frac{- (4r \alpha^{3/4} d)^2}{C_{\ref{lem:one-bite-probabilistic}}\ell^2 (4r \alpha^{3/4} d + 4r \alpha^{3/4}d)} \right) \le \exp(- \alpha^{3/4} d) < n^{-10},
    \end{equation*}
    where we used that $r = 8C \ell^3$, $\alpha > d^{-1/4}, d \ge \ell^{100}$ and $\ell \ge \log n$. Let us assume that for every $v \in V(G)$ and every $t \in [0, \ell-2],$ we have
    \begin{equation} \plabel{eq:last-step-uniform}
        |\{ P \in \calF \mid P(t) \in S_v \}| < 16r \alpha^{3/4} d,
    \end{equation}
    which by a union bound holds with high probability.

    Next, we show that with high probability, for any $i \in [m],$ at least one path $P'_{i,j}, j \in [r]$ is in $\cF$. To this end, observe that $\eta r \le 8 \alpha^{1/4} r \le r / (16C_{\ref{lem:one-bite-probabilistic}}), \ell \sqrt{r} \le r / (16C_{\ref{lem:one-bite-probabilistic}})$ and $\ell^2 \le r / (16C_{\ref{lem:one-bite-probabilistic}})$. Now, fix $i \in [m]$. Applying Lemma~\ref{lem:one-bite-probabilistic} \ref{item:indexdistr} with the index set $I$ corresponding to the set of paths $\{P_{i, j}' \mid j \in [r]\}$ and $\tau = r / 4,$ we see that the probability that none of these paths is in $\calF$ is at most $8\exp\left( -\frac{\tau^2}{C_{\ref{lem:one-bite-probabilistic}} \ell^2(r + \tau)} \right) < n^{-10},$ where we used that $q \ge 1/2, r = 8C \ell^3$, $C$ is sufficiently large and $\ell \ge \log n$. Hence, by a union bound, with high probability, for every $i \in [m]$ there exists $j \in [r]$ such that $P'_{i,j} \in \cF$ and we shall assume that this holds.
    
    For each $i \in [m],$ pick an arbitrary path $P'_{i, j_i}$ in $\calF$ and let $P_i$ be the path obtained by appending $a_i$ to the beginning and $b_i$ to the end of $P'_{i,j_i}.$ Note that for $i \in [m],$ since $P_{i,j_i}' \in \cF$, the paths $P_i$ are internally vertex disjoint and their internal vertices are disjoint from $U$, which combined with the fact that $a_i, b_i \in U$ and $a_i\neq b_i$ further implies that each $P_i$ is non-degenerate. Moreover, clearly $P_i$ has length $\ell$. It remains to prove the last point.

    For $t \in [1, \ell-1]$, let $W_t = \{ P_i(t) \mid i \in [m] \}.$ For any $v \in V(G),$ we have $0 \le |W_t \cap N(v)| \le 16r \alpha^{3/4}d$ by (\ref{eq:last-step-uniform}), implying that $W_t$ is $(V_t, V_{t+1}, \alpha^{2/3})$-bi-uniform. Indeed, note that $\alpha^{2/3}\bar{d}(G) \ge |W_t| \bar{d}(G) / |V_0|,$ where we used that $|W_t| = m \le \alpha^{3/4}n, |V_0| \ge n/2$ and $\alpha = o(1)$. On other hand, $16r \alpha^{3/4} d \le \alpha^{2/3} \bar{d}(G)$ because $r = 8 C \ell^3, \alpha \le \ell^{-40}$, $\ell \ge \log n = \omega(1)$ and $\bar{d}(G) \geq d/2$. We conclude that the paths $P_i$ satisfy all the requirements, thus finishing the proof.
\end{proof}

Finally, using Lemmas~\ref{lem:one-bite}~and~\ref{lem:last-step} we prove Lemma~\ref{lem:connections}.
\begin{proof}[Proof of Lemma~\ref{lem:connections}]
    We shall iteratively apply Lemma~\ref{lem:one-bite} in steps $k=1, \dots$ to find the desired paths in batches and once the number of remaining pairs to connect becomes small enough, we will find them in one step using Lemma~\ref{lem:last-step}. Let $C$ be a sufficiently large constant to be chosen implicitly later and for $k \ge 1,$ define $\alpha_k = (C\ell)^{k-1} \alpha$. Denote $K = 3 \log (\log(\alpha^{-1}) / \log(p^{-1}))$ and observe that for $k \le K+1,$ we have $\alpha_k \le (C \ell)^K \alpha \le \ell^{2K} \alpha \le \alpha^{15/16}.$ Indeed, the first inequality holds since $\ell \ge C$ so it remains to justify that $\ell^{2K} \le \alpha^{-1/16},$ or equivalently $\log (\alpha^{-1}) - 96 \log (\log(\alpha^{-1}) / \log(p^{-1})) \log \ell \ge 0$. The derivative in $\alpha$ of the left hand side is negative for $\alpha \in (0, \ell^{-400}]$, so it is enough to prove the inequality for $\alpha = \ell^{-400}$ which is our upper bound on $\alpha.$ In this case, we have $\log (\ell^{400}) - 96 \log (\log(\ell^{400}) / \log(p^{-1})) \log \ell \ge 400 \log \ell - 96 \log (400 \log \ell / (8 \log \ell)) \log \ell > 0.$

    At step $k$, we are given a set $I_k \subseteq [m]$ with $|I_k| \le \max\{ p^{(3/2)^{k-1}} n, \alpha^{45/64} n\}$ and already defined paths $P_i$ for each $i \in [m] \setminus I_k$ such that $P_i$ is an $a_i-b_i$ path of length $\ell$ disjoint from $U$ and the paths $P_i, i \in [m] \setminus I_k$ are pairwise internally vertex disjoint. Moreover, for each $t \in [\ell-1]$, the set $\{ P_i(t) \mid i \in [m] \setminus I_k\}$ is $(V_t, V_{t+1}, \alpha_k / \ell)$-bi-uniform. Finally, the multisets $\{ a_i \mid i \in I_k \}$ and $\{ b_i \mid i \in I_k\}$ are $(V_0, V_1, \alpha_k)$-bi-uniform and $(V_1, V_0, \alpha_k)$-bi-uniform, respectively. We set $I_1 = [m]$ and note that by assumption all the requirements are satisfied for $k=1$ since $\alpha_1 = \alpha,$ none of the paths have been defined and the empty set of vertices is $\beta$-uniform for any $\beta \ge 0.$

    Now, consider an arbitrary step $k \ge 1$. Let $U_k = U \cup \bigcup_{i \in [m] \setminus I_k} \Vint(P_i)$. Note that $|U_k \cap V_0| = |U_k \cap V_1| \le |U \cap V_1| + m(\ell-1) \le |U| + m \ell \le n/50$, where the equality is justified by the fact that $G$ is $(V_0, V_1)$-alternating, $|U \cap V_0| = |U \cap V_1|$ and $\ell$ is odd, while the inequality holds since $|U| \le n/100, m \le n / \ell^{8}$ and $\ell = \omega(1)$.
    
    We claim that $U_k$ is $\alpha_k$-uniform with respect to $G$. If $k=1,$ then $U_k = U$ which is $\alpha$-uniform by assumption. If $k \ge 2,$ by assumption, for each $t \in [\ell-1],$ the set $\{ P_i(t) \mid i \in [m] \setminus I_k\}$ is $(V_t, V_{t+1}, \alpha_k / \ell)$-bi-uniform. If $V_0 = V_1,$ then $U_k$ is $\alpha_k$-uniform since $\alpha + (\ell-1) \alpha_k / \ell \le \alpha_k$. On the other hand, if $G$ is bipartite with parts $V_0$ and $V_1$ of equal size, then the set $(U \cap V_0) \cup \bigcup_{t \in [\ell-1], t \text{ even}} \{P_i(t) \mid i\in [m]\setminus I_k\}$ is $(V_0, V_1, \alpha_k)$-bi-uniform and analogously the set $(U \cap V_1) \cup \bigcup_{t \in [\ell-1], t \text{ odd}} \{P_i(t) \mid i\in [m]\setminus I_k\}$ is $(V_1, V_0, \alpha_k)$-bi-uniform, and since these sets are of equal size and their union is $U_k,$ it follows that $U_k$ is $\alpha_k$-uniform. Observe that if $k \le K+1,$ then recalling that $\alpha_k \le \alpha^{15/16}$, $U_k$ is balanced and $|U_k| \le |U| + m \ell$, by assumption, it follows that $G \setminus U_k$ is $(V_0 \setminus U_k, V_1 \setminus U_k)$-mixing in time $t_0$.

    Denote $m_k = |I_k|.$ First, suppose that $m_k > \alpha^{45/64} n$. Then, $|I_k| \le p^{(3/2)^{k-1}} n$ implies $k \le K$, where we used that $\log(3/2) > 1/3$. Denote by $C_{\ref{lem:one-bite}}$ the constant provided by Lemma~\ref{lem:one-bite} and let $\alpha' = C_{\ref{lem:one-bite}} \alpha_k$.
    
    We apply Lemma~\ref{lem:one-bite} to the graph $G$ with the set of pairs $\{(a_i, b_i) \mid i \in I_k\}$, $U = U_k,$ $\alpha = \alpha_k$ and the other parameters directly translated. Let us verify all the inequalities needed for this application. We have $m_k/n \ge \alpha^{45/64} \ge \alpha_k^{3/4}$ since $\alpha_k \le \alpha^{15/16},$ and furthermore $d' \le \alpha d \le \alpha_k d, m_k/n \le m/n \le \ell^{-8}, \alpha_k \ge \alpha > d^{-1/4}, d \ge \alpha^{-4} \ge \ell^8$ and $\ell \ge \log n$.
    
    We obtain a set $J \subseteq I_k$ such that $|I_k \setminus J| \le 4 \ell^2 m_k^2 / n + \alpha' n$ and a collection of paths $(P_i^k)_{i \in J},$ such that $P_i^k$ is an $a_i-b_i$ path of length $\ell,$ these paths are internally vertex disjoint and their internal vertices are disjoint from $U_k$, that is, their internal vertices are disjoint from $U$ and all the internal vertices in the paths found in previous steps. Moreover for each $t \in [\ell-1],$ the set $\{ P_i^k(t) \mid i \in J\}$ is $(V_t, V_{t+1}, \alpha')$-bi-uniform. Finally, the  multisets $\{ a_i \mid i \in I_k \setminus J\}$ and $\{ b_i \mid i \in I_k \setminus J\}$ are $(V_0, V_1, \alpha')$-bi-uniform and $(V_1, V_0, \alpha')$-bi-uniform, respectively.

    For $i \in J$, let $P_i = P_i^k$ and let $I_{k+1} = I_k \setminus J.$ Note that $4\ell^2 m_k^2 / n \le 4\ell^2 p^{2\cdot (3/2)^{k-1}} n \le (1/2) p^{(3/2)^k} n,$ where we used that $k \ge 1, p \le \ell^{-8}$ and $\ell = \omega(1)$. Furthermore, $\alpha' n \le C_{\ref{lem:one-bite}} \alpha^{15/16} n \le (1/2) \alpha^{45/64} n,$ since $\alpha = o(1)$. Combining, we have $|I_{k+1}| \le 4\ell^2 m_k^2 / n + \alpha' n \le \max\{ p^{(3/2)^k} n, \alpha^{45/64} n\}$. Consider an arbitrary index $t \in [\ell-1]$. Since the set $\{ P_i(t) \mid i \in [m] \setminus I_k\}$ is $(V_t, V_{t+1}, \alpha_k)$-bi-uniform and the set $\{ P_i(t) \mid i \in J\}$ is $(V_t, V_{t+1}, \alpha')$-bi-uniform and $\alpha_k + \alpha' \le \alpha_{k+1}/\ell,$ the set $\{ P_i(t) \mid i \in [m] \setminus I_{k+1}\}$ is $(V_t, V_{t+1}, \alpha_{k+1}/\ell)$-bi-uniform. Finally, note that the multisets $\{ a_i \mid i \in I_{k+1}\}$ and $\{ b_i \mid i \in I_{k+1}\}$ are $(V_0, V_1, \alpha_{k+1})$-bi-uniform and $(V_1, V_0, \alpha_{k+1})$-bi-uniform respectively since $I_{k+1} = I_k \setminus J$ and $\alpha_{k+1} \ge \alpha'$. Thus, we may proceed to step $k+1$.

    Now, suppose that $m_k \le \alpha^{45/64} n$ and either $k=1$ or $m_{k-1} > \alpha^{45/64} n$. If $k = 1$, then $\alpha_k = \alpha \le \alpha^{15/16}$ and, otherwise, we have $k \le K+1$, so $\alpha_k \le \alpha^{15/16}.$ We apply Lemma~\ref{lem:last-step} to the graph $G$ with the remaining pairs $(a_i, b_i)_{i \in I_k}, U = U_k, \alpha = \alpha^{15/16}$ and the other parameters directly translated. Let us verify the necessary inequalities. We have $m_k/n \le \alpha^{45/64} = (\alpha^{15/16})^{3/4}, d' \le \alpha d \le \alpha^{15/16}d, \alpha^{15/16} \le \ell^{-40}, \alpha^{15/16} \ge \alpha \ge d^{-1/4}$ and $\ell \ge \log n$.
    
    Thus, for each $i \in I_k,$ we get an $a_i-b_i$ path $P_i^k$, these paths are internally vertex disjoint and disjoint from $U_k$. Moreover, for each $t \in [\ell-1]$, the set $\{ P_i^k(t) \mid i \in I_k \}$ is $(V_t, V_{t+1}, (\alpha^{15/16})^{2/3})$-bi-uniform. For $i \in I_k,$ let $P_i = P_i^k$. For $t \in [\ell-1]$, since the set $\{ P_i(t) \mid i \in [m] \setminus I_k\}$ is $(V_t, V_{t+1}, \alpha_k / \ell)$-bi-uniform and $\{ P_i(t) \mid i \in I_k \}$ is $(V_t, V_{t+1}, (\alpha^{15/16})^{2/3})$-bi-uniform, it follows that $\{ P_i(t) \mid i \in [m]\}$ is $(V_t, V_{t+1}, \alpha^{1/2})$-bi-uniform as $\alpha_k / \ell + \alpha^{15/16 \cdot 2/3} \le \alpha^{15/16} + \alpha^{5/8} \le \alpha^{1/2}.$ This concludes the proof of the lemma.
\end{proof}

\section{Finding the absorbers} \label{sec:absorbers}
The main purpose of this section is to find a reservoir set through which we can connect many pairs of vertices and an absorbing structure for this reservoir. Crucially, we require that the vertex set of the absorbing structure including the reservoir is a uniform set. We start with a formal definition of the connecting property we shall use.

\begin{defn}[Connecting set] \plabel{def:connecting-set}
    Let $G$ be a graph. We say that a set $V \subseteq V(G)$ is $(D, \ell)$-connecting in $G$ if for every sequence $x_1, \dots, x_r, y_1, \dots, y_r$ of (not necessarily distinct) vertices outside of $V$ such that each vertex in $V$ has at most $D$ neighbours in the multiset $\{x_1, \dots, x_r, y_1, \dots, y_r\}$, there is a collection of internally vertex-disjoint paths $(P_i)_{i \in [r]}$ such that, for each $i \in [r],$ $P_i$ is an $x_i-y_i$ path in $G$ through $V$ of length at most $\ell$.
\end{defn}

We need the following lemma stating that a large random set is connecting with high probability. This statement was proved by Chakraborti, Janzer, Methuku and Montgomery~\cite{chakraborti2025edge} in the case $\explet = \Theta(1/ (\log n)^2)$ adapting a lemma of Buci\'{c} and Montgomery~\cite{bucic2022towards} where edge-disjoint paths are constructed.

\begin{restatable}{lemma}{lemconnectingingrestate} \plabel{lem:randomsetconnecting}
    Let $G$ be an $n$-vertex $(d\pm d')$-nearly-regular $\explet$-expander, where $d'\leq d/4$ and $\explet<\frac{1}{(\log n)^2}$. Assume that $d\ge p^{-4}\explet^{-6}(\log n)^{22}$ and $p\leq 1$. Let $V$ be a $p$-random subset of $V(G)$. Then, with probability $1-o(1)$, $V$ is $(D,\ell)$-connecting for $D=\frac{p^9 \explet^{13} d }{(\log n)^{51}}$ and $\ell=\frac{5}{2}\explet^{-1}(\log n)^4$.
\end{restatable}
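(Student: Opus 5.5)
Following the Buci\'{c}--Montgomery argument as adapted in~\cite{chakraborti2025edge}, the plan is to show that with high probability the random set $V$ retains robust expansion from every set that is well spread in a suitable sense. The connecting paths will then be found greedily, one pair at a time, by growing balls in $G[V\cup\{x_i,y_i\}]$ minus the previously used paths.

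\textbf{Step 1: expansion inside $V$.} For each $v$, Chernoff gives $|N(v)\cap V| = pd(1\pm o(1))$ with high probability, since $pd\ge (\log n)^{20}$. We need more than this, namely that the expansion of $G$ is inherited by $G[V]$ robustly. We do not take a union bound over all sets. Instead we use the Koml\'os--Szemer\'edi style ``ball growing'' lemma: for a fixed vertex $x$ and a forbidden set $F$ of size at most $\explet pn/(\log n)^{O(1)}$ with few vertices in each neighbourhood, the $i$-th ball $B_i$ around $x$ in $G[V]-F$ satisfies $|B_{i+1}|\ge (1+\explet/4)|B_i|$ as long as $|B_i|\le n/2$.

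The key point is the edge-expansion form of Definition~\ref{def:rho-exp}. For $S\subseteq V$, the $\ge\explet d|S|$ edges leaving $S$ in $G$ land on vertices that lie in $V$ with probability $p$. This gives a count of $\approx p\explet d|S|$ edges from $S$ into $V\setminus S$, and hence at least $\approx p\explet |S|$ new vertices, since degrees into $V$ are $\le 2pd$. To make this valid simultaneously for all balls, we count only sets arising as balls. These are determined by the ``exploration'' of the random set, and we handle them with a standard sprinkling argument. We split $V$ into $L=\explet^{-1}(\log n)^2$ independent rounds $V=V^{(1)}\cup\dots\cup V^{(L)}$, each $p/L$-random, and grow the $j$-th ball layer using only $V^{(j)}$. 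Conditional on the previous rounds the current ball is fixed, so Chernoff applies to the number of its $G$-out-neighbours in $V^{(j)}$. The losses from the forbidden set $F$ are controlled because every vertex has at most $D$ neighbours among endpoints and few path vertices nearby. This is where the condition $d\ge p^{-4}\explet^{-6}(\log n)^{22}$ enters: it ensures that the Chernoff error terms $\sqrt{p\explet d|S|/L}\cdot\log n$ are dominated.

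\textbf{Step 2: greedy connection.} We process the pairs $(x_i,y_i)$ in order, maintaining the set $F$ of internal vertices already used. Balls from $x_i$ and from $y_i$ each grow to size $>n/2$ within $O(\explet^{-1}\log n)$ rounds, so they meet. The resulting path has length at most $2\cdot 4\explet^{-1}\log n\le \ell$. For this to work, $F$ must not block expansion, so we need that few vertices near $x_i$ are used and that $|F|\le r\ell$ is small compared to $p\explet n$.

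Two problems arise here. First, $r$ can be as large as about $D\cdot|V|/d$. Second, $F$ can concentrate. We therefore use the Buci\'{c}--Montgomery trick: instead of arbitrary shortest paths we pick paths avoiding vertices that are ``overused''. A vertex $v\in V$ is declared blocked when too many used vertices lie in its neighbourhood. A counting argument shows that blocked vertices form a set that still has small edge-boundary relative to the expansion, because $\sum_v |N(v)\cap F|\le 2pd|F|$, and the bound $D\le p^9\explet^{13}d/(\log n)^{51}$ keeps the total usage far below the expansion threshold.

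\textbf{Main obstacle.} I expect the main difficulty to be Step 1 combined with the adaptivity in Step 2: the forbidden set $F$ depends on $V$, so the per-round Chernoff arguments cannot be applied to a fixed set. I would handle this by proving a deterministic property of $V$ that holds with probability $1-o(1)$ and is formulated for all $F$ satisfying a degree-type condition. Concretely, for every vertex $x$ and every sequence of layers the exploration succeeds, and we union bound only over the $n\cdot n^{O(L)}$ exploration histories rather than over all sets $F$. This fits inside the failure probability $\exp(-\Omega(p\explet d/(\log n)^{O(1)}))$.
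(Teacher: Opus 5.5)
There is a genuine gap, and it is exactly where you flag the main obstacle: you never obtain a property of $V$ that holds with high probability for every forbidden set the greedy procedure might create. The union bound over ``$n\cdot n^{O(L)}$ exploration histories'' is unjustified. The forbidden set is a function of the earlier paths, hence of all of $V$, so the balls $B_j$ and the set $F$ can be essentially arbitrary. The choices of $F$ alone number $\binom{n}{|F|}$ with $|F|$ up to $r\ell$, which for sparse $G$ is far larger than $p\gamma d$. A failure probability of order $\exp(-\Omega(p\gamma d/(\log n)^{O(1)}))$ cannot pay for this.

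Your growth step is also inconsistent as written. If the $j$-th layer is $N_G(B_j)\cap V^{(j)}$ with $V^{(j)}$ of density $p/L$, its expected size is at most $np/L$. This is below $\gamma|B_j|/4$ once $|B_j|>4pn/(\log n)^2$, so the ball stalls long before reaching half of $V$.

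The paper avoids both issues. Its balls are subsets of $V(G)$: a vertex joins $B_{i+1}$ if it has a $(G-F)$-neighbour in $B_i\cap V_i$. The stars-versus-robust-neighbourhood dichotomy of Lemma~\ref{lem:structuredichotomy} then gives growth by a factor $1+\gamma/2^7$ per round. The robustness proved is only against \emph{edge} sets with $|F|\le\mu|U|$, proportional to the starting set. It is obtained by a union bound over pairs $(U',F)$ with $U'$ well-expanding and $|F|\le|U'|$; there the failure probability $e^{-\Omega(|U'|(\log n)^2)}$ beats the at most $n^{3|U'|}$ choices. An arbitrary $U$ is then reduced to a well-expanding subset (Lemma~\ref{lem:findingwell-expandingset}). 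Larger $F$ are handled by splitting $G$ into edge-disjoint expanders (Lemma~\ref{lem:partitionedgesintoexpanders}).

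Because this robustness scales with $|U|$, single endpoints cannot be connected greedily once many paths exist, and your blocking rule does not fix this. It only constrains vertices of $V$, so nothing stops an endpoint $x_j\notin V$ from having all of its roughly $pd$ neighbours in $V$ used by earlier paths. Also, deleting blocked vertices creates new vertices with many deleted neighbours, which your sketch does not control.

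The missing idea is the matching argument of Lemma~\ref{lem:connectafterexpand}. First, Hall's theorem gives every pair disjoint private sets $X_i\subseteq N(x_i)$ and $Y_i\subseteq N(y_i)$ of size about $\Delta\ell^2(\log n)^4/\mu$ inside a reserved part $W$ of $V$; this is where the bound on $D$ enters. Then the Aharoni--Haxell theorem (Theorem~\ref{thm:hyperhall}) reduces the task to the following: for every $I\subseteq[r]$, a maximal family of vertex-disjoint short paths, each joining some $X_i$ to $Y_i$ with $i\in I$, has more than $(|I|-1)\cdot 8\ell\log n$ members. If it had fewer, delete the at most $\Delta|S|\le\mu t$ edges meeting its vertex set $S$. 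Applying Proposition~\ref{prop:connect one of t} to $\bigcup_{i\in I}X_i$ and $\bigcup_{i\in I}Y_i$, each of size at least $2t$, then yields one more such path avoiding $S$, contradicting maximality.
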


The proof of Lemma~\ref{lem:randomsetconnecting} is a trivial modification of the corresponding proof in~\cite{chakraborti2025edge} so we defer it to the appendix.

We will need the following simple consequence of Lemma \ref{lem:randomsetconnecting} which finds a connecting set avoiding a small uniform subset of the vertices.

\begin{lemma} \plabel{lem:randomset_avoiding_connecting}
    Let $G$ be an $n$-vertex $(d\pm d')$-nearly-regular $\explet$-expander, where $d'\leq \explet d/100$ and $\explet<\frac{1}{(\log n)^2}$. Let $U$ be an $\alpha$-uniform subset of $V(G)$ with $|U|\leq \explet n/100$ and $\alpha\leq \explet/100$. Assume that $d\ge p^{-10}\explet^{-100}$ and $p\leq 1$. Let $R$ be a $p$-random subset of $V(G)\setminus U$. Then, with probability $1-o(1)$, $R$ is $(D,\ell)$-connecting in $G$ for $D=\frac{p^{9} \explet^{13} d }{(\log n)^{52}}$ and $\ell=5\explet^{-1}(\log n)^4$.
\end{lemma}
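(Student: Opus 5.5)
The plan is to apply Lemma~\ref{lem:randomsetconnecting} to the graph $G_2 = G \setminus U$ rather than to $G$. Since $R$ is a $p$-random subset of $V(G)\setminus U = V(G_2)$, it is exactly a $p$-random subset of the vertex set of $G_2$. Any path in $G_2$ through $R$ is also a path in $G$ through $R$. So it suffices to show two things: that $G_2$ satisfies the hypotheses of Lemma~\ref{lem:randomsetconnecting} with slightly worse parameters, and that the connecting property of $R$ in $G_2$ transfers to $G$ with the stated $D$ and $\ell$.

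\textbf{Step 1: properties of $G_2$.} By Lemma~\ref{lem:remove-set}, $G_2$ is $(d_2\pm d_2')$-nearly regular with $d_2 = d(1-|U|/n)\ge 0.99d$ and $d_2' = d'(1+|U|/n)+\alpha(d+d') \le d_2/4$. It is also a $\explet_2$-expander with $\explet_2 = \explet - 4(|U|/n+\alpha+d'/d) \ge \explet - 12\explet/100 \ge \explet/2$. Let $n_2 = |V(G_2)| \in [n/2, n]$.

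We need $\explet_2 < 1/(\log n_2)^2$. This may fail, since $\explet_2 \le \explet < 1/(\log n)^2 \le 1/(\log n_2)^2$ does give it, but only because $\explet_2 \le \explet$. Indeed, $\explet_2\le\explet$ holds directly from the formula, and expansion is monotone: a $\explet_2$-expander is also a $\explet''$-expander for every $\explet'' \le \explet_2$. So this hypothesis is fine.

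The degree condition requires $d_2 \ge p^{-4}\explet_2^{-6}(\log n_2)^{22}$. Since $\explet_2 \ge \explet/2$, it suffices to have $0.99d \ge 64\,p^{-4}\explet^{-6}(\log n)^{22}$. This follows from $d \ge p^{-10}\explet^{-100}$ together with $\explet^{-1} > (\log n)^2$: we get $\explet^{-94} \ge (\log n)^{188}$, which absorbs $(\log n)^{22}$ and the constant.

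\textbf{Step 2: apply the lemma and transfer the parameters.} Lemma~\ref{lem:randomsetconnecting} shows that with probability $1-o(1)$, $R$ is $(D_2,\ell_2)$-connecting in $G_2$, where
\[
D_2 = \frac{p^9\explet_2^{13}d_2}{(\log n_2)^{51}} \ge \frac{p^9\explet^{13}d\cdot 0.99\cdot 2^{-13}}{(\log n)^{51}} \ge \frac{p^9\explet^{13}d}{(\log n)^{52}} = D
\]
for large $n$, and
\[
\ell_2 = \tfrac52\explet_2^{-1}(\log n_2)^4 \le 5\explet^{-1}(\log n)^4 = \ell.
\]

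It remains to check the definition of connecting in $G$. Take vertices $x_i, y_i$ outside $R$ such that each vertex of $R$ has at most $D$ neighbours in the multiset in $G$. Some of these vertices may lie in $U$, and hence outside $G_2$. This is the main subtlety.

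To handle it, run the argument of Lemma~\ref{lem:randomsetconnecting} in the graph $G_2 \cup U$ instead. Equivalently, apply Lemma~\ref{lem:randomsetconnecting} to $G$ itself with the random set $R$. The set $R$ is $p$-random in $V(G)\setminus U$, and it can be coupled as $R = R' \setminus U$ where $R'$ is $p$-random in $V(G)$. Removing the $\alpha$-uniform set $U$ costs each vertex only a $(|U|/n+\alpha)$-fraction of its neighbours, which is at most an $\explet/50$ fraction.

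Since the appendix proof is a direct adaptation, the simplest rigorous route is to note that the proof of Lemma~\ref{lem:randomsetconnecting} only uses that the endpoints are adjacent to vertices in the graph where expansion holds. Each endpoint has at least $d - |N(v)\cap U| \ge d/2$ neighbours in $G_2$. So one can first step from each $x_i, y_i$ to a neighbour and then connect inside $G_2$; this uses the slack of $\ell_2 \le \ell/2$ and $D_2 \ge 2^{13}D$.

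\textbf{Main obstacle.} I expect the hardest part to be this endpoint issue: making the first step robust so that the new multiset of endpoints still has at most $D_2$ neighbours of any vertex of $R$. My first attempt would be to choose these first-step neighbours greedily or randomly so that load is spread out.
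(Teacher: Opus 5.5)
There is a genuine gap in Step~2. You flag the endpoint issue as a load-balancing problem, but the first-step reduction fails before load balancing even comes up.

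\begin{itemize}
\item The neighbour $x_i'$ that you step to from $x_i$ becomes an internal vertex of the final $x_i$--$y_i$ path. The definition of a connecting set requires all internal vertices to lie in $R$, so $x_i'$ must be in $R$.
\item But the connecting property of $R$ in $G\setminus U$ only applies to endpoints \emph{outside} $R$, so $x_i'$ cannot serve as a new endpoint.
\item You also cannot connect through $R$ with the chosen first-step vertices removed. That set depends on the endpoints, so it is no longer random and Lemma~\ref{lem:randomsetconnecting} does not apply to it.
\end{itemize}

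Your alternative of writing $R=R'\setminus U$ with $R'$ a $p$-random subset of $V(G)$ does not work either. The connecting property is preserved under taking supersets, not subsets, and paths through $R'$ may use vertices of $U$.

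The missing idea is to split the reservoir in advance. Put each vertex of $R$ independently into $R_1$ or $R_2$ with probability $1/2$, so both are $(p/2)$-random subsets of $V(G)\setminus U$.

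\begin{itemize}
\item Apply Lemma~\ref{lem:randomsetconnecting} to $R_2$ in $G\setminus U$. The extra $\log n$ factor in $D$ still gives $D'\ge 40D$, and you get $\ell'+2\le\ell$. This is all you need; your claim $\ell_2\le\ell/2$ does not follow from $\explet_2\ge\explet/2$ and is not required.
\item Reserve $R_1$ for the first step. Since $U$ is small and $\alpha$-uniform, every vertex has at least $d/2$ neighbours outside $U$. A Chernoff bound then shows that every vertex of $G$ has between $dp/10$ and $2dp$ neighbours in $R_1$.
\item Given the endpoints, each vertex of $R_1\subseteq R$ has at most $D$ neighbours in the multiset. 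Hall's theorem then gives pairwise disjoint sets $X_i\subseteq N(x_i)\cap R_1$ and $Y_i\subseteq N(y_i)\cap R_1$, each of size $dp/(10D)$.
\item Pick $x_i'\in X_i$ and $y_i'\in Y_i$ uniformly at random. Disjointness of these sets makes the new endpoints distinct, which you need for internal disjointness of the final paths.
\item Disjointness also means each vertex of $R_1$ is selected with probability at most $10D/(dp)$. So every $w\in R_2$ has expected load at most $2dp\cdot 10D/(dp)=20D$.
\item Theorem~\ref{thm:talagrand} with $c=r=1$, plus a union bound, keeps every load below $40D\le D'$.
\item Finally, connect each $x_i'$ to $y_i'$ through $R_2$ in $G\setminus U$ and append the edges $x_ix_i'$ and $y_i'y_i$.
\end{itemize}
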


\begin{proof}
    Note that we may assume that $n$ is sufficiently large.

    By Lemma \ref{lem:remove-set}, $G\setminus U$ is a $(d_2\pm d'_2)$-nearly regular $\explet_2$-expander, where $d_2=d(1-|U|/n)\geq d/2$, $d'_2=d'(1+|U|/n)+\alpha(d+d')\leq d/8\leq d_2/4$ and $\explet_2=\explet-4(|U|/n+\alpha+d'/d)\geq 5\explet/8$. Let $n_2$ be the number of vertices in $G\setminus U$, and note that $n_2\geq n/2$.

    Let $R_1\cup R_2$ be a random partition of $R$, where each $v\in R$ belongs to each $R_i$ with probability $1/2$, independently over all vertices $v$. Clearly, $R_2$ is a $(p/2)$-random subset of $V(G)\setminus U$. Hence, by Lemma \ref{lem:randomsetconnecting}, with probability $1-o(1)$, $R_2$ is $(D',\ell')$-connecting in $G\setminus U$ for $D'=\frac{(p/2)^9 \explet_2^{13} d_2 }{(\log n_2)^{51}}\geq 40D$ and $\ell'=\frac{5}{2}\explet_2^{-1}(\log n_2)^4\leq 4\explet^{-1}(\log n)^4$. Indeed, the conditions of Lemma \ref{lem:randomsetconnecting} are satisfied as $d'_2\leq d_2/4$, $\explet_2\leq \explet<\frac{1}{(\log n)^2}\leq \frac{1}{(\log n_2)^2}$, $d_2\geq d/2\geq \frac{1}{2}p^{-10}\explet^{-100}\geq (p/2)^{-4}\explet_2^{-6}(\log n_2)^{22}$.

    Since $U$ is $\alpha$-uniform for $\alpha\leq 1/10$ and $|U|\leq n/10$, each vertex $v\in V(G)$ has at most $d/3$ neighbours in $U$, and so each $v$ has at least $d/2$ neighbours in $V(G)\setminus U$. Hence, as $R_1$ is a $(p/2)$-random subset of $V(G)\setminus U$, it follows by the Chernoff bound and the union bound that with probability $1-o(1)$, every $v\in V(G)$ has at least $dp/10$ and at most $2dp$ neighbours in $R_1$. Therefore, it suffices to prove that if these degree conditions hold (and $R_2$ is $(D',\ell')$-connecting in $G\setminus U$), then $R$ is $(D,\ell)$-connecting in $G$.

    Let $x_1,\dots,x_r,y_1,\dots,y_r$ be a sequence of (not necessarily distinct) vertices in $V(G)\setminus R$ such that every vertex in $R$ has at most $D$ neighbours in the multiset $\{x_1,\dots,x_r,y_1,\dots,y_r\}$. Since each vertex in $\{x_1,\dots,x_r,y_1,\dots,y_r\}$ has at least $dp/10$ neighbours in $R_1$, by a simple application of Hall's theorem, there exist pairwise disjoint sets $X_1,\dots,X_r,Y_1\dots,Y_r\subset R_1$ of size $\frac{dp/10}{D}$ each such that $X_i\subset N(x_i)$ and $Y_i\subset N(y_i)$. For each $i\in [r]$, let $x'_i$ be a uniformly random element of $X_i$, and let $y'_i$ be a uniformly random element of $Y_i$. Now fix a vertex $w\in R_2$ and let $L_w$ be the random variable counting the number of neighbours of $w$ in the set $\{x'_1,\dots,x'_r,y'_1,\dots,y'_r\}$. Since $w$ has at most $2dp$ neighbours in $R_1$, and the size of each $X_i$ and $Y_i$ is at least $\frac{dp}{10D}$, the expected value of $L_w$ is at most $2dp\cdot \frac{10D}{dp}=20D$. Since $L_w$ is $1$-Lipschitz with respect to the choices $x'_i$ and $y'_i$, it follows that the probability that $L_w\geq 40D$ is at most $1/n^2$. Indeed, we can apply Talagrand's inequality (Theorem \ref{thm:talagrand}) with $c=r=1$ and $\tau=D\geq (\log n)^5$. Hence, with probability $1-o(1)$, every $w\in R_2$ has at most $40D\leq D'$ neighbours in $\{x'_1,\dots,x'_r,y'_1,\dots,y'_r\}$. Since $R_2$ is $(D',\ell')$-connecting in $G\setminus U$, there is a collection of vertex-disjoint paths $(P'_i)_{i\in [r]}$ such that, for each $i\in [r]$, $P'_i$ is an $x'_i-y'_i$ path in $G$ through $R_2$ of length at most $\ell'$. Appending the edge $x_i x'_i$ at the start and the edge $y'_i y_i$ at the end of each $P'_i$, we obtain a collection of internally vertex-disjoint $x_i-y_i$ paths of length at most $\ell'+2\leq \ell$ through $R=R_1\cup R_2$, so $R$ is $(D,\ell)$-connecting in $G$.
\end{proof}

We also need the following simple lemma stating that a random set of vertices of a given size is likely to be uniform.

\begin{lemma} \plabel{lem:random-s-set-uniform}
    Let $G$ be a $(V_0, V_1)$-alternating $(d \pm d')$-nearly regular graph on $n$ vertices with $d' \le d/64$ and let $U$ be an $\alpha_U$-uniform subset of $V(G)$ with $|U \cap V_0| = |U \cap V_1| \le n / 64$, where $\alpha_U \le 1/64$. Given a nonnegative integer $s \le |V_0 \setminus U|$, let $S$ be a random subset of $V_0 \setminus U$ of size $s$. Set $p = s / |V_0 \setminus U|$ and assume that $pd \ge 10 \log n$. Then with probability $1 - o(1)$, $S$ is $(V_0, V_1, \alpha)$-bi-uniform with respect to $G$, where $\alpha = 20(\alpha_U + d'/d + \sqrt{p \log n / d})$.
\end{lemma}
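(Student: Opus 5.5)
We fix a vertex $v \in V_1$ and control $|N(v) \cap S|$, where $S$ is a uniformly random $s$-subset of $V_0 \setminus U$. Set $N_v := N(v) \cap (V_0 \setminus U)$. Note that $N(v) \subseteq V_0$ in the bipartite case, and $V_0 = V(G)$ otherwise. Hence $|N(v)\cap S| = |N_v \cap S|$, which is hypergeometric with mean
\[ \mu_v = s|N_v|/|V_0\setminus U| = p|N_v|. \]
The plan has three steps. First, estimate $\mu_v$ deterministically. Second, prove concentration and take a union bound over $v$. Third, compare with the target quantity $|S|\bar d(G)/|V_0|$.

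For the first step, we compute $|N_v|$ using the $\alpha_U$-uniformity of $U$. Split $U$ by sides. In the bipartite case $U\cap V_0$ and $U \cap V_1$ have equal sizes $|U|/2$, and the neighbourhood of $v\in V_1$ lies in $V_0$. So $|N(v)\cap U| = |U|\bar d/n \pm \alpha_U \bar d = |U\cap V_0|\bar d/|V_0| \pm \alpha_U\bar d$. The non-bipartite case is immediate with $V_0=V(G)$. Thus
\[ |N_v| = d(v) - |U\cap V_0|\bar d/|V_0| \pm \alpha_U \bar d = \bar d\,\frac{|V_0\setminus U|}{|V_0|} \pm (2d' + \alpha_U\bar d), \]
using $d(v)=\bar d \pm 2d'$ (since both $d(v)$ and $\bar d$ lie in $[d-d',d+d']$). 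Multiplying by $p$ gives
\[ \mu_v = \frac{s\bar d}{|V_0|} \pm p(2d'+\alpha_U \bar d). \]
Since $p \le 1$, the error is at most $(2d'/d\cdot 1.1 + \alpha_U)\bar d$, which is well within $20(\alpha_U + d'/d)\bar d$ once we use $\bar d \ge d(1-1/64)$.

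For the second step, we apply the Chernoff bound for hypergeometric variables, which holds since the hypergeometric distribution is dominated in the convex order by the binomial. We use deviation $\lambda = 3\sqrt{pd\log n} + 3\log n$. Here $\mu_v \le 2pd$, so $\Pr[|X-\mu_v|>\lambda] \le 2\exp(-\lambda^2/(3(\mu_v+\lambda))) = o(n^{-1})$. Because $pd \ge 10\log n$, we have $\log n \le \sqrt{pd\log n}$, so $\lambda \le 6\sqrt{pd\log n} = 6 d\sqrt{p\log n/d} \le 7\bar d\sqrt{p\log n/d}$. A union bound over at most $n$ vertices $v$ then yields the property simultaneously for all $v$ with probability $1-o(1)$.

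For the third step, combine the two errors. With high probability, $\bigl||N(v)\cap S| - |S|\bar d/|V_0|\bigr| \le (3\alpha_U + 3d'/d + 7\sqrt{p\log n/d})\bar d \le \alpha \bar d$ for every $v\in V_1$, so $S$ is $(V_0,V_1,\alpha)$-bi-uniform. The only real care needed is the bookkeeping in the first step. There, one must verify that the bipartite and non-bipartite cases give the same formula, and that the ratio $|U\cap V_0|/|V_0|$ correctly replaces $|U|/n$. The probabilistic part is routine.
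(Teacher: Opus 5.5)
Your proof is correct and follows essentially the same route as the paper. For each $v\in V_1$ you treat $|N(v)\cap S|$ as hypergeometric, estimate its mean deterministically from the $\alpha_U$-uniformity of $U$, apply the hypergeometric Chernoff bound, and take a union bound over $v$; the only differences are cosmetic (you use the additive deviation $\lambda=3\sqrt{pd\log n}+3\log n$ and work with $\bar d(G)$ throughout, while the paper uses the multiplicative deviation $\eps=\sqrt{20\log n/(pd)}$ around $pk$ and converts from $d$ to $\bar d(G)$ at the end).
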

\begin{proof}
    Consider a fixed vertex $v \in V_1$. The random variable $|S \cap N_G(v)|$ is distributed as a hypergeometric random variable with parameters $|V_0 \setminus U|, s$ and $k \coloneqq |N_G(v) \setminus U|$, where $k = d(1 - |U| / n) \pm 4d' \pm 4 \alpha_U d$, where we used $d' \le d/64$ and that $U$ is $\alpha_U$-uniform. Note that, in particular, we have $k \ge 3d/4$ since $d' \le d/64, \alpha_U \le 1/64$ and $|U| \le n/32$. By \cite[Theorem~2.10]{JLRbook}, standard Chernoff bounds apply in this situation and we shall use \cite[Corollary~2.3]{JLRbook}. Recall that $p = s / |V_0 \setminus U|,$ set $\eps = \sqrt{20 \log n /(pd)}$ and note that $\eps \le \sqrt{2} < 3/2$. Thus, we have
    \[ \Pr\left[ ||S \cap N_G(v)| - pk \ge \eps pk \right] \le 2\exp(- \eps^2 / 3 \cdot p k) \le 2n^{-2}, \]
    where we used that $k \ge 3d/4$. By a union bound, with probability at least $1 - 2n^{-1}$, for all $v \in V_1,$ we have 
    \begin{align*}
        |S \cap N_G(v)| &= (1 \pm \eps) p |N_G(v) \setminus U| = (1 \pm \eps) p (d(1-|U|/n) \pm 4d' \pm 4 \alpha_U d)\\
        &= pd(1-|U|/n) \pm (\alpha/2) d = (s/|V_0|) d \pm (\alpha/2) d. 
    \end{align*}
    Since $\bar{d}(G) =d\pm d'$ and $d'\leq d/64$, it follows that with probability $1 - o(1)$, the set $S$ is $(V_0, V_1, \alpha)$-bi-uniform.
\end{proof}

The rest of this section is split into two subsections. In the first one, we deal with bipartite graphs and in the second, we work with far from bipartite graphs.

\subsection{Finding a bipartite absorber}
First we formally define the bipartite absorber for a set $R$.

\begin{defn}[Bipartite absorber]
    Given a bipartite graph $G$ with parts $A, B$, a balanced set $R \subseteq V(G)$ and vertices $x, y \not\in R,$ a bipartite absorber for $R$ with endpoints $x$ and $y$ is a subgraph $H \subseteq G$ such that $V(H) \supseteq R \cup \{x, y\}$ with the following property. For any balanced set $R' \subseteq R$ of size at most $|R| / 2,$ there is an $x-y$ path in $H$ with vertex set $V(H) \setminus R'$.
\end{defn}

A bipartite absorber is composed of many small gadgets which can absorb pairs of vertices which we define next.
\begin{defn}[$(x,a,b,y)$-absorber] \plabel{def:bip-gadget}
    Given a graph $G$ and distinct vertices $x, a, b, y$, we say a subgraph $H \subseteq G$ such that $\{x, a, b, y\} \subseteq V(H)$ is an $(x, a, b, y)$-absorber if $H$ contains an $x-y$ path with vertex set $V(H)$ as well as an $x-y$ path with vertex set $V(H) \setminus \{ a, b\}$. We shall refer to $V(H) \setminus \{x, a, b, y\}$ as the set of internal vertices of the $(x, a, b, y)$-absorber $H$.
\end{defn}

Next, we describe how an $(x, a, b, y)$-absorber will be constructed. An illustration is given in Figure~\ref{fig:bip-gadget} which is essentially borrowed from~\cite{chakraborti2025edge}.

\begin{figure}
    \centering
    \includegraphics[width=0.9\textwidth]{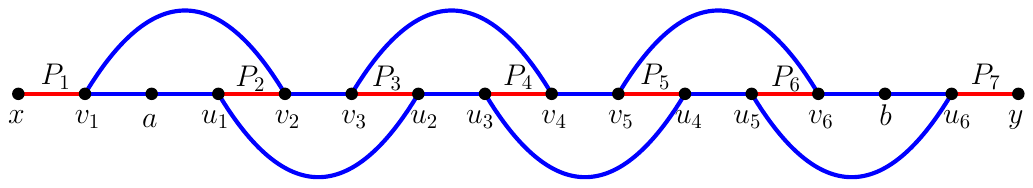}
    \caption{The construction of an $(x, a, b, y)$-absorber with $\ell = 7$. The blue segments and curves represent single edges, where the $a-b$ path with arcs below corresponds to $Q_1$ and the $a-b$ path with arcs above corresponds to $Q_2$. The red segments represent paths $P_1, \dots, P_\ell$.}
    \plabel{fig:bip-gadget}
\end{figure}

\textbf{Construction of an $(x,a,b,y)$-absorber.}
Suppose we are given distinct vertices $x, a, b, y$ in a graph $G$ and an odd integer $\ell$. Consider two $a-b$ paths $Q_1 = u_0, \dots, u_\ell$ and $Q_2 = v_0, \dots, v_\ell$ such that $u_0 = v_0 = a, u_\ell = v_\ell = b$ and $u_i \neq v_j$ for all $i, j \in [\ell-1]$. Let $P_1$ be a path from $x$ to $v_1$, let $P_{\ell}$ be a path from $u_{\ell-1}$ to $y$ and for $i \in [2, \ell-1],$ let $P_i$ be a path from $v_i$ to $u_{i-1}$. Suppose the paths $P_1, \dots, P_{\ell}$ are pairwise vertex-disjoint and do not contain $a$ or $b$. Then the subgraph $H$ with vertex set $V(H) = \{a, b\} \cup \bigcup_{i=1}^{\ell} V(P_i)$ and edge set $E(H) = E(Q_1) \cup E(Q_2) \cup \bigcup_{i=1}^{\ell} E(P_i)$ is an $(x, a, b, y)$-absorber. Indeed, consider the following two paths.

\begin{itemize}
    \item $x, P_1, v_1, a, u_1, P_2, v_2, v_3, P_3, \dots, u_{\ell-3}, u_{\ell-2}, P_{\ell-1}, v_{\ell-1}, b, u_{\ell-1}, P_\ell, y$.
    \item $x, P_1, v_1, v_2, P_2, u_1, u_2, P_3, v_3, v_4, P_4, u_3, \dots, u_{\ell-3}, P_{\ell-2}, v_{\ell-2}, v_{\ell-1}, P_{\ell-1}, u_{\ell-2}, u_{\ell-1}, P_{\ell}, y$.
\end{itemize}
The first path is an $x-y$ path with vertex set $V(H)$ while the second is an $x-y$ path with vertex set $V(H) \setminus \{a, b\}$.

Finally, we use the following lemma which provides a template for the construction of a bipartite absorber for $R$.

\begin{lemma}[Lemma~3.9~in~\cite{chakraborti2025edge}] \plabel{lem:robustly-matchable-initial}
    For every sufficiently large $m$, there exists a bipartite graph $K$ with an odd number of edges, maximum degree at most $102$, and with vertex classes $A_1 \cup A_2$ and $B_1 \cup B_2$, with $|A_1| = |B_1| = 2m$ and $|A_2| = |B_2| = 5m$ such that the following holds. For every $A_1' \subseteq A_1$ and $B_1' \subseteq B_1$ with $|A_1'| = |B_1'| \ge m,$ there is a perfect matching in $H$ between $A_1' \cup A_2$ and $B_1' \cup B_2$.
\end{lemma}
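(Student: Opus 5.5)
The plan is a probabilistic construction checked with Hall's theorem, in the spirit of Montgomery's robustly matchable graphs.

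\textbf{Construction.} Fix a constant $D$, say $D=40$. Build $K$ as the union of the following random bounded-degree bipartite graphs.
\begin{itemize}
    \item $D$ independent uniformly random perfect matchings between $A_2$ and $B_2$.
    \item $D$ independent random injections of $A_1$ into $B_2$, each viewed as a matching, arranged so that every vertex of $B_2$ receives exactly $2D/5 = 16$ edges from $A_1$ (for instance, split $B_2$ into parts and use random matchings).
    \item Symmetrically, $D$ random injections of $B_1$ into $A_2$.
\end{itemize}
Every vertex then has degree at most $D+16=56\le 102$. If the total number of edges is even, add one extra edge, say between $A_1$ and $B_1$. Adding an edge cannot destroy a perfect matching and keeps the maximum degree below $102$, so this fixes the parity.

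\textbf{Reduction to two expansion properties.} Fix $A_1'\subseteq A_1$ and $B_1'\subseteq B_1$ with $|A_1'|=|B_1'|\ge m$. Both sides $X=A_1'\cup A_2$ and $Y=B_1'\cup B_2$ then have the same size $N\in[6m,7m]$. Suppose Hall's condition fails, so some $S\subseteq X$ has $|N(S)\cap Y|<|S|$. By the standard complementation argument, either such an $S$ exists with $|S|\le N/2\le 3.5m$, or the set $T=Y\setminus N(S)$ violates Hall's condition from the $Y$ side with $|T|\le N/2$. Since $B_2\subseteq Y$ and $A_2\subseteq X$ for every admissible choice of $A_1',B_1'$, it suffices to prove the following two properties, which do not depend on $A_1',B_1'$:
\begin{itemize}
    \item[(P1)] every $S\subseteq A_1\cup A_2$ with $|S|\le 3.5m$ satisfies $|N_K(S)\cap B_2|\ge |S|$;
    \item[(P2)] every $T\subseteq B_1\cup B_2$ with $|T|\le 3.5m$ satisfies $|N_K(T)\cap A_2|\ge |T|$.
\end{itemize}
Together with the complementation step, these give a perfect matching between $X$ and $Y$ for every admissible pair simultaneously.

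\textbf{Verifying (P1) and (P2).} By symmetry it suffices to prove (P1). For each size $s\le 3.5m$, bound the probability that some $s$-set $S$ has all its neighbours in $B_2$ inside some set $Z\subseteq B_2$ with $|Z|=s$. Each vertex of $S$ sends $D$ matching edges into $B_2$, and these are close to independent across matchings. Each such edge lands in $Z$ with probability at most roughly $s/(5m)\le 0.7$, with negatively correlated dependencies inside a matching. The union bound over $S$ and $Z$ gives
\[ \binom{7m}{s}\binom{5m}{s}\left(\frac{s}{5m}\right)^{\Omega(Ds)}, \]
which is $o(1/m)$ summed over all $s$ once $D$ is a large constant. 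For small $s$ the factor $(s/5m)^{Ds}$ dominates the binomial coefficients. For $s$ close to $3.5m$ the binomial coefficients are only $e^{O(m)}$, while $(0.7)^{Ds}$ is $e^{-\Omega(Dm)}$.

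\textbf{Main obstacle.} The delicate point is the union bound for linear-sized $s$. The entropy terms must be beaten by the factor $(s/5m)^{Ds}$, and the dependence inside each random matching must be handled rigorously. I would do this by exposing the matchings edge by edge, so that each new edge lands in $Z$ with conditional probability at most $s/(5m-s)$, and then multiplying these conditional bounds. That multiplication is where the constant $D$, and hence the degree bound $102$, gets fixed.
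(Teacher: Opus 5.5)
The paper does not prove this lemma. It quotes it from \cite{chakraborti2025edge}, where it is a two-sided form of Montgomery's robustly matchable bipartite graphs \cite{montgomery}, and only post-processes it here into a $103$-regular multigraph (Lemma~\ref{lem:robustly-matchable-bipartite}). Your self-contained random construction is a genuinely different route, and with the repairs below it is correct. It works cleanly because of the slack in the sizes. Since $N=|A_1'|+5m\le 7m$, every Hall violator can be taken of size at most $\lceil N/2\rceil\le 3.5m+1<5m$, so expansion into the fixed parts $B_2$ and $A_2$ alone refutes it. Consequently your argument never uses $|A_1'|\ge m$ or any $A_1$--$B_1$ edges, and it gives maximum degree $D+2D/5=56$. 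You therefore prove a slightly stronger statement than the one quoted, at the price of writing out the union bound that the paper outsources.

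One step fails as you wrote it: the conditional bound $s/(5m-s)$ per newly exposed edge. It is useless exactly in the range you single out as delicate, since it is at least $1$ once $s\ge 2.5m$. You only need the probability that \emph{all} edges from $S$ land in $Z$. So condition on the previously exposed edges of the same matching having landed in $Z$; the next one then does so with probability $(|Z|-i)/(5m-i)\le |Z|/(5m)$. Thus each matching contributes at most $\binom{|Z|}{k}/\binom{5m}{k}\le (|Z|/5m)^{k}$. With this, the $s$-th term of your union bound is at most $\bigl[\tfrac{35e^2}{25}\,(s/5m)^{D-2}\bigr]^{s}$. For $D=40$ this is at most $(10^{-4})^{s}$ for all $s\le 3.5m+1$, and at most $m^{-\Omega(s)}$ for $s\le\sqrt m$, so the sum is $o(1)$.

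Two smaller points need the same care.
\begin{itemize}
\item \textbf{The $A_1$-injections.} With your degree-balancing device, each injection is a random bijection onto a $2m$-set $P_j\subseteq B_2$. A single injection only gives the bound $(|Z\cap P_j|/2m)^{k}$, which may be close to $1$. You recover $(|Z|/5m)^{Dk}$ by AM--GM over the $D$ injections, using $\sum_j|Z\cap P_j|=\tfrac{2D}{5}|Z|$.
\item \textbf{The Hall reduction.} If $|N(S)|<\lceil N/2\rceil$, a large violator $S$ must first be shrunk to size $|N(S)|+1$. Passing to $Y\setminus N(S)$ only gives a small violator when $|N(S)|\ge\lceil N/2\rceil$.
\end{itemize}
Finally, if different matchings share an edge, take the underlying simple graph; this changes no neighbourhood.
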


In our proof, it will be convenient to assume that $K$ is regular, so we use the following simple corollary of Lemma~\ref{lem:robustly-matchable-initial}.

\begin{lemma} \plabel{lem:robustly-matchable-bipartite}
     For every sufficiently large $m$, there exists a bipartite $103$-regular multigraph $K$ with vertex classes $A_1 \cup A_2$ and $B_1 \cup B_2$, with $|A_1| = |B_1| = 2m$ and $|A_2| = |B_2| = 5m$ such that the following holds. For every $A_1' \subseteq A_1$ and $B_1' \subseteq B_1$ with $|A_1'| = |B_1'| \ge m,$ there is a perfect matching in $K$ between $A_1' \cup A_2$ and $B_1' \cup B_2$.
\end{lemma}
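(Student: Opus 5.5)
We start from the graph $K_0$ given by Lemma~\ref{lem:robustly-matchable-initial}: it is bipartite with classes $A_1\cup A_2$ and $B_1\cup B_2$, has maximum degree at most $102$, and has the robust matching property. We then add edges to $K_0$, allowing parallel edges, until it is $103$-regular. The new graph $K$ has the same vertex classes and contains $K_0$ as a subgraph. So for any $A_1'\subseteq A_1$ and $B_1'\subseteq B_1$ with $|A_1'|=|B_1'|\ge m$, the perfect matching between $A_1'\cup A_2$ and $B_1'\cup B_2$ that $K_0$ provides is also a perfect matching in $K$. Adding edges never destroys this property, so the only task is the regularization.

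The two sides have the same number of vertices, $|A_1\cup A_2|=|B_1\cup B_2|=7m$. Define the deficiency of a vertex $v$ as $103-d_{K_0}(v)\ge 1$. The total deficiency on the $A$-side is $103\cdot 7m-e(K_0)$, and the same expression gives the total deficiency on the $B$-side, so the two totals are equal.

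To regularize, list the $A$-side vertices, each repeated as many times as its deficiency. Make a similar list for the $B$-side. The two lists have the same length, so we can pair them up in an arbitrary bijection. For each pair we add one edge between the two vertices, and the same pair may receive several parallel edges. After this, every vertex has degree exactly $103$. This is a bipartite multigraph with the required vertex classes.

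No step is a real obstacle. The one thing to check is that the deficiency totals on the two sides are equal, which holds because both sides have $7m$ vertices and every edge contributes once to each side. Multigraphs are explicitly allowed in the statement, so the parallel edges cause no problem.
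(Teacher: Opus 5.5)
Your proposal is correct and follows essentially the same route as the paper: start from the graph of Lemma~\ref{lem:robustly-matchable-initial} and add (possibly parallel) edges between deficient vertices on opposite sides until the multigraph is $103$-regular, which preserves the matching property. Your explicit deficiency count, with equal totals because both sides have $7m$ vertices, just spells out why the paper's greedy edge-adding procedure never gets stuck.
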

\begin{proof}
    Let $K_0$ be the graph given by Lemma~\ref{lem:robustly-matchable-initial}. As long as there are vertices of degree less than $103$, add an edge between two such vertices in different parts until the multigraph is $103$-regular. The resulting multigraph $K$ clearly satisfies all the requirements.
\end{proof}

Having done all the preparation, we prove the following lemma which finds a connecting set of vertices and a bipartite absorber for it.

\begin{lemma} \plabel{lem:absorber-bipartite}
    Let $G$ be a $(d \pm d')$-nearly-regular bipartite $\explet$-expander with parts $V_0$ and $V_1$ of size $n/2$, where $0 < \explet < 1 / (\log n)^2.$ Let $U_0 \subseteq V(G)$ be a balanced $\alpha_U$-uniform set. Suppose we are given $c$ with $0 < c < 1/4$ such that $d' \le d^{1-c}, |U_0| \le d^{-c/1000} n, \alpha_U \le d^{-c}$ and $d \ge \explet^{-10^6 / c}$ and suppose $n$ is sufficiently large with respect to $c$. Then, there exists a balanced set $R$ which is $(D, \ell_0)$-connecting in $G$ such that $|R| \ge d^{-c/10^4} n$ and a balanced bipartite absorber $H$ for $R$ such that $V(H) \cap U_0 = \emptyset,$ $|V(H)| \le d^{-c / (2\cdot 10^4)} n$ and $V(H)$ is $\alpha$-uniform with respect to $G$, where $D = d^{1-c/1000}, \ell_0 = 5 \explet^{-1} (\log n)^4$ and $\alpha = d^{-c/20}$.
\end{lemma}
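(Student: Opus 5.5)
We follow the template of \cite{chakraborti2025edge}, with every connecting step replaced by Lemma~\ref{lem:connections} so that all vertices used are uniformly distributed. Fix $p = d^{-c/10^4}$.

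\textbf{Reservoir and template.} Take $R$ to be a balanced random subset of $V(G)\setminus U_0$: sample a $p$-random subset of $V_0\setminus U_0$ and one of $V_1\setminus U_0$, and trim the larger to equalise sizes. By Lemma~\ref{lem:randomset_avoiding_connecting} (with $U=U_0$), whp $R$ is $(D',\ell_0)$-connecting with $D' = p^9\explet^{13}d/(\log n)^{52} \ge d^{1-c/1000}$, since $d\ge \explet^{-10^6/c}$ makes $\explet$ and $\log n$ factors negligible. The trimming costs nothing since only monotone connectivity is used. By Lemma~\ref{lem:random-s-set-uniform}, $R$ is $d^{-c/3}$-uniform. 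Set $|R\cap V_0|=|R\cap V_1| = 2m$. Take the multigraph $K$ of Lemma~\ref{lem:robustly-matchable-bipartite}. Identify $A_1$ with $R\cap V_0$ and $B_1$ with $R\cap V_1$. Choose uniformly random disjoint sets $A_2\subseteq V_0$ and $B_2\subseteq V_1$ of size $5m$ avoiding $U_0\cup R$; these are uniform by Lemma~\ref{lem:random-s-set-uniform}. The $103m'$ edges of $K$, where $m'=7m$, give pairs $(a,b)$ to be absorbed. Order these edges and choose random distinct connector vertices $x_1,\dots,x_{e(K)+1}$, alternating parts as needed for parity. Since $K$ is $103$-regular, each vertex of $A_1\cup A_2\cup B_1\cup B_2$ appears in exactly $103$ pairs. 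The pairs in Lemma~\ref{lem:connections} need distinct vertices, however, so each $a\in A$ needs $103$ distinct copies. I would therefore absorb via copies in the usual way: the gadget for edge $ab$ uses fresh neighbours $a',b'$ of $a,b$, and we connect the copies. An alternative is to split the edges of $K$ into $103$ perfect matchings and run the three rounds below once per matching, which avoids reuse within a round and keeps the multisets uniform.

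\textbf{Building the gadgets.} For every pair, build an $(x_i,a,b,x_{i+1})$-absorber as in Figure~\ref{fig:bip-gadget}, in three parallel rounds, each an application of Lemma~\ref{lem:connections}. Round one finds the paths $Q_1$ from $a$ to $b$ of odd length $\ell$, with $\ell = \Theta(\explet^{-2}\log n)$ odd and $\ell\ge 10t_0$. Round two finds the paths $Q_2$. Round three finds the $\ell$ paths $P_j$ joining $x_i$, the vertices $v_j$ and $u_{j-1}$, and $x_{i+1}$. The required mixing hypothesis of Lemma~\ref{lem:connections} comes from Lemma~\ref{lem:remove-set-bip-mixing-time}, applied to every balanced $\alpha^{7/8}$-uniform $T$ of size at most $d^{-c/2\cdot10^4}n$. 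In each round, $U$ is $U_0$ together with everything used so far; by Lemma~\ref{lem:connections} this set is uniform, with error $\alpha\mapsto\alpha^{1/2}$ per round. The endpoint multisets are unions of boundedly many uniform sets, namely the $t$-th vertices of earlier paths and the random sets. Since each of $a,b$ is used a bounded number of times, they are bi-uniform. Starting from uniformity $d^{-c/3}$, three square roots give $d^{-c/24}$, and the accumulated additive errors keep us below $\alpha=d^{-c/20}$ if the starting parameters are tuned slightly, for instance by using $d^{-c/2}$-uniformity from the random choices. The conditions $p_{\text{lem}}\le\ell^{-8}$ and $\alpha\le \ell^{-400}$ follow from $d\ge\explet^{-10^6/c}$. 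Balance of $V(H)$ holds because every path has odd length and connects opposite parts, and $R$, $A_2$ and $B_2$ are balanced.

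\textbf{Assembly and main obstacle.} Concatenating the gadgets along $x_1,\dots,x_{e(K)+1}$ gives $H$. For balanced $R'\subseteq R$ with $|R'|\le|R|/2$, the robust matching of $K$ on $(A_1\setminus R')\cup A_2$ versus $(B_1\setminus R')\cup B_2$ tells us which gadgets absorb their pair; the rest are traversed in skip mode. This gives an $x_1$--$x_{e(K)+1}$ path on $V(H)\setminus R'$, and closing it with one more connection gives the endpoints $x,y$. The size bound is $|V(H)|=O(\ell^2 m)\le d^{-c/(2\cdot 10^4)}n$. The main obstacle is the bookkeeping in the three rounds. Each vertex of $A_1\cup A_2\cup B_1\cup B_2$ occurs in $103$ gadgets, so the prescribed endpoint multisets must remain $(V_0,V_1,\cdot)$-bi-uniform with distinct endpoints per round. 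In addition, the conjunction over all rounds of the mixing assumption has to be checked. I would handle this by the matching decomposition of $K$ and running $O(1)$ rounds of Lemma~\ref{lem:connections}, each losing a square root in uniformity, which is affordable since only $O(1)$ rounds occur.
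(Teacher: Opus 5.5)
\textbf{The gap is in the step you flag as the main obstacle.} It rests on a misreading of Lemma~\ref{lem:connections}, and the workaround you commit to does not survive the quantitative hypotheses. Your overall architecture matches the paper's: $R=A_1\cup B_1$, random $A_2,B_2$, alternating connectors, the $103$-regular robustly matchable $K$, and three rounds of Lemma~\ref{lem:connections} building $Q_1$, $Q_2$ and the $\ell$ linking paths.

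Lemma~\ref{lem:connections} is stated for \emph{multisets} $\{a_i\}$ and $\{b_i\}$. It only requires $a_i\neq b_i$, and its conclusion is internal disjointness; the endpoints lie in the forbidden set $U$ anyway. So distinct endpoints are not needed. The paper runs each round once with all $e(K)$ pairs simultaneously, and since $K$ is $103$-regular the endpoint multisets are $103\alpha_0$-bi-uniform.

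Your matching decomposition instead needs $3\cdot 103$ \emph{sequential} applications, and each one costs a square root:
\begin{itemize}
\item An application fed a $\beta$-uniform forbidden set only returns $\beta^{1/2}$-bi-uniform paths, so the next forbidden set is only about $\ell\beta^{1/2}$-uniform.
\item After $k$ sequential applications the exponent has dropped from $c$ to roughly $c/2^k$. This is not ``affordable because $O(1)$'': the constant number of rounds enters the exponent exponentially.
\item The target $\alpha=d^{-c/20}$ tolerates only about four sequential applications starting from $d^{-c}$.
\item Worse, the hypothesis $\alpha\le \ell^{-400}$ of Lemma~\ref{lem:connections} fails long before $309$ applications (already after about ten), since all you know is $d\ge \gamma^{-10^6/c}$.
\end{itemize}
Your other workaround, gadgets on fresh neighbours $a',b'$, would as described absorb $a',b'$ rather than the vertices of $R$. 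So it does not give an absorber for $R$, and it is unnecessary anyway.

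\textbf{Smaller points.}
\begin{itemize}
\item Being $(D,\ell_0)$-connecting is monotone \emph{increasing}, so trimming a $p$-random set does not preserve it.
\item A trimmed $p$-random subset of $V(G)\setminus U_0$ has size about $p(n-|U_0|)<pn$, so it narrowly misses $|R|\ge d^{-c/10^4}n$. The paper fixes $|A_1|=|B_1|=2m_0$ with $m_0\approx pn/2$ and couples so that a $p$-random subset of $V(G)\setminus U_0$ lies inside $R$ with high probability. This handles both issues.
\item Balance of $V(H)$ needs $e(K)=721m$ to be odd, so choose $m$ odd. Each gadget should be oriented from a $V_0$-connector to a $V_1$-connector, with every other gadget traversed in reverse. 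No extra closing connection is needed; one would also require its own uniformity accounting.
\end{itemize}
With these repairs, above all a single application of Lemma~\ref{lem:connections} per round, your argument becomes the paper's.
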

\begin{proof}
    We start by defining some parameters. Let $p = d^{-c/10^4}$. Let $\alpha_0 = 60d^{-c}, \alpha_1 = 10^4 d^{-c}, \alpha_2 = d^{-c/3}$ and $\alpha_3 = d^{-c/9}$. Let $\ell = \explet^{-4}$ rounded to the nearest odd integer. Let $m_0 = pn / 2$ rounded to the nearest odd integer. Let $m_1 = m_2 = 721 m_0$ and $m_3 = \ell m_1$. Finally, let $m_X = (m_1 + 1) / 2$ and note that $m_X$ is an integer since $m_1 = 721m_0$ is odd. To help the reader, we remark that the set $R$ will be a balanced random subset of $V(G) \setminus U_0$ of size roughly $pn$. The absorber is then constructed using Lemma~\ref{lem:connections} in three rounds, where in round $i$ the parameters will be $\alpha_i$ and $m_i$ in place of $\alpha$ and $m$, respectively and the length of the paths will be $\ell$.

    Let $V_0' = V_0 \setminus U_0, V_1' = V_1 \setminus U_0$. Let $n_2 = |V(G)\setminus U_0| = n - |U_0| \ge 0.9n$ and note that $|V_0'| = |V_1'| = n_2 / 2$ since $U_0$ is balanced. 
    
    Let $(A_1, B_1, A_2, B_2, X, Y)$ be a randomly chosen $6$-tuple of pairwise disjoint sets such that $A_1, A_2, X \subseteq V'_0$ and $B_1, B_2, Y \subseteq V'_1$ with sizes $|A_1| = |B_1| = 2m_0, |A_2| = |B_2| = 5m_0$ and $|X| = |Y| = m_X$. Since $A_1$ is distributed as a random subset of $V_0 \setminus U_0$ of size $2m_0$, applying Lemma~\ref{lem:random-s-set-uniform} with the graph $G$ and $U = U_0$, we get that with probability at least $1 - o(1)$, $A_1$ is $(V_0, V_1, \alpha_0)$-bi-uniform with respect to $G,$ where we used that $2m_0 / |V_0 \setminus U_0| \cdot d \ge pd \ge 10 \log n, |U_0| \le d^{-c/1000} n \le n / 64, \alpha_U \le d^{-c} \le 1/64$
    and $20(\alpha_U + d'/d + \sqrt{2m_0 / |V_0 \setminus U_0| \cdot \log n / d}) \le \alpha_0$. Noting that we had plenty of slack in the above inequalities, the analogous statement holds true for the other five sets. Thus, by a union bound, we get that with probability $1 - o(1)$, the sets $A_1, A_2$ and $X$ are $(V_0, V_1,\alpha_0)$-bi-uniform with respect to $G$, while $B_1, B_2$ and $Y$ are $(V_1, V_0, \alpha_0)$-bi-uniform with respect to $G$.
    
    Denote $R = A_1 \cup B_1$. Since $\Pr[\Bin(n_2/2, p) \ge 2m_0] = o(1)$, we may couple a $p$-random subset of $V(G)\setminus U_0$ and $R$ such that the $p$-random subset is a subset of $R$ with probability $1-o(1)$. Noting that a superset of a $(D,\ell_0)$-connecting set is also $(D, \ell_0)$-connecting, by Lemma~\ref{lem:randomset_avoiding_connecting} we have that with probability $1-o(1)$, $R$ is $(D, \ell_0)$-connecting in $G$ for $D = \frac{p^{9} \explet^{13} d}{(\log n)^{52}} \ge d^{1-c/1000}$ and $\ell_0 = 5 \explet^{-1} (\log n)^4$. We used that $d'\leq d^{1-c}\leq \explet d/100$, $|U_0|\leq d^{-c/1000}n\leq \explet n/100$, $\alpha_U\leq d^{-c}\leq \explet/100$ and $d \ge p^{-10}\explet^{-100}$, and to justify $D \ge d^{1-c/1000},$ we used that $p = d^{-c/10^4}, d \ge \explet^{-10^6/c}$ and $\explet < 1/(\log n)^2$.
    
    Hence, let us assume that $R$ is $(D, \ell_0)$-connecting in $G$, the sets $A_1, A_2$ and $X$ are $(V_0, V_1,\alpha_0)$-bi-uniform and the sets $B_1, B_2$ and $Y$ are $(V_1, V_0, \alpha_0)$-bi-uniform with respect to $G$, which by a union bound happens with probability $1 - o(1)$.

    Let $K$ be the $103$-regular multigraph given by Lemma~\ref{lem:robustly-matchable-bipartite} such that for any $A_1' \subseteq A_1$ and $B_1' \subseteq B_1$ with $|A_1'| = |B_1'| \ge m_0$, $K$ contains a perfect matching between $A_1' \cup A_2$ and $B_1' \cup B_2$. Note that $e(K) = 103 |V(K)| / 2 = 721 m_0 = m_1$. We denote $X = \{ x_1, \dots, x_{m_X}\}$ and $Y = \{ y_1, \dots, y_{m_X}\}$. Furthermore, denote the edges of $K$ as $\{ (a_1, b_1), \dots, (a_{m_1}, b_{m_1})\},$ where $a_i \in V_0, b_i \in V_1$ for all $i \in [m_1]$.

    For $i \in [m_1],$ we construct an $(x_{\floor{i/2} + 1}, a_i, b_i, y_{\ceil{i/2}})$-absorber $\Abs_i$. Suppose the internal vertices of all of these absorbers are pairwise disjoint. Then, the union of these absorbers is a bipartite absorber for $R$ with endpoints $x_1$ and $y_{m_X}$. Indeed, denote $S = R \cup A_2 \cup B_2 \cup X \cup Y \cup \bigcup_{i \in [m_1]} V(\Abs_i)$ and consider an arbitrary balanced set $R' \subseteq R$ with $|R'| \le |R| / 2$. Denoting $A_1' = A_1 \setminus R'$ and $B_1' = B_1 \setminus R',$ we have $|A_1'| = |B_1'| \ge m_0$. Let $M$ be a perfect matching in $K$ between $A_1' \cup A_2$ and $B_1' \cup B_2$, which exists by the properties of $K$, and let $I \subseteq [m_1]$ denote the indices corresponding to the edges in $M$, so that $M = \{ (a_i, b_i) \mid i \in I\}$. For each $i \in [m_1],$ let $P_i$ be the path from $x_{\floor{i/2} + 1}$ to $y_{\ceil{i/2}}$ with vertex set $V(\Abs_i)$ if $i \in I$ and with vertex set $V(\Abs_i) \setminus \{ a_i, b_i\}$ if $i \not\in I$. Then the concatenation of the paths $P_1, \overline{P_2}, P_3, \overline{P_4}, \dots, \overline{P_{m_1-1}}, P_{m_1}$ is an $x_1-y_{m_X}$ path with vertex set $S \setminus R'$, where we remind the reader that $\overline{P}$ denotes the reversal of $P$ and we used that $m_1$ is odd.

    Finally, it remains to show that we can construct the absorbers $\Abs_1, \dots, \Abs_{m_1}$ such that the resulting set $S$ is $\alpha$-uniform. We construct these absorbers in three rounds. In the first round we construct an $a_i-b_i$ path $Q^1_i$ for each $i \in [m_1],$ in the second round we construct another $a_i - b_i$ path $Q^2_i$ for each $i \in [m_1]$ and in the third round, for each $i \in [m_1],$ we construct the remaining $\ell$ paths as described in the construction of $(x, a, b, y)$-absorbers.

    First we establish several properties and inequalities needed to apply Lemma~\ref{lem:connections} in each of the rounds. For $i \in [3],$ in round $i$, we shall apply Lemma~\ref{lem:connections} with $\alpha = \alpha_i$, $m = m_i$ and a balanced $\alpha_i$-uniform forbidden set $U_i$ of size at most $4 \ell m_i$. Provided this holds, since $\alpha_i^{7/8} \le \explet / 100$ for any $i \in [3]$ and $d' \le \explet d / 100,$ by Lemma~\ref{lem:remove-set-bip-mixing-time}, for any balanced $\alpha_i^{7/8}$-uniform set $S \subseteq V(G)$ of size at most $|U_i| + \ell m_i \le 5\ell m_i \le \explet n / 100$, the graph $G \setminus S$ is $(V_0 \setminus S, V_1 \setminus S)$-mixing in time $t_0 = 4000 \explet^{-2} \log n \le \ell/10$. Furthermore, note that for any $i \in [3],$ the following inequalities hold: $d' \le d^{1-c} \le \alpha_i d, m_i/n \le 1000 \ell p \le 1000 \ell \explet^{100} \le \ell^{-8}, \alpha_i \le d^{-c/9} \le \explet^{10^5} \le \ell^{-400}, \alpha_i \ge d^{-c} \ge d^{-1/4}$ and $\ell = \explet^{-4} \ge \log n$.

    \textbf{Round 1.} Let $U_1 = U_0 \cup R \cup A_2 \cup B_2 \cup X \cup Y$ and observe that it is $\alpha_U + 3\alpha_0$-uniform, so it is also $\alpha_1$-uniform. Additionally, note that $U_1$ is balanced since $U_0$ and $R$ are balanced, $|A_2| = |B_2|, |X| = |Y|,$ $A_2, X \subseteq V_0, B_2, Y \subseteq V_1$ and the sets $U_0, R, A_2, B_2, X, Y$ are pairwise disjoint. Furthermore, $|U_1| = |U_0| + 14m_0 + 2m_X \le 3\ell m_1$. Since $K$ is $103$-regular, the multiset $\{ a_i \mid i \in [m_1]\}$ is $(V_0, V_1, \alpha_1)$-bi-uniform and the multiset $\{ b_i \mid i \in [m_1]\}$ is $(V_1, V_0, \alpha_1)$-bi-uniform, where we used that $\alpha_1 \ge 103 \alpha_0$. We apply Lemma~\ref{lem:connections} with the graph $G,$ the forbidden set $U_1$ and the multiset of pairs to connect $\{ (a_i, b_i) \mid i \in [m_1] \}.$ Thus, for each $i \in [m_1],$ we obtain an $a_i - b_i$ path $Q^1_i$ of length $\ell$ such that these paths are pairwise internally vertex disjoint, their internal vertices are disjoint from $U_1,$ and for each $t \in [\ell-1],$ the set $\{ Q^1_i(t) \mid i \in [m_1]\}$ is $(V_t, V_{t+1}, \alpha_1^{1/2})$-bi-uniform.

    \textbf{Round 2.} Let $U_2 = U_1 \cup \bigcup_{i \in [m_1]} \Vint(Q^1_i)$. Note that $U_2$ is balanced since $U_1$ is balanced and $\Vint(Q^1_i)$ is balanced for each $i \in [m_1].$ Furthermore, $U_2$ is $\alpha_2$-uniform since $U_1$ is $\alpha_1$-uniform, for each $t \in [\ell-1],$ the set $\{Q^1_i(t) \mid i \in [m_1]\}$ is $(V_t, V_{t+1}, \alpha_1^{1/2})$-bi-uniform, $\ell$ is odd and $\alpha_2 \ge \ell \alpha_1^{1/2}$. Finally, $|U_2|\leq |U_1|+\ell m_1\leq 4\ell m_2$ since $m_1=m_2$.
    We apply Lemma~\ref{lem:connections} to the graph $G$ with the forbidden set $U_2$, the multiset of pairs $\{ (a_i, b_i) \mid i \in [m_2] \}$ and length $\ell$. Thus, for each $i \in [m_2]$, we get an $a_i - b_i$ path $Q^2_i$ of length $\ell$ such that these paths are pairwise internally vertex disjoint, their internal vertices are disjoint from $U_2$ and for each $t \in [\ell-1]$, the set $\{ Q^2_i(t) \mid i \in [m_2] \}$ is $(V_t, V_{t+1}, \alpha_2^{1/2})$-bi-uniform.

    \textbf{Round 3.} Recalling our construction of an $(x, a, b, y)$-absorber, we define the set of pairs to be connected in the third round as follows. For each $i \in [m_1],$ we need to create $\ell$ paths. For $i \in [m_1]$, let 
    \begin{itemize}
        \item $(a^3_{(i-1)\ell + 1}, b^3_{(i-1)\ell + 1}) = (x_{\floor{i/2}+1}, Q^2_i(1)),$
        \item for even $j \in [2, \ell-1],$ let $(a^3_{(i-1)\ell + j}, b^3_{(i-1)\ell + j}) = (Q^2_i(j), Q^1_i(j-1))$,
        \item for odd $j \in [2, \ell-1],$ let $(a^3_{(i-1)\ell + j}, b^3_{(i-1)\ell+ j}) = (Q^1_i(j-1), Q^2_i(j))$,
        \item and $(a^3_{(i-1)\ell + \ell}, b^3_{(i-1)\ell + \ell}) = (Q^1_i(\ell-1), y_{\ceil{i/2}})$.
    \end{itemize}

    Note that the multiset $\{ x_{\floor{i/2}+1} \mid i \in [m_1]\}$ is $(V_0, V_1, 3 \alpha_0)$-bi-uniform since the set $\{ x_i \mid i \in [m_X]\}$ is $(V_0,V_1,\alpha_0)$-bi-uniform and $\alpha_0 \geq 1/\bar{d}(G)$. Since for each even $j \in [\ell-1]$, the set $\{ Q_i^2(j) \mid i \in [m_1]\}$ is $(V_0, V_1, \alpha_2^{1/2})$-bi-uniform and for each odd $j \in [2,\ell],$ the set $\{ Q_i^1(j-1) \mid i \in [m_1]\}$ is $(V_0, V_1, \alpha_2^{1/2})$-bi-uniform, it follows that the multiset $\{ a^3_i \mid i \in [m_3]\}$ is $(V_0, V_1, \alpha_3)$-bi-uniform since $\alpha_3 \ge 3 \alpha_0 + 2 \ell \alpha_2^{1/2}$. Completely analogously, the multiset $\{ b^3_i \mid i \in [m_3]\}$ is $(V_1, V_0, \alpha_3)$-bi-uniform. 
    
    Let $U_3 = U_2 \cup \bigcup_{i \in [m_2]} \Vint(Q^2_i)$ and note that it is $\alpha_3$-uniform as $U_2$ is $\alpha_2$-uniform, for each $t \in [\ell-1]$, the set $\{ Q^2_i(t) \mid i \in [m_2]\}$ is $(V_t, V_{t+1}, \alpha_2^{1/2})$-bi-uniform, $\ell$ is odd and $\alpha_3 \ge 2\ell \alpha_2^{1/2}$. Furthermore, $U_3$ is balanced since $U_2$ is balanced and for each $i \in [m_2]$, $\Vint(Q^2_i)$ is balanced. Finally, $|U_3|\leq |U_2|+\ell m_2\leq 5\ell m_2 < 4\ell m_3$.

    We apply Lemma~\ref{lem:connections} to the graph $G$, with the forbidden set $U_3$, the multiset of pairs $\{(a^3_i, b^3_i) \mid i \in [m_3]\}$ and $\alpha = \alpha_3$. Thus, for each $i \in [m_3],$ we obtain an $a^3_i - b^3_i$ path $P_i$ of length $\ell$ with all these paths pairwise internally vertex disjoint and disjoint from $U_3$. Moreover, by the last point in Lemma~\ref{lem:connections}, the union of the internal vertices of these paths is $\ell \alpha_3^{1/2}$-uniform. Note that for any $i \in [m_1],$ the union $\Abs_i = Q^1_i \cup Q^2_i \cup \bigcup_{j \in [\ell]} P_{(i-1)\ell + j}$ is an $(x_{\floor{i/2}+1}, a_i, b_i, y_{\ceil{i/2}})$-absorber. Thus, taking $H = \bigcup_{i \in [m_1]} \Abs_i$, by the discussion above, $H$ is a bipartite absorber for $R$ and, by construction, $V(H) \cap U_0 = \emptyset$.  Noting that $|V(H)| = A_1 \cup B_1 \cup A_2 \cup B_2 \cup X \cup Y \cup \bigcup_{i \in [m_1], j \in [2]} \Vint(Q^j_i) \cup \bigcup_{i \in [m_3]} \Vint(P_i)$, it follows that $|V(H)| \le m_1 \cdot 2 \ell^2 \le d^{-c / (2 \cdot 10^4)}n$ and that $V(H)$ is $(2\ell \alpha_3^{1/2})$-uniform, so it is also $\alpha$-uniform as $2\ell d^{-c/18} \le d^{-c/20}$.
\end{proof}

\subsection{Finding a non-bipartite absorber}
We move on to defining the analogous structures for the far from bipartite case. Here, the absorbing structure will be simpler since we can construct absorbers for individual vertices.

\begin{defn}[Absorber]
    Given a graph $G$, a set $R \subseteq V(G)$ and vertices $x,y\not \in R$, an absorber for $R$ with endpoints $x$ and $y$ is a subgraph $H \subseteq G$ such that $V(H) \supseteq R \cup \{x, y\}$ with the following property. For any set $R' \subseteq R$, there is an $x-y$ path in $H$ with vertex set $V(H) \setminus R'$.
\end{defn}

We next define an absorber for a single vertex.
\begin{defn}[$(x,a,y)$-absorber] \plabel{def:non-bip-gadget}
    Given a graph $G$ and distinct vertices $x, a, y$, we say a subgraph $H \subseteq G$ such that $\{x, a, y\} \subseteq V(H)$ is an $(x, a, y)$-absorber if $H$ contains an $x-y$ path with vertex set $V(H)$ as well as an $x-y$ path with vertex set $V(H) \setminus \{ a\}$. We shall refer to $V(H) \setminus \{x, a, y\}$ as the set of internal vertices of the $(x, a, y)$-absorber $H$.
\end{defn}

Next, we describe how to construct an $(x, a, y)$-absorber. This is illustrated in Figure~\ref{fig:non-bip-gadget}.

\begin{figure}[H]
    \centering
    \includegraphics[width=0.9\textwidth]{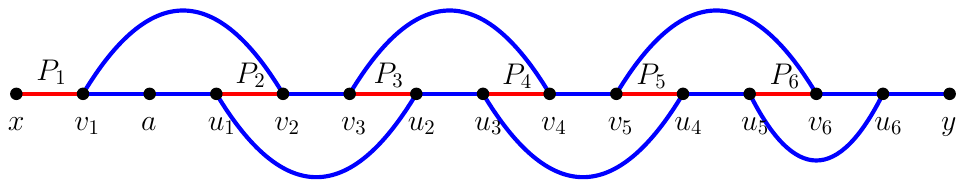}
    \caption{The construction of an $(x, a, y)$-absorber with $\ell = 7$. The blue segments and arcs represent single edges, where the $a-y$ path with arcs below corresponds to $Q_1$ and the $a-u_6$ path with arcs above to $Q_2$. The red segments represent paths $P_1, \dots, P_{\ell-1}$.}
    \plabel{fig:non-bip-gadget}
\end{figure}

\textbf{Construction of an $(x,a,y)$-absorber.}
Suppose we are given distinct vertices $x, a, y$ in a graph $G$ and an odd integer $\ell$. Let $Q_1 = u_0, \dots, u_\ell$ be an $a-y$ path of length $\ell,$ where $u_0 = a, u_\ell = y$. Next, let $Q_2 = v_0, \dots, v_\ell$ be an $a - u_{\ell-1}$ path of length $\ell$, where $v_0 = a$ and $v_{\ell} = u_{\ell-1}$ such that $v_i \neq u_j$ for all $i \in [\ell-1], j \in [\ell]$. Let $P_1$ be a path from $x$ to $v_1$ and for $i \in [2, \ell-1],$ let $P_i$ be a path from $u_{i-1}$ to $v_i$. Suppose the paths $P_1, \dots, P_{\ell-1}$ are pairwise vertex-disjoint and are disjoint from the set $\{a, u_{\ell-1}, y\}$. Then the subgraph $H$ with vertex set $V(H) = \{a, u_{\ell-1}, y\} \cup \bigcup_{i=1}^{\ell-1} V(P_i)$ and edge set $E(H) = E(Q_1) \cup E(Q_2) \cup \bigcup_{i=1}^{\ell-1} E(P_i)$ is an $(x, a, y)$-absorber. Indeed, consider the following two paths.

\begin{itemize}
    \item $x, P_1, v_1, a, u_1, P_2, v_2, v_3, P_3, u_2, u_3, \dots, u_{\ell-3}, u_{\ell-2}, P_{\ell-1}, v_{\ell-1}, u_{\ell-1}, y$.
    \item $x, P_1, v_1, v_2, P_2, u_1, u_2, P_3, v_3, v_4, P_4, u_3, u_4, \dots, u_{\ell-4}, u_{\ell-3}, P_{\ell-2}, v_{\ell-2}, v_{\ell-1}, P_{\ell-1}, u_{\ell-2}, u_{\ell-1}, y$.
\end{itemize}
The first path is an $x-y$ path with vertex set $V(H)$, while the second path is an $x-y$ path with vertex set $V(H) \setminus \{a\}$.

We are ready to prove the nonbipartite analogue of Lemma~\ref{lem:absorber-bipartite} which has a very similar proof.

\begin{lemma} \plabel{lem:absorber-far-from-bipartite}
    Let $G$ be an $n$-vertex $(d \pm d')$-nearly-regular $\explet$-expander and assume $G$ is $\eps$-far from bipartite, where $0 < \explet < 1 / (\log n)^2$ and $\eps>0$. Let $U_0 \subseteq V(G)$ be an $\alpha_U$-uniform set. Suppose we are given $c$ with $0 < c < 1/4$ such that $d' \le d^{1-c}, |U_0| \le d^{-c/1000} n, \alpha_U \le d^{-c}, d \ge (\explet \eps)^{-10^6 / c}$ and suppose $n$ is sufficiently large with respect to $c$. Then there exists a set $R \subseteq V(G) \setminus U_0$ which is a $(D, \ell_0)$-connecting set in $G$ and an absorber $H$ for $R$ such that $V(H) \cap U_0 = \emptyset$, $|V(H)| \le d^{-c / (2 \cdot 10^4)}n$ and $V(H)$ is $\alpha$-uniform with respect to $G$, where $D = d^{1-c/1000}$, $\ell_0 = 5 \explet^{-1} (\log n)^4$ and $\alpha = d^{-c/20}$.
\end{lemma}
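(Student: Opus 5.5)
We follow the proof of Lemma~\ref{lem:absorber-bipartite} closely, with $V_0 = V_1 = V(G)$, so that $G$ is $(V(G),V(G))$-alternating and balancedness is automatic. The main simplification is that single-vertex $(x,a,y)$-absorbers replace the pair gadgets, so no robustly matchable template is needed. We set $p = d^{-c/10^4}$, $\alpha_0 = 60d^{-c}$, $\alpha_1 = 10^4 d^{-c}$, $\alpha_2 = d^{-c/3}$ and $\alpha_3 = d^{-c/9}$. We take $\ell$ to be $(\explet\eps)^{-4}$ rounded to an odd integer, $m_0 = pn$, $m_1 = m_2 = m_0$ and $m_3 = (\ell-1)m_0$.

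\textbf{Random choice of $R$ and $X$.} We choose disjoint random sets $R$ of size $m_0$ and $X=\{x_1,\dots,x_{m_0+1}\}$ in $V(G)\setminus U_0$. Lemma~\ref{lem:random-s-set-uniform} shows both are $\alpha_0$-uniform with high probability. A coupling with a $p/2$-random subset of $V(G)\setminus U_0$, together with Lemma~\ref{lem:randomset_avoiding_connecting}, shows $R$ is $(D,\ell_0)$-connecting with $D \ge d^{1-c/1000}$; the parameter checks are as in the bipartite case, using $d \ge (\explet\eps)^{-10^6/c}$.

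\textbf{Mixing hypothesis.} Writing $R=\{a_1,\dots,a_{m_0}\}$, we build an $(x_i,a_i,x_{i+1})$-absorber for each $i$, following the construction after Definition~\ref{def:non-bip-gadget}. Chaining these absorbers gives an absorber for $R$ with endpoints $x_1$ and $x_{m_0+1}$, since each $a_i$ can be independently included or skipped. For the mixing assumption of Lemma~\ref{lem:connections}, we use Lemma~\ref{lem:remove-set-far-from-bip-mixing-time} in place of Lemma~\ref{lem:remove-set-bip-mixing-time}. It applies because any $\alpha_i^{7/8}$-uniform set of size $O(\ell m_i)\le \eps\explet n/16$ has $\alpha_i^{7/8}\le \eps^2\explet^2/10^5$, and $d'/d\le d^{-c}\le \eps^2\explet^2/10^5$. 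This gives mixing time $t_0 = 10^7\log n\,\eps^{-2}\explet^{-2}\le \ell/10$. The remaining inequalities of Lemma~\ref{lem:connections} ($m_i/n\le \ell^{-8}$, $\alpha_i\le \ell^{-400}$, $\alpha_i\ge d^{-1/4}$, $\ell\ge\log n$) follow from $d$ being a huge power of $(\explet\eps)^{-1}$ and $\explet<(\log n)^{-2}$.

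\textbf{Three rounds.} Let $U_1 = U_0\cup R\cup X$, which is $\alpha_1$-uniform.
\begin{itemize}
\item Round~1 applies Lemma~\ref{lem:connections} to the pairs $(a_i,x_{i+1})$ to get paths $Q^1_i$ of length $\ell$. Since the $a_i$ are distinct and $R,X$ are $\alpha_0$-uniform, these multisets are $\alpha_1$-uniform.
\item Round~2 uses $U_2 = U_1\cup\bigcup_i \Vint(Q^1_i)$, which is $\alpha_2$-uniform because $\alpha_2\ge \ell\alpha_1^{1/2}$. It connects the pairs $(a_i, Q^1_i(\ell-1))$ by paths $Q^2_i$. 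The second coordinates form $\{Q^1_i(\ell-1)\}$, which is $\alpha_1^{1/2}$-uniform. The paths $Q^2_i$ avoid $U_2$ internally, so $v_j\ne u_{j'}$ as the gadget requires.
\item Round~3 uses $U_3 = U_2\cup\bigcup_i\Vint(Q^2_i)$, which is $\alpha_3$-uniform. It connects the $m_3$ pairs $(x_i, Q^2_i(1))$ and $(Q^1_i(j-1), Q^2_i(j))$ for $j\in[2,\ell-1]$. Each coordinate multiset is a union of at most $\ell$ sets that are $\alpha_2^{1/2}$-uniform, hence $\alpha_3$-uniform.
\end{itemize}
One technical point: Lemma~\ref{lem:connections} requires $a_i\neq b_i$ and $U\supseteq$ all endpoints. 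These hold because every endpoint lies in the current forbidden set and the endpoints of each pair are distinct vertices by disjointness from earlier rounds.

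\textbf{Size and uniformity of $V(H)$.} $V(H)$ is the union of $R$, $X$ and the internal vertices of all paths from the three rounds. Its size is at most $3\ell^2 m_0\le d^{-c/(2\cdot10^4)}n$. It is $(\alpha_0+2\ell\alpha_2^{1/2}+\ell\alpha_3^{1/2})$-uniform, which is at most $d^{-c/20}$. It is disjoint from $U_0$ since every round avoids $U_i\supseteq U_0$.

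The main obstacle is the round-by-round bookkeeping: the uniformity degrades from $\alpha_1$ through $\alpha_2$ to $\alpha_3$, and we must verify that the conditions of Lemma~\ref{lem:connections} and of Lemma~\ref{lem:remove-set-far-from-bip-mixing-time} still hold at the worst round. This works because each round only loses a square root and a factor $\ell$, which the polynomial slack $d^{-c}$ versus $\ell=(\explet\eps)^{-4}$ absorbs. The probabilistic and combinatorial content is inherited entirely from Lemma~\ref{lem:connections}.
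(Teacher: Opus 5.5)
Your proposal is correct and follows the paper's proof essentially step for step. It uses the same parameters $p,\alpha_0,\dots,\alpha_3,\ell$, the same random choice of disjoint $R$ and $X$, the same chaining of $(x_i,a_i,x_{i+1})$-absorbers built in three rounds of Lemma~\ref{lem:connections}, and the same mixing hypothesis supplied by Lemma~\ref{lem:remove-set-far-from-bip-mixing-time}. The only difference is bookkeeping: you take $|R|=pn$ and couple with a $(p/2)$-random set, while the paper takes $|R|=2pn$ and couples with a $p$-random set, and the two choices are equivalent. Also, the contribution of $R\cup X$ to the uniformity bound should be $2\alpha_0$ rather than $\alpha_0$, which does not affect the final estimate.
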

\begin{proof}
    We start by defining some parameters. Let $p = d^{-c/10^4}$. Let $\alpha_0 = 60d^{-c}, \alpha_1 = 10^4 d^{-c}, \alpha_2 = d^{-c/3}$ and $\alpha_3 = d^{-c/9}$. Let $\ell = (\eps \explet)^{-4}$ rounded to the nearest odd integer. Let $m = m_1 = m_2 = 2pn$ rounded to the nearest integer and $m_3 = (\ell-1) m_1$. Similarly as in the proof of the previous lemma, the set $R$ will be a random subset of $V(G) \setminus U_0$ of size $m$. The absorber is then constructed using Lemma~\ref{lem:connections} in three rounds, where in round $i$ the parameters will be $\alpha_i$ and $m_i$ in place of $\alpha$ and $m$, respectively, and the length of the paths will be $\ell$.

    Let $V' = V(G) \setminus U_0$. Let $n_2 = |V(G)\setminus U_0| = n - |U_0| \ge 0.9n$. 
    
    Let $(R, X)$ be a randomly chosen pair of disjoint subsets of $V(G)\setminus U_0$ with sizes $|R| = m$ and $|X| = m + 1.$ Since $R$ is distributed as a uniformly random subset of $V(G) \setminus U_0$ of size $m$, by  Lemma~\ref{lem:random-s-set-uniform}, with probability $1 - o(1)$, $R$ is $\alpha_0$-uniform with respect to $G,$ where we used that $d' \le d^{1-c} \le d/ 64, |U_0| \le d^{-c/1000} n \le n / 64, \alpha_U \le d^{-c} \le 1/64, (m / |V'|) d \ge 2p d \ge 10\log n$ and $20(\alpha_U + d'/d + \sqrt{(m/|V'|) \log n / d}) \le \alpha_0$. Analogously, with probability $1 - o(1)$, $X$ is $\alpha_0$-uniform with respect to $G$.

    Since $\Pr[\Bin(n_2, p) \ge m] = o(1)$, we may couple a $p$-random subset of $V(G)\setminus U_0$ and $R$ such that the $p$-random subset is a subset of $R$ with probability $1-o(1)$. Noting that a superset of a $(D,\ell_0)$-connecting set is also $(D, \ell_0)$-connecting, by Lemma~\ref{lem:randomset_avoiding_connecting} we have that with probability $1-o(1)$, $R$ is $(D, \ell_0)$-connecting in $G$ for $D = \frac{p^9 \explet^{13} d}{(\log n)^{52}} \ge d^{1-c/1000}$ and $\ell_0 = 5 \explet^{-1} (\log n)^4$. We used that $d'\leq d^{1-c}\leq \explet d/100$, $|U_0|\leq d^{-c/1000}n\leq \explet n/100$, $\alpha_U\leq d^{-c}\leq \explet/100$ and $d \ge p^{-10}\explet^{-100}$, and to justify $D \ge d^{1-c/1000},$ we used that $p = d^{-c/10^4}, d \ge (\explet \eps)^{-10^6/c}$ and $\explet < 1/(\log n)^2$.
    
    Hence, let us assume that $R$ is $(D, \ell_0)$-connecting in $G$ and the sets $R$ and $X$ are both $\alpha_0$-uniform with respect to $G$, which by a union bound happens with probability $1 - o(1)$.

    We denote $R = \{a_1, \dots, a_{m}\}$ and $X = \{ x_1, \dots, x_{m+1}\}$. For $i \in [m],$ we construct an $(x_i, a_i, x_{i+1})$-absorber $\Abs_i$. Suppose the internal vertices of all of these absorbers are pairwise disjoint. Then, the union of these absorbers is an absorber for $R$ with endpoints $x_1$ and $x_{m+1}$. Indeed, denote $S = R \cup X \cup \bigcup_{i \in [m]} V(\Abs_i)$ and consider an arbitrary set $R' \subseteq R$. Let $I = \{ i \mid a_i \in R'\}$ and for each $i \in I,$ let $P_i$ be the path from $x_i$ to $x_{i+1}$ with vertex set $V(\Abs_i) \setminus \{a_i\}$ and for each $i \in [m] \setminus I$, let $P_i$ be the path from $x_i$ to $x_{i+1}$ with vertex set $V(\Abs_i)$. Then the concatenation of these paths $P_1, P_2, \dots, P_m$ is an $x_1-x_{m+1}$ path with vertex set $S \setminus R'$ as needed.

    Finally, it remains to show that we can construct the absorbers $\Abs_1, \dots, \Abs_m$ such that the resulting set $S$ is $\alpha$-uniform. We shall do so in three rounds following the construction of an $(x, a, y)$-absorber as described above. In the first round, for each $i \in [m],$ we construct an $a_i - x_{i+1}$ path $Q_i^1$ of length $\ell$. In the second round, for each $i \in [m],$ we construct an $a_i - Q_i^1(\ell-1)$ path of length $\ell$ and in the third round we construct the remaining $\ell-1$ paths per absorber.

    First we establish several properties and inequalities needed to apply Lemma~\ref{lem:connections} in each of the rounds. For $i \in [3],$ in round $i$, we shall apply Lemma~\ref{lem:connections} with $\alpha = \alpha_i$, $m = m_i$ and an $\alpha_i$-uniform forbidden set $U_i$ of size at most $4 \ell m_i$. Provided this holds, since $d'/d \le d^{-c} \le \eps^2 \explet^2 / 10^5$ and $\alpha_i^{7/8} \le d^{-c/9 \cdot 7/8} \le (\explet\eps)^{100} \le \eps^2 \explet^2 / 10^5,$ for any $i \in [3],$ by Lemma~\ref{lem:remove-set-far-from-bip-mixing-time}, for any $\alpha_i^{7/8}$-uniform set $S \subseteq V(G)$ of size at most $|U| + \ell m_i \le 5 \ell m_i \le 10 \ell^2 n p \le \eps \explet n / 16$, the graph $G \setminus S$ is $(V(G) \setminus S, V(G) \setminus S)$-mixing in time $t_0 = 10^7 \log n \cdot \eps^{-2} \explet^{-2} \le \ell / 10$. Furthermore, note that for any $i \in [3],$ the following hold: $d' \le d^{1-c} \le \alpha_i d, m_i/n \le 2 \ell p \le 2 \ell (\explet \eps)^{100} \le \ell^{-8}, \alpha_i \le d^{-c/9} \le (\eps\explet)^{10^4} \le \ell^{-400}, \alpha_i \ge d^{-c} \ge d^{-1/4}$ and $\ell = (\eps \explet)^{-4} \ge \log n$.

    \textbf{Round 1.} For $i \in [m]$, let $(a^1_i, b^1_i) = (a_i, x_{i+1})$ and define $U_1 = U_0 \cup R \cup X$. Note that $U_1$ is $\alpha_1$-uniform since $\alpha_U + 2\alpha_0 \le \alpha_1$ and that $|U_1| = |U_0| + 2m + 1 \le d^{-c/1000} n + 2m + 1 \le \ell m_1$. We apply Lemma~\ref{lem:connections} to the graph $G$, with the pairs $(a_i^1, b_i^1)$, the forbidden set $U_1$ and the length of the path $\ell$. Thus, for each $i \in [m]$, we find an $a_i - x_{i+1}$ path $Q^1_i$ of length $\ell$ such that these paths are pairwise internally vertex disjoint, the paths avoid the set $U_1$, and, for all $t \in [\ell-1]$, the set $\{ Q^1_i(t) \mid i \in [m]\}$ is $\alpha_1^{1/2}$-uniform.

    \textbf{Round 2.} Let $U_2 = U_1 \cup \bigcup_{i \in [m]} V(Q^1_i),$ note that $|U_2| \leq |U_1|+\ell m \le 2\ell m_2$ and that $U_2$ is $\alpha_2$-uniform since $\alpha_1 + \ell \alpha_1^{1/2} \le \alpha_2$. For $i \in [m],$ let $(a^2_i, b^2_i) = (a_i, Q^1_i(\ell-1))$ and recall that the set $\{ Q^1_i(\ell-1) \mid i \in [m]\}$ is $\alpha_2$-uniform. Applying Lemma~\ref{lem:connections} with the set of pairs $\{(a^2_i, b^2_i) \mid i \in [m]\}$, forbidden set $U_2$ and length $\ell$, for each $i \in [m],$ we obtain an $a_i - Q^1_i(\ell-1)$ path $Q^2_i$ of length $\ell,$ these paths are pairwise internally vertex disjoint, their internal vertices are disjoint from $U_2$ and for any $t \in [\ell-1],$ the set $\{ Q^2_i(t) \mid i \in [m]\}$ is $\alpha_2^{1/2}$-uniform.

    \textbf{Round 3.} Finally, we find the remaining $\ell-1$ paths for each absorber. To this end, for each $i \in [m],$ let
    \begin{itemize}
        \item $(a^3_{(\ell-1)(i-1) + 1}, b^3_{(\ell-1)(i-1) + 1}) = (x_i, Q^2_i(1))$ and,
        \item for $j \in [2, \ell-1]$, let $(a^3_{(\ell-1)(i-1) + j}, b^3_{(\ell-1)(i-1) + j}) = (Q^1_i(j-1), Q^2_i(j))$.
    \end{itemize}

    Recall that $m_3 = (\ell-1) m$. Let $U_3 = U_2 \cup \bigcup_{i \in [m]} V(Q^2_i).$ Observe that $|U_3|\leq |U_2|+\ell m\leq 4\ell m_3$. Note that the multisets $U_3, \{ a^3_i \mid i \in [m_3] \}$ and $\{ b^3_i \mid i \in [m_3]\}$ are $\alpha_3$-uniform, since $4\ell \alpha_2^{1/2} \le \alpha_3$. Applying Lemma~\ref{lem:connections} with the set of pairs $\{ (a^3_i, b^3_i) \mid i \in [m_3] \}$, the forbidden set $U_3$, $\alpha = \alpha_3$ and length $\ell$, for $i \in [m_3],$ we obtain an $a^3_i - b^3_i$ path $P_i$ of length $\ell$, such that these paths are pairwise internally vertex disjoint, their internal vertices are disjoint from $U_3$ and for all $t \in [\ell-1],$ the set $\{ P_i(t) \mid i \in [m_3]\}$ is $\alpha_3^{1/2}$-uniform. 
    
    Note that for $i \in [m],$ the union $\Abs_i = Q^1_i \cup Q^2_i \cup \bigcup_{j \in [\ell-1]} P_{(\ell-1)(i-1) + j}$ is an $(x_i, a_i, x_{i+1})$-absorber. Therefore, by the discussion above, the union $H = \bigcup_{i \in [m]} \Abs_i$ is an absorber for $R$ with endpoints $x_1$ and $x_{m+1}$. Recall that $V(H) = R \cup X \cup \bigcup_{i \in [m], j \in [2]} \Vint(Q_i^j) \cup \bigcup_{i \in [m_3]} \Vint(P_i)$, which is, by construction, disjoint from $U_0$. Thus, $|V(H)| \le 4\ell^2 m \le d^{-c / (2 \cdot 10^4)} n$ and $V(H)$ is $\alpha$-uniform with respect to $G$, since $\alpha \ge 2\ell \alpha_3^{1/2}.$ This completes the proof.
\end{proof}



\section{Robust Hamiltonicity theorems} \label{sec:robust}
Before presenting the proofs of our robust Hamiltonicity theorems, we show how to construct a linear forest in a nearly-regular graph such that the endpoints of the paths in this forest form a uniform set. We can use the following simple lemma from \cite{chakraborti2025edge} to argue that after taking our matchings in the leftover set, the set of vertices that we still need to connect contains few vertices in each neighbourhood.

\begin{lemma} {\cite[Lemma 5.1]{chakraborti2025edge}} \plabel{lem:matchings with little leftover}
    Let $G$ be a graph and let $V_1,\dots,V_t$ be disjoint subsets of $V(G)$ such that, for each $j\in [t-1]$, the degree of every vertex in $G[V_j,V_{j+1}]$ is between $\delta$ and $\Delta$. Let $T_1,\dots,T_n$ be subsets of $V(G)$ such that, for each $i\in [n]$ and $j\in [t]$, we have $|T_i\cap V_j|\leq r$. Assume that $1-\delta/\Delta\leq t^{-1/2}\log n$. Then, there exist matchings $M_j$ for each $j\in [t-1]$ in $G[V_j,V_{j+1}]$ such that, if $Y$ consists of all those vertices $y\in V_1\cup V_t$ which belong to neither $M_1$ nor $M_{t-1}$ and all those vertices $y\in V_j$ with $2\leq j\leq t-1$ which do not belong to both $M_{j-1}$ and $M_j$, then $|Y|\le 10|\bigcup_{i=1}^t V_i|t^{-1/2}\log n$ and $|Y\cap T_i|\leq 10rt^{1/2}\log n$ for all $i\in [n]$.
\end{lemma}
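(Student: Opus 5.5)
The plan is a randomized construction followed by an alteration. First decompose each bipartite graph $G[V_j,V_{j+1}]$ into matchings. Then pick one of them at random, independently for each $j$, and show that the bad set $Y$ has small expectation globally and is concentrated inside each $T_i$.

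\textbf{Setup.} For each $j\in[t-1]$, let $H_j=G[V_j,V_{j+1}]$, which has maximum degree at most $\Delta$. By K\H{o}nig's edge colouring theorem for bipartite graphs, $H_j$ decomposes into $\Delta$ matchings $M_j^{(1)},\dots,M_j^{(\Delta)}$, some possibly empty. Let $M_j$ be chosen uniformly at random among these, independently over $j$. A vertex $v\in V_j$ has $d_{H_j}(v)\ge\delta$ incident edges in $H_j$, each lying in exactly one colour class. Hence $\Pr[v\notin M_j]\le 1-\delta/\Delta\le t^{-1/2}\log n$, and the same bound holds for $M_{j-1}$.

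\textbf{Expectation.} A vertex $y\in V_j$ enters $Y$ only if it is missed by $M_{j-1}$ or by $M_j$, so $\Pr[y\in Y]\le 2t^{-1/2}\log n$. For the endpoint classes $V_1,V_t$ the condition is only more lenient. By linearity, $\E|Y|\le 2|\bigcup V_i|t^{-1/2}\log n$, and by Markov's inequality $|Y|\le 10|\bigcup V_i|t^{-1/2}\log n$ with probability at least $4/5$.

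\textbf{Concentration on each $T_i$.} The variable $|Y\cap T_i|$ is a sum over $j\in[t]$ of the quantities $|Y\cap T_i\cap V_j|$. Each of these is at most $r$ and depends only on $M_{j-1}$ and $M_j$. Split it into its even-$j$ part and its odd-$j$ part. Within each part the summands are functions of disjoint independent choices, since summand $j$ uses $M_{j-1},M_j$ and summand $j+2$ uses $M_{j+1},M_{j+2}$. Each part is therefore a sum of independent $[0,r]$-valued variables with mean at most $2rt\cdot t^{-1/2}\log n=2rt^{1/2}\log n$. Applying Chernoff/Hoeffding to the rescaled variables in $[0,1]$, the probability that a part exceeds $5rt^{1/2}\log n$ is at most $\exp(-\Omega(t^{1/2}\log n))$.

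\textbf{Main obstacle.} The difficulty is making the union bound over all $n$ sets $T_i$ work, which needs $t^{1/2}\log n\gg\log n$, that is, $t$ large. If $t$ is small, I would handle it by a deterministic argument. Note $|Y\cap T_i|\le rt$ trivially, and this is already at most $10rt^{1/2}\log n$ whenever $t^{1/2}\le 10\log n$. In the remaining case $t^{1/2}>10\log n$, the Chernoff bound gives failure probability at most $\exp(-c\,t^{1/2}\log n)\le n^{-2}$ for each $i$. A union bound over $i\in[n]$ then leaves positive probability for all conditions together with the Markov event, and the constants $10$ provide the needed slack.
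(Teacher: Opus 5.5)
The paper does not prove this lemma. It is quoted from \cite[Lemma~5.1]{chakraborti2025edge} and used only as a black box in Lemma~\ref{lem:linear-forest}, so there is no in-paper argument to compare against. Your proof is correct:
\begin{itemize}
\item The K\H{o}nig colouring uses $\lfloor\Delta\rfloor$ classes, padded with empty matchings. It covers each vertex of $V_j\cup V_{j+1}$ with probability $d_{H_j}(v)/\lfloor\Delta\rfloor\ge\delta/\Delta$.
\item The Markov step handles $|Y|$.
\item The parity split genuinely gives independent $[0,r]$-valued summands, because $|Y\cap T_i\cap V_j|$ depends only on $M_{j-1}$ and $M_j$.
\item The deterministic fallback $|Y\cap T_i|\le rt\le 10rt^{1/2}\log n$ when $t^{1/2}\le 10\log n$ is valid.
\end{itemize}

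Your write-up is less efficient than it needs to be only in the concentration step. The ``main obstacle'' you identify comes from using the multiplicative form of Chernoff, not from the problem itself. Even within your parity decomposition, Hoeffding's additive inequality applies to a sum of at most $t$ independent $[0,1]$-valued terms whose mean is at most $2t^{1/2}\log n$. It gives
$\Pr[\text{a part exceeds } 5rt^{1/2}\log n]\le\exp\bigl(-2(3t^{1/2}\log n)^2/t\bigr)=\exp(-18(\log n)^2)$,
uniformly in $t$, so no case distinction on $t$ is needed.

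You can also drop the parity decomposition altogether. $|Y\cap T_i|$ is a function of the $t-1$ independent choices $M_1,\dots,M_{t-1}$. Changing $M_j$ alters it by at most $2r$, since only vertices of $V_j\cup V_{j+1}$ are affected. For $t\ge 2$, McDiarmid's inequality with deviation $8rt^{1/2}\log n$ then bounds the failure probability by
$\exp\bigl(-2(8rt^{1/2}\log n)^2/(4r^2(t-1))\bigr)\le\exp(-32(\log n)^2)$.
The union bound over $i\in[n]$ together with the Markov event then finishes exactly as in your write-up.

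Your multiplicative version does give the sharper tail $\exp(-\Omega(t^{1/2}\log n))$ when $t\gg(\log n)^2$. That extra strength is never used here, and it is what forces the fallback for small $t$.
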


For convenience, we will also use the following very easy lemma from \cite{chakraborti2025edge}.

\begin{lemma} {\cite[Lemma~5.2]{chakraborti2025edge}} \plabel{lem:simple random subgraph}
    Let $G$ be an $n$-vertex graph. Let $V\subset V(G)$. Let $d\ge (\log n)^{15}$, let $0\le d' \le d$ and let $1\le t\le d$ be an integer. 
    Let $U$ be a uniformly random subset of $V$ of size $\lfloor \frac{1}{t}|V| \rfloor$. 
    \begin{enumerate}
        \item[$\mathrm{(a)}$] If $v\in V(G)$ satisfies $d_G(v,V)\le d+ d'$, then $d_G(v,U)\le \frac{1}{t}d + d' + d^{2/3}$ with probability $1-n^{-\omega(1)}$. 
        \item[$\mathrm{(b)}$] If $v\in V(G)$ satisfies $d_G(v,V)\ge d- d'$, then $d_G(v,U)\ge \frac{1}{t}d - d' - d^{2/3}$ with probability $1-n^{-\omega(1)}$. 
    \end{enumerate}
\end{lemma}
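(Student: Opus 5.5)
This is a hypergeometric concentration statement, so I will prove it with a Chernoff bound followed by a union bound over vertices. Fix $v \in V(G)$ and set $k = d_G(v,V)$. Since $U$ is a uniformly random subset of $V$ of size $s = \lfloor |V|/t \rfloor$, the quantity $d_G(v,U) = |U \cap N_G(v) \cap V|$ is hypergeometric with parameters $|V|, s, k$. Its mean is
\[ \mu = \frac{sk}{|V|} \in \left[ \frac{k}{t} - \frac{k}{|V|},\ \frac{k}{t} \right], \]
and in particular $\mu = k/t \pm 1$, because $k \le |V|$. By \cite[Theorem~2.10]{JLRbook}, the usual Chernoff tail bounds for binomial variables also apply to hypergeometric ones. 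Specifically, for $x>0$ we have $\Pr[X \ge \mu + x] \le \exp(-x^2/(2(\mu + x/3)))$, and similarly for the lower tail.

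For part (a), assume $k \le d + d'$. Then $\mu \le (d+d')/t \le d/t + d'$, using $t \ge 1$. Take $x = d^{2/3}$. There are two cases.
\begin{itemize}
\item If $\mu \le d$, the exponent is at least $d^{4/3}/(2(d + d^{2/3})) \ge d^{1/3}/4$.
\item If $\mu > d$, we must have $d' > 0$ and $k > d$. This case is not bounded by $d$, since $\mu$ can be as large as $2d$ when $d' \le d$. Still, $\mu \le 2d$, so the exponent is at least $d^{4/3}/(6d) = d^{1/3}/6$.
\end{itemize}
In both cases the failure probability is at most $\exp(-d^{1/3}/6)$. Since $d \ge (\log n)^{15}$, we have $d^{1/3} \ge (\log n)^5$, so this probability is $n^{-\omega(1)}$. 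This gives $d_G(v,U) \le d/t + d' + d^{2/3}$.

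Part (b) is symmetric, using the lower tail $\Pr[X \le \mu - x] \le \exp(-x^2/(2\mu))$. Here $\mu \ge (d-d')/t - 1 \ge d/t - d' - 1$. If $d/t - d' - d^{2/3} \le 0$ the claim is trivial. Otherwise I apply the bound with $x = d^{2/3} - 1$, and the same estimate works because $\mu \le k/t$ is harmless: when $\mu$ is large the deviation only needs to be $d^{2/3}$ relative to a variance of at most $\mu$. I have to be careful here, since $k$ is not bounded above in (b). In that case I first reduce to the sub-neighbourhood consisting of exactly $\lceil d - d'\rceil$ neighbours, which is monotone and keeps $\mu \le d$.

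The only mildly delicate step is handling large means: in (a) this is the $d'$ slack, and in (b) it is the truncation. The rest is routine. The statement is per vertex, so no union bound is needed, although one could union bound over all $n$ vertices at no cost.
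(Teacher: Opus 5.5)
Your proof is correct and is the standard argument: $d_G(v,U)$ is hypergeometric, the binomial Chernoff tails transfer via \cite[Theorem~2.10]{JLRbook}, part (a) goes through because $\mu\le 2d$ always (so the case split is unnecessary), and part (b) goes through once you pass by monotonicity to a fixed set of $\lceil d-d'\rceil$ neighbours. The paper gives no proof of its own here and simply quotes the lemma from \cite{chakraborti2025edge}.

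Delete the sentence in (b) asserting that a large mean is ``harmless'' with deviation $x=d^{2/3}-1$. When $\mu\gg d^{4/3}$, the bound $\exp(-x^2/(2\mu))$ is useless for that $x$, so the truncation step is genuinely needed. Alternatively, you could take $x=\mu-(d/t-d'-d^{2/3})$, which is large exactly when $\mu$ is.
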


We now deduce the following more useful lemma.

\begin{lemma} \plabel{lem:linear-forest}
   For any $0<c<1/3$, the following holds for all sufficiently large $n$. Let $G$ be an $n$-vertex graph and let $V\subset V(G)$. Let $d\geq (\log n)^{21/c}$. Assume that each $v\in V(G)$ satisfies $d-d^{1-c}\leq d_G(v,V)\leq d+d^{1-c}$. Then there exists a linear forest $\mathcal{F}$ in $G[V]$ such that the number of vertices in $V$ which have degree less than $2$ in $\mathcal{F}$ is at most $d^{-c/5}n$, and for each $v\in V(G)$, the number of vertices in $N_G(v)\cap V$ which have degree less than $2$ in $\mathcal{F}$ is at most $d^{1-c/5}$.
\end{lemma}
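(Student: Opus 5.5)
We will derive the statement from Lemma~\ref{lem:simple random subgraph} and Lemma~\ref{lem:matchings with little leftover}, using a random equipartition of $V$. Fix an integer $t$ of order $d^{c/2}$; say $t=\lfloor d^{c/2}\rfloor$. Partition $V$ uniformly at random into parts $V_1,\dots,V_t$ of sizes $\lfloor |V|/t\rfloor$ or $\lceil |V|/t\rceil$ (discard at most $t$ leftover vertices and count them as bad; they contribute at most $t\le d^{1-c/5}$, which is negligible). Each $V_j$ is marginally a uniformly random subset of $V$ of size about $|V|/t$. We apply Lemma~\ref{lem:simple random subgraph} to each $j$ and each vertex $v\in V(G)$ with $d'=d^{1-c}$; the hypothesis $d\ge(\log n)^{15}$ holds since $c<1/3$. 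A union bound over $nt$ events then gives, with high probability,
\[ d_G(v,V_j)=\tfrac{d}{t}\pm(d^{1-c}+d^{2/3}) \quad \text{for all } v,j. \]

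In particular every vertex of $G[V_j,V_{j+1}]$ has degree between $\delta=\frac dt-2d^{\max(1-c,2/3)}$ and $\Delta=\frac dt+2d^{\max(1-c,2/3)}$. Hence
\[ 1-\delta/\Delta=O\bigl(t\,d^{-\min(c,1/3)}\bigr)=O(d^{-c/2}), \]
using $c<1/3$. We need this to be at most $t^{-1/2}\log n\approx d^{-c/4}\log n$, which holds for large $n$. This is the key balancing step in choosing $t$: $t$ must be small enough that the relative degree error $t d^{-c}$ stays below $t^{-1/2}$, yet large enough that $t^{-1/2}$ gives a $d^{-\Omega(c)}$ saving. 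The choice $t=d^{c/2}$ satisfies both.

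Next we apply Lemma~\ref{lem:matchings with little leftover} with test sets $T_v=N_G(v)\cap V$ for $v\in V(G)$, taking $r=\max_{v,j}|T_v\cap V_j|\le d/t+2d^{1-c}\le 2d^{1-c/2}$. This yields matchings $M_j$ in $G[V_j,V_{j+1}]$. Then $\mathcal F=\bigcup_j M_j$ is a linear forest in $G[V]$: its edges only go between consecutive parts and each vertex lies in at most one matching edge toward each side, so the maximum degree is $2$ and there are no cycles since the layers are ordered. The vertices of degree less than $2$ in $\mathcal F$ are exactly the set $Y$ of the lemma, together with all of $V_1\cup V_t$. The lemma bounds $|Y|\le 10n t^{-1/2}\log n\le d^{-c/5}n/2$, since $d^{-c/4}\log n\le d^{-c/5}/20$ by $d\ge(\log n)^{21/c}$. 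It also bounds
\[ |Y\cap T_v|\le 10 r t^{1/2}\log n\le 20 d^{1-c/2}d^{c/4}\log n=20d^{1-c/4}\log n\le d^{1-c/5}/2. \]

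It remains to handle $V_1\cup V_t$, whose vertices automatically have degree at most $1$ in $\mathcal F$. We have $|V_1\cup V_t|\le 2n/t\le d^{-c/5}n/2$, and $|N(v)\cap(V_1\cup V_t)|\le 2r\le 4d^{1-c/2}$, which is well within budget. Adding the two contributions gives both required bounds. The main care point is the parameter bookkeeping in the application of Lemma~\ref{lem:matchings with little leftover}: the $\log n$ losses are absorbed by $d\ge(\log n)^{21/c}$, which gives $\log n\le d^{c/21}$, and the exponents work out as $c/4-c/21>c/5$.
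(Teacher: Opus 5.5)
Your proposal is correct and follows the paper's proof essentially step for step. The shared steps are: $t\approx d^{c/2}$; a random equipartition of $V$ with Lemma~\ref{lem:simple random subgraph} (applied with $d'=d^{1-c}$) to get all $d_G(v,V_j)=d/t\pm 2d^{1-c}$; Lemma~\ref{lem:matchings with little leftover} with test sets the neighbourhoods and $r=2d^{1-c/2}$; and a final count of the degree-deficient vertices as $Y\cup V_1\cup V_t$. The only bookkeeping slip is in your treatment of the $\le t$ discarded leftover vertices: they add $t$ on top of your two halves $d^{-c/5}n/2$. This is harmless, since $|V_1\cup V_t|\le 4d^{-c/2}n$ leaves ample slack to absorb them.
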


\begin{proof}
    Let $t=d^{c/2}$. Consider a random partition $V=V_1\cup \dots \cup V_t$ into sets of size $|V|/t$. By Lemma \ref{lem:simple random subgraph}, with high probability, for all $v\in V(G)$ and all $j\in [t]$, we have $d^{1-c/2}-2d^{1-c}\leq d_G(v,V_j)\leq d^{1-c/2}+2d^{1-c}$. In particular, the degree of every vertex in $G[V_j,V_{j+1}]$ is between $\delta=d^{1-c/2}-2d^{1-c}$ and $\Delta=d^{1-c/2}+2d^{1-c}$. Observe that $1-\delta/\Delta = 4d^{1-c}/\Delta\leq 4d^{-c/2}\leq t^{-1/2}\log n$. Let $V(G)=\{v_1,\dots,v_n\}$ and let $T_i=N_G(v_i)$. Observe that $|T_i\cap V_j|=d_G(v_i,V_j)\leq 2d^{1-c/2}$ for each $i\in [n]$ and $j\in [t]$. Hence, applying Lemma \ref{lem:matchings with little leftover} with $r=2d^{1-c/2}$, we find that there exist matchings $M_j$ for each $j\in [t-1]$ in $G[V_j,V_{j+1}]$ such that, if $Y$ consists of all those vertices $y\in V_1\cup V_t$ which belong to neither $M_1$ nor $M_{t-1}$ and all those vertices $y\in V_j$ with $2\leq j\leq t-1$ which do not belong to both $M_{j-1}$ and $M_j$, then $|Y|\leq 10|V|t^{-1/2}\log n$ and $|Y\cap T_i|\leq 10rt^{1/2}\log n$ for all $i\in [n]$. Let $\mathcal{F}$ be the union of these matchings. Clearly, $\mathcal{F}$ is a linear forest.
    The number of vertices in $V$ which have degree less than $2$ in $\mathcal{F}$ is at most $|Y|+|V_1|+|V_t|\leq 10|V|t^{-1/2}\log n +2|V|/t\leq d^{-c/5}n$.
    Moreover, the number of vertices in $N_G(v_i)\cap V$ which have degree less than $2$ in $\mathcal{F}$ is at most $|Y\cap T_i|+|V_1\cap T_i|+|V_t\cap T_i|\leq 10rt^{1/2}\log n +d_G(v_i,V_1)+d_G(v_i,V_t)\leq 20d^{1-c/4}\log n + 2(d^{1-c/2}+2d^{1-c})\leq d^{1-c/5}$. 
\end{proof}

We proceed with the robust Hamiltonicity theorems, starting with the bipartite case. The proof is schematically represented in Figure~\ref{fig:robust}.

\begin{figure}[H]
    \centering
    \includegraphics[width=0.85\textwidth]{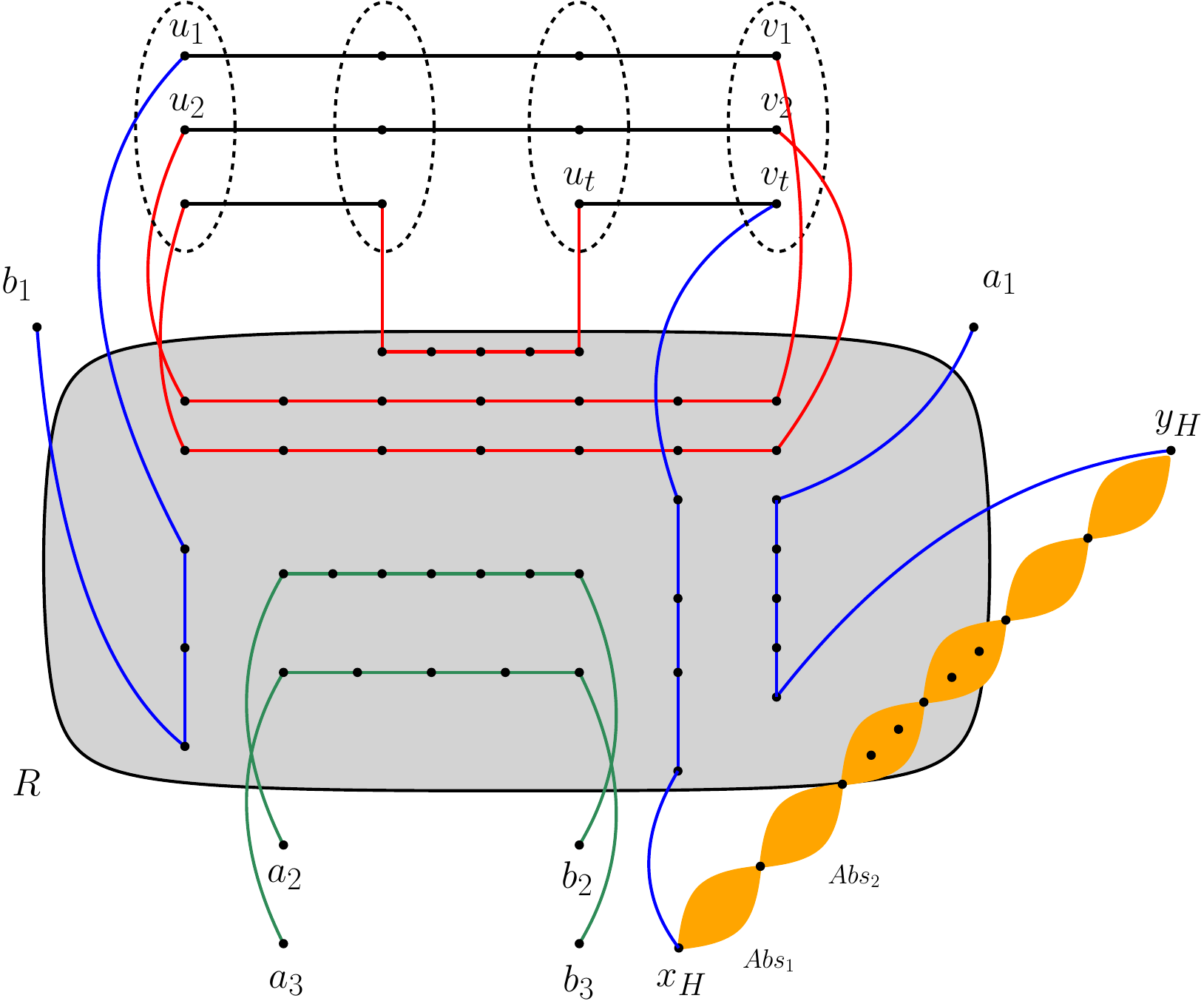}
    \caption{An example with $m=3$. The red, blue and green paths are constructed using the connecting properties of $R$. The orange lens-shaped objects represent $(x, a, b, y)$-absorbers.}
    \plabel{fig:robust}
\end{figure}

\begin{theorem} \plabel{thm:robust-bipartite}
   Let $G$ be a $(d \pm d')$-nearly-regular bipartite $\explet$-expander with parts $V_0$ and $V_1$ of size $n/2$, where $\explet < 1 / (\log n)^2.$ Suppose we are given a positive integer $m$ and pairs of vertices $(a_i, b_i)_{i \in [m]}$ with $a_i \in V_0, b_i \in V_1$ for all $i \in [m]$. Assume that $\{ a_i \mid i \in [m]\}$ is $(V_0, V_1, \alpha)$-bi-uniform both as a set and as a multiset, and analogously, that $\{ b_i \mid i \in [m]\}$ is $(V_1, V_0, \alpha)$-bi-uniform both as a set and as a multiset. Furthermore, assume that the sets $\{a_i \mid i \in [m]\}$ and $\{b_i \mid i \in [m]\}$ are of equal size.

   Suppose we are given $c$ with $0 < c < 1/4$ and assume that $n$ is sufficiently large with respect to $c$ and the following inequalities hold: $d' \le d^{1 - c}, \alpha \le d^{-c}, m \le d^{- c/900} n$ and $d \ge \explet^{-10^6/c}.$ Then, there exist internally vertex disjoint paths $P_1, \dots, P_m$ such that $P_i$ is an $a_i-b_i$ path for each $i \in [m]$ and $\bigcup_{i \in [m]} \Vint(P_i) = V(G) \setminus \bigcup_{i \in [m]} \{a_i, b_i\}$. 
\end{theorem}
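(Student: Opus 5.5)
The plan follows the absorption scheme of Section~\ref{sec:outline}. Let $U_0=\{a_i\}\cup\{b_i\}$. By hypothesis this set is balanced, and since the set versions of $\{a_i\}$ and $\{b_i\}$ are bi-uniform, it is $\alpha$-uniform with $|U_0|\le 2m\le d^{-c/1000}n$. We apply Lemma~\ref{lem:absorber-bipartite} with this $U_0$ and $\alpha_U=\alpha\le d^{-c}$. This yields a balanced $(D,\ell_0)$-connecting set $R$ with $|R|\ge d^{-c/10^4}n$, where $D=d^{1-c/1000}$, together with a balanced bipartite absorber $H$ for $R$ with endpoints $x,y$. The absorber satisfies $V(H)\cap U_0=\emptyset$, $|V(H)|\le d^{-c/(2\cdot 10^4)}n$, and $V(H)$ is $d^{-c/20}$-uniform.

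Next set $V=V(G)\setminus(V(H)\cup U_0)$. Because $U_0\cup V(H)$ is $(\alpha+d^{-c/20})$-uniform, Lemma~\ref{lem:remove-set} shows that every vertex has $d_2\pm d_2^{1-c'}$ neighbours in $V$, where $d_2=d(1-|U_0\cup V(H)|/n)$ and $c'$ is a constant of order $c/40$. Apply Lemma~\ref{lem:linear-forest} to $G$ and $V$ with parameter $c'$. This gives a linear forest $\mathcal{F}$ in $G[V]$ with at most $d^{-c'/5}n$ path endpoints, and every neighbourhood contains at most $d^{1-c'/5}$ such endpoints. Isolated vertices of $V$ count as paths of length $0$.

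Now choose a connection order. The pairs $(a_i,b_i)$ for $i\in[m]$ are to be joined, the absorber must be inserted, and every path of $\mathcal{F}$ must be threaded in. Arrange these as $m$ chains, as in Figure~\ref{fig:robust}. For $i\ge2$, chain $i$ runs $a_i\to$ (some $\mathcal{F}$-paths) $\to b_i$. Chain $1$ runs $a_1\to$ (some $\mathcal{F}$-paths) $\to x\to H\to y\to$ (more $\mathcal{F}$-paths) $\to b_1$.

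The gaps between consecutive objects form a list of pairs, which we connect through $R$ using its $(D,\ell_0)$-connecting property. The endpoint multiset is the union of three parts: the $\mathcal{F}$-endpoints, at most $d^{1-c'/5}$ per neighbourhood; the $a_i,b_i$, at most $(m/n)d+\alpha d\ll D$ per neighbourhood (using $m\le d^{-c/900}n$); and $x,y$. So the degree condition holds with room, since $D=d^{1-c/1000}$ exceeds $d^{1-c'/5}$ once $c'\ge c/200$, say. The number of pairs is far smaller than $|R|/2$. The connecting paths are internally disjoint and use a set $R_{\mathrm{used}}\subseteq R$ with $|R_{\mathrm{used}}|\le \ell_0\cdot(\#\text{pairs})\le |R|/2$. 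Finally, using the absorber property of $H$ with $R'=R_{\mathrm{used}}$, replace the $x$–$H$–$y$ segment by an $x$–$y$ path on $V(H)\setminus R_{\mathrm{used}}$. Every vertex is then covered exactly once, with each $a_i$ and $b_i$ an endpoint only of $P_i$.

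\textbf{Main obstacle: parity.} In the bipartite setting $R'$ must be balanced. Each joining path alternates sides, so its internal vertex set is balanced exactly when its endpoints lie in different parts and its length is odd. Concretely, I would orient each $\mathcal{F}$-path and choose which endpoint connects where so that every gap joins a $V_0$-vertex to a $V_1$-vertex. The intended argument is a global counting one. The $a_i$, $b_i$, $H$, and $V(G)$ are all balanced, so $V$ is balanced. A path of $\mathcal{F}$ is either balanced, with endpoints on opposite sides, or has an excess of one on some side. Hence the paths with a $V_0$-excess and those with a $V_1$-excess are equinumerous. Interleaving them in alternation along the chains, with balanced paths placed anywhere, makes each gap a $V_0$–$V_1$ pair. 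Then each connecting path has odd length and balanced interior, so $R_{\mathrm{used}}$ is balanced.

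A further point needs care: Definition~\ref{def:connecting-set} gives lengths "at most $\ell_0$" rather than specified parities. In a bipartite graph, however, the parity of a path is determined by the parts of its endpoints, so this is automatic. The remaining bookkeeping is to check the exponent inequalities when choosing $c'$.
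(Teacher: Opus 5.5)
Your proposal is correct and follows the paper's proof. The steps are the same: build the absorber with Lemma~\ref{lem:absorber-bipartite} avoiding $U_0$, take a linear forest on the remainder via Lemma~\ref{lem:linear-forest}, connect all gaps through $R$, and absorb the unused part of $R$. The only difference is the parity bookkeeping. You orient the $\mathcal{F}$-paths so that every connecting path joins $V_0$ to $V_1$ and is therefore individually balanced. The paper instead pairs the forest endpoints arbitrarily and threads all of $\mathcal{F}$ into one $b_1$--$x_H$ path $Q^*$. It then deduces that $\Vint(Q^*)\cap R$ is balanced, because $V$ is balanced and $Q^*$ has endpoints in opposite parts. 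Both arguments work.
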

\begin{proof}

    Let $U = \bigcup_{i \in [m]} \{ a_i, b_i\}$. Note that $U$ is an $\alpha$-uniform set with respect to $G$ and $|U| \le 2m \le d^{-c/1000} n$. Applying Lemma~\ref{lem:absorber-bipartite} with $U_0 = U$, we obtain a balanced set $R$ which is $(D, \ell_0)$-connecting in $G$ with $|R| \ge d^{-c/10^4} n$ and a balanced bipartite absorber $H$ for $R$ with endpoints $x_H\in V_0$ and $y_H\in V_1$ such that $V(H) \cap U = \emptyset, |V(H)| \le d^{-c / (2 \cdot 10^4)} n \le n/100$ and $V(H)$ is $\alpha_H$-uniform with respect to $G,$ where $D = d^{1 - c/1000}$, $\ell_0 =  5 \explet^{-1} (\log n)^4$ and $\alpha_H = d^{-c/20}$.

    Note that the set $U \cup V(H)$ is $(\alpha+\alpha_H)$-uniform with respect to $G$. Let $V = V(G) \setminus (U \cup V(H))$. Observe that $V$ is a balanced set since $U$ and $V(H)$ are balanced and note that for any $v \in V(G),$ we have 
    \begin{align*}
        d_G(v, V) &= d_G(v) - |(U \cup V(H)) \cap N_G(v)| = d \pm d' - (|U| + |V(H)|) d / n \pm d' \pm 2 (\alpha+\alpha_H) d\\
        &= d(1 - (|U| + |V(H)|) / n) \pm 10d^{1-c/40}.
    \end{align*}
    Let $d_V = d(1 - (|U| + |V(H)|) / n)$ and $d_V' = 10d^{1-c/40}.$ Note that $d_V \ge d / 2$ and thus $d_V' \le d_V^{1-c/45}$. Note that $d_V \ge d/2 \ge (\log n)^{10^6 / c},$ thus we may apply Lemma~\ref{lem:linear-forest} with $d = d_V$ and parameter $c/45$ in place of $c$ to obtain a linear forest $\calF$ in $G[V]$ such that the number of vertices in $V$ which have degree less than $2$ in $\mathcal{F}$ is at most $d^{-c/225}n$, and for each $v \in V(G)$, the number of vertices in $N_G(v) \cap V$ which have degree less than $2$ in $\calF$ is at most $d_V^{1 - c/225} \le d^{1-c/225}.$ 

    Next, we define pairs to be connected through the connecting set $R$. Let $(u_1, v_1), \dots, (u_t, v_t)$ be the set of endpoints of the paths in $\calF$, where we treat vertices in $V$ with degree zero in $\calF$ as paths with the same endpoint $u_i = v_i$. Observe that each $u_i$ has degree less than $2$ in $\mathcal{F}$, so by the previous paragraph, we have $t\leq d^{-c/225}n$. First, we pair up the endpoints of the paths in $\calF$ such that after finding connections between the pairs, we are left with a single path. Concretely, for $i \in [t-1],$ let $x_i = v_i, y_i = u_{i+1}$. Then, let $x_t = b_1, y_t = u_1, x_{t+1} = v_t, y_{t+1} = x_H$. Observe that for any set of pairwise internally vertex disjoint $x_i - y_i$ paths through $R$, for $i \in [t+1]$, the union of these $t+1$ paths together with the paths in $\calF$ yields a $b_1 - x_H$ path. Let $x_{t+2} = y_H, y_{t+2} = a_1$ and for $i \in [2, m],$ let $x_{t+i+1} = a_i, y_{t+i+1} = b_i$.
        
    

    
    Denote $r = t+m+1$ and let $W = \{ x_i \mid i \in [r] \} \cup \{ y_i \mid i \in [r]\}$ denote the multiset of endpoints we wish to connect through $R$. Given some $v\in R$, we wish to upper bound $|N_G(v)\cap W|$. Note that each vertex in $V$ that has degree less than $2$ in $\mathcal{F}$ appears at most twice in $W$, so these vertices contribute at most $2d^{1-c/225}$ to $|N_G(v)\cap W|$. Since $a_i, b_i,x_H, y_H$ each appear once in $W$ and the multiset $\{a_i \mid i\in [m]\}\cup \{b_i \mid i\in [m]\}$ is $\alpha$-uniform, for each $v \in R$, 
    \[ |N_G(v) \cap W| \le 2d^{1 - c/225} + \left( \frac{2m \bar{d}(G)}{n} + \alpha \bar{d}(G) \right) + 2 \le 20d^{1- c /900} \le D, \]
    where we used that $D=d^{1-c/1000}$, $m \le d^{-c/900} n$, $\bar{d}(G) \le 2d, \alpha \leq d^{-c}$ and $d = \omega(1)$.

    Since $R$ is $(D, \ell_0)$-connecting in $G$, there exists a collection $(P^*_i)_{i \in [r]}$ of pairwise internally disjoint paths of length at most $\ell_0$ such that for each $i \in [r]$, $P^*_i$ is an $x_i - y_i$ path with internal vertices in $R$. Let $R' = \bigcup_{i \in [r]} \Vint(P^*_i)$. We claim that $R'$ is a balanced subset of $R$. Indeed, as already mentioned, the path $Q^* \coloneqq \calF \cup \bigcup_{i \in [t+1]} P^*_i$ is a $b_1 - x_H$ path and note that $\Vint(Q^*) \supseteq V$ and $\Vint(Q^*) \setminus V \subseteq R$. Since $V$ is a balanced set, it follows that $\Vint(Q^*) \cap R$ is a balanced set. For $i \in [t+2, t+m+1],$ observe that $\{x_i, y_i\}$ is a balanced set, thus $\Vint(P^*_i)$ is a balanced subset of $R$ for all $i \in [t+2, t+m+1]$. Hence, $R' = (\Vint(Q^*) \cap R) \cup \bigcup_{i \in [t+2, t+m+1]} \Vint(P^*_i)$ is a balanced subset of $R$, as claimed.
    
    Furthermore, we have $|R'| \le r \ell_0 = (t + m + 1) \ell_0 \le (d^{-c/225}n + d^{-c/900}n + 1) \cdot 5 \explet^{-1} (\log n)^4 \le d^{-c/920} n < |R| / 2.$ Hence, by the property of the absorber, there is an $x_H - y_H$ path $Q$ with vertex set $V(H) \setminus R'$. Recalling the discussion above, we see that $P_1 = P^*_{t+2} \cup Q \cup Q^*$ is an $a_1 - b_1$ path whose internal vertices contain the set $V$. For $i \in [2,m]$, let $P_i = P^*_{t+i+1}$ and note that $P_i$ is an $a_i - b_i$ path. Since $V(H) = V(Q) \cup \bigcup_{i \in [r]} \Vint(P^*_i)$, it follows that $\bigcup_{i \in [m]} \Vint(P_i) = V(G) \setminus U$. Finally, it is easy to see that the paths $(P_i)_{i \in [m]}$ are pairwise internally disjoint, finishing the proof; see Figure~\ref{fig:robust} for an illustration.
\end{proof}

Next, we have the far-from-bipartite analogue of the previous theorem with a similar proof.
\begin{theorem}
    \plabel{thm:robust-far-from-bipartite}
    Let $G$ be a $(d \pm d')$-nearly-regular $\explet$-expander which is $\eps$-far from bipartite, for some $0<\explet<1/(\log n)^2$ and $\eps > 0$. Suppose we are given a positive integer $m$ and pairs of vertices $(a_i, b_i)_{i \in [m]}$ such that $a_i \neq b_i$ for all $i \in [m]$ and $\bigcup_{i \in [m]} \{ a_i, b_i \}$ is $\alpha$-uniform both as a set and as a multiset.
   
    Suppose we are given $c$ with $0 < c < 1/4$ and assume that $n$ is sufficiently large with respect to $c$ and the following inequalities hold: $d' \le d^{1-c}, \alpha \le d^{-c}, m \le d^{- c/900} n$ and $d \ge (\explet \eps)^{-10^6/c}.$ Then, there exist internally vertex disjoint paths $P_1, \dots, P_m$ such that $P_i$ is an $a_i-b_i$ path for each $i \in [m]$ and $\bigcup_{i \in [m]} \Vint(P_i) = V(G) \setminus \bigcup_{i \in [m]} \{a_i, b_i\}$. 
\end{theorem}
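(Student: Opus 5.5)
The plan is to mirror the proof of Theorem~\ref{thm:robust-bipartite}, replacing Lemma~\ref{lem:absorber-bipartite} by Lemma~\ref{lem:absorber-far-from-bipartite}. Without parity constraints, all the balancedness bookkeeping disappears.

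\textbf{Absorber and leftover forest.} Let $U=\bigcup_{i\in[m]}\{a_i,b_i\}$. By assumption $U$ is $\alpha$-uniform with $\alpha\le d^{-c}$, and $|U|\le 2m\le d^{-c/1000}n$. Apply Lemma~\ref{lem:absorber-far-from-bipartite} with $U_0=U$. The hypotheses $d'\le d^{1-c}$ and $d\ge(\explet\eps)^{-10^6/c}$ are exactly those given. This yields a $(D,\ell_0)$-connecting set $R\subseteq V(G)\setminus U$, with $D=d^{1-c/1000}$ and $\ell_0=5\explet^{-1}(\log n)^4$, together with an absorber $H$ for $R$ with endpoints $x_H,y_H$. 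Moreover $V(H)\cap U=\emptyset$, $|V(H)|\le d^{-c/(2\cdot 10^4)}n$, and $V(H)$ is $d^{-c/20}$-uniform.

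Set $V=V(G)\setminus(U\cup V(H))$. The union $U\cup V(H)$ is $(\alpha+d^{-c/20})$-uniform, so for every $v\in V(G)$ we get
\[d_G(v,V)=d_V\pm 10d^{1-c/40},\qquad d_V=d\bigl(1-(|U|+|V(H)|)/n\bigr)\ge d/2,\]
by the same computation as before. Apply Lemma~\ref{lem:linear-forest} with parameter $c/45$. This gives a linear forest $\calF$ in $G[V]$ with at most $d^{-c/225}n$ vertices of $\calF$-degree below $2$, and with at most $d^{1-c/225}$ such vertices in any neighbourhood.

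\textbf{Connecting pairs.} List the endpoints of the paths of $\calF$ as $(u_1,v_1),\dots,(u_t,v_t)$, treating isolated vertices as trivial paths with $u_i=v_i$; then $t\le d^{-c/225}n$. Define the pairs
\[(x_i,y_i)=(v_i,u_{i+1})\ (i<t),\quad (x_t,y_t)=(b_1,u_1),\quad (x_{t+1},y_{t+1})=(v_t,x_H),\]
\[(x_{t+2},y_{t+2})=(y_H,a_1),\qquad (x_{t+i+1},y_{t+i+1})=(a_i,b_i)\ (2\le i\le m).\]
The degree check is as in the bipartite proof. The multiset $W$ of all these endpoints meets each $N_G(v)$, $v\in R$, in at most
\[2d^{1-c/225}+2m\bar d(G)/n+\alpha\bar d(G)+2\le 20d^{1-c/900}\le D\]
vertices. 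Here we use that $\{a_i,b_i\}$ is $\alpha$-uniform as a multiset and $m\le d^{-c/900}n$. All endpoints lie outside $R$, since $R\cap U=\emptyset$ and $R\subseteq V(H)$ is disjoint from $V$ except at $x_H,y_H\notin R$.

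Hence the connecting property gives internally disjoint $x_i$--$y_i$ paths $P_i^*$ through $R$. Let $R'=\bigcup_i\Vint(P_i^*)$. Then $|R'|\le(t+m+1)\ell_0\le d^{-c/920}n$, and no constraint on $R'$ is needed, because the absorber accepts any $R'\subseteq R$. Take $Q$ to be the $x_H$--$y_H$ path in $H$ on $V(H)\setminus R'$. Let $Q^*=\calF\cup\bigcup_{i\le t+1}P_i^*$, which is a $b_1$--$x_H$ path. Set $P_1=P_{t+2}^*\cup Q\cup Q^*$ and $P_i=P^*_{t+i+1}$ for $i\ge 2$. These are internally disjoint and cover $V(G)\setminus U$.

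\textbf{Main obstacle.} The only delicate point, compared with the bipartite case, is the degenerate situation where pairs share endpoints, or where $a_1$ or $b_1$ coincides with some other $a_j,b_j$. Then $P_1$ passes through the segments in a fixed order and must not revisit a vertex of $U$. Since every vertex of $U$ occurs only as an endpoint of a connecting path, and the internal vertices of all the $P^*_i$ lie in $R$, which is disjoint from $U$, $P_1$ visits each of $a_1,b_1$ exactly once and no other vertex of $U$ internally. So it is a genuine path.

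The requirement $a_i\ne b_i$ guarantees that each $P_i$ is a genuine path. I also need that a pair $(x_i,y_i)$ with $x_i=y_i$ never arises; this is guaranteed since $v_t\ne x_H$ and so on. A remaining subtlety is that the definition of connecting set requires the $x_i,y_i$ to lie outside $R$, which was verified above.
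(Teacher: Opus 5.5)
Your proposal is correct and is essentially the paper's proof. The steps match: apply Lemma~\ref{lem:absorber-far-from-bipartite} with $U_0=U$, cover the remainder by a linear forest from Lemma~\ref{lem:linear-forest}, and link the forest's endpoints, the absorber's endpoints and the pairs $(a_i,b_i)$ through the connecting set $R$. Finally, absorb $R'$, using that the non-bipartite absorber accepts an arbitrary $R'\subseteq R$. The differences are only cosmetic. You reuse the bipartite ordering of the pairs and the linear-forest parameter $c/45$, whereas the paper orders the connections as $a_1\to y_H$, $x_H\to v_t$, $u_1\to b_1$ and uses $c/170$.
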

\begin{proof}
    Let $U = \bigcup_{i \in [m]} \{ a_i, b_i\}$. Note that $U$ is an $\alpha$-uniform set with respect to $G$ and $|U| \le 2m \le d^{-c/1000} n$. Applying Lemma~\ref{lem:absorber-far-from-bipartite} with $U_0 = U$, we obtain a set $R \subseteq V(G) \setminus U$ which is $(D, \ell_0)$-connecting in $G$ and an absorber $H$ for $R$ with endpoints $x_H$ and $y_H$ such that $V(H) \cap U = \emptyset, |V(H)| \le d^{-c / (2 \cdot 10^4)} n$ and $V(H)$ is $\alpha_H$-uniform with respect to $G,$ where $D = d^{1 - c/1000}$, $\ell_0 =  5 \explet^{-1} (\log n)^4$, $\alpha_H = d^{-c/20}$.

    Note that the set $U \cup V(H)$ is $(\alpha+\alpha_H)$-uniform with respect to $G$. Let $V = V(G) \setminus (U \cup V(H))$ and note that for any $v \in V(G),$ we have 
    \begin{align*}
        d_G(v, V) &= d_G(v) - |(U \cup V(H)) \cap N_G(v)| = d \pm d' - (|U| + |V(H)|) d / n \pm d' \pm 2 (\alpha+\alpha_H) d\\
        &= d(1 - (|U| + |V(H)|) / n) \pm 10d^{1-c/150}.
    \end{align*}
    
    Let $d_V = d(1 - (|U| + |V(H)|) / n)$ and $d_V' = 10d^{1-c/150}.$ Note that $d_V \ge d / 2$ and thus $d_V' \le d_V^{1-c/170}$. Since $d_V \ge d/2 \ge (\log n)^{10^6 / c},$ we may apply Lemma~\ref{lem:linear-forest} with $d = d_V$ and parameter $c/170$ in place of $c$ to obtain a linear forest $\calF$ in $G[V]$ such that for each $v \in V(G)$, the number of vertices in $N_G(v) \cap V$ which have degree less than $2$ in $\calF$ is at most $d_V^{1 - c/850} \le d^{1-c/900}.$

    Next, we define the pairs to be connected through the connecting set $R$. Let $(u_1, v_1), \dots, (u_t, v_t)$ be the set of endpoints of the paths in $\calF$, where we treat vertices in $V$ with degree zero in $\calF$ as paths with the same endpoint $u_i = v_i$. First, we pair up the endpoints of the paths in $\calF$ such that after finding connections between the pairs, we are left with a single path. Concretely, for $i \in [t-1],$ let $x_i = v_i$ and $y_i = u_{i+1}$. Observe that for any set of pairwise internally disjoint $x_i - y_i$ paths, $i \in [t-1],$ through $R$, the union of $\calF$ and these $t-1$ paths is a $u_1 - v_t$ path. Let $x_t = a_1, y_t = y_H, x_{t+1} = x_H, y_{t+1} = v_t$ and $x_{t+2} = u_1, y_{t+2} = b_1$. Note that for any set of pairwise internally disjoint $x_i - y_i$ paths, $i \in [t+2],$ through $R$, the union of these $t+2$ paths together with $\calF$ and an $x_H - y_H$ path internally disjoint from these paths, yields an $a_1 - b_1$ path. Finally, for $i \in [2, m],$ let $x_{t+i+1} = a_i, y_{t+i+1} = b_i$. 
    
    Denote $r = t+m+1$ and let $W = \{ x_i \mid i \in [r] \} \cup \{ y_i \mid i \in [r]\}$ denote the multiset of endpoints we wish to connect through $R$. 
    Given some $v\in R$, we wish to upper bound $|N_G(v)\cap W|$. Note that each vertex in $V$ that has degree less than $2$ in $\mathcal{F}$ appears at most twice in $W$, so these vertices contribute at most $2d^{1-c/900}$ to $|N_G(v)\cap W|$. Since $a_i, b_i,x_H, y_H$ each appear once in $W$ and the multiset $\{a_i \mid i\in [m]\}\cup \{b_i \mid i\in [m]\}$ is $\alpha$-uniform, for each $v \in R$, 
    \[ |N_G(v) \cap W| \le 2d^{1 - c/900} + \left( \frac{2m \bar{d}(G)}{n} + \alpha \bar{d}(G) \right) + 2 \le 20d^{1-c/900} \le D, \]
    where we used that $D=d^{1-c/1000}$, $m \le d^{-c/900} n$, $\bar{d}(G) \le 2d, \alpha \le d^{-c}$ and $d = \omega(1)$.

    Since $R$ is $(D, \ell_0)$-connecting in $G$, there exists a collection $(P^*_i)_{i \in [r]}$ of pairwise internally disjoint paths of length at most $\ell_0$ such that for each $i \in [r]$, $P^*_i$ is an $x_i - y_i$ path with internal vertices in $R$. Let $R' = \bigcup_{i \in [r]} \Vint(P^*_i)$. By the property of the absorber, there is an $x_H - y_H$ path $Q$ with vertex set $V(H) \setminus R'$. Recalling the discussion above, we see that $P_1 = Q \cup \mathcal{F} \cup \bigcup_{i \in [t+2]} P^*_i$ is an $a_1 - b_1$ path, whose internal vertices contain the set $V$. For $i \in [2,m]$, let $P_i = P^*_{t+i+1}$ and note that $P_i$ is an $a_i - b_i$ path. Since $V(H) = V(Q) \cup \bigcup_{i \in [r]} \Vint(P^*_i)$, it follows that $\bigcup_{i \in [m]} \Vint(P_i) = V(G) \setminus U$. Finally, it is easy to see that the paths $(P_i)_{i \in [m]}$ are pairwise internally vertex disjoint, finishing the proof.
\end{proof}

Note that Theorems~\ref{thm:main-bipartite}~and~\ref{thm:main-far-from} follow directly by applying Theorems~\ref{thm:robust-bipartite}~and~\ref{thm:robust-far-from-bipartite}, respectively, with $m=1$ and $
a_1b_1$ an arbitrary edge.

\section{Applications} \label{sec:applications}
\subsection{Hamiltonicity of moderately dense regular sublinear expanders}
In this subsection, we prove Theorem~\ref{thm:dense-regular}. We recall the statement for reader's convenience. 

\thmregularrestate*

We will need the following simple lemma. We remark that a slightly stronger bound can be obtained using known results towards the linear arboricity conjecture, but this easier bound is sufficient for our purposes.

\begin{lemma} \plabel{lem:large linear forest}
    Every graph with $m$ edges and maximum degree at most $\Delta$ contains a linear forest with at least $m/\Delta$ edges.
\end{lemma}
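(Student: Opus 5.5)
We want a linear forest with at least $m/\Delta$ edges in a graph $G$ with $m$ edges and maximum degree at most $\Delta$. The plan is to pass through matchings and then glue matchings into linear forests by taking unions of two matchings, or more simply, to use a matching of size $m/(\Delta+1)$ together with an averaging argument. Since the bound requested is $m/\Delta$ rather than $m/(\Delta+1)$, a single matching from Vizing's theorem is not quite enough. I therefore aim for unions of pairs of matchings, or for a greedy argument working directly with linear forests.

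\textbf{Approach via Vizing.} By Vizing's theorem, $E(G)$ decomposes into $k \le \Delta+1$ matchings $M_1,\dots,M_k$. The union of two matchings is a disjoint union of paths and even cycles. Removing one edge from each cycle of $M_i\cup M_j$ leaves a linear forest with at least $\tfrac{3}{4}|M_i\cup M_j|$ edges, because each cycle has length at least $4$.

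Now pair the matchings into $\lceil k/2\rceil$ groups; a leftover single matching is itself a linear forest. Averaging shows that some group yields a linear forest with at least
\[ \tfrac34 \cdot \frac{m}{\lceil k/2\rceil} \]
edges. For $k=\Delta+1$ this is at least $\tfrac34\cdot\frac{2m}{\Delta+2}$, which is at least $m/\Delta$ whenever $\Delta\ge 4$ (check: $3\Delta \ge 2\Delta+4$).

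The remaining cases are $\Delta\le 3$. For $\Delta\le 2$ the graph is a union of paths and cycles, and deleting one edge per cycle leaves at least $\tfrac23 m \ge m/\Delta$ edges when $\Delta=2$; the case $\Delta=1$ is trivial since $G$ is already a matching. For $\Delta=3$, the pairing gives $\tfrac34\cdot\tfrac{2m}{\lceil 4/2\rceil}=\tfrac34 m\ge m/3$ if $k=4$, and the bound is even easier if $k\le 3$.

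\textbf{Main obstacle.} The only delicate point is making the constants work in the small-$\Delta$ range, which the case check above handles. Since the lemma is later applied with $\Delta$ large, the general bound $\tfrac{3m}{2(\Delta+2)}\ge m/\Delta$ is the essential content.
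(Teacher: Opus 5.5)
Your argument is correct, but it takes a different route from the paper's. There is one arithmetic slip. For $\Delta=3$ the bound is $\frac34\cdot\frac{m}{\lceil 4/2\rceil}=\frac{3m}{8}$, not $\frac34 m$. Since $\frac{3m}{8}\ge\frac{m}{3}$, the conclusion is unaffected. Also, the case $k<\Delta+1$ is covered because $\lceil k/2\rceil$ is monotone in $k$, so you only need $k=\Delta+1$.

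The paper avoids edge-colouring entirely and argues by induction on $m$:
\begin{itemize}
\item Take a maximal path $P=v_0v_1\dots v_k$. By maximality every edge at $v_k$ goes back into the path. Hence every edge touching $V(P)$ is incident to one of $v_0,\dots,v_{k-1}$, so there are at most $k\Delta$ of them.
\item Delete these edges. Induction gives a linear forest with at least $m/\Delta-k$ edges, which (after discarding isolated vertices) avoids $V(P)$.
\item Adding $P$ back gives at least $m/\Delta$ edges.
\end{itemize}

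Comparing the two:
\begin{itemize}
\item The paper's argument is self-contained, uniform in $\Delta$, and needs no case analysis.
\item Yours relies on Vizing's theorem and on the structure of the union of two colour classes. It needs separate checks for $\Delta\le 3$. For $\Delta=2$ pairing colour classes only yields $3m/8$, so the direct path/cycle argument is genuinely needed there.
\item In exchange, you get the stronger bound $\frac{3m}{2(\Delta+2)}\approx\frac32\cdot\frac{m}{\Delta}$ for large $\Delta$. This is in the spirit of the paper's remark that linear-arboricity results give slightly more.
\item That extra strength is not used: Lemma~\ref{lem:clean-up-forest} only needs $m/\Delta$.
\end{itemize}
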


\begin{proof}
    We apply induction on $m$. In the case $m=0$, the statement vacuously holds. Now let $G$ be a graph with $m\geq 1$ edges and maximum degree at most $\Delta$. Consider a maximal path $P=v_0v_1 \dots v_k$ in $G$. By maximality, each edge in $G$ incident to some $v_i$ has to be incident to one of $v_0,\dots,v_{k-1}$, hence, by the maximum degree assumption, there are at most $k\Delta$ such edges. Let $G'$ be the graph obtained from $G$ by removing these edges. Then $G'$ has at least $m-k\Delta$ edges, so by the induction hypothesis, it contains a linear forest with at least $m/\Delta-k$ edges. Removing isolated vertices from this forest and adding $P$ to it, we obtain a linear forest in $G$ with at least $m/\Delta$ edges.
\end{proof}

Using Lemma~\ref{lem:large linear forest}, it is easy to deduce the following lemma which will be convenient for the proof of Theorem~\ref{thm:dense-regular}.

\begin{lemma} \plabel{lem:clean-up-forest}
    Let $G$ be a $d$-regular graph and suppose we are given a vertex partition $V(G) = A \cup B,$ where $|A| \le |B|$, and a set $X \subseteq V(G)$ such that $\Delta(G[A]), \Delta(G[B]) \le d/2$ and $|X| + |B| - |A| \le d/10$. Then there exists a linear forest $\calF$ in $G$ satisfying the following.
    \begin{enumerate}[label=LF\arabic*)]
        \item Every path in $\calF$ has one endpoint in $A$ and the other in $B$.
        \item $|V(\calF)| \le 4(|X| + |B|-|A|)$.
        \item \plabel{lf-prop-equal-parts} Let $U$ denote the set of internal vertices in the paths in $\calF$. Then, $X \subseteq U$ and $|A \setminus U| = |B \setminus U|$.
    \end{enumerate}    
\end{lemma}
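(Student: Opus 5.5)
Write $k=|B|-|A|\ge 0$ and $s=|X|$. Condition LF3 says the internal vertex set $U$ must contain exactly $k$ more vertices of $B$ than of $A$, and must contain $X$. The plan is to build $\calF$ greedily from two kinds of paths, each with one endpoint in $A$ and one in $B$. \emph{Surplus paths} have net internal excess $+j$ toward $B$. \emph{Neutral gadgets} swallow a vertex of $X$ with net excess $0$. Throughout I use the two degree facts that follow from $d$-regularity and $\Delta(G[A]),\Delta(G[B])\le d/2$: every vertex of $A$ has at least $d/2$ neighbours in $B$, and every vertex of $B$ has at least $d/2$ neighbours in $A$.

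\textbf{Step 1: a large linear forest inside $B$.} Count edges: $e(A,B)=d|A|-2e(G[A])$, so
\[ 2e(G[B])=d|B|-e(A,B)=dk+2e(G[A])\ge dk. \]
Since $\Delta(G[B])\le d/2$, Lemma~\ref{lem:large linear forest} gives a linear forest in $G[B]$ with at least $k$ edges. Deleting edges, I keep a linear forest $L\subseteq G[B]$ with exactly $k$ edges and no isolated vertices.

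\textbf{Step 2: surplus paths.} Take a component $b_1\dots b_{j+1}$ of $L$. I choose a fresh vertex $a\in N(b_1)\cap A$, not in $X$ and not used so far, and form the path $a\,b_1\dots b_{j+1}$. Its endpoints are $a\in A$ and $b_{j+1}\in B$. Its internal vertices $b_1,\dots,b_j$ give excess $+j$, so summing over components gives excess exactly $k$.

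If $b_{j+1}\in X$, I extend the path by $b_{j+1}\,a''\,b''$ with fresh $a''\in A\setminus X$ and $b''\in B\setminus X$. This makes $b_{j+1}$ internal, while $b_{j+1}$ and $a''$ cancel, so the excess is unchanged. Each such extension costs two vertices and is charged to a vertex of $X$.

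\textbf{Step 3: neutral gadgets for the rest of $X$.} Every $x\in X$ not yet internal is handled with four fresh vertices outside $X$:
\begin{itemize}
\item if $x\in A$, I use a path $b\,x\,b'\,a$ with $b,b'\in N(x)\cap B$ and $a\in N(b')\cap A$;
\item if $x\in B$, I use the symmetric path $a\,x\,a'\,b$.
\end{itemize}
In both cases the internal pair $\{x,b'\}$ or $\{x,a'\}$ is balanced, and the endpoints lie one in $A$ and one in $B$. All paths are chosen vertex-disjoint, so their union $\calF$ is a linear forest and LF1 and LF3 hold.

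\textbf{Step 4: vertex count and feasibility of greedy choices.} The vertex count is $|V(\calF)|\le 2k+k+2s+4s$, and I expect to tighten the bookkeeping to $4(s+k)$. For instance, I can count component vertices of $L$ as at most $2k$ and charge each added $a$ to a component, and components number at most $k$. This constant is the one fiddly point; I expect the extension in Step 2 can be merged with the gadget accounting if needed.

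Feasibility is routine. Every greedy choice has at least $d/2$ candidates. The forbidden vertices are those already used plus those of $X$, at most $4(s+k)+s\le 0.5d$, using $s+k\le d/10$. I expect this to leave at least one candidate in each case.

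The main obstacle is Step 1: producing the $B$-excess of exactly $k$ requires internal $B$–$B$ edges, and the edge count via regularity together with Lemma~\ref{lem:large linear forest} is what supplies them. Everything after that is greedy.
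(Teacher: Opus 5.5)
Your proposal is correct and is essentially the paper's proof. The same edge count and Lemma~\ref{lem:large linear forest} give a $k$-edge linear forest in $G[B]$, each component gets one fresh $A$-vertex, and each remaining vertex of $X$ is absorbed either by a two-edge extension at an endpoint or by a four-vertex path with $x$ in the second position; your choice of fresh vertices outside $X$ only means that just endpoints of $L$ ever need the extension. The bookkeeping you flag is immediate, since each $x\in X$ triggers the extension or the gadget but never both, so $|V(\calF)|\le 3k+4s\le 4(s+k)\le 2d/5$. The same count bounds the excluded set $V(L)\cup X\cup\{\text{fresh vertices}\}$ at every greedy step strictly below the $d/2$ available candidates, so every choice succeeds.
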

\begin{proof}   
    Note that $d|B| = 2e_G(B) + e_G(A,B) \le 2e_G(B) + d|A|,$ which implies $e_G(B) \ge \frac{d}{2}(|B| - |A|)$.
    Applying Lemma~\ref{lem:large linear forest} to the graph $G[B]$ with $\Delta = d/2$, we obtain a linear forest in $G[B]$ with at least $e_G(B) / \Delta \ge |B| - |A|$ edges. So, let $\calF'$ be a linear forest consisting of non-empty paths in $G[B]$ with exactly $|B| - |A|$ edges in total. We extend each path in $\calF'$ using a distinct vertex in $A$, which can be done greedily since every vertex in $B$ has at least $d/2 > |B| - |A|$ neighbours in $A$. Let $\calF$ denote this augmented linear forest. 
    
    Next, iteratively, for every vertex $v \in X$ in an arbitrary order, we do the following. If $v \in V(\calF)$ and $v$ is an internal vertex of a path in $\calF$, do nothing. If $v \in V(\calF)$ and $v$ is an endpoint of a path in $\calF$, then extend that path using any two edges $uv$ and $uv' \in E(G[A, B])$ for some $u,v'$ such that $u, v' \not\in V(\calF)$. Finally, if $v \not\in V(\calF)$, let $v_1, v_2, v_3, v_4$ be a path of length three in $G[A, B]$ such that $v_2 = v$ and $v_i \not \in V(\calF)$, for all $i \in [4]$. Add the path $v_1, \dots, v_4$ to $\calF$. Let us verify that this is always possible. Indeed, at any step, $V(\calF)$ has size at most $3 (|B| - |A|) + 4|X| \le \frac{2d}{5}$ and every vertex has degree at least $d/2$ in $G[A, B]$. Therefore, at each step we can greedily extend the path if $v$ is an endpoint in $\calF$ or construct the desired path $v_1, v_2, v_3, v_4$ if $v \not \in V(\calF)$.

    Let $\calF$ denote this final linear forest and let $U$ be the set of internal vertices of the paths in $\calF$. From the construction, it is easy to see that every path has one endpoint in $A$ and one in $B$ and that $X \subseteq U$. We have already noted that $|V(\calF)| \le 4(|X| + |B| - |A|)$. Thus, it remains to prove that $|A \setminus U| = |B \setminus U|$.
    
    Note that in $\calF$, there are exactly $|B| - |A|$ edges in $G[B],$ while the other edges are in $G[A,B]$. This implies that $\sum_{b \in B} d_{\calF}(b) - \sum_{a \in A} d_{\calF}(a) = 2 (|B| - |A|)$. Hence, denoting by $k$ the number of paths in $\calF$ and using that every path in $\calF$ has one endpoint in $A$ and the other in $B$, we have
    \[ |U \cap B| - |U \cap A| = \frac{1}{2}\left( \sum_{b \in B} d_{\calF}(b) - k\right) - \frac{1}{2} \left( \sum_{a \in A} d_{\calF}(a) - k\right) = |B| - |A|, \]
    implying $|A\setminus U| = |B \setminus U|$, as claimed.
\end{proof}

\begin{proof}[Proof of Theorem~\ref{thm:dense-regular}]
    Clearly, we may assume that $n$ is sufficiently large, since otherwise, by taking $\eta$ small enough, the condition $d \ge n^{1- \eta}$ cannot be satisfied.
    
    We show that for sufficiently large $n$, the statement holds with $\eta = 10^{-15}$. Let $G$ be an $n$-vertex $d$-regular $\explet$-expander, where $d \ge n^{1 - \eta}$ and $\explet = n^{-\eta}.$ Denote $\delta = 10^{-7}$ and $\eps = n^{-\delta}$. We consider two cases.
    
    First suppose that $G$ is $\eps$-far from bipartite. Let $a_1b_1$ be an arbitrary edge in $G$. The set $\{a_1, b_1\}$ is $(2/d)$-uniform with respect to $G$. Let $c' = 1/5$ and note that by assumption, we have $(\explet \eps)^{-10^6 / c'} = (n^{-\eta} \cdot n^{-\delta})^{-10^6 / c'} \le n^{2/3} < d$. Thus, applying Theorem~\ref{thm:robust-far-from-bipartite} with $d' = 0, c =  c'= 1/5, \alpha = 2/d< d^{-c'}, m = 1 \le d^{-c'/900} n$ and the set of pairs $\{(a_1, b_1)\}$, we obtain a Hamilton path from $a_1$ to $b_1$ in $G$ which together with the edge $a_1b_1$ forms a Hamilton cycle, as needed.

    Now, suppose that $G$ is not $\eps$-far from bipartite and let $(A, B)$ be a maximum cut of $G$, i.e. a partition of $V(G)$ maximizing $e_G(A, B)$ and note that, by assumption, $e_G(A, B) \ge (1 - \eps) dn / 2$. We wish to apply Theorem~\ref{thm:robust-bipartite} to the bipartite subgraph $G[A, B]$. However, there are two things that we need to take care of beforehand. First, it is possible that $|A| \neq |B|$ and secondly, $G[A, B]$ need not be nearly regular. However, since $d \ge n^{1 - \eta},$ both of these properties nearly hold and we can easily ensure them using Lemma~\ref{lem:clean-up-forest} as follows.
    
    Without loss of generality, assume that $|B| \ge |A|.$ Since $G$ is $d$-regular, we have $d |B| = 2e_G(B) + e_G(B, A) \le 2e_G(B) + d|A|,$ which implies $e_G(B) \ge \frac{d}{2}(|B| - |A|)$. Since $G$ is $\eps$-close to bipartite, $e_G(B) \le \eps dn / 2,$ implying $|B| - |A| \le \eps n = n^{1-\delta}$. Since $(A, B)$ is a maxcut, we have $\Delta(G[A]), \Delta(G[B]) \le d/2$.
    
    Let $c = \delta/5$ and let $X$ denote the set of vertices in $G$ with degree at most $d - \frac{1}{2}d^{1-c}$ in $G[A, B]$, that is, with at least $\frac{1}{2} d^{1-c}$ neighbours inside their own part of the cut $(A,B)$. Since $G$ is $\eps$-close to bipartite, we have $\frac{1}{2} \cdot \frac{1}{2} d^{1-c} \cdot |X| \le \eps dn / 2,$ implying $|X| \le 2 \eps d^c n \le n^{1-\delta/2}$. Note that $|X| + |B| - |A| \le n^{1-\delta/2} + n^{1-\delta} \le 2n^{1-\delta/2} \le d / 10$. Let $\calF$ be the linear forest obtained by applying Lemma~\ref{lem:clean-up-forest}, with the partition $V(G) = A \cup B$ and the set $X$. (If Lemma \ref{lem:clean-up-forest} gives an empty linear forest, then let $\mathcal{F}$ consist of a single edge between $A$ and $B$.) We have $|V(\calF)| \le 4(|X| + |B| - |A|) \le 8n^{1-\delta/2}$ and denoting by $U$ the set of internal vertices of $\calF$, we have $X \subseteq U$ and $|A \setminus U| = |B \setminus U|.$ Let $k$ denote the number of paths in $\calF$, and for $i \in [k],$ let the endpoints of the $i$-th path be $(x_i, y_i)$ with $x_i \in A, y_i \in B$.
    
     Let $G' = G[A,B] \setminus U$. Denote $A' = A \setminus U$ and $B' = B \setminus U$ and note that $G'$ is bipartite with parts $A'$ and $B'$, where $|A'| = |B'|$, since $|A \setminus U| = |B \setminus U|$. Denote $n' = |V(G')| \ge n - |V(\calF)| \ge  n/2$. We claim that $G'$ is a $(d \pm d')$-nearly regular $\explet'$-expander, where $d' = d^{1-c}$ and $\explet' = \explet / 2$. For each $v \in V(G'),$ since $v\not \in X$, we have $d_{G'}(v) \ge d - \frac{1}{2} d^{1-c} - |U| \ge d - d^{1-c}$ and clearly $d_{G'}(v) \le d$, so $G'$ is $(d \pm d')$-nearly regular. Now, consider an arbitrary set $S \subseteq V(G')$ of size $|S| \le 2n' / 3 \le 2n / 3$. Using that $G$ is a $\explet$-expander and that each vertex in $V(G')$ has at most $\frac{1}{2}d^{1-c}$ neighbours in its own part in $G$, we have
     \begin{align*}
        e_{G'}(S, V(G') \setminus S) &\ge e_G(S, V(G) \setminus S) - \frac{1}{2} d^{1-c} |S| - e_G(S, U) \ge \explet d |S| - d^{1-c} |S| - |U| |S|\\
        &\ge  \explet d|S| - d^{1-c} |S| - 8n^{1 - \delta/2} |S| \ge \explet' d |S|.
    \end{align*} 
    Since $\bar{d}(G') \le d,$ it follows that $G'$ is a $\explet'$-expander. 
    
    Let $m = k$ and for $i \in [k],$ let $a_i = x_i$ and $b_i = y_{i+1},$ where we denote $y_{k+1} = y_1$. Note that $m \le |V(\calF)| \le 8n^{1-\delta/2}$. We apply Theorem~\ref{thm:robust-bipartite} to the graph $G'$ with the set of pairs $\{ (a_i, b_i) \mid i \in [m] \}$ and with $\explet' = n^{-\eta} / 2$ in place of $\explet$. Let us verify the requirements for this application. Denote $\alpha = d^{-c}$ and observe that $\alpha (d-d') \ge m$, so $\{ a_i \mid i \in [m]\}$ and $\{ b_i \mid i \in [m]\}$ are $(A', B', \alpha)$-bi-uniform and $(B', A', \alpha)$-bi-uniform with respect to $G'$, respectively and these sets are both of size $m$ since the vertices $a_1, \dots a_m, b_1, \dots b_m$ are all distinct. Furthermore, we have $d' = d^{1-c}, m \le 8n^{1-\delta/2} \le d^{-c/900} n_2$ and $(\explet')^{-10^6 / c} \le n^{-2\eta \cdot (-10^6/c)} = n^{10^7 \eta / \delta} = n^{1 / 10} < d$.

    Thus, using Theorem~\ref{thm:robust-bipartite}, for each $i \in [k]$, we obtain an $x_i - y_{i+1}$ path $P_i$, where these paths are pairwise vertex disjoint and $\bigcup_{i \in [k]} \Vint(P_i) = V(G') \setminus \bigcup_{i \in [k]} \{x_i, y_i\}$. It is easy to see that $\calF \cup \bigcup_{i \in [k]} P_i$ is a Hamilton cycle in $G$, finishing the proof.
\end{proof}

\subsection{Hamiltonicity of edge-percolated Cayley graphs}

As a second application, we recover the recent result of Bedert, Dragani\'{c}, M\"{u}yesser and Pavez-Sign\'{e}~\cite{bedertdraganic} about Hamiltonicity of moderately dense Cayley graphs and moreover we prove that Hamiltonicity is preserved after (modest) edge-percolation. Given a group $\Gamma$ and a symmetric set $S \subseteq \Gamma$ (i.e. such that $s^{-1} \in S$ for all $s \in S$) not containing the identity, the \emph{Cayley graph} $\Cay_\Gamma(S)$ is the graph with vertex set $\Gamma$ and edge set $\{ (g, gs) \mid g \in \Gamma, s \in S \}$.

Recall that $G_p$ denotes the random graph obtained from $G$ by keeping each edge independently with probability $p$ and that we wish to prove the following.

\thmlovaszpercolatedrestate*

The following result was proved by Bedert, Buci\'{c}, Kravitz, Montgomery and M\"{u}yesser~\cite{bedert2025graham} in their work on Graham's rearrangement conjecture and is the starting point for the proof of Theorem~\ref{thm:lovasz-percolated} as well as in~\cite{bedertdraganic}. The result can be viewed as a weak version of the arithmetic regularity lemma of Green~\cite{green}; we refer to \cite{bedert2025graham} for a detailed discussion of its advantages and drawbacks.

\begin{theorem}[{\cite[Corollary~4.1,~rephrased]{bedert2025graham}}] \plabel{thm:weak-ARL}
    Let $\sigma \in (0, 1]$ and $\eps \in (0,1/2)$. Let $\Gamma$ be a finite group, and let $S \subseteq \Gamma$ be a symmetric subset of $\Gamma$ with density $\sigma = |S| / |\Gamma|$. Then, there is a subgroup $\Gamma'$ of $\Gamma$ such that
    \begin{enumerate}[label=(\arabic*)]
        \item $|S \cap \Gamma'| \ge (1 - \eps) |S|$;
        \item $\Cay_{\Gamma'}(S \cap \Gamma')$ is a $\explet$-expander, for $\explet = \frac{\eps \sigma^2}{3000}$.
    \end{enumerate}        
\end{theorem}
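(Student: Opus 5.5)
The plan is a density-increment iteration over subgroups. Put $H_0=\Gamma$ and $S_0=S$. At step $k$ we have a subgroup $H_k$ and a set $S_k=S\cap H_k$ of density $\sigma_k=|S_k|/|H_k|$ inside $H_k$. We either stop, because $\Cay_{H_k}(S_k)$ is a $\explet$-expander, or we pass to a proper subgroup $H_{k+1}<H_k$. The passage should lose only an $\eps_k$-fraction of $S_k$, and since $|H_{k+1}|\le |H_k|/2$ it should roughly double the density: $\sigma_{k+1}\ge 2(1-\eps_k)\sigma_k$. Densities never exceed $1$, so the process stops after $O(\log(1/\sigma))$ steps. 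The per-step loss is weighted by the current density, $\eps_k=\eps\sigma/(4\sigma_k)$. Because $\sigma_k$ grows geometrically, $\sum_k\eps_k\le\eps$, which gives $|S\cap\Gamma'|\ge(1-\eps)|S|$ at the end. We also need $\explet_k\asymp \eps_k\sigma_k^2\ge \eps\sigma\sigma_k/4\ge\eps\sigma^2/4$ up to constants, which is how the target $\explet=\eps\sigma^2/3000$ should arise.

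\textbf{One step.} Suppose $\Cay_{H}(T)$, with $T=S\cap H$ of density $\tau$, is not a $\explet_k$-expander. Then there is $X\subseteq H$ with $|X|\le 2|H|/3$ and
\[ \sum_{s\in T}|Xs\setminus X| = e(X,H\setminus X) < \explet_k|T||X|. \]
Fix a threshold $\delta$ and let $T'=\{s\in T: |Xs\triangle X|\le \delta|X|\}$. The set $T'$ is symmetric, since $|Xs^{-1}\triangle X|=|X\triangle Xs|$. By Markov's inequality, $|T\setminus T'|\le 2\explet_k|T|/\delta$. Choosing $\explet_k\approx \eps_k\delta/2$ therefore gives $|T'|\ge(1-\eps_k)|T|$. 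Take $H_{k+1}=\langle T'\rangle$; then $S\cap H_{k+1}\supseteq T'$, which is the required small loss. It remains to show that $H_{k+1}\neq H$.

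\textbf{Properness (the main obstacle).} We need a bounded word-length lemma: if $A$ is a symmetric set of density $\tau'$ in $\langle A\rangle$, with the identity adjoined, then every element of $\langle A\rangle$ is a product of at most $3/\tau'$ elements of $A$. To prove it, take a shortest word $1=g_0,g_1,\dots,g_L=g$. The translates $g_{3i}A$ must be pairwise disjoint, since an intersection would produce a shorter word. Hence $(L/3)|A|\le|\langle A\rangle|$.

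Now suppose $\langle T'\rangle=H$. Each step of a word moves $X$ by at most $\delta|X|$ in symmetric difference, so by the triangle inequality $|Xg\triangle X|\le (3/\tau')\delta|X|$ for every $g\in H$, where $\tau'\ge\tau/2$. Averaging over $g\in H$ gives
\[ \E_g|Xg\cap X|=|X|^2/|H|\le \tfrac23|X|, \]
so some $g$ has $|Xg\triangle X|\ge\tfrac23|X|$. Taking $\delta=\tau/20$ makes $(6/\tau)\delta<2/3$, which is a contradiction. Hence $H_{k+1}$ is a proper subgroup, so its index is at least $2$ and the density at least doubles up to the $(1-\eps_k)$ factor.

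With $\delta\asymp\sigma_k$ and $\explet_k\asymp\eps_k\sigma_k$, the expansion parameter at termination is of order $\eps\sigma\sigma_k\ge\eps\sigma^2$. One also has to convert expansion in terms of $|T|$ into Definition~\ref{def:rho-exp}. This is immediate, because the Cayley graph is $|T|$-regular, so $\bar d=|T|$. The delicate point is the bookkeeping of constants, so that the product of the $(1-\eps_k)$ factors stays above $1-\eps$ while $\explet$ stays at least $\eps\sigma^2/3000$. The word-length and averaging argument above is the conceptual core of the proof.
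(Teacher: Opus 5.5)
The paper does not prove this statement. It imports it as a black box from \cite{bedert2025graham} (Corollary~4.1 there), so there is no internal argument to compare with. Your density-increment proof is correct and self-contained, and it makes the paper independent of that citation. The key steps check out:
\begin{itemize}
\item The identity $e(X,H\setminus X)=\sum_{s\in T}|Xs\setminus X|=\tfrac12\sum_{s\in T}|Xs\triangle X|$ holds because $\Cay_H(T)$ is $|T|$-regular with edges $\{x,xs\}$.
\item The set $T'$ of $\delta$-approximate periods is symmetric. By Markov it misses fewer than $2\explet_k|T|/\delta$ elements of $T$.
\item The shortest-word argument bounds every word length by $3|H|/|T'|$ when $\langle T'\rangle=H$. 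Indeed, if $g_{3i}a=g_{3j}b$ with $i<j$, then replacing $a_{3i+1}\cdots a_{3j}$ by $ab^{-1}$ saves at least one letter.
\item Right-invariance of cardinality gives $|Xg\triangle X|\le L\delta|X|$.
\item Averaging $|Xg\cap X|$ over $g\in H$ yields some $g$ with $|Xg\triangle X|\ge \tfrac23|X|$. Hence $\langle T'\rangle$ is a proper subgroup.
\end{itemize}

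Your constant bookkeeping is internally inconsistent, though harmlessly so. In the overview you write $\explet_k\asymp\eps_k\sigma_k^2$, and at the end you say the final parameter is of order $\eps\sigma\sigma_k$. With your actual choices $\delta_k=\sigma_k/20$, $\explet_k=\eps_k\delta_k/2$ and $\eps_k=\eps\sigma/(4\sigma_k)$, you get $\explet_k=\eps_k\sigma_k/40=\eps\sigma/160$ at every step.

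The rest of the accounting closes as follows. Since $\eps_k\le\eps/4<1/8$, you have $\sigma_{k+1}\ge\tfrac74\sigma_k$. Hence $\sum_k\eps_k\le\tfrac{\eps}{4}\sum_{k\ge0}(4/7)^k=\tfrac{7\eps}{12}<\eps$, and $\prod_k(1-\eps_k)\ge 1-\sum_k\eps_k$ gives $|S\cap\Gamma'|\ge(1-\eps)|S|$. Termination is automatic anyway, since $|H_k|$ strictly decreases. In the properness step, $L<3|H|/|T'|\le \tfrac{24}{7\sigma_k}$ gives $|Xg\triangle X|<\tfrac{6}{35}|X|$, comfortably below $\tfrac23|X|$.

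So your argument proves the statement with $\explet=\eps\sigma/160$. This is stronger than the quoted $\eps\sigma^2/3000$ because $\sigma\le 1$. You should replace the $\asymp$'s by these explicit values. In the paper's only use of this result, the proof of Theorem~\ref{thm:lovasz-percolated}, the improvement from $\sigma^2$ to $\sigma$ would only change the absolute constant $C$.
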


We shall also use the following result about edge-connectivity of vertex-transitive graphs, proved by Mader~\cite{mader} and Watkins~\cite{watkins} independently.
\begin{lemma}[e.g.~{\cite[Lemma~3.3.3]{godsil-royle}}] \plabel{lem:edge-connectivity}
    Any connected vertex-transitive $d$-regular graph is $d$-edge-connected.
\end{lemma}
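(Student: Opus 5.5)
We argue by contradiction via the standard ``atom'' argument of Mader and Watkins, using only submodularity of the edge-boundary function. Write $\partial X = E_G(X, V(G)\setminus X)$. We use the inequality
\[ |\partial(X \cap Y)| + |\partial(X \cup Y)| \le |\partial X| + |\partial Y|, \]
valid for all $X, Y \subseteq V(G)$. Let $\kappa'$ be the minimum of $|\partial X|$ over nonempty proper subsets $X$, and suppose $\kappa' < d$. Since $|\partial X| = |\partial(V(G)\setminus X)|$, the value $\kappa'$ is attained by some $X$ with $|X| \le n/2$. Call such a set an \emph{atom} if, among all sets with $|X| \le n/2$ and $|\partial X| = \kappa'$, it has the minimum possible size $k$.

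The first step is to show that two distinct atoms $A, B$ are disjoint. Suppose instead that $A \cap B \ne \emptyset$. Then $A \cup B \ne V(G)$, since $|A|,|B| \le n/2$ and the sets overlap. Hence $A\cup B$ is a nonempty proper subset, so $|\partial(A \cup B)| \ge \kappa'$. Submodularity then gives $|\partial(A\cap B)| \le \kappa'$, and therefore $|\partial(A\cap B)| = \kappa'$. But $A \cap B$ is nonempty, has size at most $n/2$, and is strictly smaller than $A$ because $A \ne B$ and $|A| = |B|$. This contradicts the minimality of $k$.

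Next we use the symmetry of $G$. Automorphisms map atoms to atoms. By vertex-transitivity, every vertex lies in the image of a fixed atom $S$, so the atoms partition $V(G)$. Moreover, an automorphism sending $u \in S$ to $v \in S$ maps $S$ to an atom containing $v$, which by disjointness must be $S$ itself. So the stabiliser of $S$ acts transitively on $S$, and consequently every vertex of $S$ sends the same number $r$ of edges out of $S$. Since $G$ is connected and $S \ne V(G)$, we have $r \ge 1$, and hence $\kappa' = |\partial S| = rk \ge k$.

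Finally, we count $|\partial S|$ in two regimes. If $k > d$, then $\kappa' \ge k > d$, a contradiction. If $k \le d$, then $|\partial S| = dk - 2e(G[S]) \ge dk - k(k-1) = k(d-k+1)$. For $1 \le k \le d$ this is at least $d$, since $k(d-k+1) - d = (k-1)(d-k) \ge 0$. This again contradicts $\kappa' < d$, so $\kappa' \ge d$ and $G$ is $d$-edge-connected.

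The main obstacle is the disjointness step. Atoms must be defined with the size restriction $|X|\le n/2$ so that $A\cup B$ is a proper subset, while still guaranteeing that $\kappa'$ is attained; the complement symmetry $|\partial X| = |\partial(V(G)\setminus X)|$ handles both.
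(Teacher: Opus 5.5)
Your proof is correct and complete. Submodularity gives that distinct atoms are disjoint, the setwise stabiliser of an atom $S$ is transitive on $S$ so that $|\partial S| = rk \ge k$, and $k(d-k+1) \ge d$ for $k \le d$; together these rule out every case. The paper gives no proof of its own and only cites Godsil--Royle, whose argument is this same Mader--Watkins edge-atom argument, so your route matches the intended one.
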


Recalling our main theorems as well as the proof of Theorem~\ref{thm:dense-regular}, it should come as no surprise that we need to take special care to deal with graphs which are close to being bipartite. We shall use the following lemma of Christofides, Hladk\'{y} and M\'{a}th\'{e}~\cite{christofides} from their proof that Cayley graphs, and more generally, vertex-transitive graphs with linear degree are Hamiltonian.

Before we state this lemma, let us introduce the notion of iron connectivity used in~\cite{christofides}. A graph $G$ is said to be \emph{$\ell$-iron} if $G$ stays connected after simultaneous removal of an arbitrary edge-set $E' \subseteq E(G)$ with $\Delta(E') \le \ell$ and an arbitrary vertex-set $U \subseteq V(G)$ with $|U| \le \ell$. 
\begin{lemma}[{\cite[Lemma~16]{christofides}}] \plabel{lem:transitive-close-to-bip}
    Let $c \in (0, 1/17)$ be arbitrary. Suppose that $G$ is a $cn$-iron vertex-transitive graph on $n$ vertices and that $G$ can be made bipartite by removing at most $c^4 n^2$ edges. Then there exists a bipartition $V(G) = A \cup B$ such that $|A| = |B|$ and for each $u \in A$ and each $v \in B$, we have $|N(u) \cap A| \le 6c^2 n$ and $|N(v) \cap B| \le 6c^2 n$. Furthermore, we have $g(A) = A$ or $g(A) = B$ for each $g \in \mathrm{Aut}(G)$.
\end{lemma}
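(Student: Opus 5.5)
Since Lemma~\ref{lem:transitive-close-to-bip} is quoted from~\cite{christofides}, I only need to sketch how one would establish it. The plan is to start from a maximum cut and then use the iron connectivity together with vertex-transitivity to upgrade it to a balanced, automorphism-respecting bipartition.

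\textbf{Step 1: a maximum cut with few bad vertices.} Let $(A_0,B_0)$ be a maximum cut of $G$. By hypothesis $e(A_0)+e(B_0)\le c^4n^2$. Call a vertex \emph{bad} if it has more than $c^2 n$ neighbours on its own side. Counting edges inside the parts, there are at most $2c^2 n$ bad vertices. Maximality of the cut also gives that every vertex has at most half of its neighbours on its own side.

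\textbf{Step 2: moving the cut by an automorphism changes it very little.} Let $g\in\mathrm{Aut}(G)$. Then $(gA_0,gB_0)$ is also a maximum cut. Compare it with $(A_0,B_0)$ and let $X$ be the set of vertices on which the two cuts disagree, i.e.\ $X=(A_0\cap gB_0)\cup(B_0\cap gA_0)$. Suppose $X$ and its complement both have size at least about $cn$. Then the edges between $X$ and $V\setminus X$ are almost all within a part of one of the two cuts, so there are $O(c^4n^2)$ of them. Consider the graph obtained by removing these edges. Their number is small, but we need a bound on their maximum degree to apply the iron property, and here the bad vertices are the problem. So we first delete the at most $4c^2n\le cn$ bad vertices of the two cuts. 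Every remaining vertex $v$ has at most $2c^2n\le cn$ edges from $X$ to $V\setminus X$, because such an edge is an own-side edge at $v$ for one of the cuts. The iron property then says the remaining graph is still connected, which contradicts it splitting into $X$ and its complement. Hence $X$ or $V\setminus X$ is tiny, say of size $O(c^2 n)$. In other words, $g$ either almost preserves or almost swaps $A_0$ and $B_0$.

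\textbf{Step 3: building an exactly invariant bipartition.} Define a relation on vertices by $u\sim v$ if, for most automorphisms $g$ (say a $1-O(c)$ fraction under a uniformly random $g$), $g u$ and $g v$ lie on the same side of $(A_0,B_0)$. By construction this relation is $\mathrm{Aut}(G)$-invariant. Using Step~2 and a union bound over pairs, one shows it is an equivalence relation with exactly two classes $A$ and $B$. Averaging over the transitive group, each class differs from $A_0$ or from $B_0$ in at most $O(c^2 n)$ vertices. Every automorphism maps classes to classes, so it either fixes the partition or swaps it, which is the property $g(A)=A$ or $g(A)=B$. The degree bounds come from transitivity. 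The number $|N(u)\cap A|$ for $u\in A$ is the same for all $u\in A$ if the group preserves $A$, and it is comparable across both sides if some automorphism swaps them. Averaging it over $A$ gives at most $2e(A)/|A|$, which is $O(c^2 n)$ after accounting for the vertices that were moved; the constant $6$ comes out of this bookkeeping.

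\textbf{Main obstacle.} The hardest part is balance, $|A|=|B|$. If some automorphism swaps $A$ and $B$, balance is immediate. Otherwise, the automorphism group preserves both $A$ and $B$, so $A$ and $B$ are unions of orbits. But $G$ is vertex-transitive, so there is only one orbit, and a proper nonempty invariant set is impossible. Hence a swapping automorphism must exist, and balance follows. I expect the real difficulty to lie in making Step~3 rigorous, that is, defining the classes so that they are genuinely automorphism-invariant and not just approximately so, while the error terms still stay below $6c^2n$.
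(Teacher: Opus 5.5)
The paper quotes this lemma from Christofides, Hladk\'y and M\'ath\'e without proof, so this assesses your sketch on its own terms.

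\textbf{What is fine.} Steps~1 and~2 are sound. In Step~2 the right hypothesis is not that $X$ and $V\setminus X$ have size about $cn$. It is only that $X\setminus U$ and $(V\setminus X)\setminus U$ are both nonempty, where $U$ is the set of fewer than $4c^2n$ deleted bad vertices. Your balance argument is also correct. The same transitivity shows that every vertex has the same number $k$ of neighbours in its own class.

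\textbf{The gap.} It is in Step~3, which is exactly where exact invariance is produced. ``Step~2 plus a union bound over pairs'' does not show that $\sim$ is an equivalence relation with two classes. Step~2 as you state it only bounds $|X_g|$ for each $g$. Averaging over $g$ then controls $\Pr_g[u\in X_g]$ for a typical $u$, but not for every $u$. Take an atypical vertex, e.g.\ one that is bad for $(A_0,B_0)$ and is therefore deleted for every $g$. There the aligned side of $gu$ could a priori be close to a fair coin, and then $\sim$ is not transitive.

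\textbf{How to close it.} You need a per-vertex statement, and your Step~2 argument nearly provides it. It actually shows that, after aligning, $(A_0,B_0)$ and $g^{-1}(A_0,B_0)$ disagree only inside $\mathrm{Bad}_0\cup g^{-1}(\mathrm{Bad}_0)$, because one of $X\setminus U$ and $(V\setminus X)\setminus U$ must be empty.
\begin{itemize}
\item Since $gu$ is uniform on $V$ for uniform $g$, we get $\Pr_g[gu\in\mathrm{Bad}_0]<2c^2$ for every $u$. This handles all $u\notin\mathrm{Bad}_0$.
\item For bad $u$, let $\pi$ be the indicator of $A_0$ and $s(g)\in\mathbb{Z}_2$ the preserve/swap bit. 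Then $s$ is a homomorphism; otherwise $\pi$ and $1-\pi$ would agree outside $O(c^2n)$ vertices. So $\pi(g(hu))+s(g)=\bigl(\pi((gh)u)+s(gh)\bigr)+s(h)$ (mod~$2$).
\item Since $gh$ is uniform when $g$ is, the law of the aligned side is the same at every vertex up to a global flip. Hence every vertex has bias at least $1-2c^2$.
\end{itemize}

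A cleaner packaging: $\Gamma_0=\ker s$ is not transitive, since otherwise $\mathbb{E}_{g\in\Gamma_0}|gA_0\cap B_0|=|A_0||B_0|/n\approx n/4$, contradicting $|gA_0\cap B_0|<4c^2n$. So $\Gamma_0$ has index two and exactly two orbits of size $n/2$. These are permuted by $\mathrm{Aut}(G)$ because $\Gamma_0$ is normal, and can be taken as $A,B$.

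\textbf{The constant.} Either way, the set $D$ of moved vertices satisfies $D\subseteq\mathrm{Bad}_0$, so $|D|<2c^2n$. This is what makes the constant work:
$k=2(e(A)+e(B))/n\le 2(c^4n^2+|D|n)/n<2c^4n+4c^2n\le 6c^2n$.
With only the cruder bound $|D|\lesssim 4c^2n$ that the size estimate of Step~2 gives, your averaging yields about $8c^2n$. In that case you would have to evaluate $k$ at a single vertex of $A$ that is neither bad nor moved, which gives $k\le c^2n+|D|$.
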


In order to use Lemma~\ref{lem:transitive-close-to-bip}, we prove the following simple relation between iron-connectivity and expansion.
\begin{lemma} \plabel{lem:expander-to-iron}
    Any $d$-regular $\explet$-expander is $\ell$-iron for $\ell = \explet d / 4$.
\end{lemma}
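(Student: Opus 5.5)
The plan is to argue directly from Definition~\ref{def:rho-exp}. Let $G$ be a $d$-regular $\explet$-expander on $n$ vertices and set $\ell = \explet d/4$. Fix an edge set $E'$ with $\Delta(E') \le \ell$ and a vertex set $U$ with $|U| \le \ell$, and let $G^* = (G - E') \setminus U$. We must show that $G^*$ is connected. Suppose not. Then $V(G)\setminus U$ splits into two nonempty parts $S$ and $T$ with no edges of $G^*$ between them. Choose $S$ to be the smaller part, so $|S| \le (n-|U|)/2 \le 2n/3$, and the expansion property applies to $S$ in $G$.

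The key step is to compare both sides of $e_G(S, V(G)\setminus S)$. Every edge of $G$ leaving $S$ falls into one of two groups.
\begin{enumerate}
\item It goes into $U$. Since each vertex of $S$ has at most $|U|$ neighbours in $U$, there are at most $|S|\,|U| \le \ell |S|$ such edges.
\item It goes into $T$. Such an edge must lie in $E'$, and since every vertex of $S$ is incident to at most $\ell$ edges of $E'$, there are at most $\ell|S|$ such edges.
\end{enumerate}
Hence $e_G(S, V(G)\setminus S) \le 2\ell|S| = \explet d |S|/2$. On the other hand, expansion gives $e_G(S,V(G)\setminus S) \ge \explet d|S|$. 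Since $S\neq\emptyset$ and $\explet d>0$, these two bounds contradict each other, so $G^*$ is connected.

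I don't expect a real obstacle here. The only point that needs care is making sure $S$ satisfies the size hypothesis $1\le|S|\le 2n/3$, which is why we take the smaller side. A secondary check is the degenerate case where $V(G)\setminus U$ is empty or a single vertex; that case is trivially connected, or can be excluded because $\ell<n/2$ (this follows since $\explet d\le d$).
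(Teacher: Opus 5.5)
Your proposal is correct and follows essentially the same argument as the paper: take a small side $S$ of a disconnection of $(G-E')\setminus U$, bound $e_G(S,V(G)\setminus S)$ above by $|U||S|+\Delta(E')|S|\le 2\ell|S|=\explet d|S|/2$, and contradict the expansion lower bound $\explet d|S|$. The only cosmetic difference is that the paper takes $S$ to be a connected component of size at most $|V(G')|/2$, whereas you take the smaller side of an edgeless cut.
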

\begin{proof}
    Consider an arbitrary edge set $E' \subseteq E(G)$ with $\Delta(E') \le \ell$ and an arbitrary vertex set $U \subseteq V(G)$ with $|U| \le \ell$, where $\ell = \explet d / 4$. Let $G'$ be obtained from $G$ by removing the vertex set $U$ and the edge set $E'$. We need to show that $G'$ is connected. Assume the contrary and let $S$ be a connected component of $G'$ of size at most $|V(G')| / 2 \le n/2$. Let $F = E(G[S, V(G) \setminus S])$. Since $S$ is a connected component of $G'$, we have $F \subseteq E' \cup E(G[S, U])$, implying $|F| \le \Delta(E') |S| + |U| |S| \le 2 \ell |S|$. On the other hand, since $G$ is a $\explet$-expander, we have $|F| \ge \explet d |S| > 2 \ell |S|,$ a contradiction.
\end{proof}

Finally, we shall use the following simple result about expansion after edge percolation. The first part of this lemma follows, for example, from the work of Karger~\cite{karger}.

\begin{lemma} \plabel{lem:edge-percolated-expander}
    Let $G$ be a $(d \pm d')$-nearly regular $n$-vertex $\explet$-expander, where $d' \le d / 100$, let $p \in [0,1]$ satisfy $p \explet d \ge 200 \log n$ and assume that $n$ is sufficiently large. Then, with probability at least $1 - n^{-4}$, $G_p$ is a $(pd \pm 10 (pd' + \sqrt{ pd \log n}))$-nearly regular $(\explet / 4)$-expander. Moreover, if $G$ is $\eps$-far from bipartite and $p \eps d \ge 100$, then with probability at least $1 - n^{-4}$, $G_p$ is $(\eps / 4)$-far from bipartite.
\end{lemma}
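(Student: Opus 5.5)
We prove the three parts (near-regularity, expansion, far from bipartite) separately, each via Chernoff bounds combined with a union bound over suitably many events.

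\emph{Near-regularity.} For each vertex $v$, $d_{G_p}(v)\sim\Bin(d_G(v),p)$ with mean $p(d\pm d')$. By Chernoff, $|d_{G_p}(v)-p\,d_G(v)|\le 4\sqrt{pd\log n}+4\log n$ with probability at least $1-n^{-6}$. Since $p\explet d\ge 200\log n$ gives $\log n\le\sqrt{pd\log n}$, the error is at most $10(pd'+\sqrt{pd\log n})$. A union bound over the $n$ vertices completes this part. In particular $\bar d(G_p)\le pd(1+o(1))+10(pd'+\sqrt{pd\log n})\le 1.2pd$, since $pd\ge 200\log n/\explet\gg\log n$.

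\emph{Expansion.} This is the main step. We must show that every $S$ with $1\le|S|\le 2n/3$ satisfies $e_{G_p}(S,\bar S)\ge (\explet/4)\bar d(G_p)|S|$, and it suffices that $e_{G_p}(S,\bar S)\ge \tfrac12 p\,e_G(S,\bar S)$. Indeed, $e_G(S,\bar S)\ge \explet\bar d(G)|S|\ge 0.99\explet d|S|$ gives $\tfrac12 p e_G(S,\bar S)\ge 0.49\explet pd|S|\ge(\explet/4)\cdot1.2pd|S|$. For a fixed $S$ with $|S|=s$, the count $e_{G_p}(S,\bar S)$ is binomial with mean $\mu=p\,e_G(S,\bar S)\ge 0.99 p\explet d s\ge 190 s\log n$. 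Chernoff then gives failure probability at most $e^{-\mu/8}\le n^{-23s}$, but a naive union bound over $\binom{n}{s}\le n^s$ sets only handles $s\le n/2$ or so. There is no problem with this count: $e^{-\mu/8}\cdot n^s\le n^{-22s}$, so summing over all $s\ge1$ gives total failure $\le n^{-4}$. This works for every $s\le 2n/3$, because the bound $\binom ns\le n^s$ holds for all $s$. (Karger's cut-counting result would be needed only if $\mu$ were of order $\explet pd$ rather than $s\,\explet pd$; here we have the extra factor $s$, so the simple union bound suffices.)

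\emph{Far from bipartite.} For every $S\subseteq V(G)$, let $X_S=e_G(S)+e_G(\bar S)$ be the number of non-cut edges. The hypothesis gives $X_S\ge\eps e(G)\ge 0.49\eps dn$. In $G_p$ the non-cut count is binomial with mean $\ge 0.49p\eps dn$, so it falls below half its mean with probability at most $\exp(-0.06p\eps dn)\le\exp(-6n)$, using $p\eps d\ge100$. A union bound over the $2^n$ sets $S$ leaves failure probability at most $2^ne^{-6n}\le n^{-4}$. On the complementary event, every cut of $G_p$ misses at least $0.245p\eps dn$ edges. Since $e(G_p)\le 0.6pdn$ by the first part, this is at least $(\eps/4)e(G_p)$, so $G_p$ is $(\eps/4)$-far from bipartite.

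The only step requiring care is the expansion union bound. The key point there is that the mean of each cut count is at least $s\cdot 190\log n$, which beats the $n^s$ choices of $S$; the remaining parts are routine.
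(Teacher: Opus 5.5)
Your proof is correct and follows the same route as the paper. Near-regularity comes from a Chernoff bound per vertex, and expansion from a Chernoff bound per cut whose mean is $\Omega(|S|\log n)$, which beats the $\binom{n}{s}\le n^s$ union bound. Far-from-bipartiteness comes from a Chernoff bound per partition, union-bounded over the $2^n$ partitions.

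There is one small numerical slip. Since $\explet$ may be constant, $10(pd'+\sqrt{pd\log n})$ can be as large as about $0.81pd$, so $\bar d(G_p)\le 1.2pd$ is not justified as written. The bound $\bar d(G_p)\le 1.81pd$ that does follow is still enough for both your expansion step (which needs $\bar d(G_p)\le 1.96pd$) and your far-from-bipartite step (which needs $e(G_p)\le 0.98pdn$).
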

\begin{proof}
    Denote $d'' = 10 (pd' + \sqrt{ pd \log n})$. By a Chernoff bound, for any vertex $v \in V(G)$, the probability that $|d_{G_p}(v) - p d_G(v)| > d''$ is at most $2 e^{-100 \log n / 6} < n^{-10}.$ Hence, with probability, at least $1 - n^{-9},$ $G_p$ is $(pd \pm d'')$-nearly regular. Let us assume this holds, so in particular, $\bar{d}(G_p) = pd \pm 10 (pd' + \sqrt{pd \log n})$, which further implies $\bar{d}(G_p) \le 2pd$. Let $S \subseteq V(G)$ be an arbitrary set of size $|S| \le 2n / 3.$ Since $G$ is a $\explet$-expander, we have $e_G(S, V(G) \setminus S) \ge \explet \bar{d}(G) |S| \ge \frac{99}{100} \explet d |S|$. By a Chernoff bound, we have 
    \[ \Pr[e_{G_p}(S, V(G) \setminus S) < (\explet/2) \cdot p d |S|] \le \exp\left( - \explet p d |S| / 13 \right) \le \exp\left(-7 |S| \log n \right), \]
    where we used $p \explet d \ge 200 \log n$. Recalling that if $G$ is $(pd \pm d'')$-nearly regular, then $\bar{d}(G_p) \le 2pd$, it follows that $G_p$ is a $(pd \pm d'')$-nearly regular $(\explet / 4)$-expander with probability at least
    \[ 1 - n^{-9} - \sum_{s = 1}^{2n/3} \binom{n}{s} \exp\left(-7 s \log n \right) \ge 1 - n^{-9} - \sum_{s = 1}^{2n/3} n^s n^{-7s} \ge 1 - n^{-4}, \]
    proving the first part of the lemma.

    Now, assume that, additionally, $G$ is $\eps$-far from bipartite and we aim to show that $G_p$ is $(\eps/4)$-far from bipartite with probability at least $1 - n^{-4}.$ Indeed, consider an arbitrary partition $A \cup B = V(G)$. Since $G$ is $\eps$-far from bipartite, we have $e_G(A) + e_G(B) \ge \frac{99}{100} \eps dn / 2.$ By a Chernoff bound, we have $e_{G_p}(A) + e_{G_p}(B) \ge p \eps dn / 4$ with probability at least $1 - e^{-p \eps d n / 30} \ge 1 - 4^{-n}$. Union bounding over all possible partitions and noting that $e(G_p) \le p dn$ with probability at least $1 - n^{-9}$, it follows that $G_p$ is $(\eps/4)$-far from bipartite with probability at least $1 - 2^n \cdot 4^{-n} - n^{-9} \ge 1 - n^{-4}$, as needed.
\end{proof}

We are ready to present the proof of Theorem~\ref{thm:lovasz-percolated}.
\begin{proof}[Proof~of~Theorem~\ref{thm:lovasz-percolated}]
    We show that the statement holds with $C = 10^{20}$. Since the statement of Theorem \ref{thm:lovasz-percolated} is asymptotic in nature, we may assume that $n$ is sufficiently large. Let $\Gamma$ be an arbitrary finite group of order $n$ and let $S \subseteq \Gamma$ be a symmetric set of size $d = \sigma n$ not containing the identity such that $\Cay_\Gamma(S)$ is connected, or equivalently, $S$ is a generating set of $\Gamma$. By monotonicity, we may assume that $pd = \left( \frac{n}{d} \log n\right)^C = (\sigma^{-1} \log n)^C$.
    
    Applying Theorem~\ref{thm:weak-ARL} with $\eps = 1/3$, we obtain a subgroup $\Gamma' \subseteq \Gamma$ such that for $S' = S \cap \Gamma',$ we have $|S'| \ge \frac{2}{3} d$ and $\Cay_{\Gamma'}(S')$ is a $\explet$-expander for $\explet = \sigma^2 \log^{-3} n$, where we replaced the factor of $1/9000$ by the smaller $\log^{-3} n$ so that we always have $\explet < 1 / (\log n)^2$ needed to apply our main theorems. Denote $H = \Cay_{\Gamma'}(S')$, $d_H = |S'|$ and $n_H = |\Gamma'|$ and note that $H$ is a $d_H$-regular graph on $n_H$ vertices, where $n_H \ge d_H \ge \frac{2}{3} d$. Let $k = \frac{n}{n_H}$ and note that $k \le \frac{3}{2} \sigma^{-1}$. Let $V_1, \dots, V_k$ be an arbitrary ordering of the set of left cosets $\{ g \Gamma' \mid g \in \Gamma \}$ of $\Gamma'$ in $\Gamma$.
    
    Since $\Cay_\Gamma(S)$ is connected, there exists a symmetric set $T \subseteq S \setminus \Gamma'$ of size $|T| \le 2(k-1)$ such that $\Cay_\Gamma(S' \cup T)$ is connected. We denote $G = \Cay_\Gamma(S' \cup T)$ and we will show that with high probability $G_p$ is Hamiltonian which clearly suffices since $G \subseteq \Cay_\Gamma(S)$.  Note that, for each $i \in [k],$ the induced subgraph $G[V_i]$ is isomorphic to $H$ and that $H$ is connected since it is a $\explet$-expander for $\explet > 0$.

    Let $\delta = 10^{-7}$ and $\eps = (pd_H)^{-\delta}$. We consider two cases, starting with the simpler one.

    \textbf{Case 1: $H$ is $\eps$-far from bipartite.} Let $F$ be an auxiliary graph with vertex set $V(F) = \{V_1, \dots, V_k\},$ where for $1 \le i < j \le k$, we have $V_iV_j \in E(F)$ if and only if in $G[V_i,V_j]$ there is a matching of size at least $s \coloneqq \sigma^3 d / 100$.

    \begin{claim} \plabel{claim:F-connected-far-from-bip}
        $F$ is connected.
    \end{claim}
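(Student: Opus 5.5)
We need to show the auxiliary graph $F$ on the cosets $V_1,\dots,V_k$ is connected, where $V_iV_j\in E(F)$ iff $G[V_i,V_j]$ contains a matching of size at least $s=\sigma^3 d/100$. The plan is to argue by contradiction using the edge-connectivity of the Cayley graph together with the coset structure.

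Suppose $F$ is disconnected, and let $\mathcal{C}$ be a connected component of $F$ that is not everything. Set $X=\bigcup_{V_i\in\mathcal{C}}V_i$, so that $X$ and $V(G)\setminus X$ are both nonempty unions of cosets. Every edge of $G$ leaving $X$ is an edge of some $G[V_i,V_j]$ with $V_i\in\mathcal{C}$ and $V_j\notin\mathcal{C}$, so $V_iV_j\notin E(F)$.

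First I bound $e_G(V_i,V_j)$ for non-adjacent pairs. Since $G$ is a Cayley graph with generating set $S'\cup T$ and $|T|\le 2(k-1)$, every vertex has at most $|T|\le 2k\le 3\sigma^{-1}$ neighbours outside its own coset. Hence $\Delta(G[V_i,V_j])\le 3\sigma^{-1}$. By K\"onig's theorem, a bipartite graph with no matching of size $s$ has a vertex cover of size less than $s$, so $e_G(V_i,V_j)\le 3\sigma^{-1}s$ whenever $V_iV_j\notin E(F)$. Summing over at most $k^2\le 3\sigma^{-2}$ pairs gives
\[ e_G(X,V(G)\setminus X)\le 3\sigma^{-2}\cdot 3\sigma^{-1}\cdot \sigma^3 d/100 < d. \]

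On the other hand, $G$ is a connected $|S'\cup T|$-regular vertex-transitive graph, with degree at least $|S'|\ge \frac23 d$. By Lemma~\ref{lem:edge-connectivity}, any nontrivial cut has at least $|S'\cup T|$ edges. To reach a contradiction I need the cut bound to be below $\tfrac23 d$. The estimate above gives $9d/100<\tfrac23 d$, which suffices.

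The main obstacle is keeping the constants honest. Bounding $e_G(V_i,V_j)$ through the matching number and the cross-degree $\le|T|$ is the key step. If the count were instead done per coset, using that a coset's edges to another coset form a union of at most $|T|$ perfect matchings (left multiplication by each $t\in T$), one could use even more crudely that any single $t$ yields a matching of size $n_H\ge \frac23 d> s$ between $V_i$ and $V_it$. That would show every cross edge lies between $F$-adjacent cosets, so connectivity of $G$ gives connectivity of $F$ directly. I would present this cleaner argument: each $t\in T$ induces a perfect matching between $g\Gamma'$ and $gt\Gamma'$ via $x\mapsto xt$, which has size $n_H\ge s$. Since $G$ is connected and $G[V_i]$ is connected, $F$ is connected.
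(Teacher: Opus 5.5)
Your first argument is correct and is essentially the paper's proof. The paper likewise takes a component of $F$ and notes that every edge of $G[V_i,V_j]$ has the form $\{x,xt\}$ with $t\in T$, so $\Delta(G[V_i,V_j])\le |T|$. It then bounds $e_G(V_i,V_j)$ for non-adjacent pairs (via a maximal matching, giving $2|T|s$ where your K\"onig bound gives $|T|s$), sums over pairs, and contradicts the $\frac23 d$-edge-connectivity from Lemma~\ref{lem:edge-connectivity}.

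However, the ``cleaner'' argument you say you would present instead has a false step. Right multiplication by $t$ sends the left coset $g\Gamma'$ to $g\Gamma't$. This equals $gt\Gamma'$ only if $t\Gamma't^{-1}=\Gamma'$, and the subgroup $\Gamma'$ from Theorem~\ref{thm:weak-ARL} need not be normal. In general $g\Gamma't$ is spread evenly over the $[\Gamma':\Gamma'\cap t\Gamma't^{-1}]$ left cosets contained in $g\Gamma't\Gamma'$. So the edges $\{x,xt\}$ between two given cosets form a matching of size only $|\Gamma'\cap t\Gamma't^{-1}|$, which can be far smaller than $n_H$. Already for $\Gamma'=\{e,(12)\}\le S_3$ and $t=(13)$ it equals $1$. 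Hence ``every cross edge lies between $F$-adjacent cosets'' does not follow from what you wrote.

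The shortcut can be repaired. The double coset $\Gamma't\Gamma'$ contains at most $k$ left cosets, so
\[
|\Gamma'\cap t\Gamma't^{-1}|\ge \frac{n_H}{k}=\frac{n_H^2}{n}\ge \frac49\sigma^2 n\ge \frac{\sigma^4 n}{100}=s .
\]
Then any two cosets joined by an edge of $G$ are $F$-adjacent, and connectivity of $G$ alone gives connectivity of $F$; this avoids edge-connectivity entirely, and connectivity of $G[V_i]$ is not needed. Without this double-coset count, though, the step is unjustified. Either present your first argument or add this computation.
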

    
    \begin{proof}
        Suppose not and let $W \subseteq V(F)$ be a connected component of $F$ of size at most $k/2$. Consider arbitrary $V_i \in W, V_j \in V(F) \setminus W$. Note that all edges in the induced subgraph $G[V_i, V_j]$ are of the form $\{x, xt\}, x \in V_i, t \in T,$ so $\Delta(G[V_i, V_j]) \le |T|.$ Since $V_iV_j \not\in E(F),$ in $G$ there is no matching of size $s$ between $V_i$ and $V_j,$ so $e_G(V_i, V_j) \le 2 |T|\cdot s$. It follows that $e_G(\bigcup_{V_i \in W} V_i, \bigcup_{V_j \in V(F) \setminus W} V_j) \le k^2 \cdot 2|T| s \le 4 k^3 s \le 4\left( \frac{3}{2} \sigma^{-1} \right)^3 s < d/2$. However, by Lemma~\ref{lem:edge-connectivity}, $G$ is $\frac{2}{3}d$-edge-connected since $|S'| \ge \frac{2}{3}d,$ a contradiction.
    \end{proof}

    We shall assume that, for every $V_i V_j \in E(F),$ there is a matching of size at least $sp / 2$ in $G_p[V_i, V_j]$ and moreover, that for all $i \in [k],$ the subgraph $G_p[V_i]$ is a $(pd_H \pm (pd_H)^{2/3})$-nearly regular $(\eps/4)$-far from bipartite $(\explet/4)$-expander. Let us justify that these hold with high probability. By a Chernoff bound, the former property holds with probability at least $1 - k^2 \cdot e^{-sp / 12} = 1 - o(1)$, where we used that $sp = \sigma^3 p d / 100 \ge (\log n)^C / 100.$ Recalling that $G[V_i] \cong H,$ the latter property follows from Lemma~\ref{lem:edge-percolated-expander} and a union bound over all $k$, where we used that $(pd_H)^{2/3} > 10 \sqrt{pd_H \log n_H}, \;p \gamma d_H > 200 \log n$ and $p \eps d_H = (pd_H)^{1-\delta} \ge 100$.

    Let $F'$ be an arbitrary spanning tree in $F$ and let $F_2$ be the multigraph obtained by doubling each edge in $F'$. For each $e = V_i V_j \in E(F_2)$, let $\phi(e)$ be an edge in $G_p[V_i, V_j]$ such that $M = \{ \phi(e) \mid e \in E(F_2)\}$ is a matching. Note that we can greedily construct $M$ since, by assumption, for every $V_iV_j \in E(F_2),$ there is a matching of size $sp / 2 > \sigma^3 d p / 200$ in $G_p[V_i, V_j]$ and $|E(F_2)| \le 2k \le 3 \sigma^{-1} < \sigma^3 d p / 200,$ where in the last inequality we used that $pd \ge (\sigma^{-1} \log n)^C$.
    
    Since $F_2$ is connected and every vertex has even degree, there exists an Euler tour $U_0, e_1, U_1, e_2, U_2, \dots, e_{\ell}, U_\ell=U_0$ in $F_2$, where $\ell = |E(F_2)|$ and for each $i \in [\ell],$ $U_i \in V(F_2)$ and $e_i \in E(F_2)$.

    The edges in $M$ will serve as connections between the different cosets and it remains to cover all of the vertices within each coset with appropriate paths. So, fix $i \in [k]$ and let $p_1,\dots, p_{r_i} \in [1, \ell]$ with $p_1 < \dots < p_{r_i}$ be the positions where $V_i$ appears in the Euler tour, i.e. such that $U_{p_j} = V_i$ for all $j \in [r_i]$. For $j \in [r_i],$ let $a^i_j$ be the unique vertex in $V_i \cap \phi(e_{p_j})$ and let $b^i_j$ be the unique vertex in $V_i \cap \phi(e_{p_j+1})$, where we denote $e_{\ell+1} = e_1$.

    Denote $c = 1/5$ and $\alpha = (pd_H)^{-c}$. Recall that $G_p[V_i]$ is $(pd_H \pm (pd_H)^{2/3})$-nearly regular and $(\eps/4)$-far from bipartite. Since $r_i \le |E(F_2)| \le 2k \le 3\sigma^{-1},$ the set $\{a^i_j \mid j \in [r_i]\}\cup \{b^i_j \mid j \in [r_i]\}$ is $\alpha$-uniform with respect to $G_p[V_i],$ where we used that $2r_i \le 6\sigma^{-1} \le \alpha pd_H / 4$ and $\bar{d}(G_p[V_i]) \ge pd_H /2$.
    
    We apply Theorem~\ref{thm:robust-far-from-bipartite} to the graph $G_p[V_i]$ with $m=r_i$ and the set of pairs $\{ (a^i_j, b^i_j) \mid j \in [m] \}.$ Let us verify the inequalities for this application. Here $d' = (pd_H)^{2/3} \le (pd_H)^{1-c}, \alpha = (pd_H)^{-c}$ as discussed, and we have $m \le 3\sigma^{-1} \le 3 n^{1/C} < (pd_H)^{-c/900} n_H$ and $((\explet/4) (\eps/4))^{-10^6/c} = (16 / \explet)^{5 \cdot 10^6} \cdot (pd_H)^{\delta \cdot 5 \cdot 10^6} \le (\sigma^{-2} (\log n)^4)^{5 \cdot 10^6} \cdot (pd_H)^{1/2} \le pd_H$. Hence, by Theorem~\ref{thm:robust-far-from-bipartite}, we obtain internally vertex disjoint paths $P^i_1, \dots, P^i_{r_i},$ where for $j \in [r_i],$ $P^i_j$ is an $a^i_j - b^i_j$ path and $\bigcup_{j \in [r_i]} \Vint(P^i_j) = V_i \setminus \bigcup_{j \in [r_i]} \{ a^i_j, b^i_j\} = V_i \setminus V(M)$.

    We obtain such paths for all $i$ and it is easy to see that $M \cup \bigcup_{i \in [k]} \bigcup_{j \in [r_i]} P^i_j$ is a Hamilton cycle in $G_p$, as needed.

    \textbf{Case 2: $H$ is $\eps$-close to bipartite.} Let $c' = \eps^{1/4}$. Observe that by Lemma~\ref{lem:expander-to-iron}, $H$ is $c' n_H$-iron, where we used that $\explet d_H / 4 \ge \sigma^2 (\log n)^{-3} d / 6 = \sigma^3 (\log n)^{-3} n / 6$, while $c' n_H \le \eps^{1/4} n \le (p d/2)^{-\delta/4} n \le (\sigma^{10} \log^{-10} n) n$. Since $H$ is $\eps$-close to bipartite, it can be made bipartite by removing at most $\eps e(H) \le \eps n_H^2 = (c')^4 n_H^2$ edges. Since $n$ is large enough, we have $c' < 1/17,$ so we may apply Lemma~\ref{lem:transitive-close-to-bip} to the graph $H$ with parameter $c'$ in place of $c$, to obtain a bipartition $A \cup B = V(H)$ such that $|A| = |B|$, for all $a \in A, b \in B,$ we have $|N_H(a) \cap A|, |N_H(b) \cap B| \le 6(c')^2 n_H \le c' n_H$ and for each $g \in \mathrm{Aut}(H),$ it holds that $g(A) = A$ or $g(A) = B$.
    
    Let $H' = H[A, B]$ and note that $H'$ is $(d_H \pm c' n_H)$-nearly regular. Next, we claim that $H'$ is a $(\explet/2)$-expander. Indeed, consider an arbitrary set $X \subseteq V(H')$ of size at most $2n_H / 3$. Since $H$ is a $\explet$-expander, we have $e_{H'}(X, V(H') \setminus X) \ge e_H(X, V(H) \setminus X) - |X| \cdot c' n_H \ge \explet d_H |X| - c' n_H |X| \ge (\explet/2) d_H |X|,$ where we used that $c' n_H \le c' n \le \frac{1}{100} \sigma^{3} (\log n)^{-4} n$ and $\explet d_H \ge \explet \cdot \frac{2}{3}d > \sigma^3 (\log n)^{-4} n$.
    
    Recall that for each $i \in [k],$ there is an element $x_i \in \Gamma$ such that $V_i = x_i \Gamma'$. For $i \in [k],$ let us denote $A_i = x_i A$ and $B_i = x_i B$ and note that $A_i \cup B_i$ is a bipartition of $V_i$ and $G[A_i, B_i] \cong H'$. Let $F$ be the graph with vertex set $V(F) = \{A_1, B_1, \dots, A_k, B_k\}$ where $XY \in E(F)$ if and only if there is a matching of size at least $s \coloneqq \sigma^3 d / 100$ between $X$ and $Y$ in $G$.

    \begin{claim}
        $F$ is connected.
    \end{claim}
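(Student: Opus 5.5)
We argue by contradiction, mirroring Claim~\ref{claim:F-connected-far-from-bip}. Suppose $F$ is disconnected and let $W \subseteq V(F)$ be a connected component, with $W^c = V(F)\setminus W$ nonempty. Put $X_W = \bigcup_{Y \in W} Y$, and split the edges of $G$ between $X_W$ and its complement into two classes: those inside a single coset $V_i$ (edges of $G[V_i]\cong H$ joining $A_i$ and $B_i$, or edges of $G[A_i]$ or $G[B_i]$, which never cross because $A_i$ and $B_i$ are single vertices of $F$), and those between different cosets.

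\textbf{Cross-coset edges.} These come only from $T$, so as in the earlier claim each pair $X,Y$ of $F$-vertices in different cosets that is not adjacent in $F$ spans a subgraph of maximum degree at most $|T|$ with no matching of size $s$. By K\H{o}nig's theorem it therefore has at most $2|T|s$ edges. There are at most $(2k)^2$ such pairs, so the cross-coset edges between $X_W$ and its complement number at most $8k^2|T|s \le 16k^3 s < d/3$, using $k \le \frac{3}{2}\sigma^{-1}$ and $s = \sigma^3 d/100$.

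\textbf{Within-coset edges.} For each $i$, $G[A_i,B_i]\cong H'$ has minimum degree at least $d_H - c'n_H$, which is much larger than $s$. Hence it contains a matching of size far above $s$, and so $A_iB_i \in E(F)$ for every $i$. Thus $A_i$ and $B_i$ lie in the same component. This means $X_W$ is a union of whole cosets, and no within-coset edge crosses the cut.

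\textbf{Conclusion.} The cut between $X_W$ and its complement is nontrivial (both sides are nonempty unions of cosets), yet it has fewer than $d/3 < \frac{2}{3}d$ edges. This contradicts Lemma~\ref{lem:edge-connectivity}, which makes $G=\Cay_\Gamma(S'\cup T)$ $|S'\cup T|$-edge-connected with $|S'|\ge \frac23 d$.

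The only new point compared with Claim~\ref{claim:F-connected-far-from-bip} is the within-coset step: the matching of size at least $s$ between $A_i$ and $B_i$. This follows from the minimum degree of $H'$ via the greedy bound on matching size, using $d_H - c'n_H \ge d_H/2 \ge d/3 > 2s$. The rest is the same edge-counting argument as before.
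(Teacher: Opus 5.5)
Your proof is correct and follows the paper's argument essentially verbatim. The minimum degree of $H'$ gives $A_iB_i\in E(F)$, so every component of $F$ is a union of whole cosets. The cross-coset edges all come from $T$, and they are too few to beat the $\frac23 d$ edge-connectivity supplied by Lemma~\ref{lem:edge-connectivity}.

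One arithmetic slip: with $k\le\frac32\sigma^{-1}$ and $s=\sigma^3 d/100$ you only get $16k^3s\le 0.54\,d$, which is not below $d/3$. It is, however, below $\frac23 d$, and that is all the contradiction needs; this is the bound the paper uses. Alternatively, König's theorem actually gives at most $|T|(s-1)$ edges per pair, which halves your estimate to about $0.27\,d<d/3$.
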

    
    \begin{proof}
        First, let us observe that for any $i \in [k],$ $A_iB_i \in E(F)$. This trivially holds since $G[A_i,B_i] \cong H'$ and $H'$ has minimum degree at least $d_H - c' n_H \ge d_H / 2 \ge 2s$. Now, suppose $F$ is not connected and consider a connected component $W \subseteq V(F)$ of $F$ of size at most $|V(F)| / 2$. By the above, we have $A_i \in W \iff B_i \in W$ for any $i \in [k]$. For any two sets $X, Y$ in $V(F),$ where $X \in \{A_i, B_i\} ,Y \in \{A_j, B_j\}$ for $i \neq j$, the edges between $X$ and $Y$ in $G$ are of the form $\{ x, tx \}$ with $t \in T$, implying that $\Delta(G[X,Y]) \le |T|$. Hence, for any $X \in W, Y \in V(F) \setminus W,$ we have $e_G(X, Y) \le 2 |T| s$. This implies that setting $Z = \bigcup_{X \in W} X \subseteq V(G)$, we have $e_G(Z, V(G) \setminus Z) \le |V(F)|^2 \cdot (2 |T| s) \le (2k)^2 \cdot 4k \cdot s \le 16 (\frac{3}{2} \sigma^{-1})^3 s < \frac{2d}{3}.$ However, by Lemma~\ref{lem:edge-connectivity}, $G$ is $\frac{2}{3}d$-edge-connected since $|S'| \ge \frac{2}{3}d,$ a contradiction.
    \end{proof}

    We proceed similarly as in the first case. The key difference is that we are working with the bipartite graph $H'$. Initially, $H'$ has parts of equal size so we would like to make the connections between different $V_i$'s that preserve this property. To this end, we prove the following.
    \begin{claim}
        $F$ is vertex-transitive.
    \end{claim}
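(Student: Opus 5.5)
We will exhibit, for any two vertices of $F$, an automorphism of $F$ mapping one to the other. The natural candidates are left multiplications by elements of $\Gamma$, combined with the fact (from Lemma~\ref{lem:transitive-close-to-bip}) that automorphisms of $H$ either fix $A$ or swap $A$ and $B$.

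\textbf{Step 1: left multiplications permute the vertices of $F$.} For $g \in \Gamma$, let $\lambda_g : \Gamma \to \Gamma$, $x \mapsto gx$. This is an automorphism of $G = \Cay_\Gamma(S' \cup T)$, since edges $\{x, xs\}$ are mapped to $\{gx, gxs\}$. It permutes the left cosets: $\lambda_g(x_i\Gamma') = gx_i\Gamma' = x_j\Gamma'$ for some $j$. Write $gx_i = x_j h$ with $h \in \Gamma'$. Then $\lambda_g(A_i) = gx_iA = x_j hA$. The map $y \mapsto hy$ on $\Gamma'$ is an automorphism of $H = \Cay_{\Gamma'}(S')$, so by Lemma~\ref{lem:transitive-close-to-bip} we have $hA \in \{A, B\}$. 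Consequently $hB = \Gamma' \setminus hA \in \{A,B\}$ as well, so $\lambda_g(A_i), \lambda_g(B_i) \in \{A_j, B_j\}$. Thus $\lambda_g$ induces a permutation $\hat\lambda_g$ of $V(F)$.

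\textbf{Step 2: $\hat\lambda_g$ preserves adjacency.} Since $\lambda_g$ is an automorphism of $G$, it maps a matching of size at least $s$ in $G$ between $X$ and $Y$ to a matching of the same size between $\lambda_g(X)$ and $\lambda_g(Y)$. The same applies to $\lambda_{g^{-1}}$, so $\hat\lambda_g$ is an automorphism of $F$.

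\textbf{Step 3: transitivity.} Using $\lambda_{x_jx_i^{-1}}$, we can map $A_i$ to either $x_jA = A_j$ or $x_jB = B_j$, because $A_i = x_iA$ and $x_jx_i^{-1}x_iA = A_j$. (Here we use $x_iA = A_i$ exactly, with $h = e$.) So every $A_i$ can be sent to $A_1$, and likewise every $B_i$ can be sent to $B_1$. It remains to map $A_1$ to $B_1$.

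This last step is the main obstacle: we need some $h \in \Gamma'$ with $hA = B$. Since $H$ is connected and $A, B$ are nonempty, some edge $\{y, ys\}$ with $s \in S'$ joins $y \in A$ to $ys \in B$. If $y$ and $ys$ were in the same part, the $A/B$ structure would be violated; we want to rule out that every $h$ fixes $A$. Suppose all $h \in \Gamma'$ satisfy $hA = A$. Then $A$ is invariant under left multiplication by $\Gamma'$, so $A$ is a union of left $\Gamma'$-orbits of $\Gamma'$. But $\Gamma'$ acts transitively on itself, so $A = \Gamma'$, contradicting $|A| = |B| > 0$. Hence some $h$ satisfies $hA = B$, and $\hat\lambda_{x_1hx_1^{-1}}$ maps $A_1 = x_1A$ to $x_1hA = x_1B = B_1$.

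Composing these maps shows that any two vertices of $F$ lie in one orbit of $\mathrm{Aut}(F)$, so $F$ is vertex-transitive.
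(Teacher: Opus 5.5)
Your proof is correct and follows the paper's approach: left multiplications are automorphisms of $G$ that, by the last property in Lemma~\ref{lem:transitive-close-to-bip}, permute $\{A_1,B_1,\dots,A_k,B_k\}$ and therefore induce automorphisms of $F$. The only difference is in the transitivity step. The paper picks $y$ with $f_y(X)\cap Y\neq\emptyset$, which forces $f_y(X)=Y$ because these sets partition $\Gamma$; this does your coset shift and your $A\leftrightarrow B$ swap in one move, so your orbit argument for the swap is valid but unnecessary, and the aside about the edge $\{y,ys\}$ is never used.
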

    
    \begin{proof}
        For an arbitrary element $y \in \Gamma$, define $f_y \colon V(G) \rightarrow V(G)$ as $f_y(g) = yg$ and note that $f_y$ is an automorphism of $G$. We claim that for any set $X \in \{ A_1, B_1, \dots, A_k, B_k \}$ and any $y \in V(G),$ $f_y(X) \in \{ A_1, B_1, \dots, A_k, B_k \}$. Without loss of generality, assume that $X = A_i$ for some $i \in [k]$. Note that $f_y(V_i) = yx_i \Gamma' = V_j$ for some $j \in [k]$. Let $z = y x_i x_j^{-1}$ and note that $f_z(V_j) = y x_i x_j^{-1} x_j \Gamma' = y x_i \Gamma' = V_j$. Thus, $f_z$ maps $V_j$ to $V_j$ so its restriction to $V_j$ is an automorphism of $G[V_j]$. Note that $f_z = f_y \circ f_{x_i x_j^{-1}}$ and that $f_{x_i x_j^{-1}}(A_j) = x_i x_j^{-1} x_j A = x_i A = A_i$. Recalling that $G[V_j] \cong H$, we have $f_z(A_j) \in \{A_j, B_j\}$ by the last property inherited from Lemma~\ref{lem:transitive-close-to-bip}. It follows that $f_y(A_i) = f_z(A_j) \in \{ A_j, B_j \}$, as claimed.

        Using the above, for each $y \in \Gamma$, we may naturally define a bijection $\Phi_y \colon V(F) \rightarrow V(F)$, where for $X \in V(F),$ we define $\Phi_y(X) = Y$ if $f_y(X) = Y$ when viewed as a mapping from $V(G)$ to $V(G)$. We claim that for any $y \in \Gamma$, $\Phi_y$ is an automorphism of $F$. Indeed, for any distinct $X, Y \in V(F)$, as $f_y$ is an automorphism of $G$, we have $G[X, Y] \cong G[f_y(X), f_y(Y)],$ so $XY \in E(F) \iff \Phi_y(X) \Phi_y(Y) \in E(F)$, as needed.

        Finally, consider arbitrary $X, Y \in V(F)$. Clearly, there exists an element $y \in \Gamma$ such that $f_y(X) \cap Y \neq \emptyset.$ By the above, $\Phi_y$ is an automorphism of $F$ such that $\Phi_y(X) = Y$, proving that $F$ is vertex-transitive.
    \end{proof}

    We shall assume that, for every $XY \in E(F)$, there is a matching of size at least $sp / 2$ in $G_p[X, Y]$ and moreover, that for all $i \in [k]$, the subgraph $G_p[A_i, B_i]$ is a $(pd_H \pm (pd_H)^{1 - c})$-nearly regular $\explet'$-expander for $c = \delta / 10 = 10^{-8}$ and $\explet' = \gamma / 8$. Let us argue that these properties hold with high probability. By a Chernoff bound, the former property holds with probability at least $1 - (2k)^2 \cdot e^{-sp / 12} = 1 - o(1)$. Recall that for each $i \in [k],$ the graph $G[A_i, B_i]$ is isomorphic to $H[A, B]$ which is a $(d_H \pm c' n_H)$-nearly regular $(\explet/2)$-expander. Note that $c' n_H \le d_H / 100$, $p (\explet/2) d_H \ge 200 \log n_H$ and $10 (p c' n_H + \sqrt{pd_H \log n_H}) \le 10 (2pd_H c' \sigma^{-1} + (pd_H)^{2/3}) \le (pd_H)^{1-c},$ where in the last inequality we used that $c' = \eps^{1/4} < (pd_H)^{-2c} < \sigma (pd_H)^{-c} / 100$. Thus, by Lemma~\ref{lem:edge-percolated-expander}, we conclude that with probability at least $1 - k \cdot n_H^{-4} = 1 - o(1)$, for all $i \in [k],$ the graph $G_p[A_i, B_i]$ is a $(pd_H \pm (pd_H)^{1 - c})$-nearly regular $\explet'$-expander, as required.

    Let $F_2$ be the multigraph obtained by doubling each edge in $F$ and let $M$ be a matching containing an edge $\phi(e) \in G_p[X,Y]$ for every $e = XY \in E(F_2)$, where we take multiplicity into account. Note that we can greedily construct $M$, since $|E(F_2)| \le 2 |V(F)|^2 = 2\cdot (2k)^2 \le 32 \sigma^{-2} < sp / 2.$ Since $F_2$ is connected and each vertex is of even degree, it contains an Euler tour $U_0, e_1, U_1, e_2, U_2, \dots, U_\ell = U_0$, where $\ell = |E(F_2)|$, and for $i \in [\ell],$ we have $U_i \in V(F_2)$ and $e_i \in E(F_2)$.

    Let us denote by $2r$ the degree of every vertex in $F_2$. Consider a fixed $i \in [k]$. Let $p_1,\dots, p_r \in [1, \ell]$ with $p_1 < \dots < p_r$ be the positions where $A_i$ appears in the Euler tour, i.e. such that $U_{p_j} = A_i$ for all $j \in [r]$. For $j \in [r],$ let $v_j$ be the unique vertex in $A_i \cap \phi(e_{p_j})$ and let $w_j$ be the unique vertex in $A_i \cap \phi(e_{p_j+1})$, where we denote $e_{\ell+1} = e_1$. Choose distinct vertices $z_1,\dots,z_r \in B_i \setminus V(M)$. Note that this is possible since $3r < |B_i|$. For each $j\in [r]$, let $(a^i_{2j-1}, b^i_{2j-1}) = (v_j, z_j)$ and $(a^i_{2j}, b^i_{2j}) = (w_j, z_j)$.

    For each occurrence of $B_i$, we analogously define two pairs to be connected through $V_i$. Namely, let $p'_1,\dots, p'_r \in [1, \ell]$ with $p'_1 < \dots < p'_r$ be the positions where $B_i$ appears in the Euler tour. Then, for $j \in [r],$ let $v'_j$ be the unique vertex in $B_i \cap \phi(e_{p'_j})$ and let $w'_j$ be the unique vertex in $B_i \cap \phi(e_{p'_j+1})$, where we denote $e_{\ell+1} = e_1$. Choose distinct vertices $z'_1,\dots,z'_r \in A_i \setminus V(M)$. Note that this is possible since $3r < |A_i|$. For each $j\in [r]$, let $(a^i_{2r + 2j-1}, b^i_{2r + 2j-1}) = (z'_j, v'_j)$ and $(a^i_{2r+2j}, b^i_{2r+2j}) = (z'_j, w'_j)$.

    Let $m = 4r$ and let $Q_i = \{ (a^i_j, b^i_j) \mid j \in [m] \}$. Observe that $a^i_j \in A_i, b^i_j \in B_i$ for all $j \in [m]$. We aim to apply Theorem~\ref{thm:robust-bipartite} to the graph $G_p[A_i, B_i]$ with the set of pairs $Q_i$. Let us verify the conditions for this application. Note that the set $\{ a^i_j \mid j \in [m] \}$ equals $(V(M) \cap A_i) \cup \{z_1', \dots, z_r'\}$ so it has size $3r$, and analogously, the set $\{ b^i_j \mid j \in [m] \}$ has size $3r$.

    Recall that $G_p[A_i, B_i]$ is a $(pd_H \pm (pd_H)^{1-c})$-nearly-regular $\explet'$-expander. Let $\alpha = (pd_H)^{-c}$ and note that $(\explet')^{-10^{6} / c} \le (\sigma^{-2} \log^{4} n)^{10^{14}} \le pd_H$. Additionally, we have $m = 4r \le 8k \le 12 \sigma^{-1} \le 12 n^{1/C} \le (pd_H)^{-c/900} n_H,$ which further implies that $\{ a^i_j \mid j \in [m]\}$ is $(A_i, B_i, \alpha)$-bi-uniform with respect to $G_p[A_i,B_i]$ both as a set and as a multiset since $\alpha(pd_H - (pd_H)^{1-c}) \ge m$, and, analogously, $\{ b^i_j \mid j \in [m]\}$ is $(B_i, A_i, \alpha)$-bi-uniform with respect to $G_p[A_i,B_i]$ both as a set and as a multiset. Thus, applying Theorem~\ref{thm:robust-bipartite}, we obtain a collection of internally vertex disjoint paths $\{ P^i_j \mid j \in [m]\}$ such that $P^i_j$ has endpoints $a^i_j$ and $b^i_j$ and $\bigcup_{j \in [m]} \Vint(P^i_j) = V_i \setminus \bigcup_{ j \in [m]} \{a^i_j, b^i_j\}$.

    We obtain such paths for each $i \in [k]$. By construction, it is easy to see that $M \cup \bigcup_{i \in [k], j \in [m]} P^i_j$ is a Hamilton cycle in $G_p,$ as needed.    
\end{proof}

\section{Concluding remarks} \label{sec:concluding}
In close-to-bipartite graphs, random walks behave much less nicely than in bipartite or far-from-bipartite graphs, thus our methods work poorly in that case. For $\explet = 1 / (\log n)^2$, say, we can prove Hamiltonicity of close-to-bipartite $d$-regular $\explet$-expanders only when $d \ge n^{1-\eta},$ for some small $\eta$ as given by Theorem~\ref{thm:dense-regular}. On the other hand, for bipartite or $\explet$-far-from-bipartite graphs, Theorems~\ref{thm:main-bipartite}~and~\ref{thm:main-far-from} only require $d \ge (\log n)^K$, for some constant $K$. We believe that $d \ge (\log n)^K$ should be sufficient even for close-to-bipartite graphs.

\begin{conjecture}
    There exists a constant $C$ such that any $n$-vertex $d$-regular $\explet$-expander with $d \ge (\explet^{-1} \log n)^C$ is Hamiltonian.
\end{conjecture}

Even for constant expansion $\explet$, our main results require $d \ge (\log n)^K$, while it is possible that constant degree already suffices. In fact, we believe that the dependence between $d$ and $\explet$ should be polynomial, i.e. that one can simply remove the logarithmic factor in Theorems~\ref{thm:main-bipartite} and Theorem~\ref{thm:main-far-from}.

\begin{conjecture}
    There exists a constant $C$ such that any bipartite $d$-regular $\explet$-expander with $d \ge (C\explet^{-1})^C$ is Hamiltonian.
\end{conjecture}

\begin{conjecture}
    There exists a constant $C$ such that the following holds. Let $G$ be a $d$-regular $\explet$-expander that is $\explet$-far from bipartite. If $d \ge (C\explet^{-1})^C$, then $G$ is Hamiltonian.
\end{conjecture}

Finally, recall that our approach is based on random walks for which we require good edge expansion. It is possible that edge expansion can be replaced by vertex expansion, but this would most likely require different methods.

\medskip

\textbf{Acknowledgements.} We thank Benny Sudakov for suggesting we use our main results to give a new proof of Theorem~\ref{thm:lovasz} and Alp M\"{u}yesser for helpful comments on an earlier draft. The second author thanks Abhishek Methuku for helpful discussions related to the topic of this paper.

\bibliographystyle{abbrv}
\bibliography{references}

\appendix
\section{Connecting pairs of vertices by vertex-disjoint paths through a random vertex set} 
\label{sec: sublinear expander}

In this appendix, we prove Lemma \ref{lem:randomsetconnecting}, which we now restate for the reader's convenience.

\lemconnectingingrestate*

As mentioned before, our proof follows very closely the proof of Lemma 3.4 from \cite{chakraborti2025edge}. We now define a notion of robust vertex expansion, very closely related to the notion used in \cite{chakraborti2025edge}.

\begin{defn} \label{defn:robust vertex expander}
    An $n$-vertex graph $G$ is a $(\explet,s)$-expander if, for every $U\subseteq V(G)$ and $F\subseteq E(G)$ with $1\leq |U|\leq \frac{2}{3}n$ and $|F|\leq s|U|$, we have
    $$|N_{G-F}(U)|\geq \explet |U|.$$
\end{defn}

Note that an $(\eps,s)$-expander in the sense of \cite{chakraborti2025edge} (see Definition 3.1 there) is an $(\frac{\eps}{(\log n)^2},s)$-expander in the sense of Definition \ref{defn:robust vertex expander}. We now relate our notions of edge and robust vertex expanders.

\begin{lemma} \label{lem:notions of expander}
    Let $G$ be an $n$-vertex $(d\pm d')$-nearly-regular $\explet$-expander. If $d' \le d/4$, then $G$ is also a $(\explet',s)$-expander with $\explet' = \frac{2}{5} \explet $ and $s = \explet d / 4.$
\end{lemma}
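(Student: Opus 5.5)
The plan is a direct double-counting argument. Fix $U\subseteq V(G)$ with $1\le |U|\le \frac{2}{3}n$ and $F\subseteq E(G)$ with $|F|\le s|U|$, where $s=\explet d/4$. The aim is to show $|N_{G-F}(U)|\ge \frac{2}{5}\explet|U|$; here $N_{G-F}(U)$ denotes the external neighbourhood, i.e.\ the vertices outside $U$ adjacent in $G-F$ to some vertex of $U$.

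\textbf{Step 1 (edge expansion).} Since $G$ is a $\explet$-expander and $\bar d(G)\ge d-d'\ge \frac{3}{4}d$, Definition~\ref{def:rho-exp} applied to $S=U$ gives
\[ e_G(U,V(G)\setminus U)\ge \explet \bar d(G)|U|\ge \tfrac{3}{4}\explet d|U|. \]

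\textbf{Step 2 (remove $F$).} Deleting $F$ destroys at most $|F|\le \frac{1}{4}\explet d|U|$ of these edges, so
\[ e_{G-F}(U,V(G)\setminus U)\ge \tfrac{1}{2}\explet d|U|. \]

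\textbf{Step 3 (count endpoints).} Every edge counted in Step 2 has its endpoint outside $U$ in $N_{G-F}(U)$. Each such vertex has degree at most $d+d'\le \frac{5}{4}d$, so it receives at most $\frac{5}{4}d$ of these edges. Hence
\[ |N_{G-F}(U)|\ge \frac{\frac{1}{2}\explet d|U|}{\frac{5}{4}d}=\tfrac{2}{5}\explet|U|, \]
which is exactly the claimed bound with $\explet'=\frac{2}{5}\explet$.

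There is no real obstacle. The only points to check are that the size range $1\le|U|\le\frac{2}{3}n$ in Definition~\ref{defn:robust vertex expander} matches the range in Definition~\ref{def:rho-exp}, and that the constants work out exactly: the loss $\frac{3}{4}\to\frac{1}{2}$ in Step 2 and the degree bound $\frac{5}{4}d$ in Step 3, both coming from $d'\le d/4$, combine to give precisely $\frac{2}{5}$.
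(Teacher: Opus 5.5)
Your proposal is correct and follows the paper's proof. The paper likewise lower-bounds $e_G(U,V(G)\setminus U)$ by $\explet(d-d')|U|$, upper-bounds it by $|F|+(d+d')|W|$ with $W=N_{G-F}(U)$, and plugs in $d'\le d/4$ and $|F|\le \explet d|U|/4$ to get $|W|\ge \frac{2}{5}\explet|U|$.
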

\begin{proof}
    Let $G$ be an $n$-vertex $(d\pm d')$-nearly-regular $\explet$-expander. Let $U \subseteq V(G)$ and $F \subseteq E(G)$ be such that $1 \le |U| \le \frac{2}{3} n$ and $|F| \le s |U|$ and denote $W = N_{G-F}(U).$ We need to show that $|W| \ge \frac{2 \explet |U|}{5}.$ Since $G$ is a $\explet$-expander, we have
    \[ e_G(U, V(G) \setminus U) \ge \explet (d - d') |U|. \]
    On the other hand,
    \[ e_G(U, V(G) \setminus U) \le |F| + (d + d') |W|. \]
    Combining, we have
    \[ |W| \ge \frac{1}{d+d'} \left( \explet |U|(d-d') - |F| \right) \ge \frac{4}{5d} \left( \frac{3}{4} \explet |U| d - \explet |U| d / 4 \right) = \frac{2}{5}\explet|U|. \]
\end{proof}

The main result in this section is as follows.

\begin{lemma} \label{lem:randomsetisnice}
    Let $G$ be an $n$-vertex $(\explet,s)$-expander with minimum degree $\delta$ and maximum degree $\Delta$, where $\explet<\frac{1}{(\log n)^2}$ and $s\ge p^{-4}\explet^{-5}(\log n)^{21}$ for some $\frac{100\log n}{\delta}\leq p\leq 1$. Let $V$ be a $p$-random subset of $V(G)$. Then, with probability $1-o(1)$, $V$ is $(D,\ell)$-connecting for $D=\frac{p^9 \explet^{12} s \delta}{\Delta (\log n)^{50}}$ and $\ell=\explet^{-1}(\log n)^4$.
\end{lemma}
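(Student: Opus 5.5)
The plan is to follow the Bucić--Montgomery / Chakraborti--Janzer--Methuku--Montgomery strategy: first prove that $G[V]$ (and $G$ restricted to paths through $V$) inherits a robust expansion property with high probability, and then connect the pairs one at a time greedily, using the expansion to grow balls around $x_i$ and $y_i$ inside $V$ while avoiding previously used vertices. The constraint on $D$ is what keeps the greedy process from getting stuck.

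The first step is a probabilistic statement about $V$. For a fixed set $U$ with $|U|\le \frac{2}{3}n$, the $(\explet,s)$-expansion gives $|N_{G-F}(U)|\ge \explet|U|$ robustly. From this I want to deduce that, with high probability, for every connected-ish set $U$ (say one built from balls in $G[V\cup\{x\}]$, of size up to $n/2$), the set $N_G(U)\cap V$ remains large, even after deleting any edge set of maximum degree about $p s$ and any small vertex set. Concretely, I would show with probability $1-o(1)$ that every vertex has degree $(1\pm o(1))p\,d_G(v)\ge 50\log n$ into $V$; this uses Chernoff together with $p\ge 100\log n/\delta$. Robustness means each vertex of $N_{G-F}(U)$ typically has many neighbours in $U$, so I would bound, for each candidate $U$, the probability that fewer than $\frac{p\explet}{2}|U|$ of its expansion lands in $V$. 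A union bound over all $U$ is impossible, so I would follow the cited proof and restrict attention to the polynomially many sets that arise as iterated neighbourhoods, or use the ``sparse witness'' trick: a small subset $U'\subseteq U$ of size $|U|/(\log n)^{O(1)}$ already certifies expansion.

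The second step derives a ball-growth lemma. In $G[V]$ with a forbidden set $Z$ and forbidden edges of bounded degree, the ball of radius $r$ around any starting vertex either exceeds $n/2$ within $r=O(\explet^{-1}(\log n)^2)$ steps or is blocked by $Z$. The condition is that $|Z|$ is small relative to $\explet p\,|B_r|$ at each scale. The third step connects the pairs greedily, using paths of length at most $\ell=\explet^{-1}(\log n)^4$. When connecting $x_i$ to $y_i$, the used set $Z$ consists of previous paths, of total size up to $r\ell$, which can be linear in $n$, so a naive argument fails. To handle this I would follow Bucić--Montgomery: split $V$ into $(\log n)^{O(1)}$ random disjoint pieces $V_1,\dots$ and process the pairs in batches, each batch using a fresh piece. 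Within a batch, the hypothesis that each vertex of $V$ has at most $D$ neighbours among the endpoints ensures two things. First, the endpoints' starting neighbourhoods in the piece are mostly disjoint. Second, a Hall or augmenting-path argument, in the style of the $(\explet,s)$ definition with $F$ removing edges to overloaded vertices, lets most pairs be routed, with a constant fraction of pairs connected per round. The failed pairs are retried in later pieces, and after $O(\log n)$ rounds all pairs are connected.

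The main obstacle is the batch step: showing that within one random piece, at least half of the pending pairs can be joined disjointly while the used vertices stay sparse in every neighbourhood, so that they can be discarded as $F$ in the robust expansion. The factor $p^9\explet^{12}s\delta/(\Delta(\log n)^{50})$ in $D$ is exactly the budget for this. I would try first to adapt the cited Lemma 3.4 verbatim, replacing its $\varepsilon/(\log n)^2$ by $\explet$ and checking that each power of $\log n$ absorbs correctly.
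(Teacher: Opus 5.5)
Your fallback of adapting CJMM's Lemma~3.4 is what the paper does, essentially verbatim. But the argument you sketch is not that proof, and its decisive step is left unproved.

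\textbf{The connecting step.} Growing balls from a single endpoint while keeping the used vertices $Z$ ``sparse in every neighbourhood'' gives no control. Nothing in your scheme stops earlier paths from exhausting $N(x_j)\cap V$ for a later endpoint $x_j$. Nothing forces $|Z|$ to be small compared with $\gamma p|B_r|$ at small radii. ``A constant fraction of pairs per batch'' is asserted, not derived.

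The paper avoids sequential or local control entirely.
\begin{itemize}
\item It splits $V$ into a random third $W$ and $V'=V\setminus W$.
\item Using Hall and the $D$-bound, it gives every endpoint a \emph{private} set $X_i$ (resp.\ $Y_i$) of $q=2\mu^{-1}\Delta\ell_0^2(\log n)^4$ neighbours in $W$, all pairwise disjoint. This is exactly where the requirement $D\le \mu\delta_0/(2\Delta\ell_0^2(\log n)^4)$, with $\delta_0=\delta p/6$, comes from.
\item It then finds all paths at once with the Aharoni--Haxell hypergraph matching theorem. Here $H_i$ consists of the internal vertex sets of $X_i$--$Y_i$ paths of length at most $8\ell_0\log n$ through $V'$.
\end{itemize}
The Hall-type condition is checked \emph{globally}. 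Suppose a maximal matching $M_I$, for some $I\subseteq[r]$, had at most $(|I|-1)8\ell_0\log n$ edges. Delete all edges at its vertex set $S$. Then $|F|\le |S|\Delta\le |I|\Delta\ell_0^2(\log n)^4=\mu t$, where $t=\mu^{-1}|I|\Delta\ell_0^2(\log n)^4$. So every $t$-set still reaches more than $|V'|/2$ in $G-F$. Buci\'{c}--Montgomery's proposition says that if every $t$-set reaches half of $V'$, then one of any $2t-1$ disjoint pairs can be joined. Applying it to $\bigcup_{i\in I}X_i$ and $\bigcup_{i\in I}Y_i$, each of size at least $2t$, contradicts maximality. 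So used vertices are handled by comparing $|S|\Delta$ with the size of the sets being grown, not by local sparsity. No ball is ever grown from a single vertex.

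\textbf{The probabilistic input.} Your first step does not supply what is needed.
\begin{itemize}
\item A one-step statement ``$|N(U)\cap V|\ge p\gamma|U|/2$ for all $U$'' cannot be union-bounded: each failure has probability $e^{-\Theta(p\gamma|U|)}$, against $\binom{n}{|U|}$ choices of $U$.
\item Balls in $G[V]$ depend on $V$, on $F$ and on the used vertices, so they are neither fixed nor polynomially many.
\end{itemize}
What is needed is $(\mu,\ell_0)$-reachability of $V'$: $|B^{\ell_0}_{G-F}(U,V')|>|V'|/2$ for every $U$ and every $F$ with $|F|\le\mu|U|$. The paper obtains this in three steps.
\begin{itemize}
\item[(a)] It sprinkles $V$ as $V_1\cup\dots\cup V_\ell$, so the $i$-th ball is determined by $V_1,\dots,V_{i-1}$ and is independent of $V_i$. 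This gives failure probability $e^{-\Omega(|U|(\log n)^2)}$ for \emph{well-expanding} $U$, meaning $|N(U)|\ge |U|p^{-4}\gamma^{-4}(\log n)^{16}$, with $|F|\le|U|$.
\item[(b)] It union-bounds only over such pairs $(U,F)$. An arbitrary $U$ is reduced to a well-expanding subset of size at least $p^4\gamma^5|U|/(12(\log n)^{16})$.
\item[(c)] It decomposes $G$ into $k\approx p^4\gamma^5 s/(\log n)^{20}$ edge-disjoint $(\gamma/8,\gamma s/(10^5k))$-expanders and pigeonholes $F$ among them. This boosts the tolerance from $|F|\lesssim|U|$ to $|F|\le\mu|U|$.
\end{itemize}
Your ``sparse witness'' remark points toward (b), but (a) and (c) are missing. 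Your version, robust to edge sets of bounded maximum degree, would not survive the union bound without something like (c).
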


Combining Lemmas \ref{lem:notions of expander} and \ref{lem:randomsetisnice}, Lemma \ref{lem:randomsetconnecting} follows readily.

\subsection{Proof of Lemma~\ref{lem:randomsetisnice}}

Given $U, V \subset V (G)$, the ball of radius $i$ around $U$ within $V$, denoted by $B_G^i(U, V )$, is the set of vertices in $V$ that can be reached by a path of length at most $i$ starting from a vertex in $U$ which has all of its internal vertices in $V$. The starting vertex in $U$ is not required to be in $V$ itself. We do, however, only consider reachable vertices within $V$, so that $B_G^i(U, V) \subset V$.

It is convenient to use the following definition.

\begin{defn}[$(\mu,\ell)$-reachable set]\label{defn:reachable}
    Let $G$ be an $n$-vertex graph. We say that a set $V\subset V(G)$ is $(\mu,\ell)$-reachable if, for every $U\subset V(G)$ and every $F\subset E(G)$ with $|F|\leq \mu |U|$,
    $$|B_{G-F}^{\ell} (U,V)|>\frac{|V|}{2}.$$
\end{defn}

Most of the work in this section will go into proving the following lemma.

\begin{lemma} \label{lem:reachable}

Let $0 < p < 1$. Suppose that $G$ is an $n$-vertex $(\explet, s)$-expander with $\explet<\frac{1}{(\log n)^2}$ and $s\ge p^{-4}\explet^{-5}(\log n)^{20}$. Let $V$ be a $p$-random subset of $V(G)$.
Then, with probability $1-o(1)$, $V$ is $(\mu,\ell)$-reachable for $\mu=\frac{p^8\explet^{10}s}{(\log n)^{40}}$ and $\ell=8\explet^{-1}(\log n)^2$.
\end{lemma}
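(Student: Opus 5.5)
The plan follows the Bucić--Montgomery / Chakraborti--Janzer--Methuku--Montgomery argument: grow balls inside the random set $V$ layer by layer and show that, with high probability, every such ball keeps expanding by a factor roughly $1+\explet/(\log n)$-ish until it exceeds $|V|/2$. Since a statement quantified over all $U$ and $F$ cannot be union bounded naively, I would reduce to a statement about a bounded number of ``witness'' sets.

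\textbf{Step 1: expansion of sets into a random set.} First I would show that, with probability $1-o(1)$, every set $X\subseteq V(G)$ of size $k$, together with every $F$ with $|F|\le \mu' k$ for a suitable $\mu'$, satisfies
\[ |N_{G-F}(X)\cap V| \ge \tfrac{p}{2}\,\explet |X| . \]
Fix $X$ and $F$. The set $N_{G-F}(X)$ has size at least $\explet|X|$ by the $(\explet,s)$-expander property, and Chernoff then gives a failure probability $e^{-\Omega(p\explet |X|)}$. This does not beat the $\binom{n}{k}$ choices of $X$, so I would instead use the robustness with $s$. The key is to enumerate only sets $X$ that are balls $B^{i}_{G-F}(U,V)$ generated from a small ``core.'' Following the standard trick, I would show that each relevant ball contains a subset $Y$ of size about $|X|/(\log n)^{O(1)}$ whose robust neighbourhood (after deleting the at most $s|Y|$ edges going to vertices of small degree into $Y$) is already large. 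Large $s$ relative to $p^{-4}\explet^{-5}(\log n)^{20}$ is used exactly here: vertices outside having fewer than $s/(\ldots)$ edges to $X$ are discarded, so the union bound can run over sets of size $|X|\,\mathrm{polylog}/s$, which costs only $n^{|X|\mathrm{polylog}/s}\le e^{p\explet|X|/100}$.

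\textbf{Step 2: iterating.} Given $U$ and $F$ with $|F|\le\mu|U|$, set $X_0=U$ and $X_{i+1}=X_i\cup (N_{G-F}(X_i)\cap V)$. By Step 1 (applied with $|F|\le \mu|U|\le \mu|X_i|$ and $\mu\ll p\explet s$), each step multiplies $|X_i\cap V|$ by $1+\Omega(p\explet)$. Nominally this needs $O(p^{-1}\explet^{-1}\log n)$ steps; to reach the claimed $\ell=8\explet^{-1}(\log n)^2$ I would expand in $V$ more efficiently, namely by conditioning on the vertex expansion being $\explet|X|$ in $G$ and then noting that a $p$-fraction lands in $V$ while the rest is reached through $V$ within two steps. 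Alternatively I would work directly with the $(\explet,s)$-expansion of $G[V]$, which inherits expansion $\explet/2$ with robustness $\Omega(ps)$ via the enumeration in Step 1. Then $\explet^{-1}\log n$ steps per doubling, over $\log n$ doublings, give $\ell$. The case $|X_i|>\tfrac23 n$ is handled by noting that the ball already exceeds $|V|/2$ once $|X_i\cap V|$ passes half, using Chernoff on $|V|$.

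\textbf{Main obstacle.} The hard part is Step 1's union bound. One must show that $G[V]$ with edge deletions of density $\mu$ is itself an $(\Omega(\explet),\Omega(\mu))$-expander for all sets simultaneously. I would first try the Bucić--Montgomery approach: exhibit for each bad $X$ a certificate of size $O(|X|(\log n)^{O(1)}/(p^{O(1)} s))$ determining the event, then union bound over certificates. The polynomial losses in $p$, $\explet$ and $\log n$ in $\mu$ come from this step.
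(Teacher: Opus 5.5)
There is a genuine gap, and it is the step you yourself flag as the main obstacle. In Step~2 the sets $X_i$ are functions of $V$, so no Chernoff bound applies to $|N_{G-F}(X_i)\cap V|$. You would need Step~1 simultaneously for all $X$ and $F$, which you concede cannot be union bounded. The ``small core''/``certificate'' enumeration of balls is never specified, and since the ball is itself random it is unclear what small set would determine the bad event.

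This is not a technicality. Every vertex of $N_{G-F}(X_j)\setminus V$ exposed at an earlier step stays in $N_{G-F}(X_i)$ for all $i>j$. So the guarantee $|N_{G-F}(X_i)|\ge\gamma|X_i|$ can be met entirely by vertices already known to lie outside $V$, and your argument then yields no new vertex of $V$.

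The missing idea is sprinkling, as in Lemma~\ref{lem:wellexpandingsetscanreach}. Write $V=V_1\cup\dots\cup V_\ell$ with independent $V_i$: $q$-random with $q\ge p/(6\ell)$ for $i<\ell$, and $4p/5$-random for $i=\ell$. Let $B_i$ be the set of \emph{all} vertices of $G$ (not only those in $V$) reachable from $U$ by paths in $G-F$ of length at most $i$ whose interiors lie in $V_1\cup\dots\cup V_{i-1}$. Then $B_i$ is independent of $V_i$, so for a fixed $(U,F)$ only a fixed-set concentration bound is needed, and unsampled vertices of $B_i$ get a fresh chance.

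This also fixes your step count. Growth is measured by $|B_i|$ in $G$, and Lemma~\ref{lem:structuredichotomy} gives a dichotomy. Either there are $|B_i|/(10r)$ disjoint stars with $t$ leaves leaving $B_i$, or there is a bipartite $H$ from $B_i$ to some $X$ with $|X|\ge\gamma|B_i|/8$, degrees at least $r=p^{-1}\gamma^{-1}(\log n)^2$ on the $X$ side and at most $2t$ on the $B_i$ side. Via Chernoff or McDiarmid this gives $|B_{i+1}\setminus B_i|\ge\gamma|B_i|/2^7$, even though each round samples only a $q$-fraction. Hence $\gamma^{-1}(\log n)^2$ rounds suffice with no $p^{-1}$ loss. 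Your proposed fixes are unsupported: reaching ``the rest through $V$ within two steps'' is not argued, and showing that $G[V]$ is itself a $(\gamma/2,\Omega(ps))$-expander is far stronger than the lemma needs.

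The second gap is the union bound over $(U,F)$. For fixed $(U,F)$ the sprinkling argument fails with probability $\exp(-\Omega(p^4\gamma^4|B_1|/(\log n)^{14}))$. This beats the roughly $n^{3|U|}$ pairs with $|F|\le|U|$ when $U$ is well-expanding, i.e.\ $|N_G(U)|\ge|U|p^{-4}\gamma^{-4}(\log n)^{16}$, because then $|B_1|$ dwarfs $|U|$. This is the correct form of your ``small core'': one enumerates starting sets, never balls. An arbitrary $U$ is reduced to a well-expanding $U'\subseteq U$ with $|U'|\ge p^4\gamma^5|U|/(12(\log n)^{16})$ (Lemma~\ref{lem:findingwell-expandingset}), which handles $|F|\le p^4\gamma^5|U|/(\log n)^{17}$.

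Finally, $\mu=p^8\gamma^{10}s/(\log n)^{40}$ can be much larger than $1$, and there are $n^{\Theta(\mu|U|)}$ sets $F$ with $|F|\le\mu|U|$. No union bound over $F$ at that scale is possible, and certainly not against your Step~1 failure probability $e^{-\Omega(p\gamma|X|)}$. The paper instead splits $E(G)$ into $k=\lfloor p^4\gamma^5 s/(\log n)^{20}\rfloor$ edge-disjoint $(\gamma/8,\gamma s/(10^5k))$-expanders (Lemma~\ref{lem:partitionedgesintoexpanders}) and applies the previous step to each. Pigeonhole then gives some $G_i$ with $|F\cap E(G_i)|\le\mu|U|/k$; this is also where the factor $8$ in $\ell=8\gamma^{-1}(\log n)^2$ comes from. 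Your proposal has no substitute for either reduction.
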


In the rest of this subsection, we will complete the proof of Lemma~\ref{lem:randomsetisnice} assuming Lemma~\ref{lem:reachable}. Subsection \ref{sec:reachable} is then devoted to proving Lemma~\ref{lem:reachable}.

We will need the following, which follows from\footnote{Note that in \cite{bucic2022towards}, logarithms have base $2$.} \cite[Proposition 8]{bucic2022towards}.

\begin{proposition} \label{prop:connect one of t}
    Let $1\leq \ell,t\leq n$. Let $G$ be an $n$-vertex graph and let $V\subset V(G)$ with $|V|\geq 4t-2$ be such that, for every $U\subset V(G)$ with size $|U|=t$, we have $|B^{\ell}_G(U,V)|>\frac{|V|}{2}$. Let $z_1,\dots,z_{2t-1},w_1,\dots,w_{2t-1}$ be distinct vertices of $G$. Then, for some $j\in [2t-1]$, there is a $z_j-w_j$ path in $G$ with internal vertices in $V$ and with length at most $8\ell \log n$.
\end{proposition}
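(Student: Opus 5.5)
The plan is an iterative halving argument on the index set $[2t-1]$, using only the hypothesis that balls of radius $\ell$ around any $t$ starting vertices cover more than half of $V$. The final step, which upgrades a cross connection $z_j$--$w_k$ to a diagonal connection $z_j$--$w_j$, is where I expect the real difficulty, and I am not sure the approach below closes it.

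\textbf{Step 1 (two large sets can always be linked).} I first note that the hypothesis is monotone. If $|U|\ge t$, then $U$ contains a subset of size exactly $t$, so $|B^{\ell}_G(U,V)|>|V|/2$. Hence for any two vertex sets $X,Y$ with $|X|,|Y|\ge t$, the balls $B^{\ell}_G(X,V)$ and $B^{\ell}_G(Y,V)$ intersect. Concatenating through a common vertex gives an $X$--$Y$ path of length at most $2\ell$ with all internal vertices in $V$. (A walk can be shortcut to a path, and internal vertices stay in $V$ because balls are taken within $V$.)

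\textbf{Step 2 (halving the index set).} I run $O(\log n)$ rounds. In each round I maintain an index set $I\subseteq[2t-1]$ together with two collections of vertex-disjoint-in-index ``clusters''. For each $j\in I$ there are sets $Z_j\ni z_j$ and $W_j\ni w_j$. Each vertex of $Z_j$ is reachable from $z_j$, and each vertex of $W_j$ from $w_j$, by a path through $V$ of length at most (round number)$\cdot 4\ell$. The key invariant is that the clusters have total size growing geometrically while the index set shrinks by at most a constant factor.

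\textbf{How the invariant is kept.} Each round splits $I$ into two halves $I',I''$. I use Step 1 to assign the vertices of $B^{\ell}_G(\bigcup_{j\in I'}Z_j,V)$ to the index of the cluster through which they are first reached, in BFS order. Doing the same on the $w$-side keeps a half whose clusters cover a constant fraction of $V$. The count $2t-1$ is exactly what makes pigeonhole work: among any $2t-1$ indices, some $t$ of them on the $z$-side and $t$ on the $w$-side are forced to share an index.

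\textbf{Step 3 (forcing the same index).} The endgame is to reach a round where, for a single surviving index $j$, the clusters $Z_j$ and $W_j$ each have size at least about $|V|/2$, or at least are large enough to apply Step 1 to them directly. Step 1 then produces a $Z_j$--$W_j$ connection of length $2\ell$. Prepending the $z_j$--$Z_j$ path and appending the $W_j$--$w_j$ path gives a $z_j$--$w_j$ path through $V$. It has length at most $2\cdot 4\ell\log n\le 8\ell\log n$ after constants are tuned; the number of rounds is at most $\log_2(2t)\le\log n$. Since $t\le n$, the $\log n$ factor in the length bound accounts for the rounds.

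\textbf{Main obstacle.} The difficulty is in Step 2/3: Step 1 only ever links some $z_j$ to some $w_k$, whereas the proposition needs $j=k$. The growing clusters must stay attached to their own index in a way that allows a pigeonhole collision at the same index. I would first try the Bucic--Montgomery approach as follows. Greedily pick a vertex $x\in V$ lying in the $\ell$-ball of at least half of the current $z$-indices and also of at least half of the current $w$-indices; such an $x$ exists by averaging, given the covering property applied to every $t$-subset. I then recurse on the indices whose $z$ and $w$ both reach $x$, and connect through $x$. I am not certain this averaging step goes through with only the stated hypothesis, so this is the step I would check most carefully.
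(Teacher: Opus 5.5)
Your proposal has a genuine gap, at the point you flagged: nothing in Steps~2--3 forces the same index on both sides. The Step~2 invariant is never established. First-reached attribution gives no lower bound on the growth of any individual cluster, since all newly reached vertices may be credited to a single index. And halving the $z$-side and the $w$-side separately yields two halves with no reason to share an index. Your proposed repair is also false. The hypothesis only controls unions of $t$ balls, and it is compatible with the individual balls $B^{\ell}_G(\{z_j\},V)$ being pairwise disjoint. For example, take $t=2$ and $\ell=1$, let $G[V]$ be the $3\times 3$ rook's graph, and make $z_a$ adjacent exactly to row $a$ and $w_b$ exactly to column $b$. Then every $2$-set $U$ has $|B^{1}_G(U,V)|\ge 5>|V|/2$. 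Yet no vertex of $V$ lies in the $1$-balls of two of the three $z_j$, so no averaging produces a vertex in the balls of half the $z$-indices. You also never use $|V|\ge 4t-2$, and that hypothesis is what drives a working argument.

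The missing idea is to treat the two sides \emph{independently} and synchronize only once, at the end. Set $L=(\lceil\log_2 t\rceil+1)\ell$, and call $j$ $z$-bad if $|B^{L}_G(\{z_j\},V)|\le |V|/2$. Fewer than $t$ indices are $z$-bad. Suppose not, and let $U_0$ consist of $t$ bad $z_j$'s, so $|B^{\ell}_G(U_0,V)|>|V|/2$. Given $U_i$ with $|B^{(i+1)\ell}_G(U_i,V)|>|V|/2$, split $U_i$ into two halves. The ball of a union is the union of the balls, so one half $U_{i+1}$ has $|B^{(i+1)\ell}_G(U_{i+1},V)|>|V|/4\ge t-\tfrac12$. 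Apply the hypothesis to $t$ vertices of this ball and concatenate paths through $V$; this gives $|B^{(i+2)\ell}_G(U_{i+1},V)|>|V|/2$. After $\lceil\log_2 t\rceil$ halvings you reach a single bad $z_j$ whose $L$-ball covers more than half of $V$, a contradiction. The same holds for the $w_j$. Two subsets of $[2t-1]$ of size at least $t$ must meet, and this is where your pigeonhole belongs: some $j$ is good on both sides. Its two $L$-balls then intersect, and your Step~1 concatenation gives a $z_j$--$w_j$ path through $V$ of length at most $2L\le 8\ell\log n$. The paper itself does not reprove the statement; it imports it from Buci\'{c}--Montgomery.
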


We will also use the following form of the Aharoni-Haxell hypergraph matching theorem (see Corollary~1.2 in \cite{aharoni-haxell}).

\begin{theorem}\label{thm:hyperhall}
Let $r\in \mathbb{N}$, and let $H_1,\ldots,H_r$ be a collection of hypergraphs with at most $\ell$ vertices in each edge. Suppose that, for each $I\subset [r]$, there is a matching in $\bigcup_{i\in I}H_i$ containing more than $\ell(|I|-1)$ edges. Then, there is an injective function $f:[r]\to \bigcup_{i\in [r]}E(H_i)$ such that $f(i)\in E(H_i)$ for each $i\in [r]$ and $\{f(i):i\in [r]\}$ is a matching of $r$ edges.
\end{theorem}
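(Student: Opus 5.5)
This statement is the Aharoni--Haxell theorem, which the paper imports rather than proves. If I had to establish it, I would first reduce it to their general ``pinning'' criterion and then prove that criterion by Haxell's alternating-tree augmentation argument.

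\textbf{Reduction to pinning.} Say a family of edges $P$ \emph{pins} a matching $M$ if every edge of $M$ meets some edge of $P$. I would prove the following stronger statement. Suppose that for every $I\subseteq[r]$ the union $\bigcup_{i\in I}H_i$ contains a matching $M_I$ that cannot be pinned by any $|I|-1$ edges of $\bigcup_{i\in I}H_i$. Then a system of disjoint representatives $f$ as in the statement exists.

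The stated theorem follows immediately. Take $M_I$ to be a matching of more than $\ell(|I|-1)$ edges. Any single edge of size at most $\ell$ meets at most $\ell$ edges of the matching, since those edges are pairwise disjoint. Hence $|I|-1$ edges meet at most $\ell(|I|-1)<|M_I|$ edges of $M_I$, and so cannot pin it.

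\textbf{Induction and setup.} I would prove the pinning version by induction on $r$; the case $r=1$ is trivial. Fix a partial system of disjoint representatives $f$ on $[r]\setminus\{r\}$, which exists by induction, and try to extend it to index $r$. The augmentation builds an alternating tree, in the spirit of Hall's theorem, with the following structure.
\begin{enumerate}
\item Maintain a set $I$ of indices, starting with $I=\{r\}$. Each $i\in I\setminus\{r\}$ has its current representative $f(i)$ recorded as a ``blue'' edge.
\item By hypothesis, $M_I$ cannot be pinned by the $|I|-1$ blue edges. So some edge $e\in M_I$, say $e\in H_j$ with $j\in I$, is disjoint from all blue edges.
\item If $e$ is also disjoint from all representatives $f(k)$ with $k\notin I$, I swap along the tree. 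That is, I re-route: give $e$ to $j$, pass $j$'s old edge back along the tree to its parent index, and so on up to $r$. This produces a larger system of disjoint representatives.
\item Otherwise $e$ becomes a ``red'' edge, and the indices $k$ whose representatives it hits are added to $I$ as new blue vertices. The tree grows and the process repeats.
\end{enumerate}

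\textbf{The main obstacle.} The difficulty is making this process terminate while keeping the swap valid. Red edges added earlier must remain disjoint from blue edges added later, or the re-routing breaks. Haxell handles this by choosing $f$ and the tree lexicographically extremal: minimise the number of red edges, then the sequence of hitting counts, and so on. One then shows that any failure to augment contradicts this minimality. I expect essentially all of the work to lie in this extremality bookkeeping.

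Since the paper already cites Corollary~1.2 of the Aharoni--Haxell paper, in context I would simply invoke it. Alternatively, one can invoke their topological proof via the connectivity of the independence complex of the line graph.
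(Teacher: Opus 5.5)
You are right that the paper gives no proof here. It quotes Corollary~1.2 of Aharoni and Haxell, so your closing fallback of invoking it is exactly what the paper does. Your reduction is also correct: an edge with at most $\ell$ vertices meets at most $\ell$ edges of a matching, so a matching with more than $\ell(|I|-1)$ edges cannot be pinned by $|I|-1$ edges.

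The gap is in the self-contained alternating-tree argument you propose for the pinning statement: as described, it does not prove it. In Step~2 the hypothesis only gives an edge $e\in M_I$ avoiding the $|I|-1$ blue edges. But $e$ may meet a red edge already in the tree, and then neither the swap nor the growth step is valid.

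When you re-route, it is the red edge that met $f(j)$ that becomes the new representative of the parent index. It is not $f(j)$ itself, which lies in $H_j$ and is useless to the parent. Moreover, this only works if $f(j)$ was the only blue edge that red edge met, which is what the extremal choice is for. So the red edges, together with any edge swapped in, must be pairwise disjoint, or two indices end up with intersecting representatives. The invariant you worry about, earlier red edges avoiding later blue edges, is automatic from Step~4; the one that breaks is red--red disjointness. Haxell's argument therefore needs the new edge to avoid the whole vertex set of the tree, red and blue.

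This costs a factor of about two. The tree has $|I|-1$ blue edges and at most $|I|-1$ red edges, each red edge meeting a blue one. So it meets at most $\ell(|I|-1)+(\ell-1)(|I|-1)=(2\ell-1)(|I|-1)$ edges of $M_I$. Carried out correctly, your route proves the conclusion only under $\nu\bigl(\bigcup_{i\in I}H_i\bigr)>(2\ell-1)(|I|-1)$, which is essentially Haxell's 1995 theorem. The lexicographic bookkeeping handles red edges meeting several blue edges; it does nothing about this loss.

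Getting down to $\ell(|I|-1)$ is precisely what Aharoni and Haxell needed topology for. They apply the topological Hall theorem to the matching complex of $\bigcup_{i\in I}H_i$, together with a lower bound on the connectivity of that complex in terms of the number of edges needed to pin a matching. So the alternative you mention in your last sentence is the actual proof, not a variant.

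For this paper the distinction is harmless. Its only use of the theorem, in Lemma~\ref{lem:connectafterexpand}, has enough slack for Haxell's weaker version. There the edges have at most $L=8\ell\log n$ vertices, with $\ell$ as in that lemma. A matching with at most $(2L-1)(|I|-1)$ edges still spans at most $2L^2|I|\le |I|\ell^2(\log n)^4$ vertices, which is all that proof uses. As a proof of the statement as written, however, your plan falls short.
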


\begin{lemma} \label{lem:connectafterexpand}
    Let $n$ be sufficiently large, let $G$ be a graph with maximum degree $\Delta$ and let $V\subset V(G)$ be a $(\mu,\ell)$-reachable subset.
    Let $x_1,\dots, x_{r}, y_1, \dots, y_{r}$ be a sequence of (not necessarily distinct) vertices outside of $V$. Let $W\subset V(G)\setminus (V\cup \{x_1,\dots , x_{r}, y_1, \dots , y_{r}\})$ be such that $|W|\leq |V|$ and assume that there exists some $\delta_0$ such that each $x_i$ and $y_i$ sends at least $\delta_0$ edges to $W$ and each vertex in $W$ has at most $\frac{\mu \delta_0}{2\Delta \ell^2 (\log n)^{4}}$ neighbours in the multiset $\{x_1,\dots,x_{r},y_1,\dots,y_{r}\}$.
    
    Then, there is a collection of internally vertex-disjoint $x_i-y_i$ paths (one for each $i\in [r]$) of length at most $8\ell\log n$ with internal vertices in $W\cup V$.
\end{lemma}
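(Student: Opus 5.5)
The plan is to reduce Lemma~\ref{lem:connectafterexpand} to the Aharoni--Haxell criterion, Theorem~\ref{thm:hyperhall}, after first routing each endpoint into $W$ through disjoint random neighbours. For each $i\in[r]$ let $H_i$ be the hypergraph whose edges are the vertex sets of $x_i$--$y_i$ paths of length at most $L:=8\ell\log n$ with all internal vertices in $W\cup V$; we record only the internal vertices, so that edges have at most $L$ vertices. A matching of $r$ edges $f(1),\dots,f(r)$ with $f(i)\in E(H_i)$ is then exactly the desired family of internally vertex-disjoint paths. It therefore suffices to show that for every $I\subseteq[r]$ the union $\bigcup_{i\in I}H_i$ contains a matching with more than $L(|I|-1)$ edges.

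Fix $I$ and build such a matching greedily. Suppose we currently hold a matching $\mathcal M$ of at most $L(|I|-1)$ paths, and let $Z$ be its vertex set, so $|Z|\le L^2|I|$. We want one more path, for some $i\in I$, avoiding $Z$. Take $t=|I|$ and apply Proposition~\ref{prop:connect one of t} in the graph $G$ with the vertices of $Z$ deleted. This needs two ingredients. The first is $2t-1$ pairs $z_j,w_j$ of distinct vertices in $W\setminus Z$, with $z_j\in N(x_{i_j})$ and $w_j\in N(y_{i_j})$. To obtain these, use $\delta_0$ and the degree bound on $W$: every $x_i,y_i$ has at least $\delta_0$ neighbours in $W$, and each vertex of $W$ is adjacent to few of the endpoints. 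A Hall-type argument then gives each endpoint its own large private set of $W$-neighbours, and removing $Z$ destroys only a small part of each such set unless $Z$ is concentrated. Concentration is handled by averaging: pick the $2t-1$ indices in $I$ whose private sets meet $Z$ least. If $t$ is small we cannot choose $2t-1$ distinct indices, so we would use $t'=\lceil t/2\rceil$ or duplicate the endpoints appropriately; the precise bookkeeping here I leave open.

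The second ingredient is the ball condition for every $t$-set $U$ in $G-Z$. Here we use $(\mu,\ell)$-reachability with the edge set $F$ consisting of all edges incident to $Z$ from vertices of $U$. We have $|F|\le \Delta|Z|$ in general, but to meet $|F|\le\mu|U|$ we only need to delete the edges touching $Z$. The point of charging these to $U$ is that $\mu t\gg \Delta L^2 t(\log n)^4/\delta_0$ when compared with the degree hypothesis. This is exactly where the quantity $\mu\delta_0/(2\Delta\ell^2(\log n)^4)$ comes in: the vertices of $Z$ are counted with weight at most $\Delta$, and the degree bound keeps the relevant count proportional to $t$ rather than to $|Z|$. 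Having at least $\delta_0$ edges per endpoint gives enough slack that $B^\ell_{G-F}(U,V)$ still exceeds $|V|/2$.

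Once Proposition~\ref{prop:connect one of t} returns a $z_j$--$w_j$ path through $V\setminus Z$, we extend it by the edges $x_{i_j}z_j$ and $w_jy_{i_j}$. This gives an edge of $H_{i_j}$ disjoint from $\mathcal M$, and the path length is at most $8\ell\log n+2$; that extra $+2$ is absorbed by having $n$ large. Iterating this step yields a matching of size more than $L(|I|-1)$, and Theorem~\ref{thm:hyperhall} finishes the proof.

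The main obstacle is the second step: quantitatively justifying that deleting $Z$, which can be as large as $L^2|I|$, preserves the ball condition for all $t$-sets $U$ with $t=|I|$. The only route seems to be through the edge-deletion robustness of reachability. The bound per $U$ must come from charging edges incident to $Z$, and the factor $\ell^2(\log n)^4$ in the hypothesis strongly suggests this is where the bound is spent. I would first try to handle vertices of $Z\cap V$ by removing all their edges and counting them via $\Delta$.
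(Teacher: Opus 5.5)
There is a genuine gap, and it is exactly the step you flagged: with $t=|I|$, the ball condition you need cannot be obtained from $(\mu,\ell)$-reachability.

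To stop the new path from entering the vertex set $Z$ of the current matching, you must delete \emph{all} edges incident to $Z$. Deleting only the edges between $U$ and $Z$ does not prevent a path from reaching $Z$ later. So $|F|$ can be as large as $\Delta|Z|\approx\Delta L^2|I|$. Reachability, however, only tolerates $|F|\le\mu|U|=\mu|I|$. Since $\mu\le\Delta$ (otherwise one could delete every edge at a single vertex $U$), this is off by a factor of roughly $L^2\Delta/\mu$. Choosing the least-hit private sets cannot repair this, and your inequality $\mu t\gg\Delta L^2t(\log n)^4/\delta_0$ is not the relevant one: $\delta_0$ plays no role at this step.

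The missing idea is to decouple $t$ from $|I|$.

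\begin{itemize}
\item \textbf{Choice of $t$.} Let $M_I$ be a maximal matching in $\bigcup_{i\in I}H_i$ and suppose $|M_I|\le(|I|-1)8\ell\log n$. Its vertex set $S$ has $|S|\le|I|\ell^2(\log n)^4$. Let $F$ be all edges meeting $S$, and take $t=\mu^{-1}\Delta|I|\ell^2(\log n)^4$. Then $|F|\le\Delta|S|\le\mu t$, so every $t$-set $U$ satisfies $|B^{\ell}_{G-F}(U,V)|>|V|/2$.

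\item \textbf{Supplying the $2t-1$ pairs.} Proposition~\ref{prop:connect one of t} needs $2t-1$ pairs, and these come from many pairs per index, not from distinct indices. Blow each $x_i,y_i$ up into $q=2\mu^{-1}\Delta\ell^2(\log n)^4$ copies. Each copy has degree at least $\delta_0$ into $W$, while each $w\in W$ has degree at most $q\cdot\frac{\mu\delta_0}{2\Delta\ell^2(\log n)^4}=\delta_0$. Hall's theorem then gives pairwise disjoint $X_i\subseteq N(x_i)\cap W$ and $Y_i\subseteq N(y_i)\cap W$, each of size $q$. This is precisely where the degree hypothesis on $W$ is spent.

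\item \textbf{Counting.} Pairing $X_i$ with $Y_i$ for each $i\in I$ gives $|I|q=2t$ disjoint pairs. Also $|V|\ge|W|\ge2rq\ge4t$, as the proposition requires. The proposition in $G-F$ then returns an $X_i$--$Y_i$ path through $V$ for some $i\in I$, and this path avoids $S$.

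\item \textbf{Hypergraph on $V$ only.} Define $H_i$ on vertex set $V$, with hyperedges the internal vertex sets of $X_i$--$Y_i$ paths through $V$ of length at most $8\ell\log n$. Then the $W$-endpoints never have to be disjoint within $M_I$, and $Z$ never touches the private sets. The path found above contradicts maximality of $M_I$. In the final Aharoni--Haxell matching, disjointness of the $X_i,Y_i$ across different $i$ makes the extended $x_i$--$y_i$ paths internally vertex-disjoint.
\end{itemize}

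Your version lets $H_i$ contain $W$-vertices, so previous paths would consume private neighbours. That forces extra bookkeeping which the $V$-only hypergraph avoids entirely.
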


\begin{proof}
    We first prove the following claim using the degree conditions.
    \begin{claim*}
        There exist pairwise disjoint sets $X_1,\dots,X_{r},Y_1,\dots,Y_{r}\subset W$ of size at least $\mu^{-1}2\Delta \ell^2 (\log n)^{4}$ such that $X_i\subset N_G(x_i)$ and $Y_i\subset N_G(y_i)$ for all $i\in [r]$.
    \end{claim*}

    \begin{proof}[Proof of Claim]
        Let $q=\mu^{-1} 2 \Delta \ell^2(\log n)^{4}$. Note that, as $V$ is $(\mu,\ell)$-reachable, we have $\mu\leq \Delta$, so $q\geq 2\ell^2(\log n)^{4}$. Let us assume for simplicity that $q$ is an integer. Define a bipartite graph $H$ with parts $A:=\{x_i^j: i \in [r],j\in [q]\}\cup \{y_i^j: i \in [r],j\in [q]\}$ and $W$, where the vertices $x_i^j$ and $y_i^j$ are all distinct, and in which there is an edge between $x_i^j$ and some $w\in W$ if and only if $x_i w\in E(G)$, and similarly there is an edge between $y_i^j$ and some $w\in W$ if and only if $y_i w\in E(G)$. By the assumption on the degrees in $G$, each $u\in A$ has degree at least $\delta_0$ in $H$, and each $w\in W$ has degree at most $q\cdot \frac{\mu \delta_0}{2\Delta \ell^2(\log n)^{4}}=\delta_0$ in $H$. Hence, by Hall's theorem, there exists an injective map $f:A\rightarrow W$ such that, for each $u\in A$, there is an edge in $H$ between $u$ and $f(u)$. For each $i\in [r]$, let $X_i=f(\{x_i^j: j\in [q]\})$ and $Y_i=f(\{y_i^j:j\in [q]\})$. It is straightforward to verify that these sets satisfy the conditions in the claim.
    \end{proof}
    
    For each $i\in [r]$, let $H_i$ be the hypergraph on vertex set $V$ in which a hyperedge corresponds to the set of internal vertices of a path of length at most $8\ell\log n$ between $X_i$ and $Y_i$ with all internal vertices in $V$. Note that in order to prove the lemma, it suffices to find a matching of $r$ edges in which the $i$-th edge belongs to $H_i$ for each $i\in [r]$.

    Using Theorem~\ref{thm:hyperhall}, it suffices to verify that, for each $I\subset [r]$, there is a matching in $\cup_{i\in I} H_i$ containing more than $(|I|-1)8\ell\log n$ edges. For this, let $I\subset [r]$ and let $M_I$ be a maximal matching in $\cup_{i\in I} H_i$. We need to show that $|M_I|>(|I|-1)8\ell\log n$. Assume for a contradiction that $|M_I|\leq (|I|-1)8\ell\log n$. Let $S$ be the set of vertices in $G$ used by the edges in $M_I$. Note that $|S|\leq |M_I|8\ell\log n \leq |I|\ell^2(\log n)^{4}$. Let $F$ be the set of edges in $G$ which are incident to at least one vertex in $S$. Then $|F|\leq |S|\Delta\leq |I|\Delta \ell^2(\log n)^{4}$. Let $t=\mu^{-1}|I|\Delta \ell^2 (\log n)^{4}$. Now, if $U\subset V(G)$ and $|U|=t$, then $|F|\leq \mu |U|$, so as $V$ is $(\mu,\ell)$-reachable, we have
    $$|B_{G-F}^{\ell} (U,V)|>\frac{|V|}{2}.$$
    Note also that, by the claim above, we have $|V|\geq |W|\geq 2r\cdot \mu^{-1}2\Delta \ell^2(\log n)^{4}\geq 4t$. Hence, we can apply Proposition~\ref{prop:connect one of t} with $G-F$ in place of $G$ to conclude that if $z_1,\dots,z_{2t-1},w_1,\dots,w_{2t-1}$ are distinct vertices in $G$, then for some $j\in [2t-1]$ there is a $z_j$-$w_j$ path in $G-F$ with internal vertices in $V$ and with length at most $8\ell\log n$. However, since $|X_i|,|Y_i|\geq \mu^{-1}2\Delta \ell^2(\log n)^{4} $, we have $|\cup_{i\in I} X_i|\geq |I|\mu^{-1}2\Delta \ell^2(\log n)^{4} =2t$ and $|\cup_{i\in I} Y_i|\geq 2t$, so there is a path in $G-F$ with internal vertices in $V$ and with length at most $8\ell\log n$ which connects some element of $X_i$ to some element of $Y_i$, for some $i\in I$. Since this path does not use the edges in $F$, it does not have any internal vertex which is in $S$. Hence, the internal vertices of this path are disjoint from the vertices used by $M_I$, contradicting the maximality of $M_I$.
\end{proof}

We are now ready to prove Lemma~\ref{lem:randomsetisnice} (assuming Lemma~\ref{lem:reachable}, which will be proved in the rest of this section).

\begin{proof}[Proof of Lemma~\ref{lem:randomsetisnice}]
    Let $W$ be a random subset of $V$ obtained by including each vertex of $V$ independently at random with probability $1/3$. Let $V'=V\setminus W$. Note that $V'$ is a $(2p/3)$-random subset of $V(G)$, so by Lemma~\ref{lem:reachable}, $V'$ is $(\mu,\ell_0)$-reachable with probability $1-o(1)$, where $\mu=\frac{(2p/3)^8\explet^{10}s}{(\log n)^{40}}$ and $\ell_0=8\explet^{-1}(\log n)^2$. Let $v$ be an arbitrary vertex in $G$. Since $W$ is a $(p/3)$-random subset of $V(G)$, the expected number of neighbours of $v$ in $W$ is at least $\delta p/3$. Hence, by the lower bound on $p$ and by the Chernoff bound, the probability that $v$ has fewer than $\delta p/6$ neighbours in $W$ is $o(1/n)$. Hence, with probability $1-o(1)$, every vertex in $G$ has at least $\delta p/6$ neighbours in $W$. Also, the probability that $|V'|\geq |W|$ is $1-o(1)$.
    
    We now show that, if $V'$ is $(\mu,\ell_0)$-reachable, each $v\in V(G)$ has at least $\delta p/6$ neighbours in $W$ and $|V'|\geq |W|$, then $V$ is $(D,\ell)$-connecting for $D = \frac{p^9 \gamma^{12} s \delta}{\Delta (\log n)^{50}}$ and $\ell=\explet^{-1}(\log n)^4$.
    
    Let $x_1,\dots,x_r,y_1,\dots,y_r$ be a sequence of vertices outside of $V$ and suppose that every $v\in V$ has at most $D$ neighbours in the multiset $\{x_1,\dots,x_r,y_1,\dots,y_r\}$.
    Let $\delta_0=\delta p/6$. Now note that each $x_i$ and $y_i$ sends at least $\delta_0$ edges to $W$ and each vertex in $W$ has at most $D\leq \frac{\mu \delta_0}{2\Delta \ell_0^2 (\log n)^{4}}$ neighbours in the multiset $\{x_1,\dots,x_r,y_1,\dots,y_r\}$. Hence, by Lemma~\ref{lem:connectafterexpand} (applied with $V'$ in place of $V$ and with $\ell_0$ in place of $\ell$), there is a set of internally vertex-disjoint $x_i-y_i$ paths (one for each $i\in [r]$) of length at most $8\ell_0\log n\leq \ell$ with internal vertices in $W\cup V'=V$. Thus, $V$ is indeed $(D,\ell)$-connecting, completing the proof. 
\end{proof}

It remains to prove Lemma~\ref{lem:reachable}.

\subsection{The proof of Lemma \ref{lem:reachable}} \label{sec:reachable}

For a graph $G$, it will be convenient to define the `robust neighbourhood' of a set $U \subset V(G)$ for any parameter $d$ as
$N_{G,d}(U) \coloneqq \{v \in V(G) \setminus U : |N_G(v) \cap U| \ge d \}$,
i.e., the set of vertices in $G$, outside of the set $U$, which have degree at least $d$ in $U$.


We now recall three lemmas from \cite{chakraborti2025edge}.
The first lemma follows from\footnote{Note that in \cite{chakraborti2025edge}, logarithms have base $2$.} {\cite[Lemma 7.7]{chakraborti2025edge}}.

\begin{lemma} \label{lem:structuredichotomy}
    There is an $n_0$ such that the following holds whenever $n\geq n_0$, $0< \explet< \frac{1}{(\log n)^2}$, $r\geq 4(\log n)^2$, $t\geq 4(\log n)^2$ and $s\geq 20rt$. Let $G$ be an $n$-vertex $(\explet, s)$-expander, let $U \subset V (G)$ have size $|U| \leq 2n/3$ and let $F$ be a set of at most $s|U|/4$ edges. Then, in $G - F$ we can find either
    \begin{enumerate}[label=\alph*)]
        \item $\frac{|U|}{10r}$ vertex-disjoint stars, each with $t$ leaves, centre in $U$ and all leaves in $V(G)\setminus U$, or \label{prop:large stars}
        \item a bipartite subgraph $H$ with vertex classes $U$ and $X\subset V(G)\setminus U$ such that \label{prop:robust nhood}
        \begin{itemize}
            \item $|X|\geq \explet |U|/8$ and
            \item every vertex in $X$ has degree at least $r$ in $H$ and every vertex in $U$ has degree at most $2t$ in $H$.
        \end{itemize}
    \end{enumerate}
    
\end{lemma}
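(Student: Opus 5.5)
The plan is a case analysis driven by the robust expansion property. Write $F$ for the deleted edge set, with $|F|\le s|U|/4$. First apply the $(\explet,s)$-expander property to the set $U$ itself. If some edges are removed from $F$, the neighbourhood of $U$ in $G-F$ still has size at least $\explet|U|$. The difficulty is that this neighbourhood may be attached to $U$ through only a few edges per vertex. We want either many vertices of $U$ with many private outside neighbours (outcome a) or many outside vertices with many neighbours in $U$ (outcome b). The natural tool is to remove further edges in a controlled way and re-apply expansion.

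The concrete steps are as follows. Let $X_0$ be the set of vertices outside $U$ with at least $r$ neighbours in $U$ in $G-F$, i.e.\ the robust neighbourhood $N_{G-F,r}(U)$.

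First, handle high-degree vertices of $U$. Let $U_{\mathrm{big}}\subseteq U$ be the vertices with more than $2t$ neighbours in $X_0$ in $G-F$. If $|U_{\mathrm{big}}|\ge |U|/(10r)$, build the stars greedily. Each star takes a vertex of $U_{\mathrm{big}}$ as centre and chooses $t$ leaves in $X_0\setminus U$, avoiding leaves already used. This is a Hall-type greedy argument. Each leaf is adjacent to $U$, but a leaf in $X_0$ could be adjacent to many centres. To control this, work instead with all outside neighbours and use a degree-capped auxiliary bipartite graph: bound the number of centres that may be blocked by counting used leaves, which total at most $t|U|/(10r)$. If greedy selection stalls, a centre has lost more than half its $2t$ leaves, and a double count gives a contradiction or else yields outcome (b). This gives outcome (a).

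Otherwise $|U_{\mathrm{big}}|<|U|/(10r)$. Let $H$ be the bipartite graph between $U$ and $X_0$ in which we delete all edges at $U_{\mathrm{big}}$, and then all but $2t$ edges at each remaining vertex. If at least $\explet|U|/8$ vertices of $X_0$ keep degree at least $r$, we obtain outcome (b).

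Second, derive a contradiction in the remaining case via expansion. Let $F'$ consist of $F$, all edges from $U$ to vertices outside $U$ with fewer than $r$ neighbours in $U$, and all edges from $U_{\mathrm{big}}$ to the outside. The aim is $|F'|\le s|U|$. The low-degree edges contribute at most $r$ per vertex outside. This is not bounded in terms of $|U|$ directly, so instead apply expansion to the set $U'=U\setminus U_{\mathrm{big}}$ and count per vertex of $U'$. Each vertex of $U'$ has at most $2t$ edges to $X_0$. Hence in $G-F'$ the neighbourhood of $U'$ is contained in the set of vertices of $X_0$ that still have many $U'$-neighbours. The hypothesis $s\ge 20rt$ is exactly what lets the edges incident to $U'$ that go to low-$U$-degree vertices be charged at rate about $rt$ per vertex of $U'$. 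Then expansion gives at least $\explet|U'|\ge \explet|U|/2$ surviving neighbours, each of which has at least $r$ neighbours in $U'$ via edges of $H$. Finally, truncating $U$-degrees to $2t$ loses at most $|U'|\cdot 2t/r$ times a factor of $X$-vertices, which is small compared to $\explet|U|/8$ once $r,t\ge 4(\log n)^2 > \explet^{-1}$-ish. This contradicts the assumption that fewer than $\explet|U|/8$ vertices keep degree at least $r$.

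The main obstacle is the edge-budget accounting in this second step. One must choose which edges to delete so that their number is at most $s|U|$, using $s\ge 20rt$, while guaranteeing that every surviving neighbour of $U'$ in $G-F'$ is a genuinely robust neighbour. I would first try deleting, for each vertex outside with fewer than $r$ neighbours in $U$, all its edges to $U$. I would then bound these edges by $r$ times the number of such vertices, which by a preliminary application of expansion to $U$ is comparable to $|U|$ up to the $(\log n)^2$ factors absorbed by $t$. If that fails, I would fall back on iterating, removing a vertex and re-expanding, as in the original proof in \cite{chakraborti2025edge}.
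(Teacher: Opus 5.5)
The paper does not prove this lemma; it imports it from \cite[Lemma~7.7]{chakraborti2025edge}. Your proposal has genuine gaps. The central one is the edge budget, which you flag and leave open.

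Your fallback cannot work. Expansion gives only \emph{lower} bounds on neighbourhoods, so it says nothing about how many outside vertices have fewer than $r$ neighbours in $U$. A vertex of $U'$ may have arbitrarily many such (say private) neighbours, so $F'$ can be far larger than $s|U'|$. Those vertices are exactly the star centres needed for outcome (a). Your case split measures only degrees into $X_0$, so they all end up in $U'$.

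The missing idea is a \emph{maximal packing of vertex-disjoint $t$-stars} in $G-F$, with centres in $U$ and leaves among the non-robust outside vertices. Suppose it has fewer than $|U|/(10r)$ stars, with centre set $C$ and leaf set $L$, so $|L|<t|U|/(10r)$. By maximality, every $u\in U\setminus C$ has fewer than $t$ non-robust neighbours outside $L$, and every leaf has fewer than $r$ neighbours in $U$. Hence the edges from $U\setminus C$ to non-robust vertices number less than $t|U|+r|L|\le 2t|U|$. Together with $F$ this fits in the budget $s|U\setminus C|$, because $s\ge 20rt$.

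Two further steps also fail.

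First, $|U_{\mathrm{big}}|\ge |U|/(10r)$ does not give (a). All of $U_{\mathrm{big}}$ may be adjacent to the same $2t+1$ vertices of $X_0$, leaving room for at most two disjoint $t$-stars. The conclusion would then have to come from (b), using exactly the edges you discard. Concentration is handled instead by taking $H$ maximal subject to $d_H(x)=r$ for $x\in X$ and $d_H(u)\le 2t$ for $u\in U$. If $|X|<\gamma|U|/8$, the set $U_1$ of vertices of $H$-degree at least $t$ has $|U_1|\le r|X|/t$. By maximality, every vertex of $V(G)\setminus(U\cup X)$ has fewer than $r$ neighbours in $U_2=U\setminus U_1$. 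This is the notion of ``non-robust'' to use in the star packing, with centres in $U_2$.

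Second, when you apply expansion to a proper subset $U'\subsetneq U$, the guaranteed neighbours may lie in $U\setminus U'$ (your $U_{\mathrm{big}}$, or $U_1\cup C$ above), and these cannot be put in $X$. Also, a vertex with $r$ neighbours in $U$ need not have $r$ neighbours in $U'$. So $|U\setminus U'|$ must be a small fraction of $\gamma|U|$. This needs roughly $r\ge\gamma^{-1}$ (so that $|C|<|U|/(10r)$ is small) and $r\le t$ (so that $|U_1|\le|X|$). Your remark that $4(\log n)^2>\gamma^{-1}$ is backwards, since $\gamma<(\log n)^{-2}$.

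In fact the statement as written fails without such a relation. Let $U$ be a clique in which only a set $C$ of $|U|/(20r)$ vertices has outside neighbours. Join each $c\in C$ to a private set of $100rs$ vertices of a clique on $V(G)\setminus U$. Take $|U|\ge 200rs$, $n\gg s|U|$ and $\gamma=1/(50r)$, which is allowed since $r\ge 4(\log n)^2$. Then $G$ is a $(\gamma,s)$-expander. But for $F=\emptyset$ there are at most $|C|<|U|/(10r)$ stars, and no outside vertex has two neighbours in $U$, so both (a) and (b) fail.

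Both $r\ge\gamma^{-1}$ and $r\le t$ hold in the paper's only application, where $r=p^{-1}\gamma^{-1}(\log n)^2$ and $t=p^{-2}\gamma^{-1}(\log n)^7$. There the maximal-$H$ plus star-packing argument closes. Delete $F$ and the at most $2t|U|$ edges from $U_2\setminus C$ to non-robust vertices. The neighbourhood of $U_2\setminus C$ in what remains lies in $U_1\cup C\cup X$. That set has size less than $\gamma|U|/8+\gamma|U|/10+\gamma|U|/8<\gamma|U_2\setminus C|$, which contradicts expansion.
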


The next lemma follows from {\cite[Lemma 7.8]{chakraborti2025edge}}.

\begin{lemma}
\label{lem:findingwell-expandingset}
    Let $n\geq 2$, $0<\explet<\frac{1}{(\log n)^2}$ and $s\geq \mu\geq 1$. Let $G$ be an $n$-vertex $(\explet,s)$-expander and let $U\subset V(G)$ have size $|U|\leq 2n/3$.
    Then, there is a set $U'\subset U$ with $|N_G(U')|\geq |U'|\mu$ and $|U'|\geq \frac{\explet |U|}{12\mu}$.
\end{lemma}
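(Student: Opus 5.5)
The plan is a greedy construction with a maximality argument, where the contradiction comes from the robust expansion in Definition~\ref{defn:robust vertex expander} applied with a suitable deleted edge set $F$.

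\textbf{Greedy construction.} Build $U'\subseteq U$ and a set $N'\subseteq V(G)\setminus U$ together, step by step. Start with $U'=N'=\emptyset$. While some $v\in U\setminus U'$ has at least $\mu$ neighbours in $V(G)\setminus(U\cup N')$, do the following: add $v$ to $U'$, and add exactly $\mu$ such neighbours of $v$ to $N'$.

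\textbf{Invariants.} Throughout, $N'\cap U=\emptyset$ and $N'\subseteq N_G(U')$. Also $|N'|=\mu|U'|$, since each step adds exactly $\mu$ new vertices to $N'$. Since $N'$ is disjoint from $U'$, we get $|N_G(U')|\ge|N'|=\mu|U'|$ whenever the process stops. So the only thing left is to show that the final $U'$ is large.

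\textbf{Lower bound on $|U'|$.} Suppose the process stops with $|U'|<\explet|U|/(12\mu)$. Set $R=U\setminus U'$.
\begin{itemize}
\item Note that $\mu\ge 1$ and $\explet<1$, so $|U'|<|U|/2$ and hence $|R|\ge |U|/2\ge 1$. Also $|R|\le|U|\le 2n/3$.
\item By the stopping rule, every $v\in R$ has fewer than $\mu$ neighbours in $V(G)\setminus(U\cup N')$.
\item Let $F$ be the set of all edges from $R$ to $V(G)\setminus(U\cup N')$. Then $|F|<\mu|R|\le s|R|$, using the hypothesis $s\ge \mu$.
\item The $(\explet,s)$-expansion of $G$ therefore gives $|N_{G-F}(R)|\ge\explet|R|$.
\item In $G-F$, a vertex outside $R$ that has a neighbour in $R$ must lie in $(U\setminus R)\cup N'=U'\cup N'$. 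Hence $N_{G-F}(R)\subseteq U'\cup N'$.
\end{itemize}
Combining these,
\[
\explet|U|/2\le\explet|R|\le|U'|+|N'|=(\mu+1)|U'|\le 2\mu|U'|.
\]
So $|U'|\ge \explet|U|/(4\mu)$, which contradicts the assumption $|U'|<\explet|U|/(12\mu)$. Therefore the final $U'$ satisfies both $|N_G(U')|\ge\mu|U'|$ and $|U'|\ge\explet|U|/(12\mu)$, as required.

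\textbf{Main obstacle.} The key step is choosing the greedy rule correctly. A naive choice, such as taking $U'$ to be a maximal set with $|N(U')|\ge\mu|U'|$, gives no upper bound on $|N(U')|$, so the expansion inequality cannot be turned into a lower bound on $|U'|$. Capping the new neighbours at exactly $\mu$ per step, and keeping them outside $U$, is what makes $|U'\cup N'|=(\mu+1)|U'|$ exact. If $\mu$ is not an integer, one uses $\lceil\mu\rceil$ new neighbours per step instead; this changes only the constant, and the factor $12$ leaves enough room for it.
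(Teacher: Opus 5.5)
Your argument is correct. The greedy family of disjoint $\mu$-stars (centres in $U$, leaves outside $U$) is followed by applying the $(\gamma,s)$-expansion to $R=U\setminus U'$ after deleting the fewer than $\mu|R|\le s|R|$ edges from $R$ to $V(G)\setminus(U\cup N')$; this gives $\gamma|R|\le(\mu+1)|U'|$, and your $\lceil\mu\rceil$ adjustment for non-integer $\mu$ still yields $|U'|\ge\gamma|U|/(6\mu)$.

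The paper gives no proof of its own here and simply imports the statement from \cite[Lemma~7.8]{chakraborti2025edge}. Your maximality argument is, as far as I recall, essentially the standard proof of that cited lemma.
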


The next lemma follows from {\cite[Lemma 7.9]{chakraborti2025edge}}.

\begin{lemma}\label{lem:partitionedgesintoexpanders} Let $n$ and $s$ be sufficiently large (i.e.\ greater than some absolute constant), $k\in \mathbb{N}$ and $0<\explet < \frac{1}{(\log n)^2}$.
Suppose that $G$ is an $n$-vertex $(\explet,s)$-expander and $\frac{\explet s}{k} \geq 10^6 \log n$. Then, there are edge-disjoint graphs $G_1,\ldots,G_k$ such that $E(G)=\bigcup_{i\in [k]}E(G_i)$ and, for each $i\in [k]$, $G_i$ is an $\left(\frac{\explet}8,\frac{\explet s}{10^5k}\right)$-expander  with vertex set $V(G)$.
\end{lemma}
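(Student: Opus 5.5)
The statement to prove is Lemma~\ref{lem:partitionedgesintoexpanders}. I would partition randomly: colour each edge of $G$ independently and uniformly with one of $k$ colours, and let $G_i$ be the spanning subgraph of edges of colour $i$. Fix $i$, a set $U$ with $1\le |U|\le 2n/3$, and $F_i\subseteq E(G_i)$ with $|F_i|\le \frac{\explet s}{10^5k}|U|$. We need $|N_{G_i-F_i}(U)|\ge \frac{\explet}{8}|U|$. A direct union bound over all $U$ fails, since neighbourhoods in $G$ may be highly non-robust. So the plan is to extract, for each $U$, a robust certificate in $G$ that survives random colouring with exponentially small failure probability.

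The certificate comes from Lemma~\ref{lem:structuredichotomy}. Apply it to $U$ in $G$ with $F=\emptyset$, with $r\approx \frac{\explet s}{10^3 k}$ (this is $\ge 10^3\log n/\explet$ by hypothesis) and $t$ chosen so that $s\ge 20rt$, e.g.\ $t\approx k/(50\explet)$ (if this is below $4(\log n)^2$, take $t=4(\log n)^2$ and shrink $r$ slightly). This yields one of two structures.

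\emph{Case a).} We get $|U|/(10r)$ disjoint stars, each with $t$ leaves outside $U$. In $G_i$ each star keeps about $t/k$ leaves in expectation. For this case to give the required bound, I would instead choose parameters so that $t/k\gtrsim \log n$, i.e.\ $t\approx k\cdot$ polylog, and then $r\le s/(20t)$. The point is that star leaves are disjoint, so the surviving leaves give $\gtrsim \frac{|U|}{10r}\cdot\frac{t}{2k}$ distinct neighbours. Deleting $F_i$ removes at most $|F_i|$ of them. Balancing $t/(rk)$ against $\explet$ and $|F_i|/|U|$ is exactly the parameter choice to settle.

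\emph{Case b).} We get a bipartite $H$ with $|X|\ge \explet|U|/8$, every $x\in X$ of degree $\ge r$ into $U$, and $U$-degrees $\le 2t$. Each $x$ keeps about $r/k$ of its $H$-edges in colour $i$, with Chernoff failure probability $e^{-r/(8k)}$. These events are independent over $x$, since they involve disjoint edge sets. So with probability $1-e^{-\Omega(|X|)}$, most of $X$ retains $\ge r/(2k)$ colour-$i$ edges into $U$. To disconnect such an $x$, $F_i$ must contain $\ge r/(2k)$ edges at $x$, so at most $2k|F_i|/r$ vertices are killed. This is small compared with $\explet|U|/16$ once $r\gg \frac{s}{10^5}\cdot\frac{k}{k}$, matching the $\frac{\explet s}{10^5 k}$ scaling. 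The constant $1/8$ versus $1/8$ needs some care, possibly by invoking Lemma~\ref{lem:findingwell-expandingset} on a subset to gain slack.

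The main obstacle is the union bound. There are $\binom{n}{|U|}\le e^{|U|\log n}$ sets $U$, so we need failure probability $e^{-\omega(|U|\log n)}$. In case b) the bound is only $e^{-\Omega(|X|)}$ with $|X|\ge\explet|U|/8$, which is too weak unless each vertex's failure probability is $n^{-\omega(1/\explet)}$. That requires $r/k\gtrsim \explet^{-1}\log n$, i.e.\ $\explet s/k\gtrsim \log n\cdot \explet^{-1}\cdot\ldots$, which the hypothesis $\explet s/k\ge 10^6\log n$ does not quite give. I would resolve this as in \cite{chakraborti2025edge}: fix the certificate structure ($H$ or the stars) deterministically for each $U$, and union bound only over the failing sets. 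Alternatively, use the counting trick that the relevant configuration is determined by a small subset of $X$ together with its neighbourhoods. In practice I would follow the proof of \cite[Lemma~7.9]{chakraborti2025edge} verbatim, noting that our $(\explet,s)$-expander definition is theirs with $\explet$ rescaled by $(\log n)^2$.
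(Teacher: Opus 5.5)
Your closing fallback, applying \cite[Lemma~7.9]{chakraborti2025edge} after the rescaling $\explet=\eps/(\log n)^2$, is exactly what the paper does. It gives no proof beyond this citation and the translation of definitions, and that lemma can be used as a black box. The argument you sketch yourself, however, has a genuine gap, and your diagnosis that ``a direct union bound over $U$ fails'' is mistaken.

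Applying Lemma~\ref{lem:structuredichotomy} in $G$ with $F=\emptyset$ produces a certificate that is too weak to survive the adversarial budget $|F_i|\le\frac{\explet s}{10^5k}|U|$. In case b) each $x\in X$ is only guaranteed $r\le s/(20t)\le s/(80(\log n)^2)$ edges to $U$. If $|X|=\explet|U|/8$ and all these degrees equal $r$, then $H$ has only about $\explet r|U|/(8k)$ edges of colour $i$. For large $n$ this is below $\frac{\explet s}{10^5k}|U|$, so $F_i$ can delete everything the certificate provides. Even with no deletions you would retain only about $|X|/2<\explet|U|/8$. Case a) in turn forces $r=O(\explet^{-1/2})$, so the two cases cannot be balanced. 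The union-bound trouble you hit is a symptom of this wrong certificate.

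The missing idea is that the robustness of $G$ is itself the certificate. Being a $(\explet,s)$-expander is equivalent to the following: for all $U$ with $1\le|U|\le 2n/3$ and all $W\subseteq V(G)\setminus U$ with $|W|<\explet|U|$, we have $e_G(U,V(G)\setminus(U\cup W))>s|U|$. Otherwise, letting $F$ be these edges gives $N_{G-F}(U)\subseteq W$. Under a uniform random $k$-colouring, $e_{G_i}(U,V(G)\setminus(U\cup W))$ is binomial with mean $>s|U|/k$. By Chernoff it is at most $s|U|/(2k)$ with probability at most $e^{-s|U|/(8k)}\le n^{-10^5|U|/\explet}$, using $s/k\ge 10^6\log n/\explet$. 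Note that $k<s<n$, since $s\ge\Delta(G)$ is impossible: take $U=\{v\}$ and $F$ all edges at $v$. Hence there are at most $n^{2|U|+2}$ triples $(i,U,W)$ for each value of $|U|$, and the union bound over triples succeeds. Reversing the equivalence, with $W=N_{G_i-F}(U)$, each $G_i$ is then a $(\explet,s/(2k))$-expander. This is stronger than the required $(\explet/8,\frac{\explet s}{10^5k})$, and the argument only uses $s/k\ge C\log n$.
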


In order to prove Lemma~\ref{lem:reachable}, we need to show that, for an expander $G$, if we take a large random subset $V$ in $V(G)$, then with high probability it is true that, for every $U\subset V(G)$ and not too large $F\subset E(G)$, more than half of the vertices in $V$ can be reached from $U$ by short paths inside $V$ which avoid all the edges in $F$. We prove this in three steps: first we deal only with `well-expanding' sets $U$ and very small $F$, then we extend this to arbitrary $U$ but still only very small $F$, and finally, we deal with all $U$ and much larger $F$, completing the proof of Lemma~\ref{lem:reachable}.

\begin{lemma}
\label{lem:wellexpandingsetscanreach}
    Let $n$ be sufficiently large, let $0 < p < 1$, and suppose that $G$ is an $n$-vertex $(\explet, s)$-expander with $\explet<\frac{1}{(\log n)^2}$ and $s \geq 20p^{-3}\explet^{-2}(\log n)^{9}$. Let $\ell=\explet^{-1}(\log n)^2$. Let $U \subset V(G)$ satisfy $|N_G(U)| \geq |U|p^{-4}\explet^{-4}(\log n)^{16}$ and let $F \subset E(G)$ satisfy $|F| \leq |U|$. Let $V$ be a $p$-random subset of $V(G)$.
    Then, with probability $1-e^{-\Omega(|U|(\log n)^2)}$,
    $$|B_{G-F}^{\ell} (U,V)|>\frac{|V|}{2}.$$
\end{lemma}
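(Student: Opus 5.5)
We grow the ball $B^i_{G-F}(U,V)$ layer by layer for $i=0,1,\dots,\ell$ and show that with the stated probability it expands by a factor of roughly $1+\explet/4$ (in a suitable sense) until it contains more than half of $V$. Let $n$ be large and set $k=\log n$ (so $\ell/k=\explet^{-1}\log n$ steps per "stage"). The first phase uses the hypothesis $|N_G(U)|\ge |U|p^{-4}\explet^{-4}(\log n)^{16}$. Since $|F|\le |U|$, at most $|U|$ vertices of $N_G(U)$ lose all their edges to $U$. So $N_{G-F}(U)$ has size at least $\frac12|U|p^{-4}\explet^{-4}(\log n)^{16}$, and $V\cap N_{G-F}(U)$ is a binomial random subset. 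By Chernoff, with probability $1-e^{-\Omega(|U|(\log n)^2)}$ we get $|B^1_{G-F}(U,V)|\ge \frac14 p^{-3}\explet^{-4}(\log n)^{16}|U| =: |U_1|$. Here the exponent comes from the mean $\gtrsim |U|(\log n)^{16}$. Note that $U$ is fixed in advance, so no union bound over $U$ is needed.

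The second phase is iterative. Given a current set $B_i=B^i_{G-F}(U,V)$ with $|B_i|\le n/2$ (if ever $|B_i|$ exceeds $|V|/2$ we stop), we want $|B_{i+1}|\ge (1+\explet/4)|B_i|$ with high probability. We expose $V$ in rounds, which is the usual trick: split $V$ into $\ell$ independent $p/\ell$-random sets $V_1,\dots,V_\ell$ (up to coupling, $V\supseteq\bigcup V_j$ with a $(1-(1-p/\ell)^\ell)$-random set), and use $V_{i+1}$ to extend $B_i$. The set $B_i$ is determined by $V_1,\dots,V_i$ only. Now $|F|\le |U|\le |B_i|\le s|B_i|$, so the $(\explet,s)$-expansion gives $|N_{G-F}(B_i)|\ge \explet|B_i|$. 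The issue is that $V_{i+1}$ is sparse, of density $p/\ell$, so we want vertices with many neighbours in $B_i$. Here we apply Lemma~\ref{lem:structuredichotomy} (with $F$ and suitable $r,t$, say $r=t\approx p^{-1}\explet^{-1}(\log n)^{4}$, allowed since $s\ge 20rt$ under the hypothesis on $s$, roughly). In case b) we obtain $\ge \explet|B_i|/8$ vertices $x$ each with $\ge r$ neighbours in $B_i$, but the relevant quantity is whether $x\in V_{i+1}$, with probability $p/\ell$. That only yields growth by $p\explet/(8\ell)$, which is too slow.

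So instead we keep the random set fixed and count differently. A vertex $x\in N_{G-F}(B_i)$ lies in $B_{i+1}$ iff $x\in V$. Conditioning on $B_i$ does bias $V$, but only through vertices already examined (those in $B_i\cup N(B_{i-1})$). The fresh vertices $N_{G-F}(B_i)\setminus N[B_{i-1}]$ are still $p$-random given the history. Since $N_{G-F}(B_i)$ has size $\ge\explet|B_i|$ while $B_i\subseteq V$ came from exposing neighbourhoods, we use the expansion bound with the set $B_i\cup(N(B_{i-1})\setminus V)$, with the non-$V$ exposed part at most $p^{-1}$ times larger (by Chernoff on earlier rounds). In case a) of Lemma~\ref{lem:structuredichotomy} the stars give $|B_i|/(10r)$ centres each with $t$ fresh leaves, and Chernoff over disjoint leaf sets shows that $\Omega(pt|B_i|/r)=\Omega(p|B_i|)$ leaves land in $V$. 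Both cases yield $|B_{i+1}|\ge(1+c\,p\explet)|B_i|$ with failure probability $e^{-\Omega(p\explet |B_i|)}\le e^{-\Omega(|U|(\log n)^2)}$, using $|B_i|\ge|U_1|$. Over $\ell=\explet^{-1}(\log n)^2$ steps this gives growth factor $e^{cp(\log n)^2}$. That is enough once $p\ge 1/\log n$, and for smaller $p$ the initial boost $p^{-4}$ and the $(\log n)^{16}$ slack must compensate.

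The main obstacle is exactly this bookkeeping: controlling the revealed non-$V$ vertices so that expansion of $B_i$ translates into fresh $p$-random candidates at rate $\Omega(p\explet)$ per step, and checking that $\ell$ steps suffice for every $p$. If the pure per-step rate is insufficient, we would first pass to the well-expanding subsets supplied by Lemma~\ref{lem:findingwell-expandingset} at each stage, mimicking the first phase so that each stage multiplies the size by a polylog factor. Only $O(\log n/\log\log n)$ stages of length $\explet^{-1}\log n$ would then be needed. We would finish with a union bound over the $\le\ell$ steps.
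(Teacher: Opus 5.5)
Your second phase has a genuine gap, exactly at the bookkeeping you flag.

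\textbf{Why the per-step growth fails.} With $V$ fixed and explored BFS-style, the only expansion you control is $|N_{G-F}(B_i)|\ge\gamma|B_i|$. The explored vertices outside $V$, $D_i\supseteq N(B_{i-1})\setminus V$, number about $p^{-1}|B_i|\gg\gamma|B_i|$. Nothing prevents $N_{G-F}(B_i)\subseteq B_i\cup D_i$, which would leave no fresh candidates at all. Applying expansion to $B_i\cup D_i$ does not help: its new neighbours may be adjacent only to $D_i$, and vertices of $D_i$ cannot serve as internal vertices. For the same reason, the star leaves in case a) and the set $X$ in case b) of Lemma~\ref{lem:structuredichotomy} need not be fresh. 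So the claimed bound $|B_{i+1}|\ge(1+cp\gamma)|B_i|$ is unjustified.

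\textbf{Why the rate is too slow anyway.} Even granting that bound, $\ell=\gamma^{-1}(\log n)^2$ steps give only a factor $e^{cp(\log n)^2}$. You need a factor of order $pn/|U|$, so this is useless when $p\ll 1/\log n$. The lemma must cover that case, and it is the case used later, where $p=d^{-c/10^4}$ may be $n^{-\Omega(1)}$. The one-time gain from your first phase cannot supply the missing factor. Nor can Lemma~\ref{lem:findingwell-expandingset}: it only gives $U'\subseteq B_i$ with $|N_G(U')|\ge\mu|U'|\ge\gamma|B_i|/12$. That set is not larger than $B_i$, so it never multiplies the size by a polylogarithmic factor.

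\textbf{The fix: sample the relays, not the new vertex.} The idea you discarded in your first attempt is the right one. It failed only because you kept $B_i\subseteq V$, which forces the new vertex itself to be sampled at rate $p/\ell$.
\begin{itemize}
\item Take independent $q$-random sets $V_1,\dots,V_{\ell-1}$ with $q\ge p/(6\ell)$ and a $(4p/5)$-random set $V_\ell$, chosen so that their union is $p$-random.
\item Let $B_i$ be the set of all vertices, not necessarily in $V$, reachable from $U$ in $G-F$ by a path of length at most $i$ whose internal vertices lie in $V_1\cup\dots\cup V_{i-1}$.
\item Then $B_i$ is independent of $V_i$, and any $x\in N_{G-F}(B_i)$ enters $B_{i+1}$ as soon as one of its neighbours in $B_i$ lies in $V_i$.
\end{itemize}

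\textbf{Growth step.} Apply Lemma~\ref{lem:structuredichotomy} to $W=B_i$ with $r=p^{-1}\gamma^{-1}(\log n)^2$ and $t=p^{-2}\gamma^{-1}(\log n)^7$. The hypothesis on $s$ is exactly $s\ge 20rt$.
\begin{itemize}
\item In case b), $qr\ge 1/6$, so each $x\in X$ enters with probability at least $1/8$, and McDiarmid's inequality gives concentration.
\item In case a), about $q|B_i|/(20r)$ star centres are sampled, each bringing $t$ leaves. This is $\Omega(\gamma|B_i|)$ only because $t\gg r$; your choice $r=t$ would fail here.
\end{itemize}
Together these give $|B_{i+1}\setminus B_i|\ge\gamma|B_i|/2^7$ unless $|B_i|\ge 2n/3$. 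The failure probability is $e^{-\Omega(p^4\gamma^4|B_1|/(\log n)^{14})}=e^{-\Omega(|U|(\log n)^2)}$, using your first-phase bound on $|N_{G-F}(U)|$.

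\textbf{Finishing.} The growth rate does not depend on $p$, so after $\ell$ steps $|B_\ell|\ge 2n/3$. The fresh set $V_\ell$ then gives $|B_\ell\cap V_\ell|>31pn/60>|V|/2$ with high probability. Finally, $B_\ell\cap V_\ell\subseteq B^\ell_{G-F}(U,V)$, which proves the lemma.
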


\begin{proof}
Let $q\in (0,1)$ be such that $1-(1-q)^{\ell-1}(1-\frac{4p}{5})=p$, i.e., that $(1-q)^{\ell-1}=\frac{1-p}{1-\frac{4p}{5}}$, so that
\begin{equation}\label{eqn:p15}
q\ge \frac{p}{6\ell}= \frac{p\explet}{6(\log n)^2}.
\end{equation}
Independently, for each  $i\in [\ell]$, let $V_i$ be a $q$-random subset of $V(G)$ if $i\leq \ell-1$ and a $(4p/5)$-random subset of $V(G)$ if $i=\ell$. Set $V=V_1\cup\ldots \cup V_\ell$, and note that $V$ is a $p$-random subset of $V(G)$. Thus, we wish to show that, with probability at least  $1-e^{-\Omega\left({|U|}{(\log n)^{2}}\right)}$ we have $|B^{\ell}_{G-F}(U , V)|>\frac{|V|}{2}$.

For each $0\le i\leq \ell$, let $B_{i}$ be the set of vertices of $G$ which can be reached via a path in $G-F$ which starts in $U$ and has length at most $i$ and whose internal vertices (if there are any) are in $V_1\cup \dots \cup V_{i-1}$. In particular, we have $B_0=U$ and $B_1=U\cup N_{G-F}(U)$. 
Observe also that $B_0\subset B_1\subset \ldots \subset B_{\ell}$. We emphasise that the vertices of $B_{i}$ do not themselves have to be inside $V_1\cup \ldots \cup V_{i-1}$, only the internal vertices of some path from $U$ to the vertex in $B_{i}$ are required to be inside $V_1\cup \ldots \cup V_{i-1}$. An important property of $B_{i}$ is that it is completely determined by the sets $U,V_1,\ldots, V_{i-1}$, and therefore is independent of $V_{i}$. Note also that any vertex in $N_{G-F}(B_i)$ with a neighbour in $B_i$ that gets sampled into $V_i$ belongs to $B_{i+1}$. These two observations will be the key behind why the sets $B_{i+1}$ will grow in size until they occupy most of the set $V(G)$. The lemma will then follow from
\begin{equation}\label{eqn:overkill4}
B_\ell\cap V_\ell\subset B^{\ell}_{G-F}(U, V).
\end{equation}

We now show that indeed, for each $1\le i\le \ell-1$, unless $B_{i}$ is already very large, $B_{i+1}$ is likely to be somewhat larger than $B_i$.

\begin{claim*} For each $1\le i\le \ell-1$, with probability $1-e^{-\Omega\left({|U|}{(\log n)^{2}}\right)}$, either $|B_i| \ge \frac23 n$, or  $$|B_{i+1}\setminus B_i| \ge \frac{\explet |B_i|}{2^7 }.$$
\end{claim*}
\begin{proof}[Proof of Claim] For each $v\in N_{G-F}(B_i)$, $v$ is in $B_{i+1}$ if at least one of its neighbours in $G-F$ in $B_i$ gets sampled into $V_{i}$. That is,
\begin{equation}\label{eqn:overkill1}
\{v\in N_{G-F}(B_i):(N_{G-F}(v)\cap B_i)\cap V_i\neq\emptyset\} \subset B_{i+1}\setminus B_i. 
\end{equation}
We will show that, for any set $W\subset V(G)$ with $|W|\leq \frac23 n$ and $B_1\subset W$ 
\begin{equation}\label{eqn:overkill2}
\Pr\left(|\{v\in N_{G-F}(W):(N_{G-F}(v)\cap W)\cap V_i\neq\emptyset\}|\geq \frac{\explet |W|}{2^7}\right)\ge 1-e^{-\Omega\left({p^4 \explet^4|B_1|}/{(\log n)^{14} }\right)}.
\end{equation}
Given~\eqref{eqn:overkill2}, we will have that for all $1\le i \le \ell -1$,
\begin{align*}
\Pr\left(|B_i| \ge \frac23 n  \: \text{ or  } \: |B_{i+1}\setminus B_i| \ge \frac{\explet|B_i|}{2^7}\right) &\overset{\textcolor{white}{\eqref{eqn:overkill1}}}{\geq} \Pr\left(|B_{i+1}\setminus B_i| \ge \frac{\explet|B_i|}{2^7}\: \Big| \: |B_i|\leq \frac{2}{3}n\right)
\\
&\overset{\eqref{eqn:overkill1}}{\geq} 
\Pr\left(|\{v\in N_{G-F}(B_i):(N_{G-F}(v)\cap B_i)\cap V_i\neq\emptyset\}|\ge \frac{\explet|B_i|}{2^7}\: \Big| \: |B_i|\leq \frac{2}{3}n\right)\\
&\overset{\eqref{eqn:overkill2}}{\ge} 
1-e^{-\Omega\left({p^4 \explet^4|B_1|}/{(\log n)^{14} }\right)}\ge 1-e^{-\Omega\left(|U| (\log n)^{2} \right)},
\end{align*}
where in the last inequality we used that $|B_1|\geq |N_{G-F}(U)|\ge |U|p^{-4} \explet^{-4}(\log n)^{16}-|F|\geq \frac12|U|p^{-4}\explet^{-4}(\log n)^{16}$.

Let then $W\subset V(G)$ satisfy $|W|\leq \frac23n$ and $B_1\subset W$. As $|W|\leq \frac23 n$ and $|F|\le |U|\leq |B_1|\leq |W|\le {s|W|}/4$, we can apply Lemma~\ref{lem:structuredichotomy} with $W$ in place of $U$, $r=p^{-1}\explet^{-1}(\log n)^2$ and $t=p^{-2}\explet^{-1}(\log n)^7$ (note that the lemma applies since $s \ge 20 p^{-3} \explet^{-2} (\log n)^{9} = 20 rt$). Hence, one of the two cases \ref{prop:large stars} or \ref{prop:robust nhood} from Lemma~\ref{lem:structuredichotomy} holds; we will show that \eqref{eqn:overkill2} holds in either case.

a) Suppose that $G-F$ contains $\frac{|W|}{10p^{-1}\explet^{-1}(\log n)^{2}}$ vertex-disjoint stars, each with $p^{-2}\explet^{-1}(\log n)^7$ leaves, centre in $W$ and all leaves in $N_{G-F}(W)$. Let $C\subset W$ be the set of centres of such a collection of stars, and note that
\begin{equation}\label{eqn:overkill3}
|\{v\in N_{G-F}(W):(N_{G-F}(v)\cap W)\cap V_i\neq \emptyset\}|\geq |C\cap V_i|\cdot p^{-2}\explet^{-1}(\log n)^7.
\end{equation}
By the Chernoff bound and \eqref{eqn:p15}, and as $p^2\explet^2/(\log n)^{10}\leq 1$ and $|W|\geq |B_1|$, with probability  at least 
$1-e^{-q|C|/12}= 1-e^{-\Omega\left({|W|p^2 \explet^2}/{(\log n)^{4}}\right)} \geq 1-e^{-\Omega\left({p^4 \explet^4|B_1|}/{(\log n)^{14}}\right)}$, we have $|C\cap V_i|\geq \frac{q|C|}2\geq \frac{|W|p^2 \explet^2}{120(\log n)^{4}}$. This, combined with \eqref{eqn:overkill3}, implies \eqref{eqn:overkill2}.

b) Suppose instead that there is a bipartite subgraph $H\subset G-F$ with vertex classes $W$ and $X\subset V(G)\setminus W$ such that
    \begin{itemize}
        \item $|X| \ge \frac{\explet|W|}{8}$ and
        \item every vertex in $X$ has degree at least $r=p^{-1}\explet^{-1}(\log n)^{2} $ in $H$ and every vertex in $W$ has degree at most $\Delta:=2t=2p^{-2}\explet^{-1}(\log n)^{7}$ in~$H$.
    \end{itemize}  
    For each $v\in X$, the probability that $v$ has no neighbours in $H$ in $V_i$ is at most
    \[
(1-q)^r= (1-q)^{p^{-1}\explet^{-1}(\log n)^2}\le e^{-qp^{-1}\explet^{-1}(\log n)^2}\overset{\eqref{eqn:p15}}{\leq} e^{-1/6}\leq \frac{7}{8}.
    \]
Let $Y$ be the random variable counting the number of vertices in $X$ having a neighbour in $V_{i}$ in $H$, so that $\E [Y] \ge \frac{|X|}{8}$. Observe also that $Y$ is $\Delta$-Lipschitz since for each $v\in W$ the event $\{v\in V_i\}$ affects $Y$ by at most $d_H(v)\leq \Delta$. 
Hence, by McDiarmid's inequality (see, e.g., Lemma 1.2 in \cite{mcdiarmid1989method}), we have
$$\Pr\left(Y < \frac{|X|}{16}\right)\le \Pr\left(Y < \E [Y] - \frac{|X|}{16} \right)\le2\exp\left(-\frac{2^{-9}|X|^2}{\Delta^2|W|}\right)= e^{-\Omega\left({|W|p^4 \explet^4}/{(\log n)^{14}}\right)}. $$
Each vertex in $X$ with a neighbour in $V_i$ in $H$ belongs to $\{v\in N_{G-F}(W):(N_{G-F}(v)\cap W)\cap V_i\neq\emptyset\}$. Hence, with probability at least $1-e^{-\Omega\left({|W|p^4 \explet^4}/{(\log n)^{14}}\right)}$, we have $|\{v\in N_{G-F}(W):(N_{G-F}(v)\cap W)\cap V_i\neq\emptyset\}|\geq Y\geq \frac{|X|}{16}\geq \frac{\explet|W|}{2^7}$. This means that \eqref{eqn:overkill2} holds in case b) as well, completing the proof of the claim.
\end{proof}

As $B_\ell$ and $V_\ell$ are independent, by the Chernoff bound, we have that
\[
\Pr\left(|B_{\ell}\cap V_\ell|\le\frac{31p}{60}n \: \big| \: |B_{\ell}|\geq \frac{2}3n\right)\leq \Pr\left(\Bin\left(\frac{2}3n,\frac{4p}5\right)\leq \frac{31p}{60}n\right)\le e^{-\Theta(np)},
\]
and, similarly, we have $\Pr\left(|V|\geq \frac{61p}{60}n\right)\le e^{-\Theta(np)}$.

Thus, by the claim, altogether we have that

\begin{enumerate}[label=\roman*)]
\item \label{itm1}
for each $i\in [\ell-1]$, $|B_i| \ge \frac23 n$ or  $|B_{i+1}\setminus B_i| \ge \frac{\explet|B_i|}{2^7}$, 

 \item $|B_\ell|<\frac23 n$ or $|B_\ell\cap V_\ell|>\frac{31p}{60}n$, and

\item \label{itm3} $|V|\leq \frac{61p}{60}n$
\end{enumerate}

with probability at least
$$1-\ell\cdot e^{-\Omega\left({|U|}{(\log n)^{2}}\right)}-e^{-\Theta(np)}\ge 1-e^{-\Omega\left({|U|}{(\log n)^{2} }\right)},$$ where we used that $|U|\le \frac{np}{(\log n)^{2}}$ as $|N_G(U)|\geq |U|p^{-4}\explet^{-4}(\log n)^{16}$.

However, if \ref{itm1}--\ref{itm3} all hold, then, for each $i\in [\ell]$, we have
$$|B_i| \ge \min\left\{\frac23 n,\left(1+\frac{\explet}{2^7 }\right)^{i}|U|\right\}\geq \min\left\{\frac23 n,\exp\left(\frac{\explet i}{2^8}\right)\right\},$$
so that, setting $i=\ell=\explet^{-1}(\log n)^2$, we obtain that $|B_{\ell}|\geq \frac23 n$, and hence, by ii) and iii), that $|B_\ell\cap V_\ell|>\frac{|V|}2$.

Thus, by \eqref{eqn:overkill4}, we have that $|B^{\ell}_{G-F}(U,V)|>\frac{|V|}2$ with probability at least $1-e^{-\Omega\left({|U|}{(\log n)^{2}}\right)}$.
\end{proof}

We now use Lemma~\ref{lem:findingwell-expandingset} and Lemma~\ref{lem:wellexpandingsetscanreach} to prove the following result.   
\begin{lemma} \label{lem:reachhalfwithfewedgesblocked}
    Let $0 < p < 1$. Suppose that $G$ is an $n$-vertex $(\explet, s)$-expander with $\explet<\frac{1}{(\log n)^2}$ and $s \geq 20 p^{-4}\explet^{-4}(\log n)^{16}$. Let $\ell=\explet^{-1}(\log n)^2$. Let $V$ be a $p$-random subset of $V(G)$.
    Then, with probability $1-o(1/n)$, for every $U\subset V(G)$ and every $F\subset E(G)$ with $|F|\leq \frac{p^4\explet^5|U|}{(\log n)^{17}}$,
    \begin{equation}
    \label{eqn:B1UV}
    |B_{G-F}^{\ell} (U,V)|>\frac{|V|}{2}.    
    \end{equation}
\end{lemma}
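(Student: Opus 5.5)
The plan is to combine Lemma~\ref{lem:findingwell-expandingset} with Lemma~\ref{lem:wellexpandingsetscanreach}, applying the latter only to a family of candidate sets that is small enough to union bound over. Set $\mu = p^{-4}\explet^{-4}(\log n)^{16}$. Note that $s \ge \mu$, so Lemma~\ref{lem:findingwell-expandingset} applies, and that $\frac{p^4\explet^5}{(\log n)^{17}} = \frac{\explet}{\mu\log n}$.

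\emph{Reduction.} We may assume $|U| \le 2n/3$; otherwise we pass to a subset of that size, which is harmless since $B$ is monotone in $U$ and the constraint on $|F|$ only gets weaker once the case $|U| \le 2n/3$ is covered with some slack. Given such a $U$, Lemma~\ref{lem:findingwell-expandingset} supplies $U' \subseteq U$ with $|N_G(U')| \ge \mu|U'|$ and $|U'| \ge \frac{\explet|U|}{12\mu}$. Then
\[ |F| \le \frac{\explet|U|}{\mu\log n} \le \frac{12|U'|}{\log n} \le |U'|. \]
Since $B_{G-F}^{\ell}(U',V) \subseteq B_{G-F}^{\ell}(U,V)$, it suffices to show: with probability $1-o(1/n)$, for every $U'$ with $|N_G(U')| \ge \mu|U'|$ and every $F$ with $|F| \le |U'|$, we have $|B^\ell_{G-F}(U',V)| > |V|/2$. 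The hypotheses of Lemma~\ref{lem:wellexpandingsetscanreach} hold because $s \ge 20p^{-4}\explet^{-4}(\log n)^{16} \ge 20p^{-3}\explet^{-2}(\log n)^9$ and $\ell$ is the same in both lemmas.

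\emph{Union bound.} This is the main obstacle. For a fixed pair $(U',F)$, Lemma~\ref{lem:wellexpandingsetscanreach} gives failure probability $e^{-\Omega(|U'|(\log n)^2)}$. The number of pairs with $|U'| = u$ is at most $\binom{n}{u}\cdot\sum_{f\le u}\binom{n^2}{f} \le n^{u}\cdot n^{2u+1} = e^{O(u\log n)}$. Since $u(\log n)^2$ dominates $u\log n$, summing over $u \ge 1$ gives total failure probability $\sum_{u\ge1} e^{-\Omega(u(\log n)^2)} = o(1/n)$.

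The delicate point is that $F$ ranges over subsets of $E(G)$, which can have about $n^2$ elements; the count above already uses the bound $n^2$ and still only costs $e^{O(u\log n)}$, so this is fine. Conditioning on the good event then gives~\eqref{eqn:B1UV} for every $U$ and $F$.
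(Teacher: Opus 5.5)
Your proposal is correct and follows the paper's proof essentially step for step. It uses the same $\mu=p^{-4}\explet^{-4}(\log n)^{16}$, and Lemma~\ref{lem:findingwell-expandingset} to extract a well-expanding $U'\subseteq U$ with $|F|\le |U'|$. It then applies Lemma~\ref{lem:wellexpandingsetscanreach} with a union bound over all pairs $(U',F)$, paying $n^{O(u)}$ against $e^{-\Omega(u(\log n)^2)}$, and handles $|U|>2n/3$ via the constant-factor slack by passing to a subset of size between $n/2$ and $2n/3$. The only difference is cosmetic: you do the reduction before the union bound, the paper after.
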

\begin{proof} 
Say that a set $U'\subset V(G)$ is \emph{well-expanding} in $G$ if 
$|N_{G}(U')| \ge |U'|p^{-4}\explet^{-4}(\log n)^{16}.$ Since $s \geq 20p^{-3}\explet^{-2}(\log n)^{9}$, given a non-empty well-expanding set $U'\subset V(G)$ and a set of edges $F$ of size
at most $|U'|$, Lemma~\ref{lem:wellexpandingsetscanreach} applied to $U'$ implies that 
\begin{equation}\label{eq:1}
    |B_{G-F}^{\ell}(U', V)|> \frac{|V|}2
\end{equation}
fails with probability at most $e^{-\Omega(|U'|(\log n)^2)}.$

Now, a union bound over all pairs $(U',F)$ such that $U'$ is a well-expanding set in $G$ and $F$ is a set of at most $|U'|$ edges tells us that \emph{some} such pair $(U',F)$ fails \eqref{eq:1} with probability at most 
\begin{align*}
    \sum_{(U',F)}  e^{-\Omega(|U'|(\log n)^2)} &\le 
    \sum_{u=1}^n\sum_{f=1}^{u} \binom{n}{u}\binom{n^2}{f} \cdot e^{-\Omega(u(\log n)^2)} \\
&    \le \sum_{u=1}^n u\cdot n^{3u}\cdot e^{-\Omega(u(\log n)^2)} \le \sum_{u=1}^n e^{-\Omega(u(\log n)^2)}=o(1/n).
\end{align*}

Thus, with probability $1-o(1/n)$, we can assume that \eqref{eq:1} holds for every well-expanding set $U'$ and set $F\subset E(G)$ with $|F|\leq |U'|$. We will now show that this implies that \eqref{eqn:B1UV} holds for all $U\subset V(G)$ and $F\subset E(G)$ with $|F|\leq \frac{p^4\explet^5|U|}{(\log n)^{17}}$, completing the proof.

Let $U\subset V(G)$ with $|U|\leq \frac23n$ and let $F\subset E(G)$ satisfy the (slightly weaker) condition $|F|\leq \frac{2p^4\explet^5|U|}{(\log n)^{17}}$. Then, applying Lemma~\ref{lem:findingwell-expandingset} (with $\mu = p^{-4}\explet^{-4}(\log n)^{16} \le s$), there is a set $U'\subset U$ which is well-expanding for which $|U'|\geq \frac{p^4 \explet^5 |U|}{12(\log n)^{16}}$. Noting that $|F|\leq |U'|$ (as we may assume that $n$ is large), we therefore have that
\[
|B_{G-F}^{\ell}(U,V)|\geq |B_{G-F}^{\ell}(U',V)|> \frac{|V|}{2}.
\]
Finally, consider $U\subset V(G)$ with $|U|> \frac23n$ and let $F\subset E(G)$ satisfy $|F|\leq \frac{p^4\explet^5|U|}{(\log n)^{17}}$. Let $\bar{U}\subset U$ be an arbitrary subset with $\frac{n}2\leq |\bar{U}|\leq \frac23 n$, so that we have $|F|\leq \frac{2p^4 \explet^5 |\bar{U}|}{(\log n)^{17}}$, and hence, from what we have just shown,
\[
|B_{G-F}^{\ell}(U,V)|\geq |B_{G-F}^{\ell}(\bar{U},V)|> \frac{|V|}{2},
\]
as required.
\end{proof}

We are now ready to prove Lemma~\ref{lem:reachable} in the following (equivalent) form which incorporates Definition~\ref{defn:reachable}.

\begin{lemma} \label{lem:reachhalfwithmanyedgesblocked}

Let $0 < p < 1$. Suppose that $G$ is an $n$-vertex $(\explet, s)$-expander with $\explet<\frac{1}{(\log n)^2}$ and $s\ge p^{-4}\explet^{-5}(\log n)^{20}$. Let $\ell=8\explet^{-1}(\log n)^2$. Let $V$ be a $p$-random subset of $V(G)$.
Then, with probability $1-o(1)$, for every $U\subset V(G)$ and every $F\subset E(G)$ with $|F|\leq \frac{p^8\explet^{10}|U|s}{(\log n)^{40}}$,
    $$|B_{G-F}^{\ell} (U,V)|>\frac{|V|}{2}.$$
\end{lemma}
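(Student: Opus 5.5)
We follow the scheme of Buci\'c and Montgomery: first upgrade the robustness from very few blocked edges to many blocked edges by splitting $G$ into edge-disjoint expanders, and then use a longer path length. Set $k = \frac{p^4\explet^{6} s}{10^6(\log n)^{20}}$, or a similar value chosen so that the hypotheses below are met. By Lemma~\ref{lem:partitionedgesintoexpanders}, using $\explet s/k \ge 10^6\log n$, we obtain edge-disjoint graphs $G_1,\dots,G_k$ on $V(G)$. Each $G_i$ is an $(\explet/8, s')$-expander with $s' = \frac{\explet s}{10^5 k}$. We choose $k$ so that $s' \ge 20 p^{-4}(\explet/8)^{-4}(\log n)^{16}$, which holds given $s\ge p^{-4}\explet^{-5}(\log n)^{20}$ up to constants. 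Note that $\explet/8<1/(\log n)^2$ still holds.

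Next we apply Lemma~\ref{lem:reachhalfwithfewedgesblocked} to each $G_i$ with the same $p$-random set $V$. Each application succeeds with probability $1-o(1/n)$, and $k\le n^2$. If the failure probability is not small enough for a union bound over $k$ graphs, we reduce to at most $n$ graphs by merging. Alternatively, we note that the proof of Lemma~\ref{lem:reachhalfwithfewedgesblocked} gives failure probability $e^{-\Omega((\log n)^2)}$, which easily handles a union bound over $k\le n^2$ graphs. On this event, for every $i$, every $U$, and every $F_i\subset E(G_i)$ with $|F_i|\le \frac{p^4(\explet/8)^5|U|}{(\log n)^{17}}$, we have $|B^{\ell_i}_{G_i-F_i}(U,V)|>|V|/2$, where $\ell_i=8\explet^{-1}(\log n)^2$ is the path length from that lemma with $\explet/8$ in place of $\explet$.

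Now we show the main statement. Fix $U$ and $F$ with $|F|\le \mu|U|$, where $\mu = p^8\explet^{10}s/(\log n)^{40}$. By pigeonhole over the edge-disjoint $G_i$, some index $i$ has $|F\cap E(G_i)|\le |F|/k$. The parameters are arranged so that $|F|/k \le \frac{p^4\explet^5|U|}{8^5(\log n)^{17}}$. This requires $\mu \le k\, p^4\explet^5/(8^5(\log n)^{17})$, which is exactly what the choice of $k$ gives, since the exponents $8,10,40$ match $4+4$, $6+4$ and $20+17$ with slack. Since $G_i-(F\cap E(G_i))\subseteq G-F$, we get $|B^{\ell}_{G-F}(U,V)|\ge|B^{\ell_i}_{G_i-F\cap E(G_i)}(U,V)|>|V|/2$ with $\ell = 8\explet^{-1}(\log n)^2$.

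The main obstacle is the bookkeeping. We need a single $k$ that satisfies three conditions simultaneously: $k\ge 1$, the expansion hypothesis $s'\ge 20p^{-4}(\explet/8)^{-4}(\log n)^{16}$ for each $G_i$, and the pigeonhole bound $\mu|U|/k \le p^4(\explet/8)^5|U|/(\log n)^{17}$. The assumption $s\ge p^{-4}\explet^{-5}(\log n)^{20}$ is exactly what makes the admissible range for $k$ nonempty, with the constants $8^5$ and $10^5$ absorbed by the spare logarithmic factors. The second concern is the union bound over the $k$ subgraphs. If $k$ is large, we would redo it using the $e^{-\Omega(|U|(\log n)^2)}$ bound inside the proof of Lemma~\ref{lem:reachhalfwithfewedgesblocked}, which leaves ample room.
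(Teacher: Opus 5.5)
Your argument has the same structure as the paper's proof. You split $G$ with Lemma~\ref{lem:partitionedgesintoexpanders} into $k$ edge-disjoint $(\explet/8,s')$-expanders. You apply Lemma~\ref{lem:reachhalfwithfewedgesblocked} to each one, which is where $\ell=8\explet^{-1}(\log n)^2$ comes from. You take a union bound and then pigeonhole $F$ onto some $G_i$, using $G_i-(F\cap E(G_i))\subseteq G-F$.

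\textbf{The value of $k$ is wrong.} The value you wrote down, and the exponent check you give for it, are incorrect. The tolerance in Lemma~\ref{lem:reachhalfwithfewedgesblocked} carries $\explet^5$. So with $k=p^4\explet^6 s/(10^6(\log n)^{20})$, the product $k\,p^4(\explet/8)^5/(\log n)^{17}$ has $\explet^{11}$, not $\explet^{6+4}$. Concretely, $\mu/k=10^6p^4\explet^4/(\log n)^{20}$. This is at most $p^4\explet^5/(8^5(\log n)^{17})$ only if $\explet(\log n)^3\ge 8^5\cdot 10^6$, which fails for instance whenever $\explet\le(\log n)^{-3}$. Your $k$ also violates $k\ge 1$: at the minimal $s=p^{-4}\explet^{-5}(\log n)^{20}$ it equals $\explet/10^6$.

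\textbf{The fix.} Drop the extra factor of $\explet$; the paper takes $k=\lfloor p^4\explet^5 s/(\log n)^{20}\rfloor$. With this choice:
\begin{itemize}
\item $k\ge 1$ is exactly the hypothesis on $s$.
\item $\explet s/k\ge (\log n)^{20}/(p^4\explet^4)\ge 10^6\log n$.
\item $s'=\explet s/(10^5k)\ge (\log n)^{20}/(10^5p^4\explet^4)\ge 20p^{-4}(\explet/8)^{-4}(\log n)^{16}$.
\item Since $k\ge p^4\explet^5 s/(2(\log n)^{20})$, we get $\mu/k\le 2p^4\explet^5/(\log n)^{20}\le p^4(\explet/8)^5/(\log n)^{17}$.
\end{itemize}
So your claim that the admissible window for $k$ is nonempty is correct; the witness you named just lies outside it.

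\textbf{Union bound.} No merging or re-derivation is needed here. Take $U=\{v\}$ and $F$ the set of edges at $v$. This shows that any $(\explet,s)$-expander with $\explet>0$ has $s<\delta(G)<n$, so $k<n$ and $k\cdot o(1/n)=o(1)$. This is the bound the paper uses. Your fallback, using the $e^{-\Omega((\log n)^2)}$ failure bound from inside the proof of Lemma~\ref{lem:reachhalfwithfewedgesblocked}, is also valid.
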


\begin{proof}
    Let $k= \lfloor p^4 \explet^5 s/(\log n)^{20}  \rfloor$. Then by the assumption on $s$, we have $k \ge 1$. Also, as our lemma concerns the asymptotic probability as $n\rightarrow \infty$, we may assume that $n$ is sufficiently large and therefore $\explet s/k \ge 10^6 \log n$. Hence, using Lemma \ref{lem:partitionedgesintoexpanders}, we can obtain edge-disjoint graphs $G_1,\dots,G_k$ such that $E(G)=\cup_{i\in [k]} E(G_i)$ and, for each $i\in [k]$, $G_i$ is an $(\frac{\explet}{8},s')$-expander with vertex set $V(G)$, where $s'=\frac{\explet s}{10^5 k}$. By the choice of $k$, we have $s'\geq 20 p^{-4}(\explet/8)^{-4}(\log n)^{16}$, so by Lemma \ref{lem:reachhalfwithfewedgesblocked} and a union bound over the $k$ graphs $G_i$, with probability $1-o(1)$ we have that, for each $i\in [k]$ and every $U\subset V(G_i)$ and $F\subset E(G_i)$ with $|F|\leq \frac{p^4 (\explet/8)^5 |U|}{(\log n)^{17}}$,
    $$|B_{G_i-F}^{\ell} (U,V)|>\frac{|V|}{2}.$$

    Now let $U\subset V(G)$ and $F\subset E(G)$ with $|F|\leq \frac{p^8\explet^{10}|U|s}{(\log n)^{40}}$. As the graphs $G_i$, $i\in [k]$, are edge-disjoint, there exists some $i\in [k]$ such that $|F\cap E(G_i)|\leq \frac{p^8\explet^{10}|U|s}{k(\log n)^{40}} \le \frac{p^4(\explet/8)^5|U|}{(\log n)^{17}}$, and therefore
    $$|B_{G-F}^{\ell} (U,V)|\geq |B_{G_i-(F\cap E(G_i))}^{\ell} (U,V)|>\frac{|V|}{2}.$$
    This proves the lemma.
\end{proof}

\end{document}